
\documentclass{amsproc}
\usepackage[arrow, curve, frame, matrix, graph, tips]{xy}
\usepackage{amssymb}
\usepackage{amsthm}
\usepackage{amsmath}
\usepackage[colorlinks=true,urlcolor=blue,linkcolor=blue]{hyperref}
\usepackage{enumitem}


\newtheorem{theorem}{Theorem}[subsection]
\newtheorem{lemma}[theorem]{Lemma}
\newtheorem{proposition}[theorem]{Proposition}
\newtheorem{corollary}[theorem]{Corollary}

\theoremstyle{definition}
\newtheorem{definition}[theorem]{Definition}
\newtheorem{example}[theorem]{Example}
\newtheorem{examples}[theorem]{Examples}
\newtheorem{non-example}[theorem]{Non-Example}

\theoremstyle{remark}
\newtheorem{remark}[theorem]{Remark}

\newcommand{\tn}[1]{\textnormal{#1}}
\newcommand{\tnb}[1]{\textnormal{\bf #1}}
\newcommand{\tensor}{\otimes}
\newcommand{\PSh}[1]{\widehat{#1}}
\newcommand{\C}{\mathbb{C}}
\newcommand{\D}{\mathbb{D}}

\newcommand{\N}{\mathbb{N}}
\newcommand{\comp}{\circ}
\newcommand{\id}{\tn{id}}
\newcommand{\ca}[1]{\mathcal{#1}}
\newcommand{\ladj}{\dashv}
\newcommand{\iso}{\cong}
\newcommand{\catequiv}{\simeq}
\newcommand{\Set}{\tnb{Set}}
\newcommand{\Top}{\tnb{Top}}
\newcommand{\Cat}{\tnb{Cat}}
\newcommand{\CAT}{\tnb{CAT}}
\newcommand{\op}{\tn{op}}

\renewcommand{\implies}{\Rightarrow}

\renewcommand{\iff}{\Leftrightarrow}
\newcommand{\from}{\leftarrow}


\DeclareMathOperator*{\Tbar}{\overline{T}}

\DeclareMathOperator*{\opE}{E}

\DeclareMathOperator*{\opF}{F}


\newcommand{\Graph}{\tnb{Gph}}
\newcommand{\Enrich}[1]{#1{\textnormal{-Cat}}}

\newcommand{\Trimble}[1]{\tn{Trm}_{#1}}
\newcommand{\col}{\tn{col}}
\newcommand{\res}{\tn{res}}
\newcommand{\G}{\mathbb{G}}
\DeclareMathOperator*{\colim}{\textnormal{colim}}
\newcommand{\Fam}{\tn{Fam}}
\newcommand{\LaxAlg}[1]{{\textnormal{Lax-}}#1{\textnormal{-Alg}}}
\newcommand{\OpLaxAlg}[1]{{\textnormal{OpLax-}}#1{\textnormal{-Alg}}}

\newcommand{\End}{\textnormal{End}}

\newcommand{\Mult}[1]{#1{\textnormal{-Mult}}}

\newcommand{\NOp}[1]{#1{\textnormal{-Op}_0}}
\newcommand{\MND}{\textnormal{MND}}
\newcommand{\OpMND}{\textnormal{OpMND}}

\newcommand{\DISTMULT}{\textnormal{DISTMULT}}
\newcommand{\OpDISTMULT}{\textnormal{OpDISTMULT}}

\newcommand{\GMND}{\ca G{\tn{-MND}}}
\newcommand{\GOpMND}{\ca G{\tn{-OpMND}}}
\newcommand{\CatAr}{\tn{CAT-Ar}}


\newcommand{\PbSq}[8]{\xymatrix{{#1} \ar[d]_{#5}
\save \POS?(.3)="lpb" \restore
\ar[r]^-{#6} \save \POS?(.3)="tpb" \restore
& {#2} \ar[d]^{#7} \save \POS?(.3)="rpb" \restore \\
{#4} \ar[r]_-{#8} \save \POS?(.3)="bpb" \restore & {#3}
\POS "rpb"; "lpb" **@{}; ?!{"bpb";"tpb"}="cpb" **@{}; ? **@{-};
"tpb"; "cpb" **@{}; ? **@{-}}}

\newcommand{\LaxSq}[9]{\xymatrix{{#1} \ar[r]^-{#6}
\ar[d]_{#5} \save \POS?="dom" \restore
& {#2} \ar[d]^{#7} \save \POS?="cod" \restore \\
{#3} \ar[r]_-{#8} & {#4}
\POS "dom"; "cod" **@{} ?(.35) \ar@{=>}^{#9} ?(.65)}}


\begin{document}

\title{Multitensors as monads on categories of enriched graphs}

\author{Mark Weber}
\address{Department of Mathematics, Macquarie University}
\email{mark.weber.math@gmail.com}
\thanks{}
\maketitle
\begin{abstract}
In this paper we unify the developments of \cite{Batanin-MonGlobCats}, \cite{BataninWeber-EnHop} and \cite{Cheng-ComparingOperadic} into a single framework in which the interplay between multitensors on a category $V$, and monads on the category $\ca GV$ of graphs enriched in $V$, is taken as fundamental. The material presented here is the conceptual background for subsequent work: in \cite{BatCisWeb-LiftingTheorem} the Gray tensor product of 2-categories and the Crans tensor product \cite{Crans-ATensorForGrayCats} of Gray categories are exhibited as existing within our framework, and in \cite{Weber-Funny} the explicit construction of the funny tensor product of categories is generalised to a large class of Batanin operads.
\end{abstract}
\tableofcontents

\section{Introduction}
A monad on a category $\ca C$ is an excellent way of defining extra structure on the objects of $\ca C$. For instance in the globular approach to higher category theory \cite{Batanin-MonGlobCats} an $n$-dimensional categorical structure of a given type is defined as the algebras for a given monad on the category $\PSh {\G}_{{\leq}n}$ of $n$-globular sets.

Multitensors are another way of defining extra structure. Recall \cite{BataninWeber-EnHop} that a multitensor $E$ on a category $V$ is simply the structure of a lax monoidal category on $V$. As such it includes the assignment
\[ (X_1,...,X_n) \mapsto E(X_1,...,X_n) \]
of the $n$-fold tensor product of any finite sequence of objects of $V$ and non-invertible coherences including unit maps $u_X:X \to E(X)$ and substitution maps
\[ E(E(X_{11},...,X_{1n_1}),...,E(X_{k1},...,X_{kn_k})) \to E(X_{11},...,X_{1n_1},...,X_{k1},...,X_{kn_k}) \]
which satisfy some natural axioms. In particular, the unary case $n=1$ is interesting, and restricting attention just to this case one has a monad $E_1$ on $V$. On the other hand in the case where the unit is the identity and the substitutions are invertible, one refinds the usual notion of monoidal category, though expressed in an ``unbiased'' way in terms of $n$-ary tensor products.

From this perspective the notion of enriched category does not require the invertibility of these coherence maps, and so one has the notion of a category enriched in $E$ (also known as an ``$E$-category'') for any multitensor. Thus, a multitensor on a category $V$ is a way of endowing graphs enriched in $V$ with extra structure. Recall a graph $X$ enriched in $V$ is simply a set $X_0$ of objects, together with objects $X(a,b)$ of $V$ called ``homs'' for all pairs $(a,b)$ of objects of $V$. In particular an $E$-category structure on $X$ includes the structure of an $E_1$ algebra on the homs of $X$.

These two ways of defining extra structure are related. If $V$ has coproducts and the assignation
\[ (X_1,...,X_n) \mapsto E(X_1,...,X_n) \]
preserves coproducts in each variable, in which case we say that $E$ is a \emph{distributive} multitensor, then in a straight forward manner $E$ defines a monad $\Gamma E$ on the category $\ca GV$ of graphs enriched in $V$, whose algebras are $E$-categories. The purpose of this article is to study this process
\[ (V,E) \mapsto (\ca GV,\Gamma E) \]
of assigning a monad to a distributive multitensor in a systematic way.

The developments presented in this article are applied to simpifying and unifying earlier work in the subject \cite{Batanin-MonGlobCats} \cite{BataninWeber-EnHop} \cite{Cheng-ComparingOperadic}, and as a springboard for subsequent developments. In \cite{Weber-Funny} the funny tensor product of categories is exhibited as a special case of a symmetric monoidal closed structure that can be exhibited on the category of algebras of a wide class of higher operads. In \cite{BatCisWeb-LiftingTheorem} the Gray and Crans tensor products are exhibited within our emerging framework, weak $n$-categories with strict units are defined and then exhibited as obtainable via some iterated enrichment. For both \cite{Weber-Funny} and \cite{BatCisWeb-LiftingTheorem}, the work presented here is used extensively.

This article is organised as follows. In section(\ref{sec:categories-of-enriched-graphs}) categories of enriched graphs are studied. This uses basic categorical notions recalled and defined in appendix(\ref{sec:lcp}) related to the theory of locally presentable categories. In section(\ref{sec:multitensor-to-monad-construction}) the construction of monads from multitensors is discussed, and how properties on the multitensor correspond to properties on the corresponding monad is spelled out in detail in theorem(\ref{thm:preservation-by-Gamma-E}). The monads that arise from multitensors via our construction are characterised in section(\ref{sec:Monads-Operads-Multitensors}) theorem(\ref{thm:characterisation-of-image-of-Gamma}). Later in the same section the 2-functors underlying the multitensor to monad construction are given, at which point the connection with the formal theory of monads \cite{Street-FTM} is made.

This connection is exploited to explain the ubiquity of the distributive laws that arise in higher category theory \cite{Cheng-IteratedDist}. In section(\ref{sec:dist-laws-from-monoidal-monads}) the senses in which a monad and multitensor may distribute is spelled out as part of a generalisation of the classical theory of monad distributive laws of Beck \cite{Beck-DLaws}. As an application we give a very efficient construction of the monads for strict $n$-categories in section(\ref{sec:strict-n-cat-monad}). This is the construction at the level of monads which corresponds at the level of theories to the inductive formula $\Theta_{n{+}1} = \Delta \wr \Theta_n$ of \cite{Berger-IteratedWreathProduct}. We recover this formula from our perspective in section(\ref{sec:strict-n-cat-monad}), from more general considerations in section(\ref{ssec:wreath}) which bring together the developments of \cite{BergMellWeber-MonadsArities} with those of the present article.

In the setting of the theory of cartesian monads \cite{Burroni-TCats,Hermida-RepresentableMulticategories,Leinster-HDA-book} a \emph{$T$-operad} for a cartesian monad $T$ on a category $\ca E$ with pullbacks consists of another monad $A$ on $\ca E$ together with a cartesian monad morphism{
\footnotemark{\footnotetext{That is, $\alpha$'s naturality squares are pullbacks, and $\alpha$ satisfies axioms expressing its compatibility with the monad structures on $A$ and $T$.}}} $\alpha:A \to T$. Similarly given a cartesian multitensor $E$ on a category $V$ one defines an $E$-multitensor to consist of another multitensor $F$ on $V$ together with a cartesian multitensor morphism $\phi:F \to E$ \cite{BataninWeber-EnHop}. In section(\ref{sec:operads-multitensors-basic}) the basic correspondence between $E$-multitensors and $\Gamma E$-operads is given.

A weak $n$-category is an algebra of a \emph{contractible} $n$-operad{\footnotemark{\footnotetext{In this work we use the notion of contractibility given in \cite{Leinster-HDA-book} rather than the original notion of \cite{Batanin-MonGlobCats}.}}}. In section(\ref{sec:contractible-operads-and-multitensors}) we recall this notion, give an analogous notion of contractible multitensor, and in corollary(\ref{cor:contractible}), give the canonical relationship between them. Finally in section(\ref{sec:Trimble}) we recover Cheng's description \cite{Cheng-ComparingOperadic} of Trimble's definition of weak $n$-category.

\emph{Notation and terminology}. Given a monad $T$ on a category $V$ the forgetful functor from the category of Eilenberg-Moore algebras of $T$ is denoted as $U^T:V^T \to V$.  We denote a $T$-algebra as a pair $(X,x)$, where $X$ is the underlying object and $x:TX \to X$ is the algebra structure. When thinking of monads in a 2-category, it is standard practise to refer to them as pairs $(A,t)$ where $A$ is the underlying object, $t$ is the underlying endomorphism, and the unit and multiplication are left implicit. Similarly we refer to a lax monoidal category as a pair $(V,E)$ where $V$ is the underlying category, $E$ is the multitensor, and the unit and substitution are left implicit.

The category of presheaves on a given category $\C$ is denoted $\PSh {\C}$. Given a functor $F:\C \to \D$ we denote by $\D(F,1):\D \to \PSh {\C}$ with object map $D \mapsto \D(F(-),D)$. For the category of globular sets it is typical to denote the image of the yoneda embedding as
\[ \xygraph{{0}="p0" [r] {1}="p1" [r] {2}="p2" [r] {3}="p3" [r] {...}="p4" "p0":@<1ex>"p1":@<1ex>"p2":@<1ex>"p3":@<1ex>"p4":@{<-}@<1ex>"p3":@{<-}@<1ex>"p2":@{<-}@<1ex>"p1":@{<-}@<1ex>"p0"} \]
but then $0$ denotes the globular set with one vertex and no edges or higher cells. Thus we adopt the convention of using $0$ to denote objects of categories that we wish to think of as representing some underlying objects functor. Since initial objects are also important for us, we use the notation $\emptyset$ to denote them. While multicategories aren't directly multitensors, they become so after convolution -- see \cite{DayStreet-LaxMonoids}. Moreover when working seriously with multitensors, one is always manipulating functors of many variables, and so in fact working inside the $\CAT$-enriched multicategory of categories. It is for these reasons that we find the term ``multitensor'' appropriate.

\section{Categories of enriched graphs}
\label{sec:categories-of-enriched-graphs}

Preliminary to the correspondence between monads and multitensors that we describe in this paper, is the passage $V \mapsto \ca GV$ from an arbitrary category $V$, to the category $\ca GV$ of graphs enriched in $V$. In section(\ref{ssec:enriched-graphs}) we describe the basic properties of $\ca G$ as an endofunctor of $\CAT$, whose object map is $V \mapsto \ca GV$. Then in section(\ref{ssec:properties-of-GV}), we describe what categorical properties $\ca G$ preserves in theorem(\ref{thm:GV-locally-c-presentable}). From this it is clear that as far as basic categorical properties are concerned, $\ca GV$ is at least as good as $V$.

\subsection{Enriched graphs}
\label{ssec:enriched-graphs}
\begin{definition}\label{def:enriched-graph}
Let $V$ be a category. A \emph{graph $X$ enriched in $V$} consists of an underlying set $X_0$ whose elements are called \emph{objects}, together with an object $X(a,b)$ of $V$ for each ordered pair $(a,b)$ of objects of $X$. The object $X(a,b)$ will sometimes be called the \emph{hom} from $a$ to $b$. A morphism $f:X{\rightarrow}Y$ of $V$-enriched graphs consists of a function $f_0:X_0{\rightarrow}Y_0$ together with a morphism $f_{a,b}:X(a,b){\rightarrow}Y(fa,fb)$ for each $(a,b)$. The category of $V$-graphs and their morphisms is denoted as $\ca GV$, and we denote by $\ca G$ the obvious 2-functor
\[ \begin{array}{lccr} {\ca G : \CAT \rightarrow \CAT} &&& {V \mapsto \ca GV} \end{array} \]
with object map as indicated.
\end{definition}
Note that for $n \in \N$, $\ca G^n\Set$ is the category of $n$-globular sets, and that $\ca G\tn{Glob} \iso \tn{Glob}$ where $\tn{Glob}$ denotes the category of globular sets. In fact applying the 2-functor $\ca G$ successively to the inclusion of the empty category into the point (ie the terminal category), one obtains the inclusion of the category of $(n{-}1)$-globular sets into the category of $n$-globular sets. In the case $n>0$ this is the inclusion with object map
\[ \begin{array}{lcr}
{\xygraph{{X_0}="l" [r] {...}="m" [r] {X_{n{-}1}}="r" "r":@<1ex>"m":@<1ex>"l" "r":@<-1ex>"m":@<-1ex>"l"}} & \mapsto &
{\xygraph{{X_0}="l" [r] {...}="m" [r] {X_{n{-}1}}="r" [r] {\emptyset}="rr" "rr":@<1ex>"r":@<1ex>"m":@<1ex>"l" "rr":@<-1ex>"r":@<-1ex>"m":@<-1ex>"l"}}
\end{array} \]
and when $n{=}0$ this is the functor $1{\rightarrow}\Set$ which picks out the empty set. Thus there is exactly one $(-1)$-globular set which may be identified with the empty set.

When $V$ has an initial object $\emptyset$, one can regard any sequence of objects $(Z_1,...,Z_n)$ of $V$ as a $V$-graph. The object set is $\{0,...,n\}$, $(Z_1,...,Z_n)(i-1,i) = Z_i$ for $1{\leq}i{\leq}n$, and all the other homs are equal to $\emptyset$. We denote also by $0$ the $V$-graph corresponding to the empty sequence $()$. Note that $0$ is a representing object for the forgetful functor $(-)_0:\ca GV{\rightarrow}\Set$ which sends an enriched graph to its underlying set of objects. Globular pasting diagrams \cite{Batanin-MonGlobCats} may be regarded as iterated sequences, for instance $(0,0,0)$ and $((0,0),(0),(0,0,0))$ correspond respectively to
\[ \xygraph{{\xybox{\xygraph{{\bullet}="p1" [r] {\bullet}="p2" [r] {\bullet}="p3" [r] {\bullet}="p4" "p1":"p2"^-{}:"p3"^-{}:"p4"^-{}}}} [r(4)]
{\xybox{\xygraph{{\bullet}="p1" [r] {\bullet}="p2" [r] {\bullet}="p3" [r] {\bullet}="p4"
"p1":@/^{2pc}/"p2"|-{}="p11" "p1":"p2"|-{}="p12" "p1":@/^{-2pc}/"p2"|-{}="p13" "p11":@{}"p12"|(.3){}="s11"|(.7){}="f11" "s11":"f11" "p12":@{}"p13"|(.3){}="s12"|(.7){}="f12" "s12":"f12" "p2":"p3"
"p3":@/^{3pc}/"p4"|-{}="p31" "p3":@/^{1pc}/"p4"|-{}="p32" "p3":@/^{-1pc}/"p4"|-{}="p33" "p3":@/^{-3pc}/"p4"|-{}="p34" "p31":@{}"p32"|(.3){}="s31"|(.7){}="f31" "s31":"f31" "p32":@{}"p33"|(.3){}="s32"|(.7){}="f32" "s32":"f32" "p33":@{}"p34"|(.3){}="s33"|(.7){}="f33" "s33":"f33"}}}} \]
when one starts with $V=\Set$. We denote by ``$n$'' the free-living $n$-cell, defined inductively by $n+1=(n)$.

It is often better to think of $\ca G$ as taking values in $\CAT/\Set$. By applying the endofunctor $\ca G$ to the unique functor $t_V:V{\rightarrow}1$ for each $V$, produces $(-)_0$ which sends an enriched graph to its underlying set of objects. This 2-functor
\[ \ca G_1 : \CAT \rightarrow \CAT/\Set \]
has a left adjoint described as follows. First to a given functor $f:A \to \Set$ we denote by $f^{\times 2}:A \to \Set$ the functor with object map $a \mapsto f(a) \times f(a)$. Then to a given functor $g:A \to \Set$ we denote by $g_{\bullet}$ the domain of the discrete opfibration corresponding to $g$ via the Grothendieck construction. That is, $g_{\bullet}$ can be defined via the pullback
\[ \xygraph{{g_{\bullet}}="tl" [r] {\Set_{\bullet}}="tr" [d] {\Set}="br" [l] {A}="bl" "tl":"tr"^-{}:"br"^-{U}:@{<-}"bl"^-{g}:@{<-}"tl"^-{} "tl":@{}"br"|-{pb}} \]
where $U$ is the forgetful functor from the category of pointed sets and base point preserving maps. The left adjoint to $\ca G_1$ is then described on objects by $f \mapsto f^{\times 2}_{\bullet}$. Explicitly $f^{\times 2}_{\bullet}$ has as objects triples $(a,x,y)$ where $a$ is an object of $A$, and $(x,y)$ is an ordered pair of objects of $fa$. Maps in $f^{\times 2}_{\bullet}$ are maps in $A$ which preserve these base points.

It is interesting to look at the unit and counit of this 2-adjunction. Given a category $V$, $(\ca Gt_V)_{\bullet}^{\times 2}$ is the category of bipointed enriched graphs in $V$. The counit $\varepsilon_V:(\ca Gt_V)_{\bullet}^{\times 2} \to V$ sends $(X,a,b)$ to the hom $X(a,b)$. When $V$ has an initial object $\varepsilon_V$ has a left adjoint $L_V$ given by $X \mapsto ((X),0,1)$. Given a functor $f:A{\rightarrow}\Set$ the unit $\eta_f:A{\rightarrow}\ca G(f_{\bullet}^{\times 2})$ sends $a \in A$ to the enriched graph whose objects are elements of $fa$, and the hom $\eta_f(a)(x,y)$ is given by the bipointed object $(a,x,y)$.

A given functor $f:A \to \ca GV$ thus admits a canonical factorisation
\[ \xygraph{!{0;(2,0):} {A}="l" [r] {\ca G(f_{0 \bullet}^{\times 2})}="m" [r] {\ca GV}="r" "l":"m"^-{\eta_{f_0}}:"r"^-{\ca G\ca H_V(f)}} \]
where on objects one has $\ca H_V(a,x,y) = f(a)(x,y)$. This is the generic factorisation of $f$ in the sense of \cite{Weber-Fam2fun}. The adjointness $(-)_{\bullet}^{\times 2} \ladj \ca G_1$ says that $f$ is uniquely determined by its object part $f_0 := (-)_0f$ and its hom data $\ca H_V(f)$. For the sake of computing colimits in $\ca GV$, as we will in section(\ref{ssec:properties-of-GV}), it is worth noting that one can reorganise the data of a lax triangle as on the left in
\[ \xygraph{ {\xybox{\xygraph{{A}="l" [r(2)] {B}="r" [dl] {\ca GV}="b" "l":"r"^-{k}:"b"^-{h}:@{<-}"l"^-{f} [d(.5)r(.85)] :@{=>}[r(.3)]^-{\phi}}}} [r(3)]
{\xybox{\xygraph{{A}="l" [r(2)] {B}="r" [dl] {\Set}="b" "l":"r"^-{k}:"b"^-{h_0}:@{<-}"l"^-{f_0} [d(.5)r(.85)] :@{=>}[r(.3)]^-{\phi_0}}}} [r(3)]
{\xybox{\xygraph{{f_{0 \bullet}^{\times 2}}="l" [r(2)] {h_{0 \bullet}^{\times 2}}="r" [dl] {V}="b" "l":"r"^-{\phi_{0 \bullet }^{\times 2}}:"b"^-{\ca H_V(h)}:@{<-}"l"^-{\ca H_V(f)} [d(.5)r(.85)] :@{=>}[r(.3)]^-{\ca H_{\phi}}}}}} \]
into $\ca GV$ in the same way. The middle triangle is just $(-)_0 \phi$. In the right hand triangle, $\phi_{0 \bullet}^{\times 2}$ is the evident functor with object map $(a,x,y) \mapsto (ka,\phi_a(x),\phi_a(y))$ which is determined by $\phi_0$. The natural transformation $\ca H_{\phi}$ has components given by the hom maps of the components of $\phi$, that is $(\ca H_{\phi})_{(a,x,y)}$ is the map $(\phi_a)_{x,y}:f(a)(x,y) \to hk(a)(\phi_a(x),\phi_a(y))$. It then follows easily from unpacking the data involved that
\begin{lemma}\label{lem:lax-triangles-into-GV}
Given $f:A \to \ca GV$, $k:A \to B$ and $h:B \to \ca GV$, the assignment $\phi \mapsto (\phi_0,\ca H_{\phi})$ is a bijection which is natural in $h$.
\end{lemma}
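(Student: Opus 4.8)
The plan is to obtain the bijection by simply unwinding the data on both sides into components, and to read off naturality in $h$ from the same unwinding. On the left, a natural transformation $\phi : f \Rightarrow hk$ is a family of morphisms $\phi_a : f(a) \to hk(a)$ of $\ca GV$, natural in $a \in A$; and by the very definition of $\ca GV$ each $\phi_a$ consists of a function $(\phi_a)_0 : f(a)_0 \to hk(a)_0$ together with, for every ordered pair $(x,y)$ of objects of $f(a)$, a morphism $(\phi_a)_{x,y} : f(a)(x,y) \to hk(a)((\phi_a)_0 x,(\phi_a)_0 y)$ of $V$. The assignment $a \mapsto (\phi_a)_0$ is exactly $\phi_0 = (-)_0\phi$. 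Since the object map of $\phi_{0\bullet}^{\times 2}$ is $(a,x,y) \mapsto (ka,(\phi_a)_0 x,(\phi_a)_0 y)$, the functor $\ca H_V(h)\phi_{0\bullet}^{\times 2}$ sends $(a,x,y)$ to $hk(a)((\phi_a)_0 x,(\phi_a)_0 y)$, so the family $(\phi_a)_{x,y}$, indexed by the objects $(a,x,y)$ of $f_{0\bullet}^{\times 2}$, is precisely a family of candidate components for $\ca H_\phi : \ca H_V(f) \Rightarrow \ca H_V(h)\phi_{0\bullet}^{\times 2}$.

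The heart of the matter is to check that the naturality conditions on the two sides match up with no slack. I would first observe that every morphism of $f_{0\bullet}^{\times 2}$ with source $(a,x,y)$ is a morphism $u : a \to a'$ of $A$ (its target being forced by $u$, $x$ and $y$); then, evaluating a naturality square of $\phi$ at such a $u$ and chasing the object set and the homs separately, one finds that it splits into exactly two assertions --- the naturality square of $\phi_0$ at $u$, and the naturality square of $\ca H_\phi$ at the corresponding morphism of $f_{0\bullet}^{\times 2}$ --- with nothing left over. Consequently $\phi$ yields a bona fide pair $(\phi_0,\ca H_\phi)$; conversely, a compatible pair $(\phi_0,\ca H_\phi)$ produces morphisms $\phi_a$ with underlying function $(\phi_0)_a$ and hom maps $(\ca H_\phi)_{(a,x,y)}$, and the naturality of $\phi_0$ together with that of $\ca H_\phi$ recombine into the naturality of $\phi$. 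The two passages only decompose and then recompose the same components, so they are mutually inverse, which gives the bijection.

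For naturality in $h$, a $2$-cell $\psi : h \Rightarrow h'$ sends $\phi$ on the left to the composite $(\psi k)\phi : f \Rightarrow h'k$, and on the right it sends $\phi_0$ to $(\psi_0 k)\phi_0$ and modifies $\ca H_\phi$ by pasting on $\ca H_\psi$ suitably whiskered by $\phi_{0\bullet}^{\times 2}$; one checks componentwise that $((\psi k)\phi)_0 = (\psi_0 k)\phi_0$ and that $\ca H_{(\psi k)\phi}$ is this pasting, so the bijection respects the action of $h$. I do not expect any genuine obstacle: the whole argument is bookkeeping, and the one point requiring care is confirming that a naturality square of $\phi$ carries no information beyond its object part and its hom part --- which is just the $2$-dimensional content of the adjointness $(-)_{\bullet}^{\times 2} \ladj \ca G_1$ that was already used to build the generic factorisation.
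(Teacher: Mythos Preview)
Your proposal is correct and follows essentially the same approach as the paper: the paper simply asserts that the result ``follows easily from unpacking the data involved,'' and your argument is precisely that unpacking carried out in detail. You have been more explicit than the paper about the correspondence between the naturality of $\phi$ and the joint naturality of $(\phi_0,\ca H_\phi)$, but there is no methodological difference.
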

Suppose that one has an object $0$ in a category $A$, and $f$ is the representable $f=A(0,-)$. Then $f_{\bullet}^{\times 2}$ may be regarded as the category of endo-cospans of the object $0$, that is to say the category of diagrams
\[ 0 \rightarrow a \leftarrow 0 \]
and a point of $a \in A$ is now just a map $0{\rightarrow}a$. When $A$ is also cocomplete one can compute a left adjoint to $\eta_f$. To do this note that a graph $X$ enriched in $f_{\bullet}^{\times 2}$ gives rise to a functor
\[ \overline{X} : X^{(2)}_0 \rightarrow A \]
where $X_0$ is the set of objects of $X$. For any set $Z$, $Z^{(2)}$ is defined as the following category. It has two kinds of objects: an object being either an element of $Z$, or an ordered pair of elements of $Z$. There are two kinds of non-identity maps
\[ x \rightarrow (x,y) \leftarrow y \]
where $(x,y)$ is an ordered pair from $Z$, and $Z^{(2)}$ is free on the graph just described. A more conceptual way to see this category is as the category of elements of the graph
\[ \xygraph{{Z{\times}Z}="l" [r] {Z}="r" "l":@<1ex>"r" "l":@<-1ex>"r"} \]
where the source and target maps are the product projections, as a presheaf on the category
\[ \xygraph{{\G_{\leq{1}}} [r(.75)] {=} [r(1.25)] {\xybox{\xygraph{0 [r] 1 "0":@<1ex>"1":@<1ex>@{<-}"0"}}} *\frm{-}}  \]
and so there is a discrete fibration $Z^{(2)}{\rightarrow}\G_{{\leq}1}$. The functor $\overline{X}$ sends singletons to $0 \in A$, and a pair $(x,y)$ to the head of the hom $X(x,y)$. The arrow map of $\overline{X}$ encodes the bipointings of the homs. One may then easily verify
\begin{proposition}\label{prop:unit-G1}
Let $0 \in A$, $f=A(0,-)$ and $A$ be cocomplete. Then $\eta_f$ has left adjoint given on objects by $X \mapsto \colim(\overline{X})$.
\end{proposition}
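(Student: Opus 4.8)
The plan is to verify directly that the object assignment $X \mapsto \colim(\overline{X})$ underlies a left adjoint to $\eta_f$, by producing for each graph $X$ enriched in $f_{\bullet}^{\times 2}$ and each $a \in A$ a bijection
\[ \ca G(f_{\bullet}^{\times 2})(X, \eta_f a) \;\iso\; A(\colim(\overline{X}), a) \]
natural in $a$; naturality in $X$ then comes for free, since a functor admits a left adjoint as soon as each of the functors $a \mapsto \ca D(X, Fa)$ is representable, the representing objects assembling automatically into the left adjoint. By the universal property of the colimit the right-hand side is the set of cocones from $\overline{X} : X^{(2)}_0 \to A$ with apex $a$, so it suffices to identify such cocones with morphisms $X \to \eta_f a$ in $\ca G(f_{\bullet}^{\times 2})$.

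I would then unpack both sides into the same data. Write $X(x,y) = (b_{x,y}, p_{x,y}, q_{x,y})$ for the hom of $X$ from $x$ to $y$, so that $b_{x,y} \in A$ is the head and $p_{x,y},q_{x,y} : 0 \to b_{x,y}$ are the bipointings (recall $f = A(0,-)$); by definition $\overline{X}$ sends a singleton $x$ to $0$, a pair $(x,y)$ to $b_{x,y}$, and the two generating edges $x \to (x,y) \leftarrow y$ to $p_{x,y}$ and $q_{x,y}$. Since $X^{(2)}_0$ is free on the graph with these edges, a cocone $\overline{X} \Rightarrow \Delta a$ amounts precisely to a family $\lambda_x : 0 \to a$ ($x \in X_0$) together with a family $\mu_{x,y} : b_{x,y} \to a$ (one per ordered pair), subject only to the two equations coming from the generators, namely $\mu_{x,y}p_{x,y} = \lambda_x$ and $\mu_{x,y}q_{x,y} = \lambda_y$. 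On the other side, a morphism $X \to \eta_f a$ consists of a function $X_0 \to (\eta_f a)_0 = fa = A(0,a)$, i.e.\ a family $\lambda_x : 0 \to a$, together with, for each $(x,y)$, a morphism $(b_{x,y}, p_{x,y}, q_{x,y}) \to \eta_f a(\lambda_x,\lambda_y)$ in $f_{\bullet}^{\times 2}$; and since $\eta_f a(\lambda_x,\lambda_y)$ is the bipointed object $(a, \lambda_x, \lambda_y)$ and a morphism in $f_{\bullet}^{\times 2}$ is a map of heads compatible with bipointings, this last datum is exactly a map $\mu_{x,y} : b_{x,y} \to a$ with $\mu_{x,y}p_{x,y} = \lambda_x$ and $\mu_{x,y}q_{x,y} = \lambda_y$. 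The two descriptions coincide verbatim, yielding the bijection, which is manifestly natural in $a$ (postcompose everything with a map $a \to a'$).

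There is no substantive obstacle here: the argument is bookkeeping against the definitions of $\ca G$, of $f_{\bullet}^{\times 2}$, of the unit $\eta_f$, and of $\overline{X}$. The one point worth stating with a little care is why a cocone on $\overline{X}$ is pinned down by its components at singletons and at pairs with only the two equations per pair — this is exactly the description of $X^{(2)}_0$ as the free category on the indicated graph (equivalently, as the category of elements of the graph $X_0 \times X_0 \rightrightarrows X_0$) recorded just before the proposition, together with the fact that $\overline{X}$ is specified on the generating edges by the bipointings of the homs of $X$.
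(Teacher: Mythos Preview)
Your proof is correct and is precisely the direct verification the paper alludes to with ``One may then easily verify'' but does not spell out. The identification of morphisms $X \to \eta_f a$ with cocones on $\overline{X}$ is exactly right, and your care in noting that $X_0^{(2)}$ is free on the described graph (so that cocone naturality reduces to the two equations per pair) addresses the only point that needs saying.
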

There is a close connection between $\ca G$ and the $\Fam$ construction. A very mild reformulation of the notion of $V$-graph is the following: a $V$-graph $X$ consists of a set $X_0$ together with an $(X_0{\times}X_0)$-indexed family of objects of $V$. Together with the analogous reformulation of the maps of $\ca GV$, this means that we have a pullback square
\[ \PbSq {\ca GV} {\Fam{V}} {\Set} {\Set} {(-)_0=\ca Gt_V} {} {\Fam(t_V)} {(-)^2} \]
in $\CAT$, and thus a cartesian 2-natural transformation $\ca G \to \Fam$. From \cite{Weber-Fam2fun} theorem(7.4) we conclude
\begin{proposition}\label{prop:GFam2fun}
$\ca G$ is a familial 2-functor.
\end{proposition}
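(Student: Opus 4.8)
The plan is to read this off from theorem(7.4) of \cite{Fam2fun}, feeding it the cartesian 2-natural transformation $\ca G \to \Fam$ that has just been produced in the material before the statement. That theorem identifies the familial 2-functors on $\CAT$; in particular it exhibits $\Fam$ as familial and shows that a 2-functor carrying a cartesian 2-natural transformation into a familial 2-functor is again familial. So once the cartesian transformation $\ca G \to \Fam$ is in hand, there is essentially nothing left to prove; the substance has already been arranged.

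Concretely I would proceed in three steps. First, I record the pullback square displayed above: reading a $V$-graph $X$ as a set $X_0$ equipped with an $(X_0{\times}X_0)$-indexed family of objects of $V$ exhibits $\ca GV$ as the pullback of $\Fam(t_V):\Fam V \to \Fam 1 \iso \Set$ along the squaring functor $(-)^2:\Set \to \Set$, and the analogous reading of morphisms makes this natural in $V$. Second, I note that the induced 2-natural transformation $\ca G \to \Fam$ is cartesian: its naturality square at a functor $F:V\to W$ is obtained from the single-variable pullback squares at $V$ and $W$ together with the identity $t_W\comp F = t_V$, hence is a pullback by routine manipulation of pasted pullback squares --- this is exactly the ``thus'' in the remark preceding the statement. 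Third, I check whatever side hypotheses theorem(7.4) imposes; the relevant structural facts are all already on the table, namely that $\ca G 1 \iso \Set$ so that $\ca G$ factors as $\ca G_1:\CAT \to \CAT/\Set$ followed by the forgetful 2-functor, with $\ca G_1$ a right adjoint (its left adjoint being $(-)_{\bullet}^{\times 2}$); that the base 2-functor $(-)^2:\Set\to\Set$ is a parametric right adjoint, indeed an honest right adjoint since it preserves all limits; and that every $f:A\to\ca GV$ has the generic factorisation $f = \ca G\ca H_V(f)\comp\eta_{f_0}$ discussed above. Combining these via theorem(7.4) gives the conclusion.

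I do not expect a genuine obstacle: this is an application of the machinery of \cite{Fam2fun}, and the only geometric input --- the pullback square, hence the cartesian transformation --- has already been assembled. The one point calling for a little care is bookkeeping: one must match the precise formulation of theorem(7.4) --- whether it is phrased as closure of familial 2-functors under cartesian 2-natural transformations, or as a recognition criterion via base change along a parametric-right-adjoint 2-functor at the level of $\Set$ --- against the pullback square in hand. Under either reading the verification is immediate.
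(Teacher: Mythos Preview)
Your approach is essentially the same as the paper's: establish the cartesian 2-natural transformation $\ca G \to \Fam$ via the pullback square and then invoke theorem(7.4) of \cite{Fam2fun}. The paper's proof is in fact a one-line citation of that theorem, so your more detailed unpacking of the side conditions is if anything more careful than what the paper records.
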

In particular notice that for all $V$, the functor $(-)_0:\ca GV \to \Set$ has the structure of a split fibration. The cartesian morphisms are exactly those which are \emph{fully faithful}, which are those morphisms of $V$-graphs whose hom maps are isomorphisms. The vertical-cartesian factorisation of a given $f:X \to Y$ corresponds to its factorisation as an identity on objects map followed by a fully-faithful map. Moreover it follows from the theory of \cite{Weber-Fam2fun} that $\ca G$ preserves conical connected limits as well as all the notions of ``Grothendieck fibration'' which one can define internal to a finitely complete 2-category, and that the obstruction maps for comma objects are right adjoints. In addition to this we have
\begin{lemma}\label{lem:G-EM-object}
$\ca G$ preserves Eilenberg-Moore objects.
\end{lemma}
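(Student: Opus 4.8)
The plan is to compute both sides explicitly and exhibit a natural isomorphism. Let $T=(T,\mu,\eta)$ be a monad on a category $V$; in $\CAT$ its Eilenberg--Moore object is the usual category $V^T$ of $T$-algebras, equipped with the forgetful functor $U^T$ and the universal action 2-cell. Since $\ca G$ is a 2-functor it sends this monad to a monad $\ca GT=(\ca GT,\ca G\mu,\ca G\eta)$ on $\ca GV$, and what must be shown is that $\ca G(V^T)$, together with the cone obtained by applying $\ca G$ to the defining cone of $V^T$, is the Eilenberg--Moore object $(\ca GV)^{\ca GT}$.

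First I would record the elementary bookkeeping: $\ca G$ fixes object sets, so $(\ca GTX)_0=X_0$ and $(\ca GTX)(a,b)=T(X(a,b))$, and the components of $\ca G\mu$ and $\ca G\eta$ at $X$ are identities on objects with hom-maps $\mu_{X(a,b)}$ and $\eta_{X(a,b)}$ respectively. The substantive step is the analysis of $\ca GT$-algebras. Given a structure map $x\colon\ca GTX\to X$, the unit law $x\comp(\ca G\eta)_X=\id_X$ forces $x_0=\id_{X_0}$, because $(\ca G\eta)_X$ is the identity on objects; hence $x$ is itself the identity on objects, and so amounts to a family of maps $x_{a,b}\colon T(X(a,b))\to X(a,b)$ indexed by ordered pairs $(a,b)$ of objects of $X$. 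The associativity and unit axioms for $x$ are then equivalent to their hom-by-hom versions --- which say exactly that each $(X(a,b),x_{a,b})$ is a $T$-algebra. Thus an $\ca GT$-algebra is precisely a $V$-graph whose homs carry $T$-algebra structure, i.e. an object of $\ca G(V^T)$; the same bookkeeping identifies $\ca GT$-algebra morphisms with the morphisms of $\ca G(V^T)$, and the forgetful functor of $(\ca GV)^{\ca GT}$ with $\ca G(U^T)$.

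To promote this to the full two-dimensional universal property, I would run the argument componentwise over an arbitrary test category $W$. A $\ca GT$-algebra in $\CAT$ on a functor $F\colon W\to\ca GV$ is a 2-cell $\ca GT\comp F\implies F$ satisfying the algebra axioms; applying the description above to each object and each arrow of $W$, such a 2-cell is the same datum as a factorisation of $F$ through $\ca G(U^T)$, i.e. a functor $W\to\ca G(V^T)$ over $\ca GV$, and this correspondence is compatible with 2-cells between such functors and natural in $W$. Combined with the universal property of $V^T$ in $\CAT$ this yields an isomorphism $\CAT(W,\ca G(V^T))\iso\ca GT\textnormal{-Alg}(W,\ca GV)$, 2-natural in $W$, which is the statement that $\ca G$ preserves the Eilenberg--Moore object.

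The main point is the middle step: the observation that an $\ca GT$-algebra structure is \emph{forced} to be the identity on objects and therefore splits hom-by-hom into a family of $T$-algebras. Once one sees that the unit of $\ca GT$ is the identity on objects this is a short unwinding of the definitions, and everything else is routine. (Alternatively, one could try to deduce the lemma formally from proposition(\ref{prop:GFam2fun}), using that $\ca G$ is familial and that the monad $\ca GT$ is ``vertical'' over $\Set$ so that its Eilenberg--Moore object may be computed fibrewise; but the direct verification above is the shortest route.)
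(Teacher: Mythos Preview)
Your proof is correct and takes exactly the same route as the paper: both reduce the statement to the observation that a $\ca GT$-algebra structure on a $V$-graph $X$ is forced (by the unit law and the fact that $\ca G\eta$ is the identity on objects) to be the identity on objects, and hence amounts precisely to a $T$-algebra structure on each hom, with the analogous statement for morphisms. The paper's ``proof'' is in fact only a one-sentence sketch recording this key observation, so your version is simply a more careful write-up of the same argument; your additional paragraph verifying the full 2-dimensional universal property is a routine addendum (and indeed redundant once one knows Eilenberg--Moore objects exist in $\CAT$ and the comparison functor is an isomorphism).
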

\noindent Given a monad $T$ on a category $V$, we shall write $V^T$ for the category of $T$-algebras and morphisms thereof, and $U^T:V^T{\rightarrow}V$ for the forgetful functor. We shall denote a typical object of $V^T$ as a pair $(X,x)$, where $X$ is the underlying object in $V$ and $x:TX{\rightarrow}X$ is the $T$-algebra structure. From \cite{Street-FTM} the 2-cell $TU^T \to U^T$, whose component at $(X,x)$ is $x$ itself has a universal property exhibiting $V^T$ as a kind of 2-categorical limit called an \emph{Eilenberg-Moore object}. See \cite{Street-FTM} or \cite{LackStreet-FTM2} for more details on this general notion. The direct proof that for any monad $T$ on a category $V$, the obstruction map $\ca G(V^T){\rightarrow}\ca G(V)^{\ca G(T)}$ is an isomorphism comes down to the obvious fact that for any $V$-graph $B$, a $\ca GT$-algebra structure on $B$ is the same thing as a $T$-algebra structure on the homs of $B$, and similarly for algebra morphisms.

Returning to the consideration of $\ca G_1$, our final observation for this section is
\begin{proposition}\label{prop:G1-locally-ff}
$\ca G_1 : \CAT \to \CAT/\Set$ is locally fully faithful.
\end{proposition}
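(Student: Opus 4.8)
\emph{The plan.} The plan is to verify the definition by hand. Unwinding it, $\ca G_1$ is locally fully faithful precisely when, for all categories $V$ and $W$, the functor
\[ \CAT(V,W)\longrightarrow(\CAT/\Set)(\ca G_1 V,\ca G_1 W),\qquad E\mapsto\ca GE,\quad\theta\mapsto\ca G\theta, \]
is fully faithful. Recall that a $2$-cell in $\CAT/\Set$ from $\ca GE$ to $\ca GE'$ is a natural transformation $\Phi\colon\ca GE\Rightarrow\ca GE'$ lying over the identity of $\Set$, so that each component $\Phi_X\colon\ca GE(X)\to\ca GE'(X)$ is the identity on objects and is therefore determined by its hom maps $(\Phi_X)_{a,b}\colon E(X(a,b))\to E'(X(a,b))$. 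I would use two elementary devices: for $c\in V$, the one-object $V$-graph $c_\circ$ whose single hom is $c$, which is functorial in $c$ and satisfies $\ca GE\circ(-)_\circ=(-)_\circ\circ E$; and, for a $V$-graph $X$ and a subset $S\subseteq X_0$, the full sub-$V$-graph $X|_S$, whose inclusion into $X$ is the identity on every hom it touches.

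\emph{Faithfulness and fullness.} Faithfulness is immediate: if $\ca G\theta=\ca G\theta'$, then evaluating at $c_\circ$ and reading off the unique hom map yields $\theta_c=\theta'_c$ for every $c\in V$. For fullness, given $\Phi\colon\ca GE\Rightarrow\ca GE'$ over $\Set$, I would \emph{define} $\theta_c\colon Ec\to E'c$ to be the hom map of $\Phi_{c_\circ}$; applying naturality of $\Phi$ to the graph map $f_\circ\colon c_\circ\to c'_\circ$ induced by an arrow $f\colon c\to c'$ shows that $\theta$ is natural. It then remains to check $\ca G\theta=\Phi$, i.e.\ that $(\Phi_X)_{a,b}=\theta_{X(a,b)}$ for every $V$-graph $X$ and pair $(a,b)$. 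Naturality of $\Phi$ at the inclusion $X|_{\{a,b\}}\hookrightarrow X$ reduces this to $V$-graphs with at most two objects. If $a=b$ then $X|_{\{a\}}=(X(a,a))_\circ$ and the claim holds by the definition of $\theta$. If $a\neq b$, put $c=X(a,b)$ and interpolate the two-object $V$-graph $(c)$ carrying $c$ in its $(0,1)$-hom and initial objects elsewhere (the sequence notation of Section~\ref{ssec:enriched-graphs}): there is an including graph map $(c)\to X|_{\{a,b\}}$ and a collapsing graph map $(c)\to c_\circ$, each the identity on the $(0,1)$-hom, and evaluating the two naturality squares of $\Phi$ on that hom gives $(\Phi_{X|_{\{a,b\}}})_{a,b}=(\Phi_{(c)})_{0,1}=\theta_c$.

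\emph{The main obstacle.} All of this except the last step is a mechanical unpacking of definitions; the place that needs thought is the case of a non-loop hom $X(a,b)$ with $a\neq b$. A one-object $V$-graph maps only into --- and receives maps only out of --- the \emph{loop} homs of another $V$-graph, so $(\Phi_X)_{a,b}$ cannot be read off $c_\circ$ directly; one is forced to route the comparison through a genuine two-object $V$-graph in which the hom-object $X(a,b)$ occupies a single slot, and it is functoriality of $\Phi$ against the including and collapsing maps of such a graph that pins $(\Phi_X)_{a,b}$ down to $\theta_c$. A cleaner alternative --- the one I would probably write up --- is to invoke the $2$-adjunction $(-)_{\bullet}^{\times 2}\ladj\ca G_1$: under transposition the displayed local functor becomes precomposition with the counit $\varepsilon_V\colon(\ca Gt_V)_{\bullet}^{\times 2}\to V$, and one concludes because $\varepsilon_V$ is surjective on objects (so precomposition with it is faithful) and admits the section $L_V$ of Section~\ref{ssec:enriched-graphs} as a fully faithful left adjoint with $\varepsilon_V L_V\iso 1_V$ (so precomposition with it is full).
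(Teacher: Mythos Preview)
Your proof is correct, but it takes a longer route than the paper's. The paper makes the same choice you eventually arrive at---the two-object graph $(Z)$---from the very beginning: it \emph{defines} $\phi'_Z := \phi_{(Z),0,1}$, and then the single canonical map $c:(X(a,b))\to X$ (with object map $(0,1)\mapsto(a,b)$ and identity hom map) gives $(\phi_X)_{a,b}=\phi'_{X(a,b)}$ in one stroke, uniformly in $a,b$. By basing the definition on the one-object graph $c_\circ$ instead, you create the loop/non-loop case split yourself; your ``main obstacle'' is an artefact of that choice, and its resolution is precisely to pass through $(c)$ and its map into $X$, which is what the paper does directly. So the approaches are essentially the same once the dust settles, but the paper's is shorter and avoids the detour through $X|_{\{a,b\}}$ and the collapsing map $(c)\to c_\circ$.

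Your alternative via the $2$-adjunction $(-)^{\times 2}_\bullet\dashv\ca G_1$ is a genuinely different and pleasant argument that the paper does not give: transposing identifies the local hom functor with precomposition by $\varepsilon_V$, and since $L_V$ is a fully faithful left adjoint with $\varepsilon_V L_V=1_V$, precomposition by $\varepsilon_V$ is fully faithful. Note, however, that this line (like the paper's proof and your first argument) uses the left adjoint $L_V$, hence an initial object in $V$, so it is not more general---just more conceptual.
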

\begin{proof}
Given functors $F,G : V \to W$, the data of a natural transformation $\phi:\ca GF \to \ca GG$ over $\Set$ amounts to giving for each $X \in \ca GV$ and $a,b \in X_0$, maps $\phi_{X,a,b} : FX(a,b) \to GX(a,b)$, such that for $f:X \to Y$ one has the naturality condition for $f$ between $a$ and $b$:
\[ \xygraph{!{0;(2.5,0):(0,.4)::} {FX(a,b)}="tl" [r] {GX(a,b)}="tr" [d] {GY(fa,fb)}="br" [l] {FY(fa,fb)}="bl" "tl":"tr"^-{\phi_{X,a,b}}:"br"^-{Gf_{a,b}}:@{<-}"bl"^-{\phi_{Y,fa,fb}}:@{<-}"tl"^-{Ff_{a,b}} "tl":@{}"br"|-{=}} \]
So we define $\phi':F \to G$ by $\phi'_Z = \phi_{(Z),0,1}$. One has $c:(X(a,b)) \to X$ in $\ca GV$ with object map $(0,1) \mapsto (a,b)$ and hom map $c_{0,1}$ the identity. The naturality condition for $c$ between $0$ and $1$ yields
$\phi_{(X(a,b)),0,1} = \phi_{X,a,b}$
from which it follows that $\phi = \ca G\phi'$. Conversely
$(\ca G\phi)'_Z = (\ca G\phi)_{(Z),0,1} = \phi_Z$
and so $\phi \mapsto \phi'$ is the inverse of
\[ \begin{array}{lccr} {\CAT(V,W)(F,G) \to \CAT/\Set(\ca GV,\ca GW)(\ca GF,\ca GG)} &&& {\psi \mapsto \ca G\psi.} \end{array} \qedhere \]
\end{proof}

\subsection{Properties of $\ca GV$}
\label{ssec:properties-of-GV}
This section contains a variety of results from which it is clear that as a category, $\ca GV$ is at least as good as $V$. To begin with, any limit that $V$ possesses is also possessed by $\ca GV$.
\begin{proposition}\label{prop:limits-in-GV}
Let $I$ be a small category. If $V$ admits limits of functors out of $I$, then so does $\ca GV$ and these are preserved by $(-)_0$.
\end{proposition}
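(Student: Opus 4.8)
The plan is to construct limits in $\ca GV$ pointwise, using the fact established in lemma(\ref{lem:lax-triangles-into-GV}) that a functor into $\ca GV$ is the same data as its object part together with its hom data. Given a functor $D : I \to \ca GV$, first form the limit in $\Set$ of the composite $(-)_0 D : I \to \Set$; call it $L_0$ with limit cone $(\pi_i : L_0 \to D(i)_0)_{i \in I}$. This will be the object set of the candidate limit $L$. For a pair $(a,b) \in L_0 \times L_0$, each $i \in I$ gives via $\pi_i$ a pair $(\pi_i a, \pi_i b)$ of objects of $D(i)$, and for a morphism $u : i \to j$ in $I$ the map $D(u)$ sends $(\pi_i a, \pi_i b)$ to $(\pi_j a, \pi_j b)$ since $\pi$ is a cone; hence $i \mapsto D(i)(\pi_i a, \pi_i b)$ is (the object part of) a functor $I \to V$, and I define $L(a,b)$ to be its limit in $V$, which exists by hypothesis.

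Next I would check that this $L$, equipped with the evident projections $p_i : L \to D(i)$ (object part $\pi_i$, hom map at $(a,b)$ the limiting projection $L(a,b) \to D(i)(\pi_i a, \pi_i b)$), is a cone and is universal. For the cone condition one verifies $D(u) \comp p_i = p_j$ for $u : i \to j$ both on objects (immediate from $L_0$ being a limit) and on homs (immediate from $L(a,b)$ being a limit of the corresponding diagram in $V$). For universality, given any cone $(q_i : X \to D(i))_{i \in I}$ in $\ca GV$, the object parts induce a unique $h_0 : X_0 \to L_0$ in $\Set$; and for each $(x,y) \in X_0 \times X_0$ the hom maps $(q_i)_{x,y} : X(x,y) \to D(i)(q_{i,0} x, q_{i,0} y) = D(i)(\pi_i h_0 x, \pi_i h_0 y)$ form a cone over the diagram whose limit is $L(h_0 x, h_0 y)$, inducing a unique map $X(x,y) \to L(h_0 x, h_0 y)$. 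Packaging these as a morphism $h : X \to L$ via lemma(\ref{lem:lax-triangles-into-GV}) gives the unique factorisation. That $(-)_0$ preserves the limit is built into the construction, since $L_0$ was defined to be $\lim (-)_0 D$.

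I do not expect a serious obstacle here; the one point requiring a little care is bookkeeping the naturality/compatibility of the $I$-indexed family $i \mapsto D(i)(\pi_i a, \pi_i b)$ — i.e. confirming it really is a functor and that the various induced maps are compatible — which is exactly the content that lemma(\ref{lem:lax-triangles-into-GV}) is designed to make routine. Indeed, a slicker phrasing of the whole argument is available: lemma(\ref{lem:lax-triangles-into-GV}) says $\ca GV(X, -) \comp D$ is determined by $(X_0, -) \comp (-)_0 D$ in $\Set$ and a $V$-valued hom diagram, so computing $\lim_I \ca GV(X, D(i))$ reduces to a limit in $\Set$ and a limit in $V$; the only thing to observe is that these assemble into $\ca GV(X, L)$ for the $L$ constructed above, which is precisely the naturality in $h$ asserted in that lemma. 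Either way, the proof is a direct pointwise construction with no hard step.
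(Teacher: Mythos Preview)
Your proposal is correct and is essentially the same direct pointwise construction as the paper's proof: form $L_0$ as the limit in $\Set$ of the object-set diagram, then for each pair $(a,b)\in L_0$ take the limit in $V$ of the induced hom diagram $i\mapsto D(i)(\pi_i a,\pi_i b)$, and verify the resulting cone is universal. The only cosmetic difference is that you invoke lemma(\ref{lem:lax-triangles-into-GV}) to package the bookkeeping, whereas the paper simply carries out the verification by hand; neither approach involves any additional idea.
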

\begin{proof}
Let $F:I \to \ca GV$ be a functor. We construct its limit $L$ directly as follows. First we take the set $L_0$ to be the limit of the functor $F(-)_0$, writing $\lambda_{i,0}:L_0 \to F(i)_0$ for a typical component of the limit cone. Without loss of generality one can represent the elements of $L_0$ explicitly as matching families of elements of the $F(i)_0$. That is, any such element is a family
\[ x := (x_i \in F(i)_0 \,\, : \,\, i \in I) \]
such that for all $f:i \to j$ in $I$, one has $Ff(x_i)=x_j$. Given an ordered pair $(x,y)$ of such families, one has a functor
\[ \begin{array}{lccr} {F_{x,y}:I \to V} &&& {i \mapsto Fi(x_i,y_i)} \end{array} \]
with indicated object map. One then defines the hom $L(x,y)$ to be the limit in $V$ of $F_{x,y}$, and we write $\lambda_{i,x,y}:Fi(x_i,y_i) \to L(x,y)$ for the components of the limit cone. These provide the hom maps, and $\lambda_{i,0}$ the object functions, of morphisms $\lambda_i:L \to Fi$. It is easily verified that these exhibit $L$ as a limit of $F$.
\end{proof}
\noindent In particular from the explicit construction of limits just described, it is clear that $\ca GV$ possesses some pullbacks under no conditions on $V$.
\begin{corollary}\label{cor:pbs-in-GV-along-ff}
For any category $V$, $\ca GV$ admits all pullbacks along fully faithful maps, and these are preserved by $(-)_0$. Moreover the pullback of a fully faithful map is itself fully faithful.
\end{corollary}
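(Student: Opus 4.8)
The plan is to derive everything from Proposition~\ref{prop:limits-in-GV} by noticing that the explicit limit construction given there only uses limits in $V$ that are actually available. Concretely, given a cospan $X \to Z \from Y$ in $\ca GV$ in which (say) $Y \to Z$ is fully faithful, the pullback $L$ we want is the limit of the corresponding functor out of the cospan category $I = (\bullet \to \bullet \from \bullet)$. Following the recipe in the proof of Proposition~\ref{prop:limits-in-GV}, the object set $L_0$ is just the pullback $X_0 \times_{Z_0} Y_0$ in $\Set$, which always exists. For a matching pair of elements $(x,y)$ over a common element $z \in Z_0$, the hom $L(x,y)$ should be the limit in $V$ of the diagram $X(x_X, y_X) \to Z(z,z) \from Y(x_Y, y_Y)$, i.e.\ the pullback of the hom maps. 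The key point is that since $Y \to Z$ is fully faithful, the map $Y(x_Y,y_Y) \to Z(z,z)$ is an \emph{isomorphism}, so this pullback exists in $V$ with no hypotheses whatsoever: it is $X(x_X,y_X)$ itself, with the comparison map to $Y(x_Y,y_Y)$ obtained by composing with the inverse of the hom-isomorphism. Thus the construction of Proposition~\ref{prop:limits-in-GV} goes through verbatim for this particular $I$ and this particular diagram.

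First I would set up notation for the cospan and recall that pullbacks along $\emptyset \to \bullet \from \bullet$-shaped diagrams are a special case of the general limit construction, observing explicitly which limits in $V$ are invoked: only the pullback of a pair of maps one of which is invertible, which is absolute. Then I would invoke the naturality statement in Proposition~\ref{prop:limits-in-GV} that $(-)_0$ preserves these limits — this is immediate from the construction since $L_0$ was \emph{defined} as the limit in $\Set$. Finally, to see that the pulled-back leg $L \to X$ is again fully faithful, I would just read off its hom maps from the construction: the hom map $L(x,y) \to X(x_X,y_X)$ is, by the choice above, literally the identity (or an isomorphism), hence $L \to X$ is fully faithful. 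Alternatively, and perhaps more cleanly for the writeup, I would cite Corollary~\ref{thm:extensivity-characterisation}-style reasoning: fully faithful maps are the cartesian morphisms for the fibration $(-)_0$ (as noted after Proposition~\ref{prop:GFam2fun}), and cartesian morphisms are always stable under pullback along \emph{any} morphism in the total category of a fibration — but since here we only need pullback along an arbitrary map of the \emph{already-cartesian} map, the elementary hom-map argument is more self-contained.

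I do not expect any serious obstacle: the whole content is the trivial observation that pulling back along an isomorphism requires no completeness assumptions on $V$. The only thing to be careful about is bookkeeping — matching which element of $Z_0$ a compatible pair $(x,y)$ lies over, and checking that the induced hom maps genuinely assemble into a morphism of $V$-graphs and that the resulting square has the correct universal property. All of this is routine unwinding of the construction already carried out in Proposition~\ref{prop:limits-in-GV}, so I would present it briefly, emphasising the fully-faithfulness-is-absolute point and leaving the verifications to the reader.

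\begin{proof}
Consider a cospan $X \xrightarrow{g} Z \xleftarrow{h} Y$ in $\ca GV$ with $h$ fully faithful, so each hom map $h_{u,v}:Y(u,v) \to Z(hu,hv)$ is an isomorphism. We apply the construction in the proof of proposition(\ref{prop:limits-in-GV}) for $I$ the cospan category. The set $L_0$ is the pullback $X_0 \times_{Z_0} Y_0$ in $\Set$, which exists; write its elements as pairs $(x_0,y_0)$ with $g_0 x_0 = h_0 y_0 =: z_0$. For a pair of elements $(x_0,y_0)$ and $(x_1,y_1)$ of $L_0$, the relevant diagram in $V$ is
\[ \xygraph{!{0;(2,0):(0,.5)::} {X(x_0,x_1)}="tl" [r] {Z(z_0,z_1)}="tr" [d] {} [l] {Y(y_0,y_1)}="bl" "tl":"tr"^-{g_{x_0,x_1}} "bl":"tr"_-{h_{y_0,y_1}}} \]
whose limit we must form. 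Since $h_{y_0,y_1}$ is an isomorphism, this pullback exists with no hypothesis on $V$: take $L((x_0,y_0),(x_1,y_1)) := X(x_0,x_1)$, with projection to $X(x_0,x_1)$ the identity and projection to $Y(y_0,y_1)$ the composite $h_{y_0,y_1}^{-1} g_{x_0,x_1}$. These data assemble, exactly as in proposition(\ref{prop:limits-in-GV}), into maps $L \to X$ and $L \to Y$ exhibiting $L$ as the pullback of $g$ along $h$, and by construction $L_0$ is the pullback of $g_0$ along $h_0$ in $\Set$, so $(-)_0$ preserves it. Finally the hom maps of $L \to X$ are identities, so $L \to X$ is fully faithful.
\end{proof}
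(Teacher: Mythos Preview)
Your proof is correct and follows essentially the same approach as the paper: both invoke the explicit construction of proposition(\ref{prop:limits-in-GV}), observe that the only limits required in $V$ are pullbacks along isomorphisms (which exist in any category), and read off the fully-faithfulness of the pulled-back leg directly from that construction. Your write-up is more detailed, but the idea is identical.
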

\begin{proof}
In this case the construction of proposition(\ref{prop:limits-in-GV}) goes through because the pullbacks in $V$ that arise in the construction are all along an isomorphism, and such clearly exist in any $V$. The last statement follows from this explicit construction since isomorphisms in any $V$ are pullback stable.
\end{proof}
By lemma(\ref{lem:lax-triangles-into-GV}) one can compute the left kan extension of $F:I \to \ca GV$, along any functor $G:I \to J$ between small categories, in the following way. First compute the left extension $K_0:J \to \Set$ of $F_0$ along $G$ denoting the universal 2-cell as $\kappa_0:F_0 \to K_0G$. Then given sufficient colimits in $V$, compute the left extension $\ca H_V(K):K_{0 \bullet}^{\times 2} \to V$ of $\ca H_V(F)$ along $\kappa_{0 \bullet}^{\times 2}$, denoting the universal 2-cell as $\ca H_{\kappa}:\ca H_V(F) \to \ca H_V(K)$. Thus we have the object part $K_0$ and hom data $\ca H_V(K)$ of a functor $K:J \to \ca GV$. The natural transformation $\kappa:F \to KG$ corresponding to $(\kappa_0,\ca H_{\kappa})$ by lemma(\ref{lem:lax-triangles-into-GV}) clearly exhibits $K$ as the left extension of $F$ along $G$, by a straight forward application of lemma(\ref{lem:lax-triangles-into-GV}) and the definition of ``left kan extension''.

When $J=1$ note that $K_{0 \bullet}^{\times 2}$ is just the discrete category $K_0 \times K_0$, and so for $x,y \in K_0$ one may compute $\ca H_{V}(K)(x,y)$ as the colimit of $\ca H_V(F)$ restricted to the fibre of $\kappa_{0 \bullet}^{\times 2}$ over $(x,y)$.
\begin{proposition}\label{prop:colimits-GV}
\begin{enumerate}
\item For any category $V$, $\ca GV$ has a strict initial object.\label{pcase:initial-GV}
\item If $V$ has an initial object, then $\ca GV$ has coproducts and pullbacks along coproduct inclusions.\label{pcase:coproducts-GV}
\item If $V$ has a strict initial object, then every $X \in \ca GV$ decomposes as a coproduct of connected objects.
\label{pcase:GV-locally-connected}
\item If $\lambda$ is a regular cardinal and $V$ has $\lambda$-filtered colimits, then so does $\ca GV$.\label{pcase:filtered-colims-GV}
\item If $V$ has all small colimits, then so does $\ca GV$.\label{pcase:all-colimits-GV}
\end{enumerate}
In each case the colimits in $\ca GV$ under discussion are preserved by $(-)_0$.
\end{proposition}
\begin{remark}\label{rem:explain-loc-conn-GV}
In appendix(\ref{sec:lcp}) we recall some of the general theory of connected objects. An alternative formulation of (\ref{pcase:GV-locally-connected}) is that $V$ having a strict initial object implies that $\ca GV$ is locally connected in the sense of definition(\ref{def:locally-connected-category}), which by lemma(\ref{lem:easy-ext}), implies that $\ca GV$ is extensive. If moreover one has finite limits in $V$, and thus also in $\ca GV$, then by proposition(\ref{prop:lext-decompose-charn}), this coproduct decomposition into connected objects is essentially unique, and the assignation $X \mapsto \pi_0(X)$ is the object map of a left adjoint to the functor $(-) \cdot 1$ given by taking copowers with $1$.
\end{remark}
\begin{proof}
(\emph{of proposition(\ref{prop:colimits-GV})}).
By the above uniform construction of colimits one has (\ref{pcase:all-colimits-GV}), and the preservation of any colimit by $(-)_0$ when it is constructed in this way. The empty $V$-enriched graph is clearly strictly initial in $\ca GV$ and so (\ref{pcase:initial-GV}) follows.

(\ref{pcase:coproducts-GV}): In the case where $I$ is discrete, $\kappa_{0 \bullet}^{\times 2}$ is the inclusion
\[ \coprod_{i \in I} F(i)_0 \times F(i)_0 \to \coprod_i F(i)_0 \times \coprod_i F(i)_0 \]
(between discrete categories) which picks out pairs $(x,y)$ which live in the same component. Thus the coproduct $X:= \coprod_i X_i$ in $\ca GV$ is defined to have objects those of the disjoint union of the objects sets of the $X_i$, and homs given by $X(x,y) = X_i(x,y)$ when $x$ and $y$ are both in $X_i$, and $\emptyset$ otherwise. The required pullbacks exist by corollary(\ref{cor:pbs-in-GV-along-ff}) since coproduct inclusions are clearly fully faithful.

(\ref{pcase:GV-locally-connected}): Let $X \in \ca GV$. We define the relation on $X_0$ as 
\[ \{(a,b) \,\, : \,\, V(X(a,b),\emptyset) = \emptyset \} \subseteq X_0 \times X_0 \]
and say that $a$ and $b$ are \emph{in the same component} of $X$ when they are identified by the equivalence relation generated by the above relation. Denote by $\pi_0(X)$ the set of equivalence classes, which themselves are called connected components. For $i \in \pi_0(X_0)$ we denote by $X_i$ the full sub-$V$-graph of $X$ whose objects are those of $X$'s $i$-th component, and by $c_{X,i}:X_i \to X$ the evident inclusion.

Suppose $\pi_0(X)=1$ and $f:X \to \coprod_j Y_j$ is a graph morphism into some coproduct of $V$-graphs. We will now show that such an $f$ factors uniquely through a unique summand, so that $X$ is connected. From the explicit construction of coproducts in $\ca GV$ in proposition(\ref{prop:colimits-GV}), it is clear that coproduct inclusions in $\ca GV$ are mono. Thus it suffices to show that $f$ factors through a unique summand. Since $X$ is non-empty it suffices to show that $f$ sends any pair $(a,b)$ of elements of $X_0$ to the same summand. Since $\pi_0(X)=1$ there is a sequence
\[ (x_j \,\, : \,\, 0 \leq j \leq n) \]
of elements of $X_0$, such that $x_0=a$, $x_n=b$ and for all $1 \leq j \leq n$, the set
\[ S_{i,j} := V(X(x_{j{-}1},x_j),\emptyset) \times V(X(x_j,x_{j{-}1}),\emptyset) \]
is empty. But if $fx_{j{-}1}$ and $fx_j$ are in different components of the coproduct, then both $Y(fx_{j{-}1},fx_j)$ and $Y(fx_j,fx_{j{-}1})$ would be $\emptyset$, and so the hom maps $(f_{x_{j{-}1},x_j},f_{x_j,x_{j{-}1}})$ would give an element of $S_{i,j}$. Thus all the elements $(x_0,...,x_n)$ are sent to the same component of the coproduct by $f$.

Thus for general $X$, the $X_i$ for $i \in \pi_0(X)$ are connected. Since $V$'s initial object is strict, $a$ and $b$ in $X_0$ will be in a different component iff $X(a,b) \iso \emptyset$. Thus by the explicit construction of coproducts in $\ca GV$, the $c_{X,i}$ are the components of a coproduct cocone.

(\ref{pcase:filtered-colims-GV}): When $I$ is $\lambda$-filtered we note that since $\lambda$-filtered colimits in $\Set$ commute with binary products, the cocone
\[ \kappa_{0,i} \times \kappa_{0,i} : F(i)_0 \times F(i)_0 \to K_0 \times K_0 \]
in $\Set$ is also a colimit cocone. Thus the functor $\kappa_{0 \bullet}^{\times 2} : F_{0 \bullet}^{\times 2} \to K_0 \times K_0$ has another interpretation. Since $F_{0 \bullet}^{\times 2}$ is the category of elements of the functor $i \mapsto F(i)_0 \times F(i)_0$, then by the above remark $K_0 \times K_0$ is the set of connected components of $F_{0 \bullet}^{\times 2}$ and $\kappa_{0 \bullet}^{\times 2}$ is the canonical projection. So the fibres of $\kappa_{0 \bullet \bullet}$ are the connected components of $F_{0 \bullet}^{\times 2}$. Since the evident forgetful functor $F_{0 \bullet}^{\times 2} \to I$ is a discrete opfibration, the connected components of $F_{0 \bullet}^{\times 2}$ are themselves $\lambda$-filtered. Thus a fibre of $\kappa_{0 \bullet}^{\times 2}$ over a given $(x,y)$ will itself be $\lambda$-filtered, and so $\lambda$-filtered colimits in $V$ will suffice for the construction of the colimit in this case.
\end{proof}
\begin{remark}\label{rem:fibrewise-limits-and-colimits}
There is one very easy to understand class of limit/colimit of $V$-graphs. These are those for functors $F:J \to \ca GV$ where $J$ is connected and $F_0:J \to \Set$ is constant at some set $X$. For then the limit or colimit of $F$ may also be taken to have object set $X$, and one computes the hom between $a$ and $b \in X$ of the limit or colimit by taking the limit or colimit in $V$ of the functor $J \to V$ with object map $j \mapsto F(j)(a,b)$.
\end{remark}
Now we describe how the 2-functor $\ca G$ preserves locally (c)-presentable categories and Grothendieck toposes. First we require a general lemma which produces a strongly generating or dense subcategory of $\ca GV$ from one in $V$ in a canonical way. Recall that a functor $i:\ca D \to V$ is strongly generating when $V(i,1):\ca D \to \PSh {\ca D}$ is conservative (ie reflects isomorphisms), and that $i$ is dense when $V(i,1)$ is fully faithful. Moreover recall that an object $D$ of a category $V$ is said to be \emph{small projective} when $V(D,-)$ preserves all small colimits.

For the following lemma we require also the endofunctor
\[ (-)_+ : \Cat \longrightarrow \Cat \]
of the 2-category $\Cat$ of small categories. For a small category $\C$, one describes $\C_+$ as follows. There is an injective on objects fully faithful functor
\[ \begin{array}{lcr} {\iota_{\C} : \C \rightarrow \C_+} && {C \mapsto C_+} \end{array} \]
and $\C_+$ has an additional object $0$ not in the image of $\iota_{\C}$. Moreover for each $C \in \C$ one has maps
\[ \begin{array}{lcr} {\sigma_C : 0 \rightarrow C_+} && {\tau_C : 0 \rightarrow C_+} \end{array} \]
and for all $f:C{\rightarrow}D$ one has the equations $f_+\sigma_C=\sigma_D$ and $f_+\tau_C=\tau_D$. Starting from the terminal category and iterating $(-)_+$ $n$ times gives the usual site $\G_{\leq n}$
\[ \xygraph{{0}="l" [r] {...}="m" [r] {n}="r" "l":@<1ex>"m"^-{\sigma}:@<1ex>"r"^-{\sigma} "l":@<-1ex>"m"_-{\tau}:@<-1ex>"r"_-{\tau}} \]
for $n$-globular sets.

Given a small category $\ca D$ and a functor $i:\ca D \to V$ where $V$ has an initial object, one has a functor $i^+:\ca D_+ \to \ca GV$ given on objects by $i^+(0)=0$ and $i^+(D_+)=(iD)$, fitting into
\[ \xygraph{{\ca D}="p0" [r] {V}="p1" [d] {\ca GV}="p2" [l] {\ca D_{+}}="p3" "p0":"p1"^-{i}:"p2"^-{(-)}:@{<-}"p3"^-{i^+}:@{<-}"p0"^-{\iota_{\ca D}} "p0":@{}"p2"|-{\tn{pb}}} \]
in $\CAT$. Note moreover that when $V$'s initial object is strict, the fully faithfulness of $i$ implies that of $i^+$.
\begin{lemma}\label{lem:GV-dense}
Suppose that $V$ has a strict initial object, $\ca D$ is a small category and $i:\ca D \to V$ is a fully faithful functor. Let $\lambda$ be a regular cardinal.
\begin{enumerate}
\item If $i$ is strongly generating then so is $i^+$. \cite{KellyLack-NiceVCat} \label{lemcase:Dpr-strong-generator}
\item If $i$ is dense then so is $i^+$.\label{lemcase:Dpr-GV-dense}
\item If the objects in the image of $i$ are connected then so are those in the image of $i^+$.\label{lemcase:Dpr-connected}
\item If $V$ has $\lambda$-filtered colimits and the objects in the image of $i$ are $\lambda$-presentable, then so are those of $i^+$. \cite{KellyLack-NiceVCat} \label{lemcase:Dpr-lampres}
\item If $V$ has small colimits and the objects in the image of $i$ are small projective, then so are those of $i^+$.\label{lemcase:Dpr-small-projective}
\end{enumerate}
\end{lemma}
\begin{proof}
For convenience we regard $i$ as an inclusion of a full subcategory. Since $V$ has a strict initial object, $i^+$ is also fully faithful, and so we regard it as an inclusion also. Let $f:X \to Y$ be in $\ca GV$. Suppose that $\ca GV(0,f)$ is a bijection, and that for all $D \in \ca D$, $\ca GV((D),f)$ are bijections. For (\ref{lemcase:Dpr-strong-generator}) we must show that $f$ is an isomorphism. To say that $\ca GV(0,f)$ is a bijection is to say that $f$ is bijective on objects, and so it suffices to show that $f$ is fully faithful. Let $a,b \in X_0$ and $D \in \ca D$. Note that the hom set $V(D,X(a,b))$ may be recovered as the pullback of the cospan
\[ \xygraph{!{0;(2.5,0):} {1}="l" [r] {\ca GV(0,X) \times \ca GV(0,X)}="m" [r(2)] {\ca GV((D),X)}="r" "l":"m"^-{(a,b)}:@{<-}"r"^-{(\ca GV(i_0,X),\ca GV(i_1,X))}} \]
where $i_0$ and $i_1$ pick out the objects $0$ and $1$ of $(D)$ respectively. Moreover the function $V(D,f_{a,b})$ is induced by the isomorphism of cospans
\[ \xygraph{!{0;(2.5,0):(0,.4)::} {1}="l1" [r] {\ca GV(0,X) \times \ca GV(0,X)}="m1" [r(2)] {\ca GV((D),X)}="r1" "l1":"m1"^-{(a,b)}:@{<-}"r1"^-{(\ca GV(i_0,X),\ca GV(i_1,X))} 
"l1" [d] {1}="l2" [r] {\ca GV(0,Y) \times \ca GV(0,Y)}="m2" [r(2)] {\ca GV((D),Y)}="r2" "l2":"m2"_-{(fa,fb)}:@{<-}"r2"_-{(\ca GV(i_0,Y),\ca GV(i_1,Y))} "l1":"l2"^-{} "m1":"m2"^-{\ca GV(0,f) \times \ca GV(0,f)} "r1":"r2"^-{\ca GV((D),f)}} \]
and so $V(D,f_{a,b})$ is also bijective. Since this is true for all $D \in \ca D$ and $\ca D$ is a strong generator, it follows that $f_{a,b}$ is an isomorphism. Thus $f$ is fully faithful and so (\ref{lemcase:Dpr-strong-generator}) follows.

Given functions
\[ f_{E} : \ca GV(E,X) \rightarrow \ca GV(E,Y) \]
natural in $E \in \ca D_+$, we must show for (\ref{lemcase:Dpr-GV-dense}) that there is a unique $f:X{\rightarrow}Y$ such that $f_{E}=\ca GV(E,f)$. The object map of $f$ is forced to be $f_0$, and naturality with respect to the maps $i_0$ and $i_1:0 \to (D)$ ensures that the functions $f_{E}$ amount to $f_0$ together with functions
\[ f_{D,a,b} : \ca G(t_V)_{\bullet}^{\times 2}((0,(D),1),(a,X,b)) \rightarrow \ca G(t_V)_{\bullet}^{\times 2}((0,(D),1),(f_0a,Y,f_0b)) \]
natural in $D \in \ca D$ for all $a,b \in X_0$. Recall from section(\ref{ssec:enriched-graphs}) that $\varepsilon_V:\ca G(t_V)_{\bullet}^{\times 2} \to V$ has a left adjoint given on objects by $Z \mapsto (0,(Z),1)$. By this adjointness the above maps are in turn in bijection with maps
\[ f'_{D,a,b} : V(D,X(a,b)) \rightarrow V(D,Y(f_0a,f_0b)) \]
natural in $D \in \ca D$ for all $a,b \in X_0$, and so by the density of $\ca D$ one has unique $f_{a,b}$ in $V$ such that $f'_{D,a,b}=V(D,f_{a,b})$. Thus $f_0$ and the $f_{a,b}$ together form the object and hom maps of the unique desired map $f$, and so (\ref{lemcase:Dpr-GV-dense}) follows.

By proposition(\ref{prop:colimits-GV}) $(-)_0$ preserves all the necessary colimits, so that in the case of $(\ref{lemcase:Dpr-connected})$ $0$ is connected, in the case of (\ref{lemcase:Dpr-lampres}) it is $\lambda$-presentable and in the case of (\ref{lemcase:Dpr-small-projective}) it is small projective.

Recall the uniform construction of a colimit of $F:I \to \ca GV$ described for proposition(\ref{prop:colimits-GV}), and write $\kappa_i:Fi \to K$ for the universal cocone. Then for $D \in V$ the cocone $\ca GV((D),\kappa_i)$ induces an obstruction map
\[ \gamma_{(D),\kappa_i} : \colim_{i \in I} \ca GV((D),Fi) \to \ca GV((D),K) \]
which measures the extent to which $\ca GV((D),-)$ preserves the colimit of $F$. We shall give an alternative description of this map in terms of the homs of $V$. First observe that any map $f:(D) \to X$ amounts to an ordered pair $(a,b)$ of objects of $X$ picked out by $f_0$, and the hom map $f_{0,1}:D \to X(a,b)$, and so one has a bijection
\[ \ca GV((D),X) \iso \coprod_{a,b{\in}X_0} V(D,X(a,b)). \]
Second for $a,b \in K_0$ recall from the construction of the colimit $K$ that one has a colimit cocone
$\kappa_{i,\alpha,\beta} : Fi(\alpha,\beta) \to K(a,b)$
in $V$ where $(\alpha,i,\beta)$ ranges over the fibre of $\kappa_{0 \bullet}^{\times 2}:F_{0 \bullet}^{\times 2} \to K_0 \times K_0$ over $(a,b)$. Thus one has an obstruction map
\[ \gamma_{D,\kappa_{i,\alpha,\beta}} : \colim_{(i,\alpha,\beta) \in (\kappa_{0 \bullet}^{\times 2})^{-1}(a,b)} V(D,Fi(\alpha,\beta)) \to V(D,K(a,b)) \]
measuring the extent to which $V(D,-)$ preserves the defining colimit of $K(a,b)$. The above isomorphisms exhibit $\gamma_{(D),\kappa_i}$ as isomorphic in $\Set^{\to}$ to the function
\[ \coprod_{a,b} \gamma_{D,\kappa_{i,a,b}} : \coprod_{a,b} \colim_{(\alpha,i,\beta)} V(D,Fi(\alpha,\beta)) \to  \coprod_{a,b} V(D,K(a,b)). \]

Let $D$ be small projective and $I$ small. Then the colimit in the definition of $\gamma_{D,\kappa_{i,\alpha,\beta}}$ is preserved since $D$ is small projective. Thus the functions $\gamma_{D,\kappa_{i,\alpha,\beta}}$, and thus $\gamma_{(D),\kappa_i}$ are bijective, whence $(D)$ is also small projective, and so (\ref{lemcase:Dpr-small-projective}) follows. Similar arguments prove (\ref{lemcase:Dpr-lampres}) and (\ref{lemcase:Dpr-connected}). In the case of (\ref{lemcase:Dpr-connected}) when $D$ is connected and $I$ discrete, $(\kappa_{0 \bullet}^{\times 2})^{-1}(a,b)$ is either the empty or the terminal category. In the former case the colimit in the definition of $\gamma_{D,\kappa_{i,\alpha,\beta}}$ is preserved since $D$ is connected, and in the latter case this is so since the colimit in question is absolute. As for (\ref{lemcase:Dpr-lampres}) where $D$ is now $\lambda$-presentable and $I$ is $\lambda$-filtered, the result follows because as we saw in the proof of proposition(\ref{prop:colimits-GV}), the 
categories $(\kappa_{0 \bullet}^{\times 2})^{-1}(a,b)$ are also $\lambda$-filtered.
\end{proof}
\begin{theorem}\label{thm:GV-locally-c-presentable}
Let $\lambda$ be a regular cardinal.
\begin{enumerate}
\item If $V$ is locally $\lambda$-presentable then so is $\ca GV$.\cite{KellyLack-NiceVCat} \label{thmcase:loc-pres}
\item If $V$ is locally $\lambda$-presentable and has a strict initial object, then $\ca GV$ is locally $\lambda$-c-presentable.\label{thmcase:loc-c-pres}
\item If $V$ is locally $\lambda$-presentable then $\ca G^2V$ is locally $\lambda$-c-presentable.\label{thmcase:G2-lcp}
\item If $V$ is locally $\lambda$-c-presentable then so is $\ca GV$.\label{thmcase:lcp}
\item If $V$ is a presheaf topos then so is $\ca GV$.\label{thmcase:presheaf-topos}
\item If $V$ is a Grothendieck topos then $\ca GV$ is a locally connected Grothendieck topos.\label{thmcase:Groth-topos}
\end{enumerate}
\end{theorem}
\begin{proof}
If $V$ is locally $\lambda$-presentable then $\ca GV$ is cocomplete by proposition(\ref{prop:colimits-GV}), and one can build a strong generator for $\ca GV$ consisting of $\lambda$-presentable objects from one in $V$ using lemma(\ref{lem:GV-dense}) to exhibit $\ca GV$ as locally $\lambda$-presentable, giving us (\ref{thmcase:loc-pres}). If in addition $V$ has a strict initial object, then in $\ca GV$ every object decomposes as a sum connected objects by proposition(\ref{prop:colimits-GV})(\ref{pcase:GV-locally-connected}), and so (\ref{thmcase:loc-c-pres}) follows by theorem(\ref{thm:conn-GabUlm})(\ref{lc6}). (\ref{thmcase:G2-lcp}) now follows since $\ca GV$ has a strict initial object by proposition(\ref{prop:colimits-GV}). (\ref{thmcase:lcp}) is immediate from (\ref{thmcase:loc-c-pres}) and theorem(\ref{thm:conn-GabUlm})(\ref{lc7}).

Recall that a category $V$ is a presheaf topos iff it has a small dense full subcategory $i:\ca D \hookrightarrow V$ consisting of small projective objects. Clearly the representables in a presheaf topos provide such a dense subcategory. Conversely $V(i,1):V \to \PSh{\ca D}$ is fully faithful by density. Since the objects of $\ca D$ are small projective, $V(i,1)$ is cocontinuous, and since every presheaf is a colimit of representables, it then follows that $V(i,1)$ essentially surjective on objects, giving the desired equivalence $V \catequiv \PSh{\ca D}$. In this situation $\ca D_+$ provides a small dense subcategory of $\ca GV$ consisting of small projectives by lemma(\ref{lem:GV-dense}), whence $\ca GV \catequiv \PSh{\ca D_+}$, and so (\ref{thmcase:presheaf-topos}) follows.

Since a Grothendieck topos is a left exact localisation of a presheaf category, the 2-functoriality of $\ca G$ together with (\ref{thmcase:presheaf-topos}), (\ref{thmcase:loc-c-pres}) and example(\ref{ex:lc-groth-toposes}) implies that to establish (\ref{thmcase:Groth-topos}), it suffices to show that $\ca G$ preserves left exact functors between categories with finite limits. This follows immediately from the explicit description of limits in $\ca GV$ given in the proof of proposition(\ref{prop:limits-in-GV}).
\end{proof}
In particular from theorem(\ref{thm:GV-locally-c-presentable})(\ref{thmcase:presheaf-topos}) and the 2-functor $(-)_+$, we obtain
\[ \ca G^n\Set \catequiv \PSh {\G_{\leq n}} \]
reconciling the two ways of looking at the category of $n$-globular sets. Note however that this is a genuine equivalence and not an isomorphism.

\section{Constructing a monad from a distributive multitensor}
\label{sec:multitensor-to-monad-construction}
The passage $V \mapsto \ca GV$ discussed in the previous section will now be extended to the construction $(V,E) \mapsto (\ca GV,\Gamma E)$, which takes a category $V$ equipped with a multitensor $E$, and produces a monad $\Gamma E$ on $\ca GV$. The construction itself is very simple and not at all original. What is perhaps novel is the recognition that this construction is so well-behaved formally, and that taking it as fundamental leads to considerable efficiencies in our ability to describe many constructions later on (both in this paper and subsequent works).

A multitensor $E$ and its associated monad $\Gamma E$ describe the same structure, but in different ways. Multitensors, just like the monoidal structures they generalise, come with an attendant notion of enriched category, whereas monads come with a notion of algebra. Proposition(\ref{prop:Gamma-E-algebras-as-E-categories}) says that a category enriched in $E$ is the same thing as an algebra for $\Gamma E$. In the technical aspects of operad/monad theory one is often interested in the formal properties enjoyed by the operads/monads one is considering. Thus it is of interest to know how the formal properties of $E$ correspond those of $\Gamma E$, which is what the main result of this section, theorem(\ref{thm:preservation-by-Gamma-E}), tells us.

\subsection{Recalling multitensors}
\label{sec:recalling-multitensors}
We begin by recalling some definitions and notation from \cite{BataninWeber-EnHop}. For a category $V$, the free strict monoidal category $MV$ on $V$ is described as follows. An object of $MV$ is a finite sequence $(Z_1,...,Z_n)$ of objects of $V$. A map is a sequence of maps of $V$ -- there are no maps between sequences of objects of different lengths. The unit $\eta_V:V{\rightarrow}MV$ of the 2-monad $M$ is the inclusion of sequences of length $1$. The multiplication $\mu_V:M^2V{\rightarrow}MV$ is given by concatenation. A \emph{lax monoidal category} is a lax algebra for the 2-monad $M$, and a \emph{multitensor} on a category $V$ is by definition a lax monoidal structure on $V$. 

Explicitly a multitensor on a category $V$ consists of a functor $E:MV{\rightarrow}V$, and maps
\[ \begin{array}{lcr} {u_Z:Z \rightarrow E(Z)} && {\sigma_{Z_{ij}}:\opE\limits_i\opE\limits_j Z_{ij} \rightarrow \opE\limits_{ij} Z_{ij}} \end{array} \]
for all $Z$, $Z_{ij}$ from $V$ which are natural in their arguments, and such that
\[ \xy (0,0); (10,0):
(0,0)*{\xybox{\xymatrix @C=1em {{\opE\limits_iZ_i} \ar[r]^-{u\opE\limits_i} \ar[d]_{1} \save \POS?(.4)="domeq" \restore & {E_1\opE\limits_iZ_i} \ar[dl]^{\sigma} \save \POS?(.4)="codeq" \restore \\ {\opE\limits_iZ_i} \POS "domeq"; "codeq" **@{}; ?*{=}}}};
(4,0)*{\xybox{\xymatrix @C=1.5em {{\opE\limits_i\opE\limits_j\opE\limits_kZ_{ijk}} \ar[r]^-{{\sigma}\opE\limits_k} \ar[d]_{\opE\limits_i\sigma} \save \POS?="domeq" \restore & {\opE\limits_{ij}\opE\limits_kZ_{ijk}} \ar[d]^{\sigma} \save \POS?="codeq" \restore \\
{\opE\limits_i\opE\limits_{jk}Z_{ijk}} \ar[r]_-{\sigma} & {\opE\limits_{ijk}Z_{ijk}} \POS "domeq"; "codeq" **@{}; ?*{=}}}};
(8,0)*{\xybox{{\xymatrix @C=1em {{\opE\limits_iE_1Z_i} \ar[dr]_{\sigma} \save \POS?(.4)="domeq" \restore & {\opE\limits_iZ_i} \ar[d]^{1} \save \POS?(.4)="codeq" \restore  \ar[l]_-{\opE\limits_iu} \\ & {\opE\limits_iZ_i}} \POS "domeq"; "codeq" **@{}; ?*{=}}}}
\endxy \]
in $V$. As in \cite{BataninWeber-EnHop} the expressions
\[ \begin{array}{lcccr} {E(X_1,...,X_n)} && {\opE\limits_{1{\leq}i{\leq}n} X_i} && {\opE\limits_i X_i} \end{array} \]
are alternative notation for the n-ary tensor product of the objects $X_1,...,X_n$, and we refer to the endofunctor of $V$ obtained by observing the effect of $E$ on singleton sequences as $E_1$. The data $(E,u,\sigma)$ is called a \emph{multitensor} on $V$, and $u$ and $\sigma$ are referred to as the \emph{unit} and \emph{substitution} of the multitensor respectively.

Given a multitensor $(E,u,\sigma)$ on $V$, an \emph{$E$-category} consists of $X \in \ca GV$ together with maps
\[ \kappa_{x_i} : \opE\limits_i X(x_{i-1},x_i) \rightarrow X(x_0,x_n) \]
for all $n \in \N$ and sequences $(x_0,...,x_n)$ of objects of $X$, such that
\[ \xygraph{
{\xybox{\xygraph{!{0;(2,0):(0,.6)::}
{X(x_0,x_1)} (:[r]{E_1X(x_0,x_1)}^-{u} :[d]{X(x_0,x_1)}="bot"^{\kappa}, :"bot"_{\id})}}}
[r(5)][d(.15)]
{\xybox{\xygraph{!{0;(2.75,0):(0,.5)::}
{\opE\limits_i\opE\limits_jX(x_{(ij)-1},x_{ij})} (:[r]{\opE\limits_{ij}X(x_{(ij)-1},x_{ij})}^-{\sigma} :[d]{X(x_0,x_{mn_m})}="bot"^{\kappa},:[d]{\opE\limits_iX(x_{(i1)-1},x_{in_i})}_{\opE\limits_i\kappa} :"bot"_-{\kappa})}}}} \]
commute, where $1{\leq}i{\leq}m$, $1{\leq}j{\leq}n_i$ and $x_{(11)-1}{=}x_0$. Since a choice of $i$ and $j$ references an element of the ordinal $n_{\bullet}$, the predecessor $(ij){-}1$ of the pair $(ij)$ is well-defined when $i$ and $j$ are not both $1$. With the obvious notion of $E$-functor (see \cite{BataninWeber-EnHop}), one has a category $\Enrich E$ of $E$-categories and $E$-functors together with a forgetful functor
\[ U^E : \Enrich E \rightarrow \ca GV. \]

A multitensor $(E,u,\sigma)$ is \emph{distributive} when for all $n$ the functor
\[ \begin{array}{lccr} {V^n \to V} &&& {(X_1,...,X_n) \mapsto E(X_1,...,X_n)} \end{array} \]
preserves coproducts in each variable.

Multitensors generalise non-symmetric operads. For given a non-symmetric operad
\[ \begin{array}{lcr} {(A_n \, \, : \, \, n \in \N)} & {u:I \rightarrow A_1} &
{\sigma:A_k \tensor A_{n_1} \tensor ... \tensor A_{n_k} \rightarrow A_{n_{\bullet}}} \end{array} \]
in a braided monoidal category $V$, one can define a multitensor $E$ on $V$ via the formula
\[ \opE\limits_{1{\leq}i{\leq}n} X_i = A_n \tensor X_1 \tensor ... \tensor X_n \]
as observed in \cite{BataninWeber-EnHop} example(2.6), and when the tensor product for $V$ is distributive, so is $E$. A category enriched in $E$ with one object is precisely an $A$-algebra in the usual sense. In the case where $V$ is $\Set$ with tensor product given by cartesian product, this construction is part of an equivalence between the evident category of distributive multitensors on $\Set$ and that of non-symmetric operads in $\Set$. This equivalence is easily established using the fact that every set is a coproduct of singletons.

\subsection{Monads from multitensors}
\label{sec:monads-from-multitensors}
Let $(E,u,\sigma)$ be a distributive multitensor on a category $V$ with coproducts. Then we define a monad $\Gamma E$ on the category $\ca GV$ of graphs enriched in $V$ as follows. We ask that the monad $\Gamma E$ actually live over $\Set$, that is to say, in the 2-category $\CAT/\Set$. Thus for $X \in \ca GV$, $\Gamma E(X)$ has the same objects as $X$. The homs of $\Gamma E(X)$ are defined by the equation
\begin{equation}\label{eq:monad-from-multitensor-on-homs}
\Gamma E(X)(a,b) = \coprod_{a=x_0,...,x_n=b} \opE\limits_i X(x_{i-1},x_i)
\end{equation}
for all $a,b \in X_0$. The above coproduct is taken over all finite sequences of objects of $X$ starting at $a$ and finishing at $b$. Let us write $k_{E,X,(x_i)_i}$ for a given coproduct inclusion for the above sum. Since the monad we are defining is over $\Set$, the object maps of the components of the unit $\eta$ and multiplication $\mu$ are identities, and so it suffices to define their hom maps. For the unit these are the composites
\[ \xygraph{!{0;(2,0):} {X(a,b)}="l" [r] {E_1X(a,b)}="m" [r(1.25)] {\Gamma E(X)(a,b).}="r" "l":"m"^-{u_{X(a,b)}}:"r"^-{k_{E,X,a,b}}} \]
In order to define the multiplication, observe that the composites
\[ \xygraph{!{0;(3,0):} {\opE\limits_i\opE\limits_j X(x_{ij-1},x_{ij})}="l" [r] {\opE\limits_i \Gamma E(X)(x_{i-1},x_i)}="m" [r] {(\Gamma E)^2(X)(a,b)}="r" "l":"m"^-{\opE\limits_i k}:"r"^-{k}} \]
ranging of all doubly-nested sequences $(x_{ij})_{ij}$ of objects of $X$ starting from $a$ and finishing at $b$, exhibit the hom $(\Gamma E)^2(X)(a,b)$ as a coproduct, since $E$ preserves coproducts in each variable. Let us write $k^{(2)}_{E,X,(x_{ij})_{ij}}$ for such a coproduct inclusion. We can now define the hom map of the components of the multiplication $\mu$ as unique such that
\[ \xygraph{!{0;(2.5,0):(0,.4)::} {\opE\limits_i\opE\limits_j X(x_{ij-1},x_{ij})}="tl" [r] {\opE\limits_{ij} X(x_{ij-1},x_{ij})}="tr" [d] {\Gamma E(X)(a,b)}="br" [l] {(\Gamma E)^2(X)(a,b)}="bl" "tl":"tr"^-{\sigma}:"br"^-{k}:@{<-}"bl"^-{\mu_{X,a,b}}:@{<-}"tl"^-{k^{(2)}}} \]
commutes for all doubly-nested sequences $(x_{ij})_{ij}$ starting at $a$ and finishing at $b$.
\begin{proposition}\label{prop:Gamma-E-algebras-as-E-categories}
Let $V$ be a category with coproducts and $(E,u,\sigma)$ be a distributive multitensor on $V$. Then $(\Gamma E,\eta,\mu)$ as defined above is a monad on $\ca GV$ and one has an isomorphism
$\Enrich{E} \iso (\ca GV)^{\Gamma E}$
commuting with the forgetful functors into $\ca GV$.
\end{proposition}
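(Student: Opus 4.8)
My plan is to exploit the fact that, by construction, $\Gamma E$ together with its unit $\eta$ and multiplication $\mu$ lives over $\Set$ and is the identity on objects, so that every assertion reduces to a statement about hom-objects in $V$.

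The first step is to record the coproduct decompositions of the iterates of $\Gamma E$. The definition of $\mu$ above already uses that, since $E$ preserves coproducts in each variable, the maps built from $\opE_i k$ exhibit $(\Gamma E)^2(X)(a,b)$ as a coproduct indexed by the doubly-nested sequences of objects of $X$ from $a$ to $b$; iterating, $(\Gamma E)^3(X)(a,b)$ is a coproduct indexed by the triply-nested such sequences. With these decompositions in hand the three monad axioms are checked summand by summand. The two unit axioms for $\Gamma E$ reduce, via the fact that the relevant component of $\eta$ lands only in the summands whose outer (respectively inner) constituent sequences are trivial, to the identities $\sigma\circ u = \id$ and $\sigma\circ\opE_i(u) = \id$ of the multitensor; and the associativity axiom for $\Gamma E$ reduces, after observing that both composites act on the summand $\opE_i\opE_j\opE_k X(x_{ijk-1},x_{ijk})$ and land in the summand indexed by its flattening, to the middle associativity axiom $\sigma\circ(\sigma\opE_k) = \sigma\circ(\opE_i\sigma)$ of the multitensor.

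Next, the comparison of algebras. Given $X \in \ca GV$, a $\Gamma E$-algebra structure $\theta : \Gamma E(X) \to X$ satisfies $\theta\eta_X = \id_X$; since $\eta_X$ is the identity on objects, so is $\theta$, and hence $\theta$ amounts exactly to its family of hom-maps $\theta_{a,b} : \Gamma E(X)(a,b) \to X(a,b)$. By the universal property of the coproduct (\ref{eq:monad-from-multitensor-on-homs}), such a family is precisely a choice, for every finite sequence $(x_0,\ldots,x_n)$ of objects of $X$, of a map $\kappa_{(x_i)} : \opE_i X(x_{i-1},x_i) \to X(x_0,x_n)$ in $V$ --- that is, exactly the extra data of a category enriched in $E$. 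Restricting the algebra unit law $\theta\eta_X = \id_X$ to the length-one summand yields the left-hand triangle in the definition of an $E$-category, and restricting the algebra associativity law $\theta\mu_X = \theta\,\Gamma E(\theta)$ to the summand $\opE_i\opE_j X(x_{ij-1},x_{ij})$, using the descriptions of $\mu$ and of $\Gamma E(\theta)$ on that summand, yields the right-hand square. Thus $\Gamma E$-algebra structures and $E$-category structures on a fixed $X$ coincide. For morphisms, a map $f : X \to X'$ in $\ca GV$ commutes with the algebra structures iff, restricting $f\theta = \theta'\,\Gamma E(f)$ to each summand of $\Gamma E(X)(a,b)$, it is compatible with all the $\kappa$'s, which is exactly the defining condition of an $E$-functor; so the correspondence is an isomorphism of categories, and it evidently commutes with $U^E$ and $U^{\Gamma E}$ since both forget $\theta$, i.e.\ the $\kappa$'s.

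I expect the only genuine obstacle to be organisational: one must set up carefully, via repeated appeals to distributivity, the coproduct decompositions of $(\Gamma E)^2(X)$ and $(\Gamma E)^3(X)$ together with the bijection between nested sequences and their flattenings, so that the monad axioms can be matched term-by-term with the multitensor axioms. Once that bookkeeping is in place, each verification is a routine diagram chase in $V$.
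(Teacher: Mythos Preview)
Your proposal is correct and follows essentially the same approach as the paper's proof: both reduce the monad axioms to hom-level statements by precomposing with the coproduct inclusions $k$, $k^{(2)}$, $k^{(3)}$ (using distributivity for the iterates), thereby recovering exactly the unit and associativity laws of the multitensor, and both identify $\Gamma E$-algebra structures and morphisms with $E$-category structures and $E$-functors via the universal property of the defining coproduct. Your explicit remark that the unit law forces $\theta$ to be the identity on objects is a small clarification the paper leaves implicit, but otherwise the arguments are the same.
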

\begin{proof}
Since $(\Gamma E,\eta,\mu)$ are defined over $\Set$ it suffices to check the monad axioms on the homs. For the unit laws we must verify the commutativity of
\[ \xygraph{{\xybox{\xygraph{!{0;(2.5,0):(0,.4)::} {\Gamma E(X)(a,b)}="l" [r] {(\Gamma E)^2(X)(a,b)}="r" [d] {\Gamma E(X)(a,b)}="b" "l":"r"^-{\eta_{\Gamma E}}:"b"^-{\mu}:@{<-}"l"^-{1}}}} [r(5)]
{\xybox{\xygraph{!{0;(2.5,0):(0,.4)::} {\Gamma E(X)(a,b)}="r" [l] {(\Gamma E)^2(X)(a,b)}="l" [d] {\Gamma E(X)(a,b)}="b" "r":"l"_-{\Gamma E(\eta)}:"b"_-{\mu}:@{<-}"r"_-{1}}}}} \]
and precomposing each of these by each of the injections $k_{E,X,(x_i)_i}$ gives the unit laws for the multitensor $E$. Given a triply-nested sequence of objects of $X$ starting at $a$ and finishing at $b$, let us denote by $k^{(3)}_{E,X,(x_{ijk})_{ijk}}$ the composite
\[ \xygraph{!{0;(3,0):} {\opE\limits_i\opE\limits_j\opE\limits_k X(x_{ijk-1},x_{ijk})}="l" [r(1.2)] {\opE\limits_i(\Gamma E)^2(X)(x_{i-1},x_i)}="m" [r] {(\Gamma E)^3(X)(a,b)}="r" "l":"m"^-{\opE\limits_i k^{(2)}}:"r"^-{k}} \]
and note that since $E$ is distributive, the family of maps so determined exhibits the hom $(\Gamma E)^3(X)(a,b)$ as a coproduct. The associative law on the homs then follows because precomposing the diagrams that express it with such coproduct injections gives back the associativity diagrams for the multitensor $E$. Thus $(\Gamma E,\eta,\mu)$ is indeed a monad on $\ca GV$.

For $X \in \ca GV$ a morphism $a:\Gamma E(X) \to X$ may be identified, by precomposing with the appropriate coproduct inclusions, with morphisms $\opE\limits_i X(x_{i-1},x_i) \rightarrow X(x_0,x_n)$ for all sequences $(x_0,...x_n)$ of objects of $X$, and under this identification the unit and associative laws for a $\Gamma E$-algebra correspond exactly to those for an $E$-category. To say that $f:X \to Y$ in $\ca GV$ underlies a given morphism $(X,a) \to (Y,b)$ of $\Gamma E$-algebras is clearly equivalent to saying that $f$ underlies an $E$-functor. Thus one has the required canonical isomorphism over $\ca GV$.
\end{proof}
\begin{example}\label{ex:cat-monad-from-multitensor}
In the case where $V$ is $\Set$ and $E$ is cartesian product, $\Gamma E$ is the monad for categories on $\Graph$. The summand of equation(\ref{eq:monad-from-multitensor-on-homs}) corresponding to a given sequence $(x_0,...,x_n)$ is the set of paths in $X$ of length $n$, starting at $a=x_0$ and finishing at $b=x_n$, which visits successively the intermediate vertices $(x_1,...,x_{n-1})$.
\end{example}
\begin{remark}\label{rem:EnHopI}
Given a monad $T$ on $\ca GV$ over $\Set$, and a set $Z$, one obtains by restriction a monad $T_Z$ on the category $\ca GV_Z$ of $V$-graphs with fixed object set $Z$. Let us write $\Gamma^{\textnormal{old}}$ for the functor labelled as $\Gamma$ in \cite{BataninWeber-EnHop}. Then for a given distributive multitensor $E$, our present $\Gamma$ and $\Gamma^{\textnormal{old}}$ are related by the formula
\[ \Gamma^{\textnormal{old}}(E) = \Gamma(E)_1 \]
where the $1$ on the right hand side of this equation indicates a singleton. In other words in this paper we are describing the ``many-objects version'' of the theory presented in \cite{BataninWeber-EnHop} section(4).
\end{remark}

\subsection{Properties of $\Gamma E$}
\label{sec:properties-induced-monad}
For a functor
\[ F : A_1 \times ... \times A_n \to B \]
of $n$ variables, the preservation by $F$ of a given connected limit or colimit implies that this limit or colimit is preserved in each variable separately. To see this one considers diagrams which are constant in all but the variable of interest, and use the fact that the limit/colimit of a connected diagram constant at an object $X$, is $X$, as witnessed by a universal cone/cocone all of whose components are $1_X$.

However the converse of this is false in general. For instance to say that $F$ preserves pullbacks is to say that it does so in each variable, and moreover, that all squares of the form
\begin{equation}\label{eq:elementary-pbs-in-cartesian-product}
\xygraph{!{0;(6,0):(0,.1667)::} {(a_1,...a_{i-1},a_i,...,a_j,a_{j+1}...,a_n)}="tl" [r] {(a_1,...a_{i-1},b_i,...,a_j,a_{j+1}...,a_n)}="tr" [d] {(a_1,...a_{i-1},b_i,...,b_j,a_{j+1}...,a_n)}="br" [l] {(a_1,...a_{i-1},a_i,...,b_j,a_{j+1}...,a_n)}="bl" "tl":"tr"^-{(1,...,1,f,1,...,1)}:"br"^-{(1,...,1,g,1,...,1)}:@{<-}"bl"^-{(1,...,1,f,1,...,1)}:@{<-}"tl"^-{(1,...,1,g,1,...,1)}}
\end{equation}
are sent to pullbacks in $B$, where $1 \leq i < j \leq n$, $f:a_i \to b_i$ and $g:a_j \to b_j$. That this extra condition follows from $F$ preserving all pullbacks follows since these squares are obviously pullbacks in $A_1 \times ... \times A_n$. Conversely note that any general map $(f_1,...,f_n):(a_1,...,a_n) \to (b_1,...,b_n)$ in $A_1 \times ... \times A_n$ can be factored in the following manner
\[ \xygraph{!{0;(2.5,0):} {(a_1,...,a_n)}="p1" [r] {(b_1,a_2,...,a_n)}="p2" [r] {...}="p3" [r] {(b_1,...,b_n)}="p4" "p1":"p2"^-{(f_1,1,...,1)}:"p3"^-{(1,f_2,...,1)}:"p4"^-{(1,...,1,f_n)}} \]
and doing so to each of the maps in a general pullback in $A_1 \times ... \times A_n$, produces an $n \times n$ lattice diagram in which each inner square is either of the form (\ref{eq:elementary-pbs-in-cartesian-product}), or a pullback in a single variable.

An important case where such distinctions can be ignored is with $\lambda$-filtered colimits for some regular cardinal $\lambda$. For suppose that $F$ preserves $\lambda$-filtered colimits in each variable. By \cite{AdamekRosicky-LFP} corollary(1.7) it suffices to show that $F$ preserves colimits of chains of length $\lambda$. Given such a chain
\[ \begin{array}{lccr} {X:\lambda \to A_1 \times ... \times A_n} &&& {i \mapsto (X_{i1},...,X_{in})} \end{array} \]
with object map denoted on the right, one obtains the functor
\[ \begin{array}{lccr} {X':\lambda^n \to A_1 \times ... \times A_n} &&& {(i_1,...,i_n) \mapsto (X_{i_11},...,X_{i_nn})} \end{array} \]
which one may readily verify has the same colimit as $X$. But the colimit of $X'$ may be taken one variable at a time and so
\[ \colim(X) \iso \colim_{i_1} \colim_{i_2} ... \colim_{i_n} (X_{i_11},...,X_{i_nn}) \]
from which it follows that $F$ preserves $\colim(X)$. We say that a multitensor $(E,u,\sigma)$ is \emph{$\lambda$-accessible} when the functor $E:MV \to V$ preserves $\lambda$-filtered colimits, which is clearly equivalent to the condition that each of the associated $n$-ary functors $E_n:V^n \to V$ does so, which as we have seen, is equivalent to the condition that each of the $E_n$'s preserve $\lambda$-filtered colimits in each variable.

Cartesian monads play a fundamental role in higher category theory \cite{Leinster-HDA-book}. Recall that a monad $(T,\eta,\mu)$ on a category $V$ with pullbacks is said to be \emph{cartesian} when $T$ preserves pullbacks, and $\eta$ and $\mu$ are cartesian transformations (meaning that their naturality squares are pullbacks). Similarly one has the notion of a cartesian multitensor, with a multitensor $(E,u,\sigma)$ on a category $V$ with pullbacks being \emph{cartesian} when $E$ preserves pullbacks, and $u$ and $\sigma$ are cartesian transformations.

Recall that a functor $F:V \to W$ is a \emph{local right adjoint} (local right adjoint) when for all $X \in V$ the induced functor
\[ F_X : V/X \to V/FX \]
between slice categories is a right adjoint. When $V$ has a terminal object $1$, it suffices for local right adjoint-ness that $F_1$ be a right adjoint. Recall moreover that local right adjoint functors preserve all connected limits, and thus in particular pullbacks. A monad $(T,\eta,\mu)$ on a category $V$ is local right adjoint (as a monad) when $T$ is local right adjoint as a functor and $\eta$ and $\mu$ are cartesian. Thus this is a slightly stronger condition on a monad than being cartesian. Local right adjoint monads, especially defined on presheaf categories, are fundamental to higher category theory. Indeed a deeper understanding of such monads is the key to understanding the relationship between the operadic and homotopical approaches to the subject \cite{Weber-Fam2fun}. Similarly one has the notion of an local right adjoint multitensor, with a multitensor $(E,u,\sigma)$ on a category $V$ being \emph{local right adjoint} when the functor $E:MV \to V$ is local right adjoint, and $u$ and $\sigma$ are cartesian transformations.

For a functor $F:\coprod_{i \in I} V_i \to W$ to be local right adjoint is equivalent to each of the induced $F_i:V_i \to W$ being local right adjoint, because for $X \in V_i$, $F_X = (F_i)_X$. Thus the condition that $E:MV \to V$ be local right adjoint is equivalent to the condition that each of the $E_n$'s is local right adjoint. The condition that a functor $F:V_1 \times ... \times V_n \to W$ to be local right adjoint is equivalent to the condition that it be local right adjoint in each variable, and moreover that it send the basic pullbacks (\ref{eq:elementary-pbs-in-cartesian-product}) in $V_1 \times ... \times V_n$ to pullbacks in $W$. For suppose $F$ is local right adjoint. Then since it preserves all pullbacks it preserves those of the form (\ref{eq:elementary-pbs-in-cartesian-product}). Moreover for $1 \leq i \leq n$ the functor
\[ F(X_1,...,X_{i-1},-,X_{i+1},...,X_n)_{X_i} : V_i/X_i \to W/F(X_1,...,X_n) \]
can be written as the composite
\[ \xygraph{!{0;(3,0):} {V_i/X_i}="l" [r] {V_1/X_1 \times ... \times V_n/X_n}="m" [r(1.5)] {W/F(X_1,...,X_n)}="r" "l":"m"^-{}:"r"^-{F_{X_1,...,X_n}}} \]
in which the first functor has object map $f \mapsto F(1_{X_1},...,1_{X_{i-1}},f,1_{X_{i+1}},...,1_{X_n})$. Since for all $i$ both these functors are clearly right adjoints, $F$ is local right adjoint in each variable. Conversely, supposing $F$ to be local right adjoint in each variable and preserving the pullbacks (\ref{eq:elementary-pbs-in-cartesian-product}), $F$'s effect on the slice over $(X_1,...,X_n)$ is isomorphic to the composite
\[ \xygraph{!{0;(2.65,0):} {\prod_i V/X_i}="l" [r(2)] {\left(W/F(X_1,...,X_n)\right)^n}="m" [r] {W/F(X_1,...,X_n)}="r" "l":"m"^-{\prod_i F(X_1,...,X_{i-1},-,X_{i+1},...,X_n)_{X_i}}:"r"^-{\times}} \]
and both these functors are clearly right adjoints. Thus the condition that $E:MV \to V$ be local right adjoint is equivalent to the condition that each $E_n$ is local right adjoint in each variable and preserve the pullbacks of the form (\ref{eq:elementary-pbs-in-cartesian-product}).

The following result expresses how the assignment $E \mapsto \Gamma E$ is compatible with the various categorical properties we have been discussing.
\begin{theorem}\label{thm:preservation-by-Gamma-E}
Let $V$ be a category with coproducts and $(E,u,\sigma)$ be a distributive multitensor on $V$, and let $(\Gamma E,\eta,\mu)$ be the corresponding monad on $\ca GV$. Let $\lambda$ be a regular cardinal.
\begin{enumerate}
\item $\Gamma E$ preserves coproducts.\label{thmcase:Gamma-E-pres-coproducts}
\item Suppose $V$ has $\lambda$-filtered colimits. Then $E$ is $\lambda$-accessible iff $\Gamma E$ is.\label{thmcase:Gamma-E-pres-filtered-colims}
\item Suppose $V$ has pullbacks and every object of $V$ is a coproduct of connected objects. Then $(E,u,\sigma)$ is a cartesian multitensor iff $(\Gamma E,\eta,\mu)$ is a cartesian monad.\label{thmcase:Gamma-E-cartesian}
\item Suppose $V$ is locally $\lambda$-c-presentable. Then $(E,u,\sigma)$ is an local right adjoint multitensor iff $(\Gamma E,\eta,\mu)$ is an local right adjoint monad.\label{thmcase:Gamma-E-lra}
\end{enumerate}
\end{theorem}
\noindent The proof of this result will occupy the remainder of section(\ref{sec:multitensor-to-monad-construction}).

\subsection{Coproducts and filtered colimits}
\label{sec:GammaE-preserves-coproducts-filtered-colimits}
In lemma(\ref{lem:GammaE-colim-in-terms-of-E}) below we formulate the preservation by $\Gamma E$ of a given colimit in terms of the underlying multitensor $E$. We require some further notation. For a functor $f:J \to \Set$ and $n \in \N$ we denote by $f^{\times n}:J \to \Set$ the functor with object map $j \mapsto f(j)^n$, and if $\kappa_j:fj \to K$ form a colimit cocone, then we denote by $\kappa_{\bullet}^{\times n}:f^{\times n}_{\bullet} \to K^n$ the evident induced functor. We have been using this notation already, for instance in proposition(\ref{prop:colimits-GV}), in the case $n=2$.
\begin{lemma}\label{lem:GammaE-colim-in-terms-of-E}
Let $J$ be a small category, $F:J \to \ca GV$ and $V$ has sufficient colimits so that the colimit $K$ of $F$ may be constructed as in the discussion preceeding proposition(\ref{prop:colimits-GV}). Let $\kappa_{j,0}:F(j)_0 \to K_0$ be a colimit cocone in $\Set$ at the level of objects, and for $a,b \in K_0$ let
\[ \kappa_{j,\alpha,\beta} : F(j)(\alpha,\beta) \to K(a,b) \]
be the colimit cocone in $V$, where $(j,\alpha,\beta) \in (\kappa_{0 \bullet}^{\times 2})^{-1}(a,b)$. If for all sequences $(x_0,...,x_n)$ of objects of $K$, the morphisms
\[ \opE\limits_i \kappa_{j,\gamma_{i-1},\gamma_i} : \opE\limits_i F(j)(\gamma_{i-1},\gamma_i) \to \opE\limits_i K(x_{i-1},x_i) \]
ranging over $(j,\gamma_0,...,\gamma_n) \in (\kappa_{0 \bullet}^{\times n})^{-1}(x_0,...,x_n)$ form a colimit cocone in $V$, then $\Gamma E$ preserves the colimit of $F$.
\end{lemma}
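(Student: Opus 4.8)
The plan is to use that $\Gamma E$ lives over $\Set$ so as to reduce the assertion to a statement about homs in $V$, and then to recognise the colimit-of-a-coproduct on the left-hand side as being computed by exactly the fibred colimits appearing in the hypothesis. Concretely, write $\kappa_j : F(j) \to K$ for the colimiting cocone, let $L = \colim_j \Gamma E(F(j))$, and let $c : L \to \Gamma E(K)$ be the canonical comparison; we must show $c$ is invertible. Since $\Gamma E$ is a monad over $\Set$ and $(-)_0$ preserves the colimit defining $K$ by proposition(\ref{prop:colimits-GV}), we have $(\Gamma E{\comp}F)(j)_0 = F(j)_0$ with object-cocone $\kappa_{j,0}$, so $L_0 = K_0$ and $c$ is the identity on objects; it therefore suffices to prove each hom map $c_{a,b}$ is an isomorphism. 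Moreover, by the fibred description of colimits in $\ca GV$ recorded in the discussion preceding proposition(\ref{prop:colimits-GV}) (the case $J=1$), for $a,b \in K_0$
\[ L(a,b) \iso \colim_{(j,\alpha,\beta) \in (\kappa_{0 \bullet}^{\times 2})^{-1}(a,b)} \Gamma E(F(j))(\alpha,\beta). \]

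Next, by equation(\ref{eq:monad-from-multitensor-on-homs}), $\Gamma E(F(j))(\alpha,\beta) = \coprod \opE\limits_i F(j)(y_{i-1},y_i)$, the coproduct taken over the set $\Phi(j,\alpha,\beta)$ of finite sequences $(y_0,\ldots,y_n)$ in $F(j)_0$ with $y_0 = \alpha$ and $y_n = \beta$. This $\Phi$ is a functor on $\ca I := (\kappa_{0 \bullet}^{\times 2})^{-1}(a,b)$, and the above functor into $V$ factors through its category of elements $\tn{el}(\Phi)$ via $\tilde F : \tn{el}(\Phi) \to V$, $((j,\alpha,\beta),(y_\bullet)) \mapsto \opE\limits_i F(j)(y_{i-1},y_i)$. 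By the standard fact that the colimit of a coproduct-valued functor is the colimit of the associated functor on the category of elements, $L(a,b) \iso \colim_{\tn{el}(\Phi)} \tilde F$.

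The key step is to decompose $\tn{el}(\Phi)$. A morphism $(j,\alpha,\beta) \to (j',\alpha',\beta')$ of $\ca I$ is a map $\phi : j \to j'$ of $J$ with $F(\phi)_0$ carrying $(\alpha,\beta)$ to $(\alpha',\beta')$, and since $\kappa_{j',0}F(\phi)_0 = \kappa_{j,0}$, the $K_0$-sequence $(\kappa_{j,0}y_0,\ldots,\kappa_{j,0}y_n)$ attached to an object $((j,\alpha,\beta),(y_\bullet))$ of $\tn{el}(\Phi)$ is invariant along morphisms. Hence $\tn{el}(\Phi)$ is the coproduct, over all sequences $(x_0,\ldots,x_n)$ in $K_0$ with $x_0 = a$ and $x_n = b$, of the full subcategories picked out by this invariant; and the subcategory corresponding to $(x_0,\ldots,x_n)$ is precisely the indexing category $(\kappa_{0 \bullet}^{\times n})^{-1}(x_0,\ldots,x_n)$ of the hypothesis, with $\tilde F$ restricting to $(j,\gamma_\bullet) \mapsto \opE\limits_i F(j)(\gamma_{i-1},\gamma_i)$. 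Since colimits commute with coproducts,
\[ L(a,b) \iso \coprod_{x_0=a,\ldots,x_n=b} \; \colim_{(j,\gamma_\bullet) \in (\kappa_{0 \bullet}^{\times n})^{-1}(x_0,\ldots,x_n)} \opE\limits_i F(j)(\gamma_{i-1},\gamma_i), \]
and the hypothesis identifies each inner colimit with $\opE\limits_i K(x_{i-1},x_i)$ via the cocone $\opE\limits_i \kappa_{j,\gamma_{i-1},\gamma_i}$, so the right-hand side is $\Gamma E(K)(a,b)$. Finally one checks, directly from the formula for $\Gamma E$ on morphisms, that under these identifications the composite isomorphism is $c_{a,b}$: the $(y_\bullet)$-summand of $L(a,b)$ maps by $\opE\limits_i \kappa_{j,y_{i-1},y_i}$ into the $(\kappa_{j,0}y_0,\ldots,\kappa_{j,0}y_n)$-summand of $\Gamma E(K)(a,b)$, which is exactly the hom map of $\Gamma E(\kappa_j)$. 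Thus $c$ is invertible and $\Gamma E$ preserves the colimit of $F$.

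I expect the only real work to be the bookkeeping in the decomposition step — exhibiting $\tn{el}(\Phi)$ as the coproduct of the fibres $(\kappa_{0 \bullet}^{\times n})^{-1}(x_0,\ldots,x_n)$ and verifying that the induced comparison is $c_{a,b}$. The remaining ingredients (the reduction to homs, the fibred colimit formula for $\ca GV$, the elements-of-a-coproduct identity, and the commutation of colimits with coproducts) are either immediate or directly supplied by proposition(\ref{prop:colimits-GV}) and its surrounding discussion.
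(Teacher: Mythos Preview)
Your proposal is correct and follows the same route as the paper: reduce to homs using that $\Gamma E$ is over $\Set$, then identify $L(a,b)$ as a coproduct over $K_0$-sequences of the fibred colimits appearing in the hypothesis. The paper simply asserts the formula
\[ \left(\colim_{j} \Gamma E(Fj)\right)(a,b) \;=\; \coprod_{a=x_0,\ldots,x_n=b} \colim_{(j,\gamma_0,\ldots,\gamma_n)} \opE\limits_i F(j)(\gamma_{i-1},\gamma_i) \]
without further justification, whereas your category-of-elements argument (steps 3--5) is exactly the bookkeeping needed to make that assertion rigorous; so your version is a fleshed-out form of the paper's one-line proof rather than a different argument.
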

\begin{proof}
The obstruction map $k$ measuring whether $\Gamma E$ preserves the colimit $K$ is bijective on objects since $\Gamma E$ is over $\Set$. By definition of $\Gamma E$ and the construction of colimits in $\ca GV$ one has
\[ \left(\colim\limits_{j \in J} \Gamma E(Fj)\right)(a,b) = \coprod_{a=x_0,...,x_n=b} \colim\limits_{j,\gamma_0,...,\gamma_n} \opE\limits_i F(j)(\gamma_{i-1},\gamma_i) \]
where in the summand $(j,\gamma_0,...,\gamma_n) \in (\kappa_{0 \bullet}^{\times n})^{-1}(x_0,...,x_n)$. Thus if the obstruction maps measuring whether the $\opE\limits_i \kappa_{j,\gamma_{i-1},\gamma_i}$ are colimit cocones are invertible, then the hom maps of $k$ are invertible, and so $k$ is also fully faithful.
\end{proof}
In order to understand how the preservation by $E$ of $\lambda$-filtered colimits gives rise to the same property for $\Gamma E$, we require
\begin{lemma}\label{lem:filtered-finality}
Let $J$ be a filtered category, $F:J \to \Set$ and $\kappa_j:F(j) \to K$ be a colimit cocone. Then for $n > 0$ and $1 \leq i \leq n$ the functor
\[ \begin{array}{lccr} {\tn{pr}_i : (\kappa_{\bullet}^{\times n})^{-1}(x_0,...,x_n) \to (\kappa_{\bullet}^{\times 2})^{-1}(x_{i-1},x_i)} &&& {(j,\gamma_0,...,\gamma_n) \mapsto (j,\gamma_{i-1},\gamma_i)} \end{array} \]
is final.
\end{lemma}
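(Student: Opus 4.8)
The plan is to verify the standard criterion for finality: a functor $G$ is final iff for every object $c$ of its codomain the comma category $c/G$ (objects: pairs consisting of an object $x$ of the domain and a morphism $c \to Gx$) is nonempty and connected. So I would fix the sequence $(x_0,\dots,x_n)$ of objects of $K$ and an object $(j_0,\alpha,\beta)$ of $(\kappa_\bullet^{\times 2})^{-1}(x_{i-1},x_i)$. Unwinding the definitions, an object of $c/\tn{pr}_i$ is a lift $(j,\gamma_0,\dots,\gamma_n)\in(\kappa_\bullet^{\times n})^{-1}(x_0,\dots,x_n)$ of the whole sequence together with a map $\phi:j_0\to j$ in $J$ satisfying $F(\phi)(\alpha)=\gamma_{i-1}$ and $F(\phi)(\beta)=\gamma_i$; a morphism in $c/\tn{pr}_i$ is a map in $J$ compatible with all this data. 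The only inputs needed are the two standard facts about filtered colimits in $\Set$: the cocone $(\kappa_j)$ is jointly surjective, and if $p,q\in F(j)$ satisfy $\kappa_j(p)=\kappa_j(q)$ then $F(\theta)(p)=F(\theta)(q)$ for some $\theta:j\to j'$; moreover, since $J$ is filtered, \emph{finitely many} such coincidences can be realised by a single $\theta$ (take a cocone over a suitable finite diagram).

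For nonemptiness I would first use joint surjectivity to pick, for each $\ell\notin\{i-1,i\}$, some $j_\ell\in J$ and $\delta_\ell\in F(j_\ell)$ with $\kappa_{j_\ell}(\delta_\ell)=x_\ell$. Filteredness of $J$ then produces an object $j$ together with maps $\phi:j_0\to j$ and $\psi_\ell:j_\ell\to j$; setting $\gamma_{i-1}=F(\phi)(\alpha)$, $\gamma_i=F(\phi)(\beta)$ and $\gamma_\ell=F(\psi_\ell)(\delta_\ell)$ otherwise, the cocone identities $\kappa_j F(\psi_\ell)=\kappa_{j_\ell}$ and $\kappa_j F(\phi)=\kappa_{j_0}$ show that $(j,\gamma_0,\dots,\gamma_n)$ lies over $(x_0,\dots,x_n)$, and $\phi$ exhibits it as an object of $c/\tn{pr}_i$.

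For connectedness I would take two objects $((j,\gamma_\bullet),\phi)$ and $((j',\gamma'_\bullet),\phi')$ of $c/\tn{pr}_i$. By filteredness choose $j''$ with maps $\psi:j\to j''$ and $\psi':j'\to j''$, and after coequalizing the parallel pair $\psi\phi,\psi'\phi':j_0\to j''$ (replacing $j''$ and $\psi,\psi'$ by their composites with a further morphism) arrange that $\psi\phi=\psi'\phi'$. Now $F(\psi)(\gamma_\ell)$ and $F(\psi')(\gamma'_\ell)$ have the same image $x_\ell$ under $\kappa_{j''}$ for each of the $n+1$ indices $\ell$, so by the ``finitely many coincidences'' remark there is a single $\theta:j''\to j'''$ with $F(\theta)F(\psi)(\gamma_\ell)=F(\theta)F(\psi')(\gamma'_\ell)=:\gamma''_\ell$ for all $\ell$. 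One then checks that $(j''',\gamma''_\bullet)$ lies over $(x_0,\dots,x_n)$ and that $\theta\psi\phi\,(=\theta\psi'\phi')$ makes it an object of $c/\tn{pr}_i$ receiving morphisms (namely $\theta\psi$ and $\theta\psi'$) from both given objects; hence $c/\tn{pr}_i$ is connected.

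The argument is entirely routine; I expect the only (very mild) obstacle to be the bookkeeping, namely arranging a \emph{single} morphism of $J$ that simultaneously equalizes the two structure maps out of $j_0$ and identifies all $n+1$ transported coordinates. Both are handled by invoking the cocone property of filtered categories over appropriate finite diagrams, using nothing about $V$ or $F$ beyond $F$ landing in $\Set$.
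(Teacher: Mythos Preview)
Your proof is correct and follows essentially the same approach as the paper's: both verify the comma-category criterion for finality, establish nonemptiness by lifting the missing coordinates using joint surjectivity and then amalgamating via a cocone in $J$, and establish connectedness by first arranging that the two structure maps out of $j_0$ agree and then equalising the transported coordinates using the standard description of identifications in a filtered colimit in $\Set$. The only cosmetic difference is that the paper equalises the remaining coordinates one at a time (``do the same successively for all other $k$'') whereas you invoke a single $\theta$ handling all finitely many coincidences at once; these are the same argument.
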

\begin{proof}
For a given $(j,\alpha,\beta) \in (\kappa_{\bullet}^{\times n})^{-1}(x_{i-1},x_i)$ we must show that the comma category $(j,\alpha,\beta)/\tn{pr}_i$ is connected. Explicitly the objects of this comma category consist of the data
\[ \begin{array}{lccr} {f : j \to j'} &&& {(j',\gamma_0,...,\gamma_n)} \end{array} \]
where $\gamma_i \in Fj$, $F(f)(\alpha) = \gamma_{i-1}$ and $F(f)(\beta) = \gamma_i$. A morphism
\[ (f,j',\gamma_0,...,\gamma_n) \to (f',j'',\gamma'_0,...,\gamma'_n) \]
is a map $g:j' \to j''$ in $J$ such that $gf=f'$ and $F(g)(\gamma_k) = \gamma'_k$ for $1 \leq k \leq n$.

For $k \notin \{i-1,i\}$ one can find $(j_k,\gamma_k)$ where $j_k \in J$, $\gamma_k \in F(j_k)$ and $\kappa_{j_k}(\gamma_k) = x_k$ since the cocone $\kappa$ is jointly epic. By the filteredness of $J$ one has maps $\delta:j \to j'$ and $\delta_k:j_k \to j'$, and thus $(\delta,\varepsilon_0,...,\varepsilon_n)$ with $\varepsilon_{i-1}=F(\delta)(\alpha)$, $\varepsilon_{i}=F(\delta)(\beta)$ and $\varepsilon_k=F(\delta)(\gamma_k)$ for $k \notin \{i-1,i\}$, exhibits $(j,\alpha,\beta)/\tn{pr}_i$ as non-empty.

Note that if $y,z \in Fj$ satisfy $\kappa_j(x)=\kappa_j(y)$, then since $K$ may be identified as the connected components of $F_{\bullet}$, there is an undirected path
\[ (j,x) \to (j_1,z_1) \from ... \to (j_n,z_n) \to (j,y) \]
in $F_{\bullet}$. Consider the underlying diagram in $J$ with endpoints (ie the two instances of $j$) identified. Using the filteredness of $J$ one has a cocone for this diagram, and we write $j'$ for the vertex of this cocone. Thus we have $f:j \to j'$ such that $F(f)(y)=F(f)(z)$.

Now let $(f,j',\gamma_0,...,\gamma_n)$ and $(f',j'',\gamma'_0,...,\gamma'_n)$ be any two objects of $(j,\alpha,\beta)/\tn{pr}_i$. First we use the filteredness of $J$ to produce a commutative square
\[ \xygraph{{j}="tl" [r] {j'}="tr" [d] {v_1}="br" [l] {j''}="bl" "tl":"tr"^-{f}:"br"^-{h_1}:@{<-}"bl"^-{g_1}:@{<-}"tl"^-{f'}} \]
whose diagonal we denote as $d_1$. Note that by definition $F(h_1)(\gamma_{i-1}) = F(d_1)(\alpha) = F(g_1)(\gamma'_{i-1})$ and $F(h_1)(\gamma_i) = F(d_1)(\beta) = F(g_1)(\gamma'_i)$, but we have no reason to suppose that $F(h_1)(\gamma_k) = F(g_1)(\gamma'_k)$ for $k \notin \{i-1,i\}$. However $F(h_1)(\gamma_k)$ and $F(g_1)(\gamma_k)$ are by definition identified by $\kappa_{v_1}$. Choosing one value of $k$ and using the observation of the previous paragraph, we can find $r_1:v_1 \to v_2$ such that $F(h_2)(\gamma_k)=F(g_2)(\gamma'_k)$ where $h_2=r_1h_1$ and $g_2=r_1g_1$. Do the same successively for all other $k \notin \{i-1,i\}$, so that in the end one has $h:j' \to v$ and $g:j'' \to v$ such that $hf=gf'$ whose common value we denote as $d$, and $F(h)(\gamma_k)=F(g)(\gamma'_k)$ for all $1 \leq k \leq n$. Denote by $\psi_k \in F(v)$ for the common value of $F(h)(\gamma_k)=F(g)(\gamma'_k)$. Thus one has
\[ \xygraph{!{0;(3,0):} {(f,j',\gamma_0,...,\gamma_n)}="l" [r] {(d,v,\psi_1,...,\psi_k)}="m" [r] {(f',j'',\gamma'_0,...,\gamma'_n)}="r" "l":"m"^-{h}:@{<-}"r"^-{g}} \]
in $(j,\alpha,\beta)/\tn{pr}_i$. Thus $(j,\alpha,\beta)/\tn{pr}_i$ is indeed connected.
\end{proof}
With these preliminary results in hand we can now proceed to
\begin{proof}
(\emph{of theorem(\ref{thm:preservation-by-Gamma-E})(\ref{thmcase:Gamma-E-pres-coproducts}) and (\ref{thmcase:Gamma-E-pres-filtered-colims}}))

(\ref{thmcase:Gamma-E-pres-coproducts}): Let $J$ be small and discrete and $F:J \to \ca GV$. In the situation of lemma(\ref{lem:GammaE-colim-in-terms-of-E}) with a given sequence $(x_0,...,x_n)$ from $K_0$, if that sequence contains elements from different $F(j)$'s then the category $(\kappa_{0 \bullet}^{\times n})^{-1}(x_0,...,x_n)$ will be empty, but by distributivity in this case $\opE\limits_i K(x_{i-1},x_i)$ will also be initial. On the other hand when the $x_i$ all come from the same $F(j)$, one has
\[ (\kappa_{0 \bullet}^{\times n})^{-1}(x_0,...,x_n) = \prod\limits_{1 \leq i \leq n} \kappa_{j0}^{-1}(x_i) \]
and then the universality of the cocone $\opE\limits_i \kappa_{j,\gamma_{i-1},\gamma_i}$ follows again from the distributivity of $E$.

(\ref{thmcase:Gamma-E-pres-filtered-colims}): Suppose $E$ is $\lambda$-accessible. Let $J$ be $\lambda$-filtered, $F:J \to \ca GV$ and $(x_0,...,x_n)$ be a sequence from $K_0$, where as in lemma(\ref{lem:GammaE-colim-in-terms-of-E}), $K$ is the colimit of $F$. Then one has a functor
\[ \begin{array}{lccr} {(\kappa^{\times n}_{0 \bullet})^{-1}(x_0,...,x_n) \to V^n} &&& {(j,\gamma_0,...,\gamma_n) \mapsto \left(F(j)(\gamma_{i-1},\gamma_i)\right)_{1 \leq i \leq n}} \end{array} \]
and we claim that
\[ \left(\kappa_{j,\gamma_{i-1},\gamma_i} : F(j)(\gamma_{i-1},\gamma_i) \to K(x_{i-1},x_i) \,\,\, : \,\,\, 1 \leq i \leq n\right) \]                                                                                                                                                                    
is a colimit cocone in $V^n$ for this functor. In the $i$-th variable $\kappa_{j,\gamma_{i-1},\gamma_i}$ is a cocone for the composite functor
\[ \xygraph{!{0;(2.5,0):} {(\kappa^{\times n}_{0 \bullet})^{-1}(x_0,...,x_n)}="l" [r(1.25)] {(\kappa^{\times 2}_{0 \bullet})^{-1}(x_{i-1},x_i)}="m" [r] {V}="r" "l":"m"^-{\tn{pr}_i}:"r"^-{}} \]
in which the second leg has colimit cocone given by the components $\kappa_{j,\gamma_{i-1},\gamma_i}$. Since $\tn{pr}_i$ is final by lemma(\ref{lem:filtered-finality}), the cocone $(\kappa_{j,\gamma_{i-1},\gamma_i} \,\,\, : \,\,\, 1 \leq i \leq n)$ is indeed universal as claimed.

Now the category $(F^{\times n}_{0 \bullet})$ comes with a discrete opfibration into $J$, and so its connected components are $\lambda$-filtered. But since $\lambda$-filtered colimits commute with finite products in $\Set$, these connected components are exactly the fibres of $(\kappa^{\times n}_{0 \bullet})$, and so for each sequence $(x_0,...,x_n)$, $(\kappa^{\times n}_{0 \bullet})^{-1}(x_0,...,x_n)$ is $\lambda$-filtered. Thus by lemma(\ref{lem:GammaE-colim-in-terms-of-E}) $\Gamma E$ is $\lambda$-accessible.

Conversely suppose that $\Gamma E$ is $\lambda$-accessible. For $F:J \to V$ with $J$ where is $\lambda$-filtered, with colimit cocone $\kappa_j:Fj \to K$ we must show that the induced cocone
\begin{equation}\label{eq:E-of-filtered-cocone}
E(X_1,...,X_{i-1},Fj,X_{i+1},...X_n) \to E(X_1,...,X_{i-1},K,X_{i+1},...X_n)
\end{equation}
is universal, for all $N \in \N$, $1 \leq i \leq n$ and $X_1,...,X_{i-1},X_{i+1},...X_n \in V$. By remark(\ref{rem:fibrewise-limits-and-colimits}) the cocone
\begin{equation}\label{eq:filtered-cocone-in-GV}
(X_1,...,X_{i-1},Fj,X_{i+1},...X_n) \to (X_1,...,X_{i-1},K,X_{i+1},...X_n) 
\end{equation}
in $\ca GV$ is universal, and moreover that for any sequence $(Y_1,...,Y_n)$ of objects of $V$ and $1 \leq a,b \leq n$ one has
\begin{eqnarray*}
\Gamma E(Y_1,...,Y_n)(0,n) & = & \coprod\limits_{0=x_0,...,x_n=n} \opE\limits_i \left((Y_1,...,Y_n)(x_{i-1},x_i)\right) \\
& \iso & \opE\limits_i Y_i
\end{eqnarray*}
by the distributivity of $E$. Thus applying $\Gamma E$ to the cocone (\ref{eq:filtered-cocone-in-GV}) and looking at the hom between $0$ and $n$ gives the cocone (\ref{eq:E-of-filtered-cocone}), and so by remark(\ref{rem:fibrewise-limits-and-colimits}), the result follows.
\end{proof}

\subsection{Cartesianness of $\Gamma E$}
\label{sec:GammaE-cartesian}
Let $V$ be a category with coproducts and pullbacks, in which every object is a coproduct of connected objects, and suppose that $(E,u,\sigma)$ is a cartesian multitensor. We will now show that $(\Gamma E,\eta,\mu)$ is a cartesian monad. Note that by lemma(\ref{lem:easy-ext}) such a $V$ is in fact extensive.
\begin{lemma}\label{lem:pb-decomp}
Let $V$ be a category with coproducts and pullbacks in which every object is a coproduct of connected objects. Suppose that we are given square
\[ \xygraph{{P}="tl" [r] {B}="tr" [d] {C}="br" [l] {A}="bl" "tl":"tr"^-{q}:"br"^-{g}:@{<-}"bl"^-{f}:@{<-}"tl"^-{p}} \]
in $V$ which admits a description as on the left in
\[ \xygraph{!{0;(2.5,0):}
{\xybox{\xygraph{!{0;(2,0):(0,.5)::} {\coprod\limits_{(i,j) \in L} P_{ij}}="tl" [r] {\coprod\limits_{j \in J} B_j}="tr" [d] {\coprod\limits_{k \in K} C}="br" [l] {\coprod\limits_{i \in I} A_i}="bl" "tl":"tr"^-{(q_{ij})_{ij}}:"br"^-{(g_j)_j}:@{<-}"bl"^-{(f_i)_i}:@{<-}"tl"^-{(p_{ij})_{ij}}}}} [r(1.2)]
{\xybox{\xygraph{{L}="tl" [r] {J}="tr" [d] {K}="br" [l] {I}="bl" "tl":"tr"^-{\nu}:"br"^-{\gamma}:@{<-}"bl"^-{\phi}:@{<-}"tl"^-{\pi} "tl":@{}"br"|-{pb}}}} [r]
{\xybox{\xygraph{!{0;(1.5,0):(0,.667)::} {P_{(i,j)}}="tl" [r] {B_j}="tr" [d] {C_{\phi i = \gamma j}}="br" [l] {A_i}="bl" "tl":"tr"^-{q_{ij}}:"br"^-{g_j}:@{<-}"bl"^-{f_i}:@{<-}"tl"^-{p_{ij}}}}}} \]
in which the indexing sets of the coproduct decompositions fit into a pullback square as shown in the middle, with elements of $L$ represented explicitly as pairs $(i,j)$ such that $\phi(i)=\gamma(j)$. Suppose moreover that for all such $(i,j)$ the squares as indicated on the right in the previous display are pullbacks. Then it follows that the original square is itself a pullback.
\end{lemma}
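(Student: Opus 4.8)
The plan is to reduce everything to extensivity. By lemma(\ref{lem:easy-ext}), a category $V$ with coproducts, pullbacks and in which every object is a coproduct of connected objects is extensive, so by theorem(\ref{thm:extensivity-characterisation}) its coproducts are disjoint and its initial object is strict. Under these hypotheses one can compute the pullback $A \times_C B$ summand by summand against the three given coproduct decompositions, and the result will be exactly $P = \coprod_{(i,j) \in L} P_{ij}$ with its evident cone.

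First I would record, for each pair $(i,j) \in I \times J$, the pullback $Q_{ij} := A_i \times_C B_j$ formed from the composites $A_i \to C_{\phi i} \hookrightarrow C$ and $B_j \to C_{\gamma j} \hookrightarrow C$ (i.e. $f_i$ and $g_j$ followed by coproduct coprojections of $C$), and distinguish two cases. If $\phi i \ne \gamma j$, then the canonical comparison $Q_{ij} \to C_{\phi i} \times_C C_{\gamma j}$ has initial codomain by disjointness of coproducts, so $Q_{ij}$ is itself initial by strictness of $\emptyset$. If $\phi i = \gamma j =: k$, then the coprojection $C_k \hookrightarrow C$ is monic, so $Q_{ij}$ agrees with the pullback of $A_i \xrightarrow{f_i} C_k \xleftarrow{g_j} B_j$, which by hypothesis is $P_{ij}$.

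Next I would assemble these pieces. Applying theorem(\ref{thm:extensivity-characterisation}) to the first projection $A \times_C B \to A = \coprod_i A_i$ exhibits $A \times_C B \cong \coprod_i (A_i \times_C B)$, and applying it again to each $A_i \times_C B \to B = \coprod_j B_j$ exhibits $A_i \times_C B \cong \coprod_j Q_{ij}$. Hence $A \times_C B \cong \coprod_{(i,j) \in I \times J} Q_{ij}$; deleting the initial summands, i.e. those with $\phi i \ne \gamma j$, leaves $\coprod_{(i,j)\,:\,\phi i = \gamma j} P_{ij}$, which is $\coprod_{(i,j) \in L} P_{ij} = P$ since $L$ is precisely the pullback $I \times_K J$. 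Finally I would observe that, by the construction of $Q_{ij}$, the two projections out of $A \times_C B$ restrict on the summand $P_{ij}$ to $p_{ij}$ and $q_{ij}$ followed by the relevant coprojections of $A$ and $B$ --- which is exactly how $p$ and $q$ were defined --- so the isomorphism above is the canonical comparison map, and the original square is therefore a pullback.

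There is no genuinely hard step here: the proof is just a disciplined application of extensivity. The only thing demanding care is the bookkeeping with the four indexing sets --- in particular checking that the surviving summands of $A \times_C B$ are indexed exactly by $L = I \times_K J$ (so that it is really the pullback square of indexing sets in the statement that makes the count come out right), and confirming that the resulting isomorphism is the comparison map rather than some unrelated one.
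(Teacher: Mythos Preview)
Your proof is correct, but it takes a different route from the paper's. The paper verifies the universal property directly: since every object of $V$ is a coproduct of connected objects, it suffices to check that for each \emph{connected} test object $X$ with maps $h:X\to A$ and $k:X\to B$ satisfying $fh=gk$, there is a unique filler $X\to P$. Connectedness of $X$ forces $h$ and $k$ to factor through unique summands $A_i$ and $B_j$, and the equation $fh=gk$ then forces $\phi(i)=\gamma(j)$; the component pullback $P_{ij}$ supplies the unique filler. No explicit appeal to extensivity or to lemma(\ref{lem:easy-ext}) is made.

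Your approach instead invokes extensivity (via lemma(\ref{lem:easy-ext})) and then \emph{computes} $A\times_C B$ summand by summand, showing it agrees with $P$. This is slightly longer but arguably more informative: the componentwise description of pullbacks that the paper records as a consequence of the lemma is, in your argument, the substance of the proof itself. The paper's argument is shorter and uses the connectedness hypothesis more directly; yours makes the structure of the pullback explicit. Either is fine.
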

\begin{proof}
To see this is a pullback it suffices just for connected $X$, $h:X \to A$ and $k:X \to B$ with $fh=gk$, that there is a unique filler $d:X \to P$ such that $pd=h$ and $qd=k$, since every object of $V$ is a coproduct of connected ones. But then using the connectedness one can factor $h$ and $k$ through unique summands say $i \in I$ and $j \in J$ related by $\phi(i)=\gamma(j)$, and so use the defining pullback of $P_{ij}$ to induce the desired unique $d$.
\end{proof}
One application of lemma(\ref{lem:pb-decomp}) is the componentwise construction of pullbacks in such a $V$. For given a cospan
\[ \xygraph{{A}="l" [r] {C}="m" [r] {B}="r" "l":"m"^-{f}:@{<-}"r"^-{g}} \]
in $V$, one can compute its pullback one component at a time by decomposing $A$, $B$ and $C$ into coproducts of connected objects, then pulling back the indexing sets, then taking the pullbacks componentwise, and finally re-amalgamating (by taking coproducts). Note however that the summands $P_{ij}$ of the pullback so obtained are not neccessarily themselves connected. We are now ready to exhibit
\begin{proof}
(\emph{of theorem(\ref{thm:preservation-by-Gamma-E})(\ref{thmcase:Gamma-E-cartesian})})

Let $(E,u,\sigma)$ a cartesian multitensor on $V$ a category with coproducts and pullbacks in which every object decomposes as sum of connected ones. Let $\mathbb{P}$ be the pullback
\[ \xygraph{{P}="tl" [r] {Y}="tr" [d] {Z}="br" [l] {X}="bl" "tl":"tr"^-{q}:"br"^-{g}:@{<-}"bl"^-{f}:@{<-}"tl"^-{p} "tl":@{}"br"|-{pb}} \]
in $\ca GV$ and denote by $d:P \to Z$ the diagonal. Then $\Gamma E(\mathbb{P})$ is certainly a pullback at the level of object sets, since $\Gamma E$ is over $\Set$. So it suffices, by the construction of pullbacks in $\ca GV$, to check that for each $w,w' \in P_0$ the corresponding hom square of $\Gamma E(\mathbb{P})$ is a pullback in $V$. This hom square is a square in $V$ of the form
\[ \xygraph{!{0;(5,0):(0,.25)::} {\coprod\limits_{w=w_0,...,w_n=w'} \opE\limits_i P(w_{i-1},w_i)}="tl" [r] {\coprod\limits_{qw=y_0,...,y_n=qw'} \opE\limits_i Y(y_{i-1},y_i)}="tr" [d] {\coprod\limits_{dw=z_0,...,z_n=dw'} \opE\limits_i Z(z_{i-1},z_i)}="br" [l] {\coprod\limits_{pw=x_0,...,x_n=pw'} \opE\limits_i X(x_{i-1},x_i)}="bl" "tl":"tr"^-{}:"br"^-{}:@{<-}"bl"^-{}:@{<-}"tl"^-{}} \]
and the induced square at the level of summand indexing sets is a pullback since $\mathbb{P}_0$ is a pullback in $\Set$. For each sequence $(w_0,...,w_n)$ in $P_0$ from $w$ to $w'$, the corresponding component is
\[ \xygraph{!{0;(4,0):(0,.25)::} {\opE\limits_i P(w_{i-1},w_i)}="tl" [r] {\opE\limits_i Y(qw_{i-1},qw_i)}="tr" [d] {\opE\limits_i Z(dw_{i-1},dw_i)}="br" [l] {\opE\limits_i X(pw_{i-1},pw_i)}="bl" "tl":"tr"^-{\opE\limits_i q_{w_{i-1},w_i}}:"br"^-{\opE\limits_i g_{qw_{i-1},qw_i}}:@{<-}"bl"^-{\opE\limits_i f_{pw_{i-1},pw_i}}:@{<-}"tl"^-{\opE\limits_i p_{w_{i-1},w_i}}} \]
which is a pullback since $\mathbb{P}$ is. Thus by lemma(\ref{lem:pb-decomp}) $\Gamma E(\mathbb{P})$ is a pullback.

We must show that for $f:X \to Y$ in $\ca GV$ the corresponding naturality squares of $\eta$ and $\mu$ are cartesian. Since they are over $\Set$ this is clearly so at the level of objects. The hom at $(a,b)$ of the naturality of square of $\eta$ has underlying square of summand indexing sets given by
\[ \xygraph{!{0;(4,0):(0,.25)::} {1}="tl" [r] {\{(x_0,...,x_n) \,\, : \,\, n \in \N, \, x_0=a, \, x_n=b\}}="tr" [d] {\{(y_0,...,y_n) \,\, : \,\, n \in \N, \, y_0=fa, \, y_n=fb\}}="br" [l] {1}="bl" "tl":"tr"^-{(a,b)}:"br"^-{\tn{apply $f_0$}}:@{<-}"bl"^-{(fa,fb)}:@{<-}"tl"^-{}} \]
and the components are naturality squares for $u$. Thus by lemma(\ref{lem:pb-decomp}) $\eta$ is cartesian. Note that using the distributivity of $E$ one has a canonical isomorphism
\[ (\Gamma E)^2(X)(a,b) \iso \coprod\limits_{(x_{ij})_{ij}}  \opE\limits_i\opE\limits_j X(x_{ij-1},x_{ij}) \]
where the coproduct is taken over the set of composable doubly-indexed sequences starting at $a$ and finishing at $b$.
Unpacking in these terms one can see that in the case of $\mu$'s hom naturality square, the underlying square of summand indexing sets is
\[ \xygraph{!{0;(5.5,0):(0,.1667)::} {\{(x_{ij})_{ij} \,\, : \,\, x_0=a, \, x_n=b\}}="tl" [r] {\{(x_0,...,x_n) \,\, : \,\, n \in \N, \, x_0=a, \, x_n=b\}}="tr" [d] {\{(y_0,...,y_n) \,\, : \,\, n \in \N, \, y_0=fa, \, y_n=fb\}}="br" [l] {\{(y_{ij})_{ij} \,\, : \,\, y_0=fa, \, y_n=fb\}}="bl" "tl":"tr"^-{\tn{concatenate}}:"br"^-{\tn{apply $f_0$}}:@{<-}"bl"^-{\tn{concatenate}}:@{<-}"tl"^-{\tn{apply $f_0$}}} \]
in which concatenation is that of \emph{composable} sequences, that is, one identifies the last point of the $i$-th subsequence with the first point of the $(i+1)$-th, which by definition of ``composable doubly-indexed sequence'' are equal as elements of $X_0$ or $Y_0$. This square is easily seen to be a pullback. The components of $\mu$'s hom naturality square are naturality squares for $\sigma$. Thus by lemma(\ref{lem:pb-decomp}) $\mu$ is cartesian.

Conversely suppose that $(\Gamma,\eta,\mu)$ is a cartesian monad. Then by the same argument as for the converse direction of (\ref{thmcase:Gamma-E-pres-filtered-colims}), except with pullbacks in place of $\lambda$-filtered colimits, one may conclude that $E$ preserves pullbacks. Note that for $X \in V$ the hom between $0$ and $1$ of the naturality component of $\eta_{(X)}$ is, modulo the canonical isomorphism $E_1X \iso \Gamma E(X)(0,1)$, just $E_1X$, and so $u$'s cartesianness follows from that of $\eta$ by remark(\ref{rem:fibrewise-limits-and-colimits}). Suppose that $(X_1,...,X_n)$ is a sequence of objects of $V$. Denote by $\tn{sd}(X_i)_i$ the set of subdivisions of $(X_i)_i$ into a sequence of sequences. A typical element is a sequence of sequences $(X_{ij})$ where $1 \leq i \leq k$, $1 \leq j \leq n_i$ and $n_1 + ... n_k = n$, such that sequence obtained by concatenation is $(X_1,...,X_n)$. Then modulo the canonical isomorphism
\[ (\Gamma E)^2(X_1,...,X_n) \iso \coprod\limits_{\tn{sd}(X_i)_i} \opE\limits_i\opE\limits_j X_{ij} \]
the hom of the naturality component of $\mu_{(X_1,...,X_n)}$ between $0$ and $n$ is the map
\[ (\sigma_{X_{ij}}) : \coprod\limits_{\tn{sd}(X_i)_i} \opE\limits_i\opE\limits_j X_{ij} \to \opE_i X_i \]
and thus by remark(\ref{rem:fibrewise-limits-and-colimits}), these maps are cartesian natural in the $X_i$. By lemma(\ref{lem:easy-ext}) $V$ is extensive, and so the $\sigma_{X_{ij}}$ are cartesian natural in the $X_{ij}$ as required.
\end{proof}

\subsection{Local right adjointness}
\label{sec:GammaE-lra}
We now proceed to the task of proving that the construction $\Gamma$ is compatible with local right adjoint-ness. For this we first require two lemmas. We assume familiarity with the notion of ``generic morphism'' and the alternative formulation of local right adjoint-ness in terms of generics as described in \cite{Weber-Fam2fun} proposition(2.6).
\begin{lemma}\label{lem:partial-adjoint}
Let $R:V{\rightarrow}W$ be a functor, $V$ be cocomplete, $U$ be a small dense full subcategory of $W$, and $L:U{\rightarrow}V$ be a partial left adjoint to $R$, that is to say, one has isomorphisms $W(S,RX) \iso V(LS,X)$ natural in $S \in U$ and $X \in V$. Defining $\overline{L}:W{\rightarrow}V$ as the left kan extension of $L$ along the inclusion $I:U{\rightarrow}W$, one has $\overline{L} \ladj R$.
\end{lemma}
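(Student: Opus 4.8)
The plan is to produce the adjunction isomorphism $V(\overline{L}A,X)\iso W(A,RX)$, natural in $A\in W$ and $X\in V$, as a composite of natural bijections built from (i) the pointwise formula for the left Kan extension $\overline{L}=\tn{Lan}_IL$, (ii) the given partial adjunction, and (iii) the colimit presentation of objects of $W$ coming from the density of $U$.

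First, since $V$ is cocomplete and $U$ is small, $\overline{L}=\tn{Lan}_IL$ is computed by the usual pointwise colimit formula. Writing $U/A$ for the comma category whose objects are pairs $(S,s\colon S\to A)$ with $S\in U$ — a small category, since $U$ is small and $W$ is locally small — one has $\overline{L}A \iso \colim\big(U/A \xrightarrow{\tn{pr}} U \xrightarrow{L} V\big)$, the colimit being over the evident forgetful functor $\tn{pr}$. Density of $U$ says precisely that the corresponding colimit formed in $W$ returns $A$, i.e. the canonical cocone $(s\colon S\to A)$ exhibits $A$ as $\colim\big(U/A \xrightarrow{\tn{pr}} U \xrightarrow{I} W\big)$; equivalently, $W(I,-)\colon W\to\PSh U$ is fully faithful and carries this colimit to the tautological expression of the presheaf $W(I,A)$ as a colimit of representables.

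Then, for fixed $X$, one computes
\[ V(\overline{L}A,X) \iso \lim_{(S,s)} V(LS,X) \iso \lim_{(S,s)} W(S,RX) \iso W\big(\colim_{(S,s)} S,\, RX\big) \iso W(A,RX), \]
where the first and third isomorphisms are the usual conversion of a hom out of a colimit into a limit (the colimit in the third step being the density presentation, which exists in $W$), the second is the given partial adjunction $V(LS,X)\iso W(S,RX)$ — natural in $S\in U$, hence compatible with the transition maps of the diagram indexed by $U/A$ — and the last is the density presentation of $A$ recalled above. The composite is natural in $A$ because the diagram $U/A$ and its density cocone are functorial in $A$, and natural in $X$ since every step is; this exhibits $\overline{L}\ladj R$.

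I expect the only real content, as opposed to bookkeeping, to be the third isomorphism: it requires that the density colimit presenting $A$ be sent by $W(-,RX)$ to a limit. This is immediate once one rephrases density as fully faithfulness of $W(I,-)\colon W\to\PSh U$, for then $W(A,RX)\iso\PSh U\big(W(I,A),W(I,RX)\big)$, and since $W(I,A)\iso\colim_{(S,s)}U(-,S)$ this hom-set in $\PSh U$ is exactly $\lim_{(S,s)}W(S,RX)$. The remaining points — smallness of $U/A$ so that the colimits exist in the cocomplete $V$ and in the locally small $W$, and naturality of the composite in both variables — are routine.
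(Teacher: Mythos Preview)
Your proof is correct and follows essentially the same route as the paper's own argument: both use the pointwise colimit formula $\overline{L}A=\colim_{(S,s)\in U/A}LS$, convert $V(\overline{L}A,X)$ to the limit $\lim_{(S,s)}V(LS,X)$, apply the partial adjunction termwise to obtain $\lim_{(S,s)}W(S,RX)$, and then invoke density of $U$ to identify this with $W(A,RX)$. If anything, your version is slightly more explicit than the paper's in justifying the final density step and in addressing naturality.
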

\begin{proof}
Denoting by $p:I/Y{\rightarrow}U$ the canonical forgetful functor for $Y \in W$ and recalling that $\overline{L}Y = \colim(Lp)$, one obtains the desired natural isomorphism as follows
\[ \begin{array}{rcccl} {V(\overline{L}Y,X)} & {\iso} & {[I/Y,V](Lp,\textnormal{const}(X))} & {\iso} & {\lim_{f{\in}I/Y} V(L(\textnormal{dom}(f)),X)} \\ & {\iso} & {\lim_f W(\textnormal{dom}(f),RX)} & {\iso} &  {\ca B(Y,RX)} \end{array} \]
for all $X \in V$.
\end{proof}
\begin{lemma}\label{lem:lra-dense}
Let $T:V{\rightarrow}W$ be a functor, $V$ be cocomplete and $W$ have a small dense subcategory $U$. Then $T$ is a local right adjoint iff every $f:S{\rightarrow}TX$ with $S \in U$ admits a generic factorisation. If in addition $V$ has a terminal object denoted $1$, then generic factorisations in the case $X=1$ suffice.
\end{lemma}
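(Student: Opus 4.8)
The plan is to read both directions of the first ``iff'' through the characterisation of local right adjoints in terms of generic morphisms: by \cite{Fam2fun} proposition(2.6), $T$ is a local right adjoint exactly when every $g:C \to TX$ with $C \in W$ arbitrary admits a generic factorisation. The ``only if'' direction is then immediate, since a generic factorisation of $g:C \to TX$ is in particular one when $C = S \in U$. So the content is the converse, and the strategy there is to invoke lemma(\ref{lem:partial-adjoint}) at the level of slices rather than to build generic factorisations by hand.

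Concretely, for the ``if'' direction I would fix $X \in V$ and show that $T_X : V/X \to W/TX$ is a right adjoint. Let $U_X$ be the full subcategory of $W/TX$ on the objects $(S,s)$ with $S \in U$. Since the forgetful functor $W/TX \to W$ creates colimits and $U$ is dense in $W$, the canonical diagram $U_X/(C,c) \to W/TX$ has colimit $(C,c)$ for every $(C,c) \in W/TX$; hence $U_X$ is dense in $W/TX$, and it is clearly small. Also $V/X$ is cocomplete because $V$ is. For each $(S,s) \in U_X$ choose, using the hypothesis, a generic factorisation $s = (Tg_s)\,h_s$ with $h_s:S \to TA_s$ generic and $g_s:A_s \to X$, and set $L(S,s) := (A_s,g_s) \in V/X$. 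Given $(Y,y) \in V/X$, a morphism $(A_s,g_s) \to (Y,y)$ in $V/X$ is a $\psi:A_s \to Y$ with $y\psi = g_s$, while a morphism $(S,s) \to T_X(Y,y) = (TY,Ty)$ in $W/TX$ is a $\phi:S \to TY$ with $(Ty)\phi = s$. The assignment $\psi \mapsto (T\psi)h_s$, and the assignment sending $\phi$ to the unique filler of the commuting square $(Tg_s)h_s = s = (Ty)\phi$ supplied by genericity of $h_s$, are mutually inverse by the uniqueness clause in the definition of a generic morphism, and natural in $(Y,y)$; the same uniqueness makes $L$ a functor $U_X \to V/X$ and the bijection natural in $(S,s)$ as well. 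Thus $L$ is a partial left adjoint to $T_X$, and by lemma(\ref{lem:partial-adjoint}) $T_X$ has a left adjoint. As $X$ was arbitrary, $T$ is a local right adjoint.

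For the final sentence I would reduce the general case to $X=1$ as follows. Given an arbitrary $f:S \to TX$ with $S \in U$, let $t:X \to 1$ be the terminal map of $V$ and apply the hypothesis to $S \xrightarrow{f} TX \xrightarrow{Tt} T1$, obtaining a generic $h:S \to TA'$ and a $g':A' \to 1$ with $(Tt)f = (Tg')\,h$. The square with top $f$, left $h$, bottom $Tg'$ and right $Tt$ commutes, so genericity of $h$ yields a unique $w:A' \to X$ with $(Tw)h = f$ and $tw = g'$, the second equation being automatic since $1$ is terminal in $V$. Hence $f = (Tw)h$ is a generic factorisation, and the main part of the lemma applies.

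The only real work should be verifying that genericity of $h_s$ genuinely produces a natural bijection making $L$ a partial left adjoint — i.e.\ that the two assignments are inverse and that everything is functorial and natural — but this is pure bookkeeping driven by the universal property of a generic morphism and its uniqueness clause, and I do not anticipate any essential obstacle. (If one preferred to avoid lemma(\ref{lem:partial-adjoint}), the ``if'' direction could instead be done directly by assembling a generic factorisation of an arbitrary $f:C \to TX$ as a colimit over $U/C$ of chosen generic factorisations of the composites $S \to C \xrightarrow{f} TX$, and checking the resulting $h:C \to TA$ is again generic by restricting test squares along the coprojections $S \to C$; that argument is longer but self-contained, and this is where the real care would be needed.)
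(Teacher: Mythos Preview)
Your proof is correct and follows essentially the same approach as the paper: both use the hypothesised generic factorisations to build a partial left adjoint to $T_X$ on the dense subcategory $I/TX = U_X$ of $W/TX$, and then invoke lemma(\ref{lem:partial-adjoint}). Your treatment of the final sentence is in fact more explicit than the paper's, which simply cites \cite{Fam2fun} section(2) for the reduction to $X=1$.
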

\begin{proof}
For the first statement ($\implies$) is true by definition so it suffices to prove the converse. The given generic factorisations provide a partial left adjoint $L:I/TX{\rightarrow}V$ to $T_X:V/X{\rightarrow}W/TX$ where $I$ is the inclusion of $U$. Now $I/TX$ is a small dense subcategory of $W/TX$, and so by the previous lemma $L$ extends to a genuine left adjoint to $T_X$. In the case where $V$ has $1$ one requires only generic factorisations in the case $X=1$ by the results of \cite{Weber-Fam2fun} section(2).
\end{proof}
\noindent The analogous result for presheaf categories, with the representables forming the chosen small dense subcategory, was discussed in \cite{Weber-Fam2fun} section(2). With these results in hand we may now exhibit the
\begin{proof}
(\emph{of theorem(\ref{thm:preservation-by-Gamma-E})(\ref{thmcase:Gamma-E-lra})})

The aspects of this result involving the cartesianness of the units, multiplication and substitution are covered already by (\ref{thmcase:Gamma-E-cartesian}). Suppose that $E$ is local right adjoint. Let $\ca D$ be a small dense subcategory of $V$ consisting of $\lambda$-presentable connected objects. By lemma(\ref{lem:lra-dense}) and lemma(\ref{lem:GV-dense}) it suffices to exhibit generic factorisations of maps
\[ f:S \rightarrow \Gamma{E}1 \]
where $S$ is either $0$ or $(D)$ for some $D \in \ca D$. In the case where $S$ is $0$ the first arrow in the composite
\[ \xymatrix{0 \ar[r] & {\Gamma{E}0} \ar[r]^-{\Gamma{E}t} & {\Gamma{E}1}} \]
is generic because $0$ is the initial $V$-graph with one object (and $t$ here is the unique map). In the case where $S=(D)$, to give $f$ is to give a map $f':D{\rightarrow}E_n1$ in $V$ since $D$ is connected. Since $E$ is a local right adjoint, $E_n$ is too and so one can generically factor $f'$ to obtain
\[ \xymatrix{D \ar[r]^-{g'_f} & {\opE\limits_iZ_i} \ar[r]^-{\opE\limits_it} & {E_n1}} \]
from which we obtain the generic factorisation
\[ \xymatrix{{(D)} \ar[r]^-{g_f} & {\Gamma{E}Z} \ar[r]^-{\Gamma{E}t} & {\Gamma{E}1}} \]
where $Z=(Z_1,...,Z_n)$, the object map of $g_f$ is given by $0 \mapsto 0$ and $1 \mapsto n$, and the hom map of $g_f$ is $g'_f$ composed with the coproduct inclusion.

Conversely suppose that $\Gamma E$ is local right adjoint. It suffices by lemma(\ref{lem:lra-dense}) to exhibit a generic factorisation for maps of the form on the left in
\[ \begin{array}{lccr} {f : Y \to E(X_1,...X_n)} &&& {f' : (Y) \to \Gamma E(X_1,...,X_n)} \end{array} \]
where $Y$ is connected. Such an $f$ determines $f'$ as in the previous display unique with object map $(0,1) \mapsto (0,n)$ and hom map between $0$ and $1$ given by $f$, modulo the canonical isomorphism $E(X_1,...,X_n) \iso \Gamma E(X_1,...,X_n)(0,n)$ that we described already in the proof of (\ref{thmcase:Gamma-E-pres-filtered-colims}).

Consider a factorisation
\[ \xygraph{!{0;(2,0):} {(Y)}="l" [r(.75)] {\Gamma EZ}="m" [r(1.25)] {\Gamma E(X_1,...,X_n)}="r" "l":"m"^-{g}:"r"^-{\Gamma Eh}} \]
of $f'$. The object map of $h$ partitions the objects of $Z$ into $n+1$ subsets $Z_{(0)},...,Z_{(n)}$. The strict initiality of $\emptyset$ and the definition of $(X_1,...,X_n) \in \ca GV$ ensures that the only homs of $Z$ that are possibly non-initial, are those between $a$ and $b$ living in consecutive cells of this partition. Thus in addition to this partition $h$ amounts to maps $h_{a,b} : Z(a,b) \to X_i$ for all $a \in Z_{(i-1)}$ and $b \in Z_{(i)}$. The connectedness of $Y$ ensures that the hom map of $g$ between $0$ and $1$ factors through a unique summand of the appropriate hom of $\Gamma EZ$. Thus the data of $g$ comes down to: $1 \leq i \leq j \leq n$, $c_r \in Z_{(r)}$ for $i \leq r \leq j$ and a map $g_{0,1} : Y \to \opE\limits_{i{<}r{\leq}j} Z(c_{r-1},c_r)$. Consider the canonical inclusion
\[ \overline{c} : (Z(c_i,c_{i+1}), ..., Z(c_{j-1},c_j)) \hookrightarrow Z \]
and note that by the above description one may factor $g$ as
\[ \xygraph{!{0;(3,0):} {(Y)}="l" [r] {\Gamma E(Z(c_i,c_{i+1}), ..., Z(c_{j-1},c_j))}="m" [r] {\Gamma E Z.}="r" "l":"m"^-{g'}:"r"^-{\Gamma E \overline{c}}} \]
If $g$ were in fact generic then $\overline{c}$ would have a section and thus be an isomorphism. It follows that any generic factorisation of $f'$ is necessarily of the form
\[ \xygraph{!{0;(2,0):} {(Y)}="l" [r] {\Gamma E(X_1',...,X_n')}="m" [r(2)] {\Gamma E(X_1,...,X_n)}="r" "l":"m"^-{g}:"r"^-{\Gamma E(h_1,...,h_n)}} \]
for $h_i:X_i' \to X_i$ in $V$. Moreover it is easily shown that the hom of this factorisation between $0$ and $1$ gives a generic factorisation for the original map $f$, thereby exhibiting $E$ as local right adjoint.
\end{proof}

\section{Constructing a multitensor from a path-like monad}
\label{sec:Monads-Operads-Multitensors}

The passage $(V,E) \mapsto (\ca GV,\Gamma E)$ that we studied in the previous section is really the object map of a 2-functor 
\[ \Gamma : \DISTMULT \rightarrow \MND(\CAT/\Set). \]
In fact there are two (dual) ways of exhibiting $\Gamma$ as being 2-functorial. It is these 2-functors that are the principal objects of study in this section.  The 2-functoriality is given in section(\ref{sec:2-functoriality-Gamma}). In theorem(\ref{thm:characterisation-of-image-of-Gamma}) we characterise monads of the form $\Gamma E$, and propositions(\ref{prop:Gamma-locally-ff}) and (\ref{prop:Psi-2-ff}) essentially characterise the the images of the one and 2-cell maps of $\Gamma$. Finally in section(\ref{sec:compatibility-of-2-functor-Gamma}) we witness the compatibility of $\Gamma$ with cartesian transformations, which will lead in section(\ref{sec:operads-multitensors-basic}), to an understanding of the relation between multitensors and operads.

\subsection{Constructing a multitensor from a monad}
\label{ssec:monads-to-multitensors}
For a category $V$ a monad $(T,\eta,\mu)$ \emph{over $\Set$ on $\ca GV$} is a monad on
\[ (-)_0 : \ca GV \rightarrow \Set \]
in the 2-category $\CAT/\Set$. Thus as explained in section(\ref{sec:monads-from-multitensors}), the functor $T$ doesn't affect the object sets and similarly for maps, and moreover the components of $\eta$ and $\mu$ are identities on objects. Recall from section(\ref{ssec:enriched-graphs}) that if $V$ has an initial object then one can regard any sequence of objects $(Z_1,...,Z_n)$ of $V$ as a $V$-graph. This is clearly functorial in the $Z_i$. Moreover for $1 \leq a \leq b \leq n$ one has subsequence inclusions
\[ (Z_a,...,Z_b) \hookrightarrow (Z_1,...,Z_n) \]
defined in the obvious way, with the object map preserving successor and $0 \mapsto (a-1)$, and the hom maps being identities. This enables us to construct a multitensor on $V$ from $T$, essentially by applying $T$ to sequences and looking at the homs.

Explicitly one defines this associated multitensor $(\overline{T},\overline{\eta},\overline{\mu})$ as follows. The $n$-ary tensor product is defined by
\[ \overline{T}(Z_1,...,Z_n) := T(Z_1,...,Z_n)(0,n). \]
Recall that for $Z \in V$, $(Z)$ is the $V$-graph with object set $\{0,1\}$, hom between $0$ and $1$ equal to $Z$, and other homs initial. The unit $\overline{\eta}_Z:Z \to \overline{T}_1 Z$ is the hom map of $\eta_{(Z)}$ between $0$ and $1$. In order to define the substitution, note that given objects $Z_{ij}$ of $V$ where $1 \leq i \leq k$ and $1 \leq j \leq n_i$, one has a map
\[ \tilde{\tau}_{Z_{ij}} : (\Tbar\limits_{1{\leq}j{\leq}n_1}Z_{1j},...,\Tbar\limits_{1{\leq}j{\leq}n_k}Z_{kj}) \rightarrow T(Z_{11},......,Z_{kn_k}) \]
given on objects by $0 \mapsto 0$ and $i \mapsto (i,n_i)$ for $1 \leq i \leq k$. The hom map between $(i-1)$ and $i$ is the hom map of
\[ Ts_i : T(Z_{i1},...,Z_{in_i}) \to T(Z_{11},...,Z_{kn_k})  \]
between $0$ and $n_i$, where $s_i$ is the $i$-th subsequence inclusion. The component $\overline{\mu}_{Z_{ij}}$ is defined to be the hom map of $\mu \comp T(\tilde{\tau}_{Z_{ij}})$ between $0$ and $k$.

In order to understand why $(\overline{T},\overline{\eta},\overline{\mu})$ form a multitensor, it is worthwhile to take a more conceptual approach. This begins with the observation that a sequence $(Z_1,...,Z_n)$ of objects of $V$ may be viewed as a cospan
\[ \xygraph{!{0;(2,0):} {0}="l" [r] {(X_1,...,X_n)}="m" [r] {0}="r" "l":"m"^-{b}:@{<-}"r"^-{t}} \]
in $\ca GV$ in which $b$ picks out the ``bottom'' object $0$ and $t$ picks out the ``top'' object $n$. Moreover pushout composition
\[ \xygraph{!{0;(1.5,0):(0,.667)::} {0}="tl" [r(2)] {0}="tm" [r(2)] {0}="tr" [dl] {(Z_1,...,Z_n)}="mr" [l(2)] {(Y_1,...,Y_m)}="ml" [dr] {(Y_1,...,Y_m,Z_1,...,Z_n)}="b" "tl":"ml"^-{b}:@{<-}"tm"^-{t}:"mr"^-{b}:@{<-}"tr"^-{t} "ml":"b":@{<-}"mr" "ml":@{}"mr"|-{po}} \]
of such cospans in $\ca GV$ corresponds, as shown, to concatenation of sequences. These pushouts are special in that they only require an initial object in $V$ for their construction.

Pushout composition in $\ca GV$ of general cospans of the form
\[ \xygraph{{0}="l" [r] {X}="m" [r] {0}="r" "l":"m"^-{}:@{<-}"r"^-{}} \]
require coproducts in $V$ for their construction. Note that such cospans are, as already pointed out in section(\ref{ssec:enriched-graphs}), nothing more than bipointed $V$-graphs. Thus when $V$ has coproducts, pushout composition of cospans endows the category $\ca G(t_V)^{\times 2}_{\bullet}$ of bipointed $V$-graphs with a monoidal structure whose tensor product we denote as ``$*$''. Moreover given a monad $(T,\eta,\mu)$ on $\ca GV$ over $\Set$, one obtains a monoidal monad $(T_{\bullet},\eta_{\bullet},\mu_{\bullet})$ on $\ca G(t_V)^{\times 2}_{\bullet}$. The underlying endofunctor
\[ \begin{array}{lccr} {T_{\bullet} : \ca G(t_V)^{\times 2}_{\bullet} \to \ca G(t_V)^{\times 2}_{\bullet}} &&& {(X,a,b) \mapsto (TX,a,b)} \end{array} \]
as object map as described in the previous display. In terms of cospans, this is just the application of $T$ to cospans plus composition with the unique identity-on-objects $0 \to T0$ in order to get an endocospan of $0$. The monoidal functor coherences for $T_{\bullet}$ are the maps that give the obstruction to $T$ preserving the pushouts involved. The data $(\eta_{\bullet},\mu_{\bullet})$ are defined in the evident way from $(\eta,\mu)$.

The assignation of a cospan/bipointed $V$-graph from a sequence may done in two steps
\[ (Z_1,...,Z_n) \in MV \mapsto (((Z_1),0,1),...,((Z_n),0,1)) \mapsto ((Z_1,...,Z_n),0,n) \]
and so is the object map of the composite
\[ \xygraph{!{0;(2,0):} {MV}="p1" [r] {M\ca G(t_V)^{\times 2}_{\bullet}}="p2" [r] {\ca G(t_V)^{\times 2}_{\bullet}.}="p3" "p1":"p2"^-{ML_V}:"p3"^-{*}} \]
Thus one can view the functor $\overline{T}:MV \to V$ in more conceptual terms as the composite
\[ \xygraph{!{0;(2,0):} {MV}="p1" [r] {M\ca G(t_V)^{\times 2}_{\bullet}}="p2" [r] {\ca G(t_V)^{\times 2}_{\bullet}}="p3" [r] {\ca G(t_V)^{\times 2}_{\bullet}}="p4" [r] {V.}="p5" "p1":"p2"^-{ML_V}:"p3"^-{*}:"p4"^-{T_{\bullet}}:"p5"^-{\varepsilon_V}} \]
Observe that $(T_{\bullet},\eta_{\bullet},\mu_{\bullet})$ is a monoidal monad and $L_V \ladj \varepsilon_V$. Moreover in general one has
\begin{lemma}\label{lem:general-multitensor-facts-for-Tbar}
\begin{enumerate}
\item Let $(E,u,\sigma)$ be a multitensor on $V$ and $(T,\eta,\mu)$ be a monoidal monad on $(V,E)$ with monoidal functor coherences for $T$ written as
\[ \tau_{X_i} : \opE\limits_i TX_i \to T\opE\limits_i X_i. \]
Then $(F,u',\sigma')$ defines another multitensor on $V$ where $\opF\limits_i X_i = T\opE\limits_i X_i$, $u_X' = \eta_{E_1X}u_X$ and $\sigma'$ is the composite
\[ \xygraph{!{0;(2,0):} {T\opE\limits_iT\opE\limits_j}="l" [r] {T^2\opE\limits_i\opE\limits_j}="m" [r] {T\opE\limits_{ij}.}="r" "l":"m"^-{T \tau E}:"r"^-{\mu \sigma}} \]
\label{lemcase:monoidal-monad->multitensor}
\item Let $(E,u,\sigma)$ be a multitensor on $V$ and $L \ladj R : V \to W$ with unit $\eta$ and counit $\varepsilon$. Then $(F,u',\sigma')$ defines multitensor on $W$ where $\opF\limits_i X_i = R \opE\limits_i LX_i$, $u'=(RuM)\eta$ and $\sigma'$ is the composite
\[ \xygraph{!{0;(2.5,0):} {R\opE\limits_iLR\opE\limits_jL}="l" [r] {R\opE\limits_i\opE\limits_jL}="m" [r] {R\opE\limits_{ij}L}="r" "l":"m"^-{R\opE\limits_i\varepsilon\opE\limits_jL}:"r"^-{R \sigma L}} \]
\label{lemcase:multitensor-across-adjunction}
\end{enumerate}
\end{lemma}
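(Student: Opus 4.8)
For part~\ref{lemcase:monoidal-monad->multitensor} and part~\ref{lemcase:multitensor-across-adjunction} alike, the plan is to verify directly the three defining axioms of a multitensor---the two unit laws and the substitution (pentagon-type) law---for the proposed data $(F,u',\sigma')$, reducing each by a pasting argument to instances of the corresponding axioms for $\opE$ together with: the monad axioms for $T$ and the monoidal-monad coherences of $\tau$, in part~\ref{lemcase:monoidal-monad->multitensor}; or the triangle identities for $L \ladj R$ together with the naturality of $u$, $\sigma$ and $\varepsilon$, in part~\ref{lemcase:multitensor-across-adjunction}. Conceptually, part~\ref{lemcase:monoidal-monad->multitensor} records that a monoidal-monad structure on $T$ is precisely a ``distributive law'' of the monad $T$ over the lax $M$-algebra $E$ --- the 2-cell $\tau\colon\opE\limits_iTX_i \to T\opE\limits_iX_i$ has the shape $ET \Rightarrow TE$ --- so $F = T\opE$ inherits a lax $M$-algebra structure exactly as a monad and a distributive law assemble into a monad; and part~\ref{lemcase:multitensor-across-adjunction} is the adjoint transfer of a lax $M$-algebra along $L \ladj R$, which succeeds for \emph{lax} structure with no hypothesis on $L$ or $R$ because every structural 2-cell in sight points the same way and none needs to be inverted.

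For part~\ref{lemcase:monoidal-monad->multitensor} I would argue as follows. The left unit law asks that $\sigma'$, composed with $u'$ applied to the object $\opF\limits_iX_i = T\opE\limits_iX_i$, be the identity; unwinding the definitions of $u'$ and $\sigma'$, this composite collapses after using the naturality of $\eta$, one monad unit law, the lax-monoidal unit coherence relating $\tau$ to $u$, and finally the left unit law of $\opE$. The right unit law is dual, using instead the monoidality and naturality of $\eta$, one monad unit law, and the right unit law of $\opE$. The substitution law is where the genuine content sits: expanding $\sigma'\comp\opF\limits_i\sigma'$ and $\sigma'\comp\sigma'\opF\limits_k$ each yields a pasting containing two copies of $\tau$, two of $\mu$ and two of $\sigma$, and these are matched by applying, in turn, the naturality of $\tau$, the monoidality of $\mu$ with respect to $\tau$, one instance of monad associativity, and the substitution law of $\opE$ --- this is exactly the computation underlying the composition of monads along a distributive law. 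I expect this substitution square to be the main obstacle, although the difficulty is purely one of bookkeeping: keeping the doubly-indexed families $X_{ij}$ and the interleaved occurrences of $T$ in order.

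For part~\ref{lemcase:multitensor-across-adjunction} the verifications are shorter. The left unit law unwinds to a diagram in which $R(u_{L-})\comp\eta$ meets the cell $R\opE\limits_i\varepsilon\opE\limits_jL$; sliding $\varepsilon$ through $E_1$ by naturality of $u$, then cancelling $R\varepsilon\comp\eta R=1$ by a triangle identity, leaves $R$ applied to the left unit law of $\opE$. The right unit law is dual, using the other triangle identity, the naturality of $\varepsilon$, and the right unit law of $\opE$. For the substitution law one writes $\sigma'$ twice in each of the two composites and then uses the naturality of $\varepsilon$ to commute the counit past the inner $\opE$-functors and the triangle identities to cancel the resulting unit--counit composites, reducing both sides to $R$ applied to the substitution law of $\opE$ whiskered by $L$; the only care needed is the order in which the several copies of $\varepsilon$ are cancelled.

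Finally, for context, these two parts are the tools behind the assertion that $(\overline{T},\overline{\eta},\overline{\mu})$ is a multitensor: applying part~\ref{lemcase:monoidal-monad->multitensor} to the monoidal monad $(T_{\bullet},\eta_{\bullet},\mu_{\bullet})$ and the strict monoidal structure $*$ on $\ca G(t_V)^{\times 2}_{\bullet}$, and then part~\ref{lemcase:multitensor-across-adjunction} to $L_V \ladj \varepsilon_V$, produces exactly the multitensor computed by the composite $\varepsilon_V\comp T_{\bullet}\comp *\comp ML_V$, so that no separate check of the multitensor axioms for $\overline{T}$ is required.
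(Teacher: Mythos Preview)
Your proposal is correct and aligns with the paper's approach: the paper in fact omits the proof entirely, declaring it ``an easy exercise in the definitions involved'', and what you have outlined is precisely that exercise carried out---direct verification of the unit and substitution axioms by pasting, reducing to the axioms for $E$, $T$, $\tau$ (respectively the triangle identities). Your conceptual framing in terms of distributive laws and adjoint transfer of lax $M$-algebra structure is apt and goes slightly beyond what the paper says, though the paper does observe the distributive-law analogy elsewhere (in section~\ref{sec:dist-monad-over-multitensor}).
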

\noindent whose proof is an easy exercise in the definitions involved. Starting with the monoidal structure $*$ on $\ca G(t_V)^{\times 2}_{\bullet}$, apply (\ref{lemcase:monoidal-monad->multitensor}) to obtain the multitensor $T_{\bullet}*$ on $\ca G(t_V)^{\times 2}_{\bullet}$, and then apply (\ref{lemcase:multitensor-across-adjunction}) to this using the adjunction $L_V \ladj \varepsilon_V$. It is straight forward to verify directly that the unit and substitution of the resulting multitensor coincides with $(\overline{\eta},\overline{\mu})$ as defined above. Thus we have
\begin{proposition}\label{prop:Tbar}
Let $V$ be a category with coproducts and $(T,\eta,\mu)$ be a monad on $\ca GV$ over $\Set$. Then $(\overline{T},\overline{\eta},\overline{\mu})$ defines a multitensor on $V$.
\end{proposition}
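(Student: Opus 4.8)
The plan is to realise $(\overline{T},\overline{\eta},\overline{\mu})$ as the image of a canonical monoidal structure under the two transport constructions of Lemma~\ref{lem:general-multitensor-facts-for-Tbar}, exactly as the preceding discussion suggests. Since $V$ has coproducts it in particular has an initial object, so the category $\ca G(t_V)^{\times 2}_{\bullet}$ of bipointed $V$-graphs carries the pushout-composition monoidal structure $*$: one views a bipointed graph as an endocospan $0 \to X \from 0$ of $0$, tensors by pushout over the shared copy of $0$, and takes the identity endocospan of $0$ as unit. These pushouts glue graphs only along objects, so coproducts in $V$ suffice to form them, and associativity and unit coherence are inherited from composition in the bicategory of cospans in $\ca GV$. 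The first step is thus to record $(\ca G(t_V)^{\times 2}_{\bullet},*)$ as a monoidal category — in particular as a multitensor on $\ca G(t_V)^{\times 2}_{\bullet}$ — and then to upgrade $T$ to a monoidal monad $(T_{\bullet},\eta_{\bullet},\mu_{\bullet})$ on it. On objects $T_{\bullet}$ sends $(X,a,b)$ to $(TX,a,b)$, which in cospan terms means applying $T$ and reattaching $0$ along the unique identity-on-objects map $0 \to T0$ (legitimate because $T$ lies over $\Set$, so $T0$ has a single object); its lax monoidal coherence maps $T_{\bullet}X * T_{\bullet}Y \to T_{\bullet}(X{*}Y)$ are precisely the comparison maps witnessing the failure of $T$ to preserve the defining pushouts, and $\eta_{\bullet},\mu_{\bullet}$ are the evident monoidal natural transformations built from $\eta,\mu$, with the monad axioms inherited from $T$.

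With this in hand, apply Lemma~\ref{lem:general-multitensor-facts-for-Tbar}(\ref{lemcase:monoidal-monad->multitensor}) to $(T_{\bullet},\eta_{\bullet},\mu_{\bullet})$ acting on $(\ca G(t_V)^{\times 2}_{\bullet},*)$ to obtain a multitensor $T_{\bullet}{*}$ on $\ca G(t_V)^{\times 2}_{\bullet}$, and then Lemma~\ref{lem:general-multitensor-facts-for-Tbar}(\ref{lemcase:multitensor-across-adjunction}) to the adjunction $L_V \ladj \varepsilon_V$ (with $L_V:Z \mapsto ((Z),0,1)$ and $\varepsilon_V:(X,a,b)\mapsto X(a,b)$) to transport $T_{\bullet}{*}$ to a multitensor on $V$. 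This is automatically a multitensor, so all that remains is to identify it with $(\overline{T},\overline{\eta},\overline{\mu})$. Unwinding the $n$-ary tensor $\varepsilon_V\,T_{\bullet}(L_V Z_1 {*} \cdots {*} L_V Z_n)$ and using $L_V Z_1 {*} \cdots {*} L_V Z_n \iso ((Z_1,\dots,Z_n),0,n)$ gives $T(Z_1,\dots,Z_n)(0,n)$, which is $\overline{T}(Z_1,\dots,Z_n)$ by definition; similarly the transported unit $u'=(RuM)\eta$ and the transported substitution (the composite displayed in Lemma~\ref{lem:general-multitensor-facts-for-Tbar}(\ref{lemcase:multitensor-across-adjunction})) unwind, via the subsequence inclusions $s_i$ and the map $\tilde{\tau}_{Z_{ij}}$, to the hom map of $\eta_{(Z)}$ between $0$ and $1$ and the hom map of $\mu \comp T(\tilde{\tau}_{Z_{ij}})$ between $0$ and $k$ respectively — that is, to $\overline{\eta}$ and $\overline{\mu}$.

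I expect the genuinely substantive step to be verifying that $T_{\bullet}$ is a \emph{lax monoidal} endofunctor for $*$ — that the pushout-comparison maps are natural and satisfy the associativity coherence — together with checking that $\eta_{\bullet}$ and $\mu_{\bullet}$ are monoidal; this is the kind of pasting argument that is conceptually transparent (it amounts to the interchange of iterated pushouts and their comparison maps against $T$) but requires care with the bookkeeping. Everything downstream of that — invoking the two lemmas, whose proofs are routine, and matching the output against the explicit formulas for $\overline{T},\overline{\eta},\overline{\mu}$ — is then essentially mechanical. An alternative, more pedestrian route would bypass the conceptual apparatus entirely and check the multitensor axioms (the two unit laws and the associativity diagram) for $(\overline{T},\overline{\eta},\overline{\mu})$ directly, using naturality of $\eta$ and $\mu$ against the subsequence inclusions; but this is precisely the computation the two-lemma factorisation is designed to avoid, so I would fall back on it only if some coherence in the conceptual route proved stubborn.
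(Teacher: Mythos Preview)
Your proposal is correct and follows essentially the same route as the paper: one records $*$ as a monoidal structure on bipointed $V$-graphs, lifts $T$ to a monoidal monad $T_{\bullet}$ with the pushout-comparison maps as lax coherences, applies Lemma~\ref{lem:general-multitensor-facts-for-Tbar}(\ref{lemcase:monoidal-monad->multitensor}) and then (\ref{lemcase:multitensor-across-adjunction}) along $L_V \ladj \varepsilon_V$, and finally identifies the output with the explicit formulas for $(\overline{T},\overline{\eta},\overline{\mu})$. The paper treats the verification that $T_{\bullet}$ is a monoidal monad and that the transported data agree with $\overline{\eta},\overline{\mu}$ as straightforward, exactly as you anticipated.
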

\begin{remark}\label{rem:implicit-Tbar}
Note that the multitensor $(\overline{T},\overline{\eta},\overline{\mu})$ played an implicit role the proofs of the converse implications of theorem(\ref{thm:preservation-by-Gamma-E})(\ref{thmcase:Gamma-E-pres-filtered-colims})-(\ref{thmcase:Gamma-E-lra}). The reason for this is that one has a canonical isomorphism $E \iso \overline{\Gamma E}$ of multitensors. The isomorphism at the level of functors $MV \to V$ was described in the proof of theorem(\ref{thm:preservation-by-Gamma-E})(\ref{thmcase:Gamma-E-pres-filtered-colims}), and the reader will easily verify the compatibility of this isomorphism with the unit and substitution maps.
\end{remark}

\subsection{Characterisation of monads coming from multitensors}
\label{sec:charn-monads-from-multitensors}
In this section we characterise the monads of the form $(\ca GV,\Gamma E)$ as those monads $T$ on $\ca GV$ over $\Set$ which are distributive and path-like in the sense to be defined below.

Let us consider first the basic example in which $T$ is the monad on $\Graph = \ca G\Set$ whose algebras are categories. For any graph $X$ and $a,b \in X_0$, $TX(a,b)$ is by definition the set of paths in $X$ from $a$ to $b$. Each such path determines a sequence $x=(x_0,...,x_n)$ of objects of $X$ such that $x_0=a$ and $x_n=b$, by reading off the objects of $X$ as they are visited by the given path. The set of all paths visiting exactly these objects of $X$ is the product $\prod_{i=1}^n X(x_{i-1},x_i)$ and by definition one has
\[ \begin{array}{lll} {\prod\limits_{i=1}^n X(x_{i-1},x_i)} & {=} & {T(X(x_0,x_1), X(x_1,x_2)...,X(x_{n-1},x_n))(0,n)} \\
& {=} & {\Tbar\limits_i X(x_{i-1},x_i).}  \end{array} \]
Recall,
\[ (X(x_0,x_1), X(x_1,x_2)...,X(x_{n-1},x_n)) \]
is the graph with set of objects $\{0,...,n\}$, whose hom from $(i{-}1)$ to $i$ is $X(x_{i{-}1},x_i)$, and whose other homs are empty. Thus one can express the general hom $TX(a,b)$ in terms of those of the form
\[ T(X(x_0,x_1), X(x_1,x_2)...,X(x_{n-1},x_n))(0,n). \]
More precisely one has a canonical bijection
\[ \coprod\limits_{a=x_0,...,x_n=b} T(X(x_0,x_1), X(x_1,x_2)...,X(x_{n-1},x_n))(0,n) \iso TX(a,b) \]
which we shall now express more generally.

Let $V$ be a category with coproducts. Given a $V$-graph $X$ and sequence $x=(x_0,...,x_n)$ of objects of $X$, one can define the morphism of $V$-graphs
\[ \overline{x} : (X(x_0,x_1),X(x_1,x_2),...,X(x_{n-1},x_n)) \to X \]
whose object map is $i \mapsto x_i$, and whose hom map between $(i-1)$ and $i$ is the identity. For all such sequences $x$ one has
\[ T(\overline{x})_{0,n} : \Tbar\limits_i X(x_{i-1},x_i) \to TX(x_0,x_n) \]
in $V$, and so taking all sequences $x$ starting at $a$ and finishing at $b$ one induces the canonical map
\[ \pi_{X,a,b} : \coprod\limits_{a=x_0,...,x_n=b} \Tbar\limits_i X(x_{i-1},x_i) \rightarrow TX(a,b). \]
\begin{definition}\label{def:path-like}
Let $V$ be a category with coproducts and $(T,\eta,\mu)$ be a monad on $\ca GV$ over $\Set$. Then $T$ is said to be \emph{path-like} when for all $X \in \ca GV$ and $a,b \in X_0$, the maps $\pi_{X,a,b}$ are isomorphisms.
\end{definition}
Clearly by definition, the category monad on $\ca G\Set$ is path-like. Intuitively, the path-likeness of a general $T$ is saying that the homs $TX(a,b)$ are to be thought of as abstract path objects of a certain type.
\begin{proposition}\label{prop:pl-alg<->cat}
Let $V$ have small coproducts and $(T,\eta,\mu)$ be a path-like monad on $\ca GV$ over $\Set$. Then $\ca G(V)^T \iso \Enrich {\overline{T}}$.
\end{proposition}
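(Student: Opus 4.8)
The plan is to prove the isomorphism directly, by identifying on a fixed $V$-graph $X$ the $T$-algebra structures with the $\overline{T}$-category structures, and the $T$-algebra morphisms with the $\overline{T}$-functors. First note that since $T$ lives over $\Set$, a $T$-algebra structure $x:TX\to X$ is the identity on objects, hence is completely determined by its hom maps $x_{a,b}:TX(a,b)\to X(a,b)$. Path-likeness says that $\pi_{X,a,b}:\coprod_{a=x_0,\dots,x_n=b}\overline{T}_i X(x_{i-1},x_i)\to TX(a,b)$ is invertible, so $x_{a,b}$ is equivalent, by precomposition with the coproduct injections, to a family of maps $\kappa_{(x_0,\dots,x_n)}:\overline{T}_i X(x_{i-1},x_i)\to X(x_0,x_n)$, one for each finite sequence of objects of $X$ — which is exactly the data of a putative $\overline{T}$-category structure on $X$. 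The content of the proof is then that the monad unit and associativity laws for $x$ translate, under this correspondence, precisely into the two coherence axioms for an $\overline{T}$-category.

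For the unit law I would first record that $(\eta_X)_{a,b}$ is the composite $X(a,b)\xrightarrow{\overline{\eta}_{X(a,b)}}\overline{T}_1 X(a,b)\xrightarrow{k_{(a,b)}}TX(a,b)$, i.e.\ the multitensor unit followed by the length-one coproduct injection $\pi_{X,a,b}\circ(\textnormal{incl})$; this is immediate from the definition of $\overline{\eta}$ together with the naturality of $\eta$ with respect to the canonical map $\overline{(a,b)}:(X(a,b))\to X$, whose hom between $0$ and $1$ is the identity and whose image under $T$ between $0$ and $1$ is that very injection. Granting this, $x_{a,b}\circ(\eta_X)_{a,b}=1$ becomes, after composing with $\pi_{X,a,b}$, the identity $\kappa_{(a,b)}\circ\overline{\eta}=1$, which is the $\overline{T}$-category unit axiom; the general length-$n$ part of the unit law is then automatic.

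The associativity law is where the work lies, and I expect it to be the main obstacle. Applying path-likeness to the $V$-graph $TX$, the hom $T^2X(a,b)$ decomposes as $\coprod_{(y_\bullet)}\overline{T}_i\,TX(y_{i-1},y_i)$, so it suffices to verify $x_{a,b}\circ(\mu_X)_{a,b}=x_{a,b}\circ(Tx)_{a,b}$ after precomposing with each summand injection, which, as for the unit, is the hom of $T(\overline{y})$ between $0$ and $m$ for the appropriate sequence $y$. Naturality of $\mu$, the definition of $\overline{\mu}$ in terms of the subsequence maps $\tilde{\tau}$, and functoriality of $T$ applied to $\overline{y}$ and to $x\circ\overline{y}$, then rewrite both sides purely in terms of $\overline{T}$, its substitution $\overline{\mu}$, and the $\kappa$'s, matching the big $\sigma$-square in the definition of an $\overline{T}$-category. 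One clean way to organise this — should one additionally observe that $\overline{T}$ is distributive — is to note that the comparison $\Gamma\overline{T}\to T$ assembled from the maps $\pi_{X,a,b}$ is then an isomorphism of monads over $\Set$, so that the result follows at once from proposition(\ref{prop:Gamma-E-algebras-as-E-categories}) together with the canonical identification $E\iso\overline{\Gamma E}$ of remark(\ref{rem:implicit-Tbar}); I would check which of these two routes makes the associativity bookkeeping shortest.

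Finally, the correspondence is visibly natural in $X$: a morphism $f:X\to Y$ in $\ca GV$ is a map of $T$-algebras iff the evident squares relating $x_{a,b}$ and $y_{fa,fb}$ commute, which by the invertibility and naturality in $X$ of the maps $\pi$ is equivalent to compatibility of the families $\kappa$ with $f$, i.e.\ to $f$ being an $\overline{T}$-functor. This yields the asserted isomorphism $\ca G(V)^T\iso\Enrich{\overline{T}}$, compatible with the forgetful functors to $\ca GV$.
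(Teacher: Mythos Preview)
Your approach is correct and is essentially the paper's: unpack a $T$-algebra structure on $X$ via the isomorphisms $\pi_{X,a,b}$ into a family $\kappa_{(x_i)}$, then match the unit and associativity laws with the two $\overline{T}$-category axioms, and do the same for morphisms.

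The only organisational difference is in the associativity step. You decompose $T^2X(a,b)$ by applying path-likeness to the $V$-graph $TX$, obtaining summands $\Tbar\limits_i\,TX(y_{i-1},y_i)$ indexed by sequences $y$ in $X_0$. The paper instead indexes by pairs $(x,w)$: a sequence $x$ in $X$ together with a sequence $w$ in the object set $\{0,\ldots,n\}$ of $T(X(x_0,x_1),\ldots,X(x_{n-1},x_n))$, and works with the composite injections $T(\overline{x})_{0,n}\,T(\overline{w})_{0,k}$ (``coproduct of coproducts''). Both amount to the same coproduct cocone, just parametrised differently; the paper's indexing makes the match with the $\sigma$-square for $\overline{\mu}$ slightly more direct, since the $w$-variable already records the inner partition. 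Your route through naturality of $\mu$ and the maps $\tilde{\tau}$ does the same job.

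On your alternative shortcut: you are right to flag that the $\Gamma\overline{T}\to T$ argument needs $\overline{T}$ distributive, and that hypothesis is \emph{not} available here (distributivity only enters in the later theorem(\ref{thm:characterisation-of-image-of-Gamma})). So that route is not open for this proposition as stated; stick with the direct verification.
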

\begin{proof}
Let $X$ be a $V$-graph. To give an identity on objects map $a:TX{\rightarrow}X$ is to give maps $a_{y,z}:TX(y,z){\rightarrow}X(y,z)$. By path-likeness these amount to giving for each $n \in \N$ and $x=(x_0,...,x_n)$ such that $x_0=y$ and $x_n=z$, a map
\[ a_x : \Tbar\limits_iX(x_{i-1},x_i) \rightarrow X(y,z) \]
since $\Tbar\limits_iX(x_{i-1},x_i)=Tx^*X(0,n)$, that is $a_x=a_{y,z}T\overline{x}_{0,n}$. When $n=1$, for a given $y,z \in X_0$, $x$ can only be the sequence $(y,z)$. The naturality square for $\eta$ at $\overline{x}$ implies that $\{\eta_X\}_{y,z}=T\overline{x}_{0,1}\{\eta_{(X(y,z))}\}_{0,1}$, and the definition of $\overline{(\,\,)}$ says that $\{\eta_{(X(y,z))}\}_{0,1}=\overline{\eta}_{X(y,z)}$. Thus to say that a map $a:TX{\rightarrow}X$ satisfies the unit law of a $T$-algebra is to say that $a$ is the identity on objects and that the $a_x$ described above satisfy the unit axioms of a $\overline{T}$-category.

To say that $a$ satisfies the associative law is to say that for all $y,z \in X_0$,
\begin{equation}\label{eq:assoc1} \xymatrix{{T^2X(y,z)} \ar[r]^-{\{\mu_X\}_{y,z}} \ar[d]_{Tx_{y,z}} & {TX(y,z)} \ar[d]^{a_{y,z}} \\ {TX(y,z)} \ar[r]_-{a_{y,z}} & {X(y,z)}} \end{equation}
commutes. Given $x=(x_0,...,x_n)$ from $X$ with $x_0=y$ and $x_n=z$, and $w=(w_0,...,w_k)$ from $T(X(x_0,x_1),...,X(x_{n-1},x_n))$ with $w_0=0$ and $w_k=n$, one can consider the composite map $T(\overline{x})_{0,n}T(\overline{w})_{0,k}$, and since the coproduct of coproducts is a coproduct, all such maps for $x$ and $w$ such that $x_0=y$ and $x_n=z$ form a coproduct cocone. Precomposing (\ref{eq:assoc1}) with the coproduct inclusions gives the commutativity of
\[ \xymatrix{{\Tbar\limits_i\Tbar\limits_jX(x_{ij-1},x_{ij})} \ar[r]^-{\overline{\mu}} \ar[d]_{\Tbar\limits_ia} & {\Tbar\limits_{ij}X(x_{ij-1},x_{ij})} \ar[d]^{a_x} \\ {\Tbar\limits_iX(x_{w_{i-1}},x_{w_i})} \ar[r]_-{a_w} & {X(y,z)}} \]
and conversely by the previous sentence if these squares commute for all $x$ and $w$, then one recovers the commutativity of (\ref{eq:assoc1}). This completes the description of the object part of $\ca G(V)^T \iso \Enrich {\overline{T}}$.

Let $(X,a)$ and $(X',a')$ be $T$-algebras and $F:X{\rightarrow}X'$ be a $V$-graph morphism. To say that $F$ is a $T$-algebra map is a condition on the maps $F_{y,z}:X(y,z){\rightarrow}X'(Fy,Fz)$ for all $y,z \in X_0$, and one uses path-likeness in the obvious way to see that this is equivalent to saying that the $F_{y,z}$ are the hom maps for a $\overline{T}$-functor.
\end{proof}
Returing to our basic example in which $T$ is the category monad on $\ca G\Set$, note that one can decompose the general hom $TX(a,b)$ even further when the $X(x_{i-1},x_i)$ are themselves coproducts (in $\Set$). For instance for sets $A$, $B$ and $C$ one has
\[ \begin{array}{lll} {T(A + B,C)(0,2)} & {\iso} & {(A + B) \times C} \iso {(A \times C) + (B \times C)} \\
& {\iso} & {T(A,C)(0,2) + T(B,C)(0,2)} \end{array} \]
by the distributivity of coproducts over products in $\Set$. Most succinctly one has this kind of decomposition simply because in this case $\overline{T}$ is the cartesian product of $\Set$ which is distributive as a multitensor.
\begin{definition}\label{def:distributive-monad-over-Set}
Let $V$ be a category with coproducts and $(T,\eta,\mu)$ be a monad on $\ca GV$ over $\Set$. Then $T$ is said to be \emph{distributive} when $\overline{T}$ is a distributive multitensor.
\end{definition}
For a more explicit rendering of definition(\ref{def:distributive-monad-over-Set}) which avoids explicit mention of $\overline{T}$, consider a finite sequence of families of sets
\[ ((Z_{i_j} : i_j \in I_j) \,\, | \,\, 1 \leq j \leq n). \]
Then for any sequence of indices $(i_1,...,i_n) \in I_1 \times ... \times I_n$, the coproduct inclusions give identity-on-objects morphisms of $V$-graphs
\[ \begin{array}{c} {(c_{i_1},...,c_{i_n}) : (Z_{i_1},...,Z_{i_n}) \longrightarrow (\coprod\limits_{i_1} Z_{i_1},...,\coprod\limits_{i_n} Z_{i_n}),} \end{array} \]
and thus morphisms
\[ \begin{array}{c} {T(c_{i_1},...,c_{i_n})_{0,n} : T(Z_{i_1},...,Z_{i_n})(0,n) \longrightarrow T(\coprod\limits_{i_1} Z_{i_1},...,\coprod\limits_{i_n} Z_{i_n})(0,n)} \end{array} \]
in $V$, which together give a morphism
\[ \begin{array}{c} {\delta_{(Z_{i_j})_{j}} : \coprod\limits_{(i_1,...,i_n)} T(Z_{i_1},...,Z_{i_n})(0,n) \longrightarrow T(\coprod\limits_{i_1} Z_{i_1},...,\coprod\limits_{i_n} Z_{i_n})(0,n).} \end{array} \]
in $V$. The distributivity of $T$ then says that for all such finite sequences of families of sets, this induced morphism $\delta_{(Z_{i_j})_{j}}$ is an isomorphism. 
\begin{theorem}\label{thm:characterisation-of-image-of-Gamma}
Let $V$ have coproducts. Then a monad $T$ on $\ca GV$ over $\Set$ is of the form $(\ca GV,\Gamma{E})$ iff it is distributive
and path-like, and in this case $E$ is recovered as $\overline{T}$.
\end{theorem}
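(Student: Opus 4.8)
The plan is to prove the two implications separately, folding the ``$E$ recovered as $\overline{T}$'' claim into each, and in both cases exploiting that $\Gamma E$, like every monad on $\ca GV$ over $\Set$, is determined on homs by its effect on ``sequence'' $V$-graphs.

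\emph{Forward direction.} Suppose $T \iso \Gamma E$ for a distributive multitensor $E$ on $V$. By remark(\ref{rem:implicit-Tbar}) there is a canonical isomorphism of multitensors $\overline{\Gamma E} \iso E$, hence $\overline{T} \iso E$; in particular $\overline{T}$ is a distributive multitensor, so $T$ is distributive by definition(\ref{def:distributive-monad-over-Set}), and this settles the last sentence of the theorem in this direction. To check path-likeness, fix $X \in \ca GV$ and $a,b \in X_0$. By equation(\ref{eq:monad-from-multitensor-on-homs}) the target $TX(a,b) = \Gamma E(X)(a,b)$ is literally $\coprod_{a=x_0,\ldots,x_n=b} \opE\limits_i X(x_{i-1},x_i)$, so there is an obvious comparison from $\coprod_x \Tbar\limits_i X(x_{i-1},x_i)$ into it. I would verify that $\pi_{X,a,b}$ \emph{is} this comparison (up to the isomorphism $\overline{T} \iso E$): its $x$-summand is $T(\overline{x})_{0,n} = \Gamma E(\overline{x})_{0,n}$, and using the explicit action of $\Gamma E$ on morphisms together with the isomorphism $\Gamma E(Y_1,\ldots,Y_n)(0,n) \iso \opE\limits_i Y_i$ from the proof of theorem(\ref{thm:preservation-by-Gamma-E})(\ref{thmcase:Gamma-E-pres-filtered-colims}) (all summands but the one for $(0,1,\ldots,n)$ being initial by distributivity), one identifies $\Gamma E(\overline{x})_{0,n}$ with the $x$-th coproduct coprojection precomposed with a component of $\overline{T} \iso E$. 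Hence $\pi_{X,a,b}$ is invertible and $T$ is path-like.

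\emph{Converse.} Suppose $T$ is distributive and path-like, and put $E := \overline{T}$. By proposition(\ref{prop:Tbar}) this is a multitensor, distributive by definition(\ref{def:distributive-monad-over-Set}), so since $V$ has coproducts the monad $\Gamma E$ on $\ca GV$ is defined. I would then argue at the level of Eilenberg--Moore categories: path-likeness gives, by proposition(\ref{prop:pl-alg<->cat}), an isomorphism $\ca G(V)^T \iso \Enrich{\overline{T}} = \Enrich{E}$, which its proof exhibits as commuting with the forgetful functors down to $\ca GV$; and proposition(\ref{prop:Gamma-E-algebras-as-E-categories}) gives $\Enrich{E} \iso (\ca GV)^{\Gamma E}$, again over $\ca GV$. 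Composing yields an isomorphism $\ca G(V)^T \iso (\ca GV)^{\Gamma E}$ over $\ca GV$. Since $U^T$ and $U^{\Gamma E}$ are monadic, such an isomorphism over the base transports the left adjoint of one onto the left adjoint of the other, and so identifies the induced monads; thus $T \iso \Gamma E$. As $E = \overline{T}$ by construction (consistently with remark(\ref{rem:implicit-Tbar})), this completes the proof.

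The Eilenberg--Moore route above has no real obstacle beyond being comfortable with the standard fact that a monad is recoverable from its category of algebras together with the underlying-object functor. If one instead wants a hands-on proof of $T \iso \Gamma E$, the work is to check directly that the natural transformation $\pi$ with components $\pi_{X,a,b}$ (an isomorphism exactly by path-likeness) is a morphism of monads. Unit-compatibility is the computation already performed inside the proof of proposition(\ref{prop:pl-alg<->cat}) (naturality of $\eta^T$ at the map $\overline{x}:(X(a,b)) \to X$ for $x=(a,b)$, plus the definition of $\overline{\eta}$). Multiplication-compatibility first needs that $\pi * \pi : (\Gamma E)^2 \implies T^2$ is also an isomorphism --- which follows from two applications of path-likeness and the distributivity isomorphism $(\Gamma E)^2(X)(a,b) \iso \coprod_{(x_{ij})_{ij}} \opE\limits_i\opE\limits_j X(x_{ij-1},x_{ij})$ noted in the proof of theorem(\ref{thm:preservation-by-Gamma-E})(\ref{thmcase:Gamma-E-cartesian}) --- and then a summand-by-summand verification, over composable doubly-nested sequences, matching the definition of $\Gamma E$'s multiplication (built from $\sigma = \overline{\mu}$) against the definition of $\overline{\mu}$ (built from $\mu^T$, the maps $\tilde{\tau}$, and the subsequence inclusions). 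This bookkeeping over nested sequences is the genuinely laborious step, and is precisely what the Eilenberg--Moore argument is meant to avoid.
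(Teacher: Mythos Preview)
Your proof is correct. The forward direction matches the paper's (the paper just says it ``follows from remark(\ref{rem:implicit-Tbar})'', and you spell out explicitly why $\Gamma E$ is path-like, which is the right unpacking).

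For the converse, you take a genuinely different route from the paper. The paper does exactly your ``hands-on'' alternative: it assembles the maps $\pi_{X,a,b}$ into a natural transformation $\pi:\Gamma\overline{T}\to T$, asserts that this is ``easily seen to be compatible with the monad structures'', and then path-likeness makes it an isomorphism. Your primary argument instead goes through algebras: you combine proposition(\ref{prop:pl-alg<->cat}) and proposition(\ref{prop:Gamma-E-algebras-as-E-categories}) to get $(\ca GV)^T \iso (\ca GV)^{\Gamma\overline{T}}$ over $\ca GV$, and then invoke structure--semantics (the paper itself uses this device later, in the footnote to remark(\ref{rem:distributive-opmonoidal-monad-elaboration})) to conclude $T\iso\Gamma\overline{T}$. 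This buys you exactly what you say it does: it sidesteps the doubly-nested-sequence bookkeeping that the paper's ``easily seen'' hides. Conversely, the paper's approach produces the isomorphism $\pi$ explicitly rather than abstractly, which is what one wants downstream (e.g.\ in proposition(\ref{prop:Psi-2-ff})). Both are fine proofs; yours is arguably more honest about where the work is.
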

\begin{proof}
Suppose that $T$ is distributive and path-like. Since $\overline{T}$ is distributive the morphisms $\pi_{X,a,b}$ are the hom maps of the components of a natural transformation $\pi : \Gamma \overline{T} \to T$, which is easily seen to be compatible with the monad structures. Since $T$ is path-like, this is an isomorphism. The converse follows from remark(\ref{rem:implicit-Tbar}).
\end{proof}

\subsection{2-functoriality of $\Gamma$}
\label{sec:2-functoriality-Gamma}
As the lax-algebras of a 2-monad $M$ lax monoidal categories form a 2-category $\LaxAlg M$. See \cite{Lack-Codescent} for a complete description of the 2-category of lax algebras for an arbitrary 2-monad. Explicitly a lax monoidal functor between lax monoidal categories $(V,E)$ and $(W,F)$ consists of a functor $H:V{\rightarrow}W$, and maps
\[ \psi_{X_i} : \opF\limits_i HX_i \rightarrow H \opE\limits_i X_i \]
natural in the $X_i$ such that
\[ \xygraph{{\xybox{\xygraph{!{0;(.75,0):(0,1.333)::} {HX}="l" [r(2)] {F_1HX}="r" [dl] {HE_1X}="b" "l"(:"r"^-{u_{HX}}:"b"^{\psi_X},:"b"_{Hu_X})}}} [r(5)]
{\xybox{\xygraph{!{0;(2,0):(0,.5)::} {\opF\limits_i\opF\limits_jHX_{ij}}="tl" [r] {\opF\limits_iH\opE\limits_jX_{ij}}="tm" [r] {H\opE\limits_i\opE\limits_jX_{ij}}="tr" [l(.5)d] {H\opE\limits_{ij}X_{ij}}="br" [l] {\opF\limits_{ij}HX_{ij}}="bl" "tl" (:@<1ex>"tm"^-{\opF\limits_i\psi}:@<1ex>"tr"^-{\psi\opE\limits_j}:"br"^{H\sigma},:"bl"_{\sigma{H}}:@<1ex>"br"_-{\psi})}}}} \]
commute for all $X$ and $X_{ij}$ in $V$. A monoidal natural transformation between lax monoidal functors
\[ (H,\psi),(K,\kappa):(V,E){\rightarrow}(W,F) \]
consists of a natural transformation $\phi:H{\rightarrow}K$ such that
\[ \xygraph{!{0;(2,0):(0,.5)::} {\opF\limits_iHX_i}="tl" [r] {H\opE\limits_iX_i}="tr" [d] {K\opE\limits_iX_i}="br" [l] {\opF\limits_iKX_i}="bl" "tl" (:@<1ex>"tr"^-{\psi}:"br"^{\phi\opE\limits_i},:"bl"_{\opF\limits_i\phi}:@<1ex>"br"_-{\kappa})} \]
commutes for all $X_i$. We denote by $\DISTMULT$ the 2-category $\DISTMULT$ of distributive multitensors. It is the full sub-2-category of $\LaxAlg M$ consisting of the $(V,E)$ such that $V$ has coproducts and $E$ is distributive.

For any 2-category $\ca K$ recall the 2-category $\MND(\ca K)$ from \cite{Street-FTM} of monads in $\ca K$. Another way to describe this very canonical object is that it is the 2-category of lax algebras of the identity monad on $\ca K$. Explicitly the 2-category $\MND(\CAT)$ has as objects pairs $(V,T)$ where $V$ is a category and $T$ is a monad on $V$. An arrow $(V,T){\rightarrow}(W,S)$ is a pair consisting of a functor $H:V{\rightarrow}W$ and a natural transformation $\psi:SH{\rightarrow}HT$ satisfying the obvious 2 axioms: these are just the ``unary'' analogues of the axioms for a lax monoidal functor written out above. For example, any lax monoidal functor $(H,\psi)$ as above determines a monad functor $(H,\psi_1):(V,E_1){\rightarrow}(W,F_1)$. A monad transformation between monad functors
\[ (H,\psi),(K,\kappa):(V,T){\rightarrow}(W,S) \]
consists of a natural transformation $\phi:H{\rightarrow}K$ satisfying the obvious axiom. For example a monoidal natural transformation $\phi$ as above is a monad transformation $(H,\psi_1){\rightarrow}(K,\kappa_1)$.

In fact as we are interested in monads over $\Set$, we shall work not with $\MND(\CAT)$ but rather with $\MND(\CAT/\Set)$. An arrow $(V,T) \rightarrow (W,S)$ of this 2-category is a pair $(H,\psi)$ as in the case of $\MND(\CAT)$, with the added condition that $\psi$'s components are the identities on objects, and similarly the 2-cells of $\MND(\CAT/\Set)$ come with an extra identity-on-object condition.

We shall now exhibit the 2-functor
\[ \Gamma : \DISTMULT \rightarrow \MND(\CAT/\Set) \]
which on objects is given by $(V,E) \mapsto (\ca GV,\Gamma{E})$. Let $(H,\psi):(V,E){\rightarrow}(W,F)$ be a lax monoidal functor between distributive lax monoidal categories. Then for $X \in \ca GV$ and $a,b \in X_0$, we define the hom map $\Gamma(\psi)_{X,a,b}$ as
\[ \xygraph{!{0;(5,0):(0,.3)::} {\coprod\limits_{a=x_0,...,x_n=b} \opF\limits_iHX(x_{i-1},x_i)}="l" [r] {\coprod\limits_{a=x_0,...,x_n=b} H\opE\limits_iX(x_{i-1},x_i)}="m" [l(.5)d] {H \coprod\limits_{a=x_0,...,x_n=b} \opE\limits_iX(x_{i-1},x_i)}="r" "l":"m"^-{\coprod \psi}:"r"^(.4){\tn{obst.}}:@{<-}"l"^(.6){\Gamma(\psi)_{X,a,b}} "l" [d(.5)r(.5)] {=}} \]
where ``$\tn{obst.}$'' denotes the obstruction map to $H$ preserving coproducts. It follows easily from the definitions that $(\ca GH,\Gamma(\psi))$ as defined here satisfies the axioms of a monad functor. Moreover given a monoidal natural transformation $\phi:(H,\psi){\rightarrow}(K,\kappa)$, it also follows easily from the definitions that
\[ \ca G\phi:(\ca GH,\Gamma(\psi)){\rightarrow}(\ca GK,\Gamma(\kappa)) \]
is a monad transformation. It is also straight-forward to verify that these assignments are 2-functorial.

Lax algebras of a 2-monad organise naturally into \emph{two} different 2-categories depending on whether one takes lax or oplax algebra morphisms. So in particular one has the 2-category $\OpLaxAlg M$ of lax monoidal categories, \emph{op}lax-monoidal functors between them and monoidal natural transformations between those. The coherence $\psi$ for an oplax $(H,\psi):(V,E){\rightarrow}(W,F)$ goes in the other direction, and so its components look like this:
\[ \psi_{X_i} : H \opE\limits_i X_i \rightarrow \opF\limits_i HX_i. \]
The reader should easily be able to write down explicitly the two coherence axioms that this data must satisfy, as well as the condition that must be satisfied by a monoidal natural transformation between oplax monoidal functors. Similarly there is a dual version $\OpMND(\ca K)$ of the 2-category $\MND(\ca K)$ of monads in a given 2-category $\ca K$ discussed above \cite{Street-FTM}. An arrow $(V,T){\rightarrow}(W,S)$ of $\OpMND(\CAT)$ consists of a functor $H:V{\rightarrow}W$ and a natural transformation $\psi:HT{\rightarrow}SH$ satisfying the two obvious axioms. An arrow of $\OpMND(\CAT)$ is called a monad opfunctor. As before $\OpMND(\CAT/\Set)$ differs from $\MND(\CAT/\Set)$ in that all the categories involved come with a functor into $\Set$, and all the functors and natural transformations involved are compatible with these forgetful functors.

When defining the one-cell map of $\Gamma$ above we were helped by the fact that the coproduct preservation obstruction went the right way: see the definition of the monad functor $(\ca GH,\Gamma{\psi})$ above. This time however we will not be so lucky. For this reason we define the 2-category $\OpDISTMULT$ to be the locally full sub-2-category of $\OpLaxAlg M$ consisting of the distributive lax monoidal categories, and the oplax monoidal functors $(H,\psi)$ such that $H$ preserves coproducts. Thus we can define 
\[ \Gamma' : \OpDISTMULT \rightarrow \OpMND(\CAT/\Set) \]
on objects by $(V,E) \mapsto (\ca GV,\Gamma{E})$. Let $(H,\psi):(V,E){\rightarrow}(W,F)$ be an oplax monoidal functor between distributive lax monoidal categories. Then for $X \in \ca GV$ and $a,b \in X_0$, we define the hom map $\Gamma'(\psi)_{X,a,b}$ as
\[ \xygraph{!{0;(5,0):(0,.3)::} {H \coprod\limits_{a=x_0,...,x_n=b} \opE\limits_iX(x_{i-1},x_i)}="l" [r] {\coprod\limits_{a=x_0,...,x_n=b} H\opE\limits_iX(x_{i-1},x_i)}="m" [l(.5)d] {\coprod\limits_{a=x_0,...,x_n=b} \opF\limits_iHX(x_{i-1},x_i)}="r" "l":"m"^-{\iso}:"r"^(.4){\coprod \psi}:@{<-}"l"^(.6){\Gamma'(\psi)_{X,a,b}} "l" [d(.5)r(.5)] {=}} \]
It follows easily from the definitions that $(\ca GH,\Gamma'(\psi))$ as defined here satisfies the axioms of a monad opfunctor. Moreover given a monoidal natural transformation $\phi:(H,\psi){\rightarrow}(K,\kappa)$, it also follows easily from the definitions that
\[ \ca G\phi:(\ca GH,\Gamma'(\psi)){\rightarrow}(\ca GK,\Gamma'(\kappa)) \]
is a monad transformation. It is also straight-forward to verify that these assignments are 2-functorial.

\subsection{Properties of the 2-functor $\Gamma$}
\label{sec:properties-of-2-functor-Gamma}
\begin{proposition}\label{prop:Gamma-locally-ff}
$\Gamma$ and $\Gamma'$ are locally fully faithful 2-functors.
\end{proposition}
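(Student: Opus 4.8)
The plan is to show local fully-faithfulness directly, by exhibiting the $2$-cell map of $\Gamma$ (and of $\Gamma'$) on each hom-category as a bijection. Fix distributive lax monoidal categories $(V,E)$ and $(W,F)$ and lax monoidal functors $(H,\psi),(K,\kappa):(V,E)\to(W,F)$. A morphism in the hom-category $\DISTMULT((V,E),(W,F))$ from $(H,\psi)$ to $(K,\kappa)$ is a monoidal natural transformation $\phi:H\to K$, while a morphism from $(\ca GH,\Gamma\psi)$ to $(\ca GK,\Gamma\kappa)$ in $\MND(\CAT/\Set)$ is a monad transformation, which is a natural transformation $\Phi:\ca GH\to\ca GK$ over $\Set$ satisfying the single monad-transformation axiom. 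The first and main observation is that since $\Gamma$ factors (on underlying functors) through $\ca G_1:\CAT\to\CAT/\Set$, and $\ca G_1$ is locally fully faithful by proposition(\ref{prop:G1-locally-ff}), every such $\Phi$ is of the form $\ca G\phi'$ for a unique natural transformation $\phi':H\to K$, namely $\phi'_Z=\Phi_{(Z),0,1}$. So the only thing left to check is that $\Phi=\ca G\phi'$ is a monad transformation $(\ca GH,\Gamma\psi)\to(\ca GK,\Gamma\kappa)$ if and only if $\phi'$ is a monoidal natural transformation $(H,\psi)\to(K,\kappa)$.

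First I would recall from the proof of proposition(\ref{prop:G1-locally-ff}) the two auxiliary $V$-graph maps that make the bookkeeping work: for $a,b\in X_0$ the map $c:(X(a,b))\to X$ with object map $(0,1)\mapsto(a,b)$ and identity hom map, and the maps $i_0,i_1:0\to(D)$. Using naturality of $\Phi$ (equivalently of $\ca G\phi'$) with respect to these, together with the explicit formula $\Gamma E(X)(a,b)=\coprod_{a=x_0,\dots,x_n=b}\opE_i X(x_{i-1},x_i)$ and the formula for $\Gamma\psi$ in terms of $\coprod\psi$ composed with the coproduct-preservation obstruction for $H$, the monad-transformation square for $\Phi$ at a $V$-graph $X$ decomposes, summand by summand over sequences $(x_0,\dots,x_n)$, into a family of squares in $W$. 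Precomposing with the coproduct inclusion $k_{E,X,(x_i)_i}$ — and using that coproduct inclusions are jointly epimorphic, and that $\phi'$-naturality at the maps $c$ above lets one reduce $X(x_{i-1},x_i)$ to a bipointed graph $(X_i)$ — collapses that summand square to exactly the naturality square of the monoidal-transformation axiom for $\phi'$ applied to the objects $X_i=X(x_{i-1},x_i)$. Conversely, since every object of $W$ that appears is built as a coproduct of such summands and $H$, $K$, $\psi$, $\kappa$ are all compatible with these coproducts by construction of $\Gamma\psi$, $\Gamma\kappa$, the pointwise monoidal-transformation axioms for $\phi'$ reassemble into the single monad-transformation axiom for $\ca G\phi'$. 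That gives a bijection between monad transformations $\ca GH\to\ca GK$ and monoidal transformations $H\to K$, which is exactly local fully-faithfulness of $\Gamma$.

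For $\Gamma'$ the argument is the same \emph{mutatis mutandis}: the underlying endofunctor assignment is again $\ca G$, so proposition(\ref{prop:G1-locally-ff}) again identifies the natural transformations over $\Set$ with natural transformations $H\to K$; the only difference is that $\Gamma'\psi$ is defined using the \emph{inverse} of the coproduct-preservation isomorphism for $H$ (this is why one restricts to coproduct-preserving oplax morphisms), so the summand-wise square one obtains after precomposing with coproduct inclusions is the naturality square of the monoidal-transformation axiom written with $\psi$, $\kappa$ pointing the other way; but a monoidal natural transformation between oplax monoidal functors is governed by precisely that square, so the same bijection results. I expect the main obstacle to be purely notational: carefully tracking how the coproduct-preservation obstruction (resp.\ isomorphism) for $H$ interacts with the coproduct inclusions $k_{E,X,(x_i)_i}$ and the reindexing map ``apply $\phi'_0$'' (which here is the identity, since $\Phi$ is over $\Set$), so that the summand square one extracts is genuinely the monoidal-transformation pentagon/square and not some twisted variant. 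Once proposition(\ref{prop:G1-locally-ff}) is invoked, there is no genuine difficulty — it is a diagram chase — but the chase must be set up so that ``jointly epic coproduct inclusions'' is applied on the correct side and the naturality at $c$, $i_0$, $i_1$ is used to strip away the ambient graph $X$.
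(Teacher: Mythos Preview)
Your proposal is correct and takes essentially the same approach as the paper: invoke proposition(\ref{prop:G1-locally-ff}) to reduce to checking that $\phi'$ is monoidal iff $\ca G\phi'$ satisfies the monad 2-cell axiom, then analyse the latter summand-wise over sequences $(x_0,\dots,x_n)$. The paper's converse direction is marginally cleaner than yours --- rather than arguing that every sequence arises as a hom-sequence of some $X$, it simply instantiates $X=(Z_1,\dots,Z_n)$, $a=0$, $b=n$, where the relevant coproduct has a single non-trivial summand and the obstruction map is an isomorphism, so the monad 2-cell square \emph{is} the monoidal naturality square on the nose.
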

\begin{proof}
We will verify that $\Gamma$ is locally fully faithful; the proof for $\Gamma'$ is similar. Let $(H,\psi),(K,\kappa):(V,E) \to (W,F)$ be lax monoidal functors between distributive lax monoidal categories. Given $\phi:\ca GH \to \ca GK$ so that $\phi:(\ca GH,\Gamma(\psi)) \to (\ca GK,\Gamma(\kappa))$ is a monad 2-cell, we must exhibit a unique monoidal natural transformation $\phi':H \to K$ such that $\ca G\phi'=\phi$. By proposition(\ref{prop:G1-locally-ff}) there is a unique $\phi':H \to K$ such that $\ca G\phi'=\phi$, and from the proof of proposition(\ref{prop:G1-locally-ff}) this is defined by $\phi'_Z=\phi_{(Z),0,1}$. So it suffices to show that $\phi$ satisfies the monad 2-cell axiom iff $\phi'$ satisfies the monoidal naturality axiom. The monad 2-cell axiom says the outside of
\[ \xygraph{!{0;(3,0):(0,.333)::} {\coprod \opF\limits_i HX(x_{i-1},x_i)}="tl" [r] {\coprod H\opE\limits_i X(x_{i-1},x_i)}="tm" [r] {H \coprod \opE\limits_i X(x_{i-1},x_i)}="tr" [d] {K \coprod \opE\limits_i X(x_{i-1},x_i)}="br" [l] {\coprod K\opE\limits_i X(x_{i-1},x_i)}="bm" [l] {\coprod \opF\limits_i HX(x_{i-1},x_i)}="bl"
"tl":"tm"^-{\coprod \psi}:"tr"^-{\tn{obstn}} "bl":"bm"_-{\coprod \kappa}:"br"_-{\tn{obstn}} "tl":"bl"_{\coprod \opF\limits_i \phi} "tm":"bm"^-{\coprod \phi'} "tr":"br"^{\phi_{\Gamma E(X),a,b}}} \]
commutes for all $X \in \ca GV$ and $a,b \in X_0$, and where all the coproducts are taken over all sequences $a=x_0,...,x_n=b$. Since $\phi_{\Gamma E(X),a,b} = \phi'_{\Gamma E(X)(a,b)}$, the right hand square commutes by the naturality of the obstruction maps. Monoidal naturality of $\phi'$ says that for all $(Z_1,...,Z_n)$
\[ \xygraph{!{0;(2,0):(0,.5)::} {\opF\limits_iHZ_i}="tl" [r] {H\opE\limits_iZ_i}="tr" [d] {K\opE\limits_iZ_i}="br" [l] {\opF\limits_iKZ_i}="bl" "tl":"tr"^-{\psi}:"br"^-{\phi'}:@{<-}"bl"^-{\psi'}:@{<-}"tl"^-{\opF\limits_i\phi'}} \]
commutes, which implies that the left hand square above commutes, and so monoidal naturality of $\phi'$ implies the monad 2-cell axiom for $\phi$. For the converse take $X=(Z_1,...,Z_n)$, $a=0$ and $b=n$. In this case the coproduct involved in the monad 2-cell axiom has only one non-trivial summand, that for the sequence $(0,1,...,n)$. Thus the obstruction maps are isomorphisms, and the left hand square is exactly the monoidal naturality axiom for $\phi'$.
\end{proof}
While $\Gamma$ and $\Gamma'$ aren't themselves 2-fully faithful, proposition(\ref{prop:Psi-2-ff}) is a useful related statement which is true. By definition $\Gamma$ and $\Gamma'$ fit into commutative squares
\[ \xygraph{{\xybox{\xygraph{!{0;(2.3,0):(0,.4347)::} {\DISTMULT}="tl" [r] {\MND(\CAT/\Set)}="tr" [d] {\CAT/\Set}="br" [l] {\CAT}="bl" "tl":"tr"^-{\Gamma}:"br"^-{}:@{<-}"bl"^-{\ca G_1}:@{<-}"tl"^-{}}}} [r(4.9)]
{\xybox{\xygraph{!{0;(2.8,0):(0,.3571)::} {\OpDISTMULT}="tl" [r] {\OpMND(\CAT/\Set)}="tr" [d] {\CAT/\Set}="br" [l] {\CAT}="bl" "tl":"tr"^-{\Gamma'}:"br"^-{}:@{<-}"bl"^-{\ca G_1}:@{<-}"tl"^-{}}}}} \]
in which the vertical arrows are the obvious forgetful 2-functors. Let us write $\GMND$ (resp. $\GOpMND$) for the 2-categories obtained by pulling back $\ca G_1$ along the appropriate forgetful 2-functor, and by
\[ \begin{array}{lccr} {\Psi : \DISTMULT \to \GMND} &&& {\Psi' : \OpDISTMULT \to \GOpMND} \end{array} \]
the induced 2-functors.

In more concrete terms an object of $\GMND$ (or of $\GOpMND$) is a pair $(V,T)$ where $V$ is a category with coproducts and $T$ is a monad on $\ca GV$ over $\Set$. By definition and by theorem(\ref{thm:characterisation-of-image-of-Gamma}), we know that $(V,T)$ is in the image of $\Psi$ (or of $\Psi'$) iff $T$ is distributive and path-like. A morphism $(V,T) \to (W,S)$ in $\GMND$ is a pair $(H,\psi)$ where $H:V \to W$ is a functor, and $\psi$ is 2-cell data making $(\ca GH,\psi) : (\ca GV,T) \to (\ca GW,S)$ a monad functor. Similarly, a morphism $(V,T) \to (W,S)$ in $\GOpMND$ is a pair $(H,\psi)$ where $H:V \to W$ is a coproduct preserving functor, and $\psi$ is 2-cell data making $(\ca GH,\psi) : (\ca GV,T) \to (\ca GW,S)$ a monad opfunctor. A two cell $\phi : (H,\psi) \to (K,\kappa)$ of $\GMND$ is a natural transformation $\phi:H \to K$, making $\ca G\phi : (\ca GH,\psi) \to (\ca GK,\kappa)$ a monad 2-cell, and the 2-cells of $\GOpMND$ are described similarly.
\begin{proposition}\label{prop:Psi-2-ff}
$\Psi$ and $\Psi'$ are 2-fully faithful.
\end{proposition}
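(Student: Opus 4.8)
The plan is to unpack what "2-fully faithful" means here and reduce it to bijections that we already control. Recall that $\Psi$ and $\Psi'$ are defined as the comparison 2-functors into the 2-categorical pullbacks $\GMND$ and $\GOpMND$ of $\ca G_1 : \CAT \to \CAT/\Set$ against the forgetful 2-functors $\MND(\CAT/\Set) \to \CAT/\Set$ and $\OpMND(\CAT/\Set) \to \CAT/\Set$. So for lax monoidal categories $(V,E)$ and $(W,F)$, a 1-cell of $\GMND$ from $\Psi(V,E)$ to $\Psi(W,F)$ is by the pullback description exactly a pair $(H,\psi)$ where $H:V \to W$ is a functor and $\psi$ is monad functor data making $(\ca G H,\psi):(\ca GV,\Gamma E) \to (\ca GW,\Gamma F)$ a monad functor in $\CAT/\Set$; similarly 2-cells are natural transformations $\phi:H\to K$ of the underlying $V\to W$ functors such that $\ca G\phi$ is a monad 2-cell. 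The content to prove is that the assignments
\[ \DISTMULT((V,E),(W,F)) \to \GMND(\Psi(V,E),\Psi(W,F)) \]
are isomorphisms of categories, and likewise for $\Psi'$.

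First I would handle 2-cells, since this is already essentially done. Given 1-cells $(H,\psi),(K,\kappa)$ in $\DISTMULT$, a 2-cell between their images in $\GMND$ is a natural transformation $\phi:H\to K$ (the underlying data in $\CAT$, pulled back) such that $\ca G\phi$ is a monad 2-cell $(\ca GH,\Gamma\psi)\to(\ca GK,\Gamma\kappa)$. But proposition(\ref{prop:Gamma-locally-ff}) shows precisely that $\ca G\phi$ is a monad 2-cell for $\Gamma\psi,\Gamma\kappa$ iff $\phi$ is a monoidal natural transformation $(H,\psi)\to(K,\kappa)$. Hence the 2-cell sets on both sides coincide, compatibly with the functorial structure, so $\Psi$ is locally (i.e. on 2-cells) fully faithful; identical remarks apply to $\Psi'$ using the $\Gamma'$ half of proposition(\ref{prop:Gamma-locally-ff}).

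The substantive step is the bijection on 1-cells. Here the point is that the coherence $\psi$ for a lax monoidal functor is determined by, and freely reconstructible from, the monad functor coherence $\Gamma(\psi)$. Given a 1-cell $(H,\overline\psi)$ in $\GMND$ from $\Psi(V,E)$ to $\Psi(W,F)$ — so $H:V\to W$ and $\overline\psi:\Gamma F\,\ca GH \to \ca GH\,\Gamma E$ over $\Set$ satisfying the two monad functor axioms — I would extract for each sequence $(X_1,\dots,X_n)$ the hom-component of $\overline\psi_{(X_1,\dots,X_n)}$ between objects $0$ and $n$. Using the canonical isomorphisms $\Gamma E(X_1,\dots,X_n)(0,n)\iso \opE\limits_i X_i$ of remark(\ref{rem:implicit-Tbar}) (equivalently $E\iso\overline{\Gamma E}$) and likewise $\Gamma F(HX_1,\dots,HX_n)(0,n)\iso\opF\limits_i HX_i$, this component becomes a map $\psi_{X_i}:\opF\limits_i HX_i \to H\opE\limits_i X_i$, natural in the $X_i$ by naturality of $\overline\psi$ and of those identifications. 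One then checks that the two monad functor axioms for $\overline\psi$ restrict, on these $(0,n)$-homs, exactly to the unit and substitution coherence axioms for a lax monoidal functor, so $(H,\psi)$ is a 1-cell of $\DISTMULT$; this uses that the relevant coproducts appearing in the definition of $\Gamma E$ have a single nontrivial summand when evaluated at the sequence graph $(X_1,\dots,X_n)$ between $0$ and $n$ (the summand indexed by the sequence $0,1,\dots,n$), so no obstruction maps interfere — the same phenomenon exploited at the end of the proof of proposition(\ref{prop:Gamma-locally-ff}). Conversely, for $(H,\psi)$ in $\DISTMULT$ the definition of $\Gamma(\psi)$ given in section(\ref{sec:2-functoriality-Gamma}) shows that restricting $\Gamma(\psi)$ to the $(0,n)$-hom of a sequence graph recovers $\psi$; and for a general 1-cell $(H,\overline\psi)$ of $\GMND$ one must argue that $\overline\psi$ is determined by its values on these sequence-graph homs. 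This last point is where I expect the only real work: $\overline\psi_{X,a,b}$ for an arbitrary $V$-graph $X$ is a map out of the coproduct $\coprod_{a=x_0,\dots,x_n=b}\opF\limits_i HX(x_{i-1},x_i)$, so it suffices to pin down its precomposite with each coproduct inclusion, and each such inclusion factors through $\Gamma F(\ca GH\,\overline{x})$ for the sequence morphism $\overline x:(X(x_0,x_1),\dots,X(x_{n-1},x_n))\to X$. Naturality of $\overline\psi$ with respect to $\ca GH\,\overline x$ then expresses that precomposite in terms of the $(0,n)$-hom component of $\overline\psi_{(X(x_0,x_1),\dots)}$, i.e. in terms of $\psi$. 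Assembling these gives $\overline\psi = \Gamma(\psi)$, completing the bijection; the argument for $\Psi'$ is the evident dual, using that the $\GOpMND$-morphisms have coproduct-preserving underlying functor so the isomorphism $H\coprod\iso\coprod H$ in the definition of $\Gamma'(\psi)$ is available.

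Thus the main obstacle is the reconstruction-of-$\overline\psi$-from-its-$(0,n)$-components step; everything else is bookkeeping with the defining diagrams of $\Gamma$ and $\Gamma'$ from section(\ref{sec:2-functoriality-Gamma}) together with proposition(\ref{prop:Gamma-locally-ff}) and remark(\ref{rem:implicit-Tbar}).
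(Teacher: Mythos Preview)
Your proposal is correct and follows essentially the same approach as the paper: reduce local full-faithfulness to proposition~\ref{prop:Gamma-locally-ff}, define the multitensor coherence $\psi$ from a monad-functor coherence $\overline\psi$ by evaluating at sequence graphs and reading off the $(0,n)$-hom, and then recover $\overline\psi$ from $\psi$ via naturality with respect to the canonical inclusions $\overline{x}:(X(x_0,x_1),\dots,X(x_{n-1},x_n))\hookrightarrow X$. The paper's proof is organised in exactly this way, with the same ``single non-trivial summand'' observation you highlight and the same naturality argument for reconstruction; the only cosmetic difference is that the paper writes the naturality with respect to $\overline{x}$ itself (a morphism in $\ca GV$) rather than $\ca GH\,\overline{x}$, which is the correct formulation since $\overline\psi$ is natural in $\ca GV$.
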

\begin{proof}
We shall prove that $\Psi$ is 2-fully faithful; the proof for $\Psi'$ is similar. By definition and proposition(\ref{prop:Gamma-locally-ff}) $\Psi$ is locally fully faithful. Thus it suffices to show that $\Psi$ is fully faithful as a mere functor. This in turn amounts to showing that for any functor $H:V \to W$ between categories with coproducts, and any natural transformation $\psi:\Gamma(F)\ca G(H) \to \ca G(H)\Gamma(E)$ such that $(\ca GH,\psi):(\ca GV,\Gamma E) \to (\ca GW,\Gamma F)$ is a monad functor, that there exists a unique $\psi':FM(H) \to HE$ making $(H,\psi'):(V,E) \to (W,F)$ a lax monoidal functor such that $\Gamma \psi' = \psi$.

The homs of the components of $\psi$ are maps in $V$ of the form
\[ \psi_{X,a,b} : \coprod\limits_{a=x_0,...,x_n=b} \opF\limits_iHX(x_{i-1},x_i) \to H \coprod\limits_{a=x_0,...,x_n=b} \opE\limits_iX(x_{i-1},x_i) \]
and in the case where $X=(Z_1,...,Z_n)$, $a=0$ and $b=n$, the coproducts here have only one non-trivial summand, that for the sequence $(0,1,...,n)$, and so we define
\[ \psi'_{Z_i} := \psi_{(Z_1,...,Z_n),0,n} : \opF\limits_iHZ_i \to H\opE\limits_iZ_i. \]
The lax monoidal functor coherence axioms for $\psi'$ follow easily from the monad functor coherence axioms for $\psi$. To say that $\Gamma \psi' = \psi$ is to say that
\[ \xygraph{!{0;(3,0):(0,.4)::} {\coprod\limits_{a=x_0,...,x_n=b} \opF\limits_iHX(x_{i-1},x_i)}="l" [dr] {\coprod\limits_{a=x_0,...,x_n=b} H\opE\limits_iX(x_{i-1},x_i)}="m" [ur] {H \coprod\limits_{a=x_0,...,x_n=b} \opE\limits_iX(x_{i-1},x_i)}="r" "l":"m"_(.4){\coprod \psi'}:"r"_(.6){\tn{obstn}}:@{<-}"l"_-{\psi_{X,a,b}}} \]
commutes for all $X \in \ca GV$ and $a,b \in X_0$, which is to say that
\[ \xygraph{!{0;(4,0):(0,.25)::} {\opF\limits_iHX(x_{i-1},x_i)}="tl" [r] {\coprod\limits_{a=x_0,...,x_n=b} \opF\limits_iHX(x_{i-1},x_i)}="tr" [d] {H \coprod\limits_{a=x_0,...,x_n=b} \opE\limits_iX(x_{i-1},x_i)}="br" [l] {H \opE\limits_iX(x_{i-1},x_i)}="bl" "tl":"tr"^-{c_{x_i}}:"br"^-{\psi_{X,a,b}}:@{<-}"bl"^-{Hc_{x_i}}:@{<-}"tl"^-{\psi'}} \]
commutes for all $X \in \ca GV$ and $a=x_0,...,x_n=b \in X_0$. But this last square is just the hom between $a$ and $b$ for the naturality square for $\psi$ with respect to the canonical inclusion $(X(x_0,x_1),...,X(x_{n-1},x_n)) \hookrightarrow X$. Finally we note that given $\phi$ making $(H,\phi):(V,E) \to (W,F)$ a lax monoidal functor one has for $Z_1,...,Z_n \in V$
\[ (\Gamma \phi)'_{Z_1,...,Z_n} = (\Gamma \phi)_{(Z_1,...,Z_n),0,n} = \phi_{Z_i} \]
and so $\psi \mapsto \psi'$ is the inverse of the arrow map of $\Psi$.
\end{proof}

\subsection{Cartesian transformations}
\label{sec:compatibility-of-2-functor-Gamma}
We now note that the above constructions are compatible with cartesian transformations.
\begin{lemma}\label{lem:cartesian-obstructions}
Suppose that $H:V \to W$ is a pullback preserving functor between extensive categories. Then the obstruction maps
\[ \coprod_{i \in I} HX_i \to H \coprod_{i \in I} X_i \]
are cartesian-natural in the $X_i$.
\end{lemma}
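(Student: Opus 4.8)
The plan is to verify the pullback property directly on test objects, using extensivity of $V$ and $W$ to break everything up into the single-summand pieces, where the hypothesis that $H$ preserves pullbacks does the work. Fix morphisms $f_i : X_i \to Y_i$ for $i \in I$; the assertion is that the naturality square of the obstruction map, with top edge $\theta_X : \coprod_i HX_i \to H\coprod_i X_i$, bottom edge $\theta_Y : \coprod_i HY_i \to H\coprod_i Y_i$, left edge $\coprod_i Hf_i$ and right edge $H(\coprod_i f_i)$, is a pullback in $W$. Recall that, by definition, $\theta_X$ is the unique map whose composite with the $i$-th coprojection $HX_i \to \coprod_j HX_j$ equals $H$ applied to the coprojection $\iota^X_i : X_i \to \coprod_j X_j$, and similarly for $\theta_Y$.

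The key single-summand input is the following: for each $i$, the square in $V$ with edges $\iota^X_i$, $\coprod_j f_j$, $f_i$ and $\iota^Y_i$ is a pullback. This is immediate from theorem~\ref{thm:extensivity-characterisation} applied in $V$, since the coprojections $\iota^X_j$ ($j \in I$) and the coprojections $\iota^Y_j$ each form a coproduct cocone. Since $H$ preserves pullbacks, applying $H$ yields a pullback square in $W$ with top $H\iota^X_i$, bottom $H\iota^Y_i$, and sides $Hf_i$ and $H(\coprod_j f_j)$. Because $H\iota^X_i = \theta_X \circ (\text{$i$-th coprojection})$ and $H\iota^Y_i = \theta_Y \circ (\text{$i$-th coprojection})$, this square is precisely the pullback of the right edge $H(\coprod_j f_j)$ of our naturality square along the restriction of $\theta_Y$ to the $i$-th summand of $\coprod_j HY_j$.

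Now I would run the universal-property argument. Given a test object $Z$ with maps $g : Z \to H\coprod_j X_j$ and $h : Z \to \coprod_j HY_j$ satisfying $H(\coprod_j f_j)\,g = \theta_Y\,h$, extensivity of $W$ decomposes $Z$ as $\coprod_i Z_i$, where $Z_i$ is the pullback of $h$ along the $i$-th coprojection of $\coprod_j HY_j$ (this needs only pullbacks along coproduct coprojections, which exist), with induced maps $h_i : Z_i \to HY_i$ and coprojections $e_i : Z_i \to Z$. For each $i$, $g e_i$ and $h_i$ form a cone over the $i$-th pullback square of the previous paragraph, hence factor through a unique $w_i : Z_i \to HX_i$; these assemble to a map $w : Z \to \coprod_i HX_i$, and a short check gives $\theta_X w = g$ and $(\coprod_i Hf_i)\,w = h$. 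Uniqueness of $w$ reduces, via the disjointness of coproducts in $W$ (so that the $i$-th summand of $\coprod_j HX_j$ is the full preimage of the $i$-th summand of $\coprod_j HY_j$ under $\coprod_j Hf_j$) together with the monicity of coprojections, to the uniqueness of each $w_i$. I expect this last bookkeeping — correctly identifying the pieces $Z_i$ and matching the two cones so that the assembled map is genuinely forced — to be the only place demanding care; everything else is a direct appeal to theorem~\ref{thm:extensivity-characterisation} and the hypothesis that $H$ preserves pullbacks.
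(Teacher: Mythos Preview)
Your proof is correct and takes essentially the same approach as the paper's: both reduce to the single-summand pullback squares (obtained by applying $H$ to the coprojection pullback squares in $V$, which are pullbacks by theorem~\ref{thm:extensivity-characterisation}) and then use extensivity of $W$ to conclude. The paper's version is simply more compressed---it draws the composite diagram with the coprojection squares on the left and the naturality square on the right, notes that the outer rectangles are pullbacks, and invokes extensivity of $W$ directly rather than spelling out the universal-property check on a decomposed test object as you do.
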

\begin{proof}
The naturality squares in question appear as the right hand square in
\[ \xygraph{!{0;(2,0):(0,.5)::} {HX_i}="tl" [r] {\coprod HX_i}="tm" [r] {H \coprod X_i}="tr" [d] {H \coprod Y_i}="br" [l] {\coprod HY_i}="bm" [l] {HY_i}="bl" "tl"(:"tm"^-{c_{HX_i}}:"tr"^-{\tn{obstn}},:@/^{2pc}/"tr"^-{Hc_{X_i}}) "bl"(:"bm"_-{c_{HY_i}}:"br"_-{\tn{obstn}},:@/_{2pc}/"br"_-{Hc_{Y_i}}) "tl":"bl"_{Hf_i} "tm":"bm"^{\coprod Hf_i} "tr":"br"^{H \coprod f_i}} \]
Since $V$ is extensive and $H$ preserves pullbacks it follows that outside square is a pullback for all $i \in I$. Since $W$ is extensive it follows then that the right hand square is a pullback as required.
\end{proof}
\begin{proposition}\label{prop:Gamma-respects-cartesian-transformations}
\begin{enumerate}
\item Let $(H,\psi) : (V,E) \to (W,F)$ be a lax monoidal functor between distributive lax monoidal categories, and suppose that $V$ and $W$ are extensive and $H$ preserves pullbacks. Then the following statements are equivalent:
\begin{enumerate}
\item $\psi$ is a cartesian transformation.
\item $\Gamma \psi$ is a cartesian transformation.
\item $\Psi \psi$ is a cartesian transformation.
\end{enumerate}
\label{propcase:Gamma-cart}
\item Let $(H,\psi) : (V,E) \to (W,F)$ be a coproduct preserving oplax monoidal functor between distributive lax monoidal categories, and suppose that $W$ is extensive. Then the following statements are equivalent:
\begin{enumerate}
\item $\psi$ is a cartesian transformation.
\item $\Gamma' \psi$ is a cartesian transformation.
\item $\Psi' \psi$ is a cartesian transformation.
\end{enumerate}
\label{propcase:Gamma-prime-cart}
\end{enumerate}
\end{proposition}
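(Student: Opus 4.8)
The plan is to reduce everything to homs. Since $\Gamma\psi$, $\Gamma'\psi$ and their images $\Psi\psi$, $\Psi'\psi$ are transformations over $\Set$, they are identities on objects, and as $W$ has pullbacks, proposition \ref{prop:limits-in-GV} shows pullbacks in $\ca GW$ are computed on object sets and homs; hence a naturality square of $\Gamma\psi$ (or $\Gamma'\psi$) at a map $f\colon X\to Y$ of $\ca GV$ is a pullback in $\ca GW$ iff for all $a,b\in X_0$ the induced hom-square is a pullback in $W$. I would also observe that (b)$\Leftrightarrow$(c) is immediate in each part: by the construction of $\Psi$, $\Psi'$ in section \ref{sec:properties-of-2-functor-Gamma} the natural transformation underlying $\Psi\psi$ is $\Gamma\psi$ itself and that underlying $\Psi'\psi$ is $\Gamma'\psi$, so being a cartesian transformation means the same thing for (b) and (c). Thus in each part it remains to prove (a)$\Leftrightarrow$(b).

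For (b)$\Rightarrow$(a) I would specialise: given a morphism $(g_1,\dots,g_n)\colon(A_1,\dots,A_n)\to(B_1,\dots,B_n)$ of $MV$, apply (b) to the map $f$ of $\ca GV$ with $f_0=\id$ and hom maps the $g_i$, and read off the hom-square between the objects $0$ and $n$. All homs of $(A_1,\dots,A_n)$ and $(B_1,\dots,B_n)$ off the diagonal are the strict initial object, and since $E$ and $F$ are distributive they annihilate any tuple with an initial entry, so every summand of the coproducts occurring except the one for $(0,1,\dots,n)$ is initial, the obstruction maps in $\Gamma(\psi)_{X,0,n}$ (resp.\ $\Gamma'(\psi)_{X,0,n}$) become isomorphisms, and the square reduces --- modulo the canonical isomorphisms $\opE_i A_i\cong\Gamma E(A_1,\dots,A_n)(0,n)$ already used in the proof of theorem \ref{thm:preservation-by-Gamma-E}(\ref{thmcase:Gamma-E-pres-filtered-colims}) --- to the naturality square of $\psi$ at $(g_1,\dots,g_n)$. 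As every such square arises this way, $\psi$ is cartesian.

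For (a)$\Rightarrow$(b) I would analyse the hom-square of $\Gamma\psi$ (resp.\ $\Gamma'\psi$) at $f$ and $a,b$ by decomposing the bottom-left coproduct over the sequences $\vec y\colon fa\to fb$ of $Y$; this reorganises the top-left coproduct over the sequences $\vec x$ with $f_0\vec x=\vec y$, which --- the endpoints being fixed --- form a product of fibres of $f_0$, so the whole square becomes a coproduct over $\vec y$ of squares whose indexing data is a pullback of sets. In part (\ref{propcase:Gamma-prime-cart}) the functor $H$ preserves coproducts, so the comparison in the definition of $\Gamma'(\psi)_{X,a,b}$ is an isomorphism and each of these squares is a coproduct of naturality squares of $\psi$ at the $MV$-morphisms $(f_{x_0,x_1},\dots,f_{x_{n-1},x_n})$, which are pullbacks by (a); reassembling via the extensivity of $W$ as in lemma \ref{lem:pb-decomp} yields (b), and this is why extensivity of $V$ and pullback-preservation of $H$ are not needed there. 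In part (\ref{propcase:Gamma-cart}) one must instead paste onto the ``$\coprod\psi$'' squares the obstruction squares of $H$, which are cartesian-natural by lemma \ref{lem:cartesian-obstructions}, and combine this with the cartesianness of $\psi$ before reassembling over the $\vec y$.

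The step I expect to be the main obstacle is this last one: the bookkeeping in part (\ref{propcase:Gamma-cart}) that reconciles the obstruction maps of $H$ with the sequence-indexed coproducts when $H$ is not assumed coproduct-preserving --- showing the composite of the ``$\coprod\psi$'' square and the obstruction square is a pullback, and that the reindexing along $f_0$ is compatible with lemma \ref{lem:cartesian-obstructions} and with the extensive reassembly. Part (\ref{propcase:Gamma-prime-cart}) is then a degenerate case, the coproduct-preservation of $H$ collapsing all the obstruction maps to isomorphisms.
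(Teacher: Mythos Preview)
Your proposal is correct and follows the same approach as the paper: both reduce to homs, exploit the factorisation of $\Gamma(\psi)_{X,a,b}$ as $\coprod\psi$ followed by the coproduct obstruction of $H$, and invoke extensivity of $W$ together with lemma~\ref{lem:cartesian-obstructions}. Your anticipated obstacle in part~(\ref{propcase:Gamma-cart}) dissolves more easily than you expect: since the obstruction factor is always cartesian-natural by lemma~\ref{lem:cartesian-obstructions}, the pullback pasting/cancellation lemma gives that $\Gamma\psi$ is cartesian iff $\coprod\psi$ is, and extensivity of $W$ gives that $\coprod\psi$ is cartesian iff $\psi$ is --- so no interleaving of obstruction bookkeeping with the sequence-indexed reassembly is needed.
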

\begin{proof}
The statements that $\Gamma \psi$ cartesian iff $\Psi \psi$ is cartesian, and similarly for $\Gamma' \psi$ and $\Psi' \psi$, follows by definition. For $(H,\psi)$ as in (\ref{propcase:Gamma-cart}) note that by the extensivity of $W$ the maps
\[ \coprod\limits_{a=x_0...,x_n=b} \psi_{X(x_{i-1},x_i)} : \coprod\limits_{a=x_0...,x_n=b} \opF\limits_i HX(x_{i-1},x_i) \to \coprod\limits_{a=x_0...,x_n=b} H\opE\limits_i X(x_{i-1},x_i) \]
are cartesian natural iff $\psi$ is, and so (\ref{propcase:Gamma-cart}) follows by lemma(\ref{lem:cartesian-obstructions}) and the definition of $\Gamma$. For $(H,\psi)$ as in (\ref{propcase:Gamma-prime-cart}) one has by the extensivity of $W$ that the cartesian naturality of
\[ \coprod\limits_{a=x_0...,x_n=b} \psi_{X(x_{i-1},x_i)} : \coprod\limits_{a=x_0...,x_n=b} H\opE\limits_i X(x_{i-1},x_i) \to \coprod\limits_{a=x_0...,x_n=b} \opF\limits_i HX(x_{i-1},x_i) \]
is equivalent to that of $\psi$, so that (\ref{propcase:Gamma-prime-cart}) follows from the definition of $\Gamma'$.
\end{proof}
Recall that for a cartesian monad $(T,\eta,\mu)$ on a category $V$ with pullbacks, a \emph{$T$-operad} consists of another monad $A$ on $V$ together with a cartesian monad morphism $\alpha:A \to T$, that is to say, one has a natural transformation $\alpha:A \to T$ whose naturality squares are pullbacks, and is a morphism of monoids in the monoidal category $\End(V)$. Given a cartesian monad $T$ on $\ca GV$ over $\Set$, a \emph{$T$-operad over $\Set$} is defined in the same way except that the natural transformation $\alpha$ lives over $\Set$, which means that in addition $\alpha$'s components are identities on objects. For instance for $T = \ca T_{\leq n}$ the strict $n$-category monad on $\ca G^n\Set$, $T$-operads over $\Set$ were called \emph{normalised} $n$-operads of \cite{Batanin-MonGlobCats} and the terminology ``normalised'' was also used in \cite{BataninWeber-EnHop}. Finally we note that $\Gamma$'s image is closed under cartesian monad maps.
\begin{lemma}\label{lem:transfer-dpl}
Let $V$ be a lextensive category and $T$ be a cartesian monad on $\ca GV$ over $\Set$. Let $\alpha:A \rightarrow T$ be a $T$-operad over $\Set$. 
\begin{enumerate}
\item  If $T$ is distributive then so is $A$.\label{tdpl1}
\item  If $T$ is path-like then so is $A$.\label{tdpl2}
\end{enumerate}
\end{lemma}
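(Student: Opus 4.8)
The plan is to derive both parts from two ingredients: that $\alpha$, being a cartesian transformation over $\Set$, has all its naturality squares in $\ca GV$ computed hom-by-hom, so that each of them is a pullback in $V$ (by the explicit construction of limits in proposition(\ref{prop:limits-in-GV})); and that in an extensive category pullbacks transport coproduct decompositions and coproduct-preservation (theorem(\ref{thm:extensivity-characterisation})). Note first that $V$ lextensive is in particular extensive, hence has all small coproducts, so $\overline{A}$ and $\overline{T}$ are genuine multitensors by proposition(\ref{prop:Tbar}), and moreover $A$ is a monad on $\ca GV$ over $\Set$ since $\alpha$ is the identity on objects.

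For (\ref{tdpl1}), by definition(\ref{def:distributive-monad-over-Set}) it suffices to show that each $n$-ary component $\overline{A}(X_1,\dots,X_n)$ preserves coproducts in each variable, given that $\overline{T}(X_1,\dots,X_n)$ does. Writing $\overline{\alpha}_n$ for the natural transformation $\overline{A}(X_1,\dots,X_n)\to \overline{T}(X_1,\dots,X_n)$ obtained as the hom map of $\alpha_{(X_1,\dots,X_n)}$ between $0$ and $n$, I would first observe that $\overline{\alpha}_n$ is cartesian: its naturality square at a map $(f_1,\dots,f_n)$ of $V^n$ is the hom between $0$ and $n$ of the naturality square of $\alpha$ at the induced map of $V$-graphs $(X_1,\dots,X_n)\to(Y_1,\dots,Y_n)$, which is a pullback in $\ca GV$ and hence, by proposition(\ref{prop:limits-in-GV}), a pullback in $V$ hom-wise. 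Then, fixing all but the $k$-th argument and a coproduct $Z=\coprod_i Z_i$, the naturality squares of $\overline{\alpha}_n$ at the coprojections $Z_i\to Z$ sit over the coproduct cocone $\overline{T}(\dots,Z_i,\dots)\to\overline{T}(\dots,Z,\dots)$ and are pullbacks, so by theorem(\ref{thm:extensivity-characterisation}) the maps $\overline{A}(\dots,Z_i,\dots)\to\overline{A}(\dots,Z,\dots)$ form a coproduct cocone. Thus $\overline{A}$ is distributive, i.e. $A$ is distributive.

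For (\ref{tdpl2}), fix $X\in\ca GV$ and $a,b\in X_0$. Since $T$ is path-like, $\pi^T_{X,a,b}$ exhibits $TX(a,b)$ as $\coprod_x\overline{T}(X(x_0,x_1),\dots,X(x_{n-1},x_n))$ over sequences $x=(x_0,\dots,x_n)$ from $a$ to $b$, the coprojection indexed by $x$ being $T(\overline{x})_{0,n}$. Pulling $\alpha_{X,a,b}:AX(a,b)\to TX(a,b)$ back along these coprojections, theorem(\ref{thm:extensivity-characterisation}) yields a coproduct decomposition $AX(a,b)=\coprod_x P_x$ whose $x$-th coprojection $P_x\to AX(a,b)$ completes a pullback square over the cospan $\alpha_{X,a,b}$, $T(\overline{x})_{0,n}$. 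On the other hand, the naturality square of $\alpha$ at $\overline{x}:(X(x_0,x_1),\dots,X(x_{n-1},x_n))\to X$ is a pullback in $\ca GV$, so by proposition(\ref{prop:limits-in-GV}) its hom between $0$ and $n$ is a pullback square in $V$ over the same cospan, with apex $\overline{A}(X(x_0,x_1),\dots,X(x_{n-1},x_n))$ and with apex map to $AX(a,b)$ equal to $A(\overline{x})_{0,n}$. By uniqueness of pullbacks these two squares are isomorphic compatibly with their maps into $AX(a,b)$, so $P_x\cong\overline{A}(X(x_0,x_1),\dots,X(x_{n-1},x_n))$ and the $x$-th coprojection is $A(\overline{x})_{0,n}$. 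Hence the family $\bigl(A(\overline{x})_{0,n}\bigr)_x$ is a coproduct cocone, which is exactly the assertion that $\pi^A_{X,a,b}$ is an isomorphism; so $A$ is path-like by definition(\ref{def:path-like}).

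The step needing care is the last matching in (\ref{tdpl2}): one must check that the coprojection of the decomposition obtained by pulling back along $\pi^T_{X,a,b}$ really coincides, under the identification $P_x\cong\overline{A}(X(x_0,x_1),\dots,X(x_{n-1},x_n))$, with the map $A(\overline{x})_{0,n}$ out of which $\pi^A_{X,a,b}$ is built — otherwise one only concludes that $AX(a,b)$ is \emph{abstractly} a coproduct of the right pieces, not that $\pi^A_{X,a,b}$ itself is invertible. This follows from uniqueness of pullbacks together with the fact that $\pi^T_{X,a,b}$ is assembled from precisely the maps $T(\overline{x})_{0,n}$, but it is the point where the bookkeeping must be done honestly.
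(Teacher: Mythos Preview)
Your argument is correct and is essentially the paper's own proof: in both parts one uses that the hom-level naturality squares of $\alpha$ are pullbacks in $V$, and then invokes extensivity (theorem(\ref{thm:extensivity-characterisation})) to transport the coproduct cocones for $T$ to the corresponding ones for $A$. Your write-up is more explicit about the bookkeeping in (\ref{tdpl2}) --- matching the pulled-back summands with the $A(\overline{x})_{0,n}$ --- whereas the paper simply says ``use the cartesianness of $\alpha$ and the extensivity of $V$ to conclude as in (\ref{tdpl1})''.
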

\begin{proof}
(\ref{tdpl1}): given an $n$-tuple $(X_1,...,X_n)$ of objects of $V$ and a coproduct cocone
\[ (c_j : X_{ij} \rightarrow X_i \,\, : \,\, j \in J) \]
where $1{\leq}i{\leq}n$, we must show that the hom-maps
\[ A(X_1,...,c_j,...,X_n)_{0,n} : A(X_1,...,X_{ij},...,X_n)(0,n) \rightarrow A(X_1,...,X_i,...,X_n)(0,n) \]
form a coproduct cocone. For $j \in J$ we have a pullback square
\[ \xygraph{!{0;(6,0):(0,.167)::}
{A(X_1,...,X_{ij},...,X_n)(0,n)}="tl" [r] {A(X_1,...,X_i,...,X_n)(0,n)}="tr" [d] {T(X_1,...,X_i,...,X_n)(0,n)}="br" [l] {T(X_1,...,X_{ij},...,X_n)(0,n)}="bl" "tl"(:"tr"^-{A(X_1,...,c_j,...,X_n)_{0,n}}:"br"^{\alpha},:"bl"_{\alpha}:"br"_-{T(X_1,...,c_j,...,X_n)_{0,n}})
"tl":@{}"br"|-{pb}} \]
and by the distributivity of $T$ the $T(X_1,...,c_j,...,X_n)_{0,n}$ form a coproduct cocone, and thus so do the $A(X_1,...,c_j,...,X_n)_{0,n}$ by the extensivity of $V$.

(\ref{tdpl2}): given $X \in \ca GV$, $a,b \in X_0$ and a sequence $(x_0,...,x_n)$ of objects of $X$ such that $x_0=a$ and $x_n=b$, we have the map
\[ A\overline{x}_{0,n} : A(X(x_0,x_1),...,X(x_{n-1},x_n))(0,n) \rightarrow AX(a,b) \]
and we must show that these maps, where the $x_i$ range over all sequences from $a$ to $b$, form a coproduct cocone. By the path-likeness of $T$ we know that the maps
\[ T\overline{x}_{0,n} : T(X(x_0,x_1),...,X(x_{n-1},x_n))(0,n) \rightarrow TX(a,b) \]
form a coproduct cocone, so we can use the cartesianness of $\alpha$ and the extensivity of $V$ to conclude as in (\ref{tdpl1}).
\end{proof}

\section{Distributive laws between monads and multitensors}
\label{sec:dist-laws-from-monoidal-monads}

In section(1) of the seminal paper \cite{Beck-DLaws} of Jon Beck on monad distributive laws, it is shown that there are three equivalent ways of regarding a distributive law of a monad $S$ over a monad $T$ on the same category:
\begin{enumerate}
\item As a natural transformation $TS \to ST$ satisfying some axioms,
\item as a natural transformation $STST \to ST$ satisfying some axioms, one of which is that it is the multiplication of a monad, and
\item as a lifting of the monad $S$ to the category of algebras of $T$.
\end{enumerate}
In the previous section we saw that $\Gamma$ can be seen as a 2-functor landing in certain 2-categories of monads, and from \cite{Street-FTM} we know that monads in these 2-categories of monads are distributive laws. Thus $\Gamma$ sends monoidal and opmonoidal monads to distributive laws.

In this section we exhibit an analogue of Beck's basic result in our situation \emph{before} applying $\Gamma$, and then relate this to the monad distributive laws one has upon $\Gamma$'s application. Given a monad $T$ and a multitensor $E$ on a category $V$, the analogue of the data $TS \to ST$ of a monad distributive law is that of the coherences making $T$ into a monoidal or opmonoidal monad with respect to $E$. Theorem(\ref{thm:monoidal-monad-dist-law}) is the analogue of Beck's result for monoidal monads, and theorem(\ref{thm:opmonoidal-monad-dist-law}) is the analogue for opmonoidal monads.

Section(\ref{sec:dist-multitensor-over-monad}) completes our development of the theory of monads and multitensors, and in section(\ref{sec:strict-n-cat-monad}) we give an illustration of our theory to efficiently construct the monads for strict $n$-categories and deduce all their important properties.

\subsection{Distributing a monad over a multitensor}
\label{sec:dist-monad-over-multitensor}
Let $(V,E,u,\sigma)$ be a lax monoidal category. Recall that a \emph{monoidal monad} on $(V,E)$ is a monad on $(V,E)$ in the 2-category of lax monoidal categories, lax monoidal functors and monoidal natural transformations. In more explicit terms this amounts to a monad $(T,\eta,\mu)$ on $V$ together with coherence maps
\[ \tau_{X_i} : \opE\limits_i TX_i \to T\opE\limits_i X_i \]
such that
\[ \xygraph{{\xybox{\xygraph{!{0;(.75,0):(0,1.333)::} {T}="l" [r(2)] {E_1T}="r" [dl] {TE_1}="b" "l"(:"r"^-{u T}:"b"^{\tau},:"b"_{Tu})}}} [r(4.5)]
{\xybox{\xygraph{!{0;(2,0):(0,.5)::} {\opE\limits_i\opE\limits_jT}="tl" [r] {\opE\limits_iT\opE\limits_j}="tm" [r] {T\opE\limits_i\opE\limits_j}="tr" [l(.5)d] {T\opE\limits_{ij}}="br" [l] {\opE\limits_{ij}T}="bl" "tl" (:@<1ex>"tm"^-{\opE\limits_i\tau}:@<1ex>"tr"^-{\tau\opE\limits_j}:"br"^{T\sigma},:"bl"_{\sigma T}:@<1ex>"br"_-{\tau})}}}} \]
and
\[ \xygraph{{\xybox{\xygraph{!{0;(.75,0):(0,1.333)::} {E_1}="l" [r(2)] {E_1T}="r" [dl] {TE_1}="b" "l"(:"r"^-{E_1 \eta}:"b"^{\tau},:"b"_{\eta E_1})}}} [r(4.5)]
{\xybox{\xygraph{!{0;(2,0):(0,.5)::} {\opE\limits_iT^2}="tl" [r] {T\opE\limits_iT}="tm" [r] {T^2\opE\limits_i}="tr" [l(.5)d] {T\opE\limits_i}="br" [l] {\opE\limits_iT}="bl" "tl" (:@<1ex>"tm"^-{\tau T}:@<1ex>"tr"^-{T\tau}:"br"^{\mu \opE\limits_i},:"bl"_{\opE\limits_i\mu}:@<1ex>"br"_-{\tau})}}}} \]
commute. Ignoring the subscripts in the above data and axioms one can see immediately the formal resemblence with monad distributive laws. Restricting attention to singleton sequences of objects from $V$ one has a monad distributive law of $T$ over $E_1$.

Given a multitensor $E$ on a category $V$ and a monad $(S,\eta,\mu)$ on $\ca GV$, a \emph{lifting of $S$ to $\Enrich{E}$} is a monad $(S',\eta',\mu')$ on $\Enrich{E}$ such that $SU^E = U^ES'$, $\eta U^E = U^E\eta'$ and $\mu U^E = U^E \mu'$, where we recall that $U^E:\Enrich{E} \to \ca GV$ is the forgetful functor. We arrive now at our monoidal monad analogue of Beck's basic monad distributive law result.
\begin{theorem}\label{thm:monoidal-monad-dist-law}
Let $V$ be a category, $(E,u,\sigma)$ be a multitensor on $V$ and $(T,\eta,\mu)$ be a monad on $V$. Then there is a bijection between the following types of data:
\begin{enumerate}
\item  Morphisms $\tau_{X_i} : \opE\limits_i TX_i \to T \opE\limits_i X_i$
of $V$ providing the coherences making $T$ into a monoidal monad.\label{data:monoidal-monad-coherence}
\item  Morphisms $\sigma_{X_{ij}}' : T\opE\limits_iT\opE\limits_j X_{ij} \to T\opE\limits_{ij} X_{ij}$ of $V$ providing the substitutions for a multitensor $(TE,u',\sigma')$ where $u_X'=\eta_{E_1X}u_X$, $\eta E:E \to TE$ is a multitensor map, $Tu:T \to TE_1$ is a monad map, and the composite
\[ \xygraph{!{0;(2,0):} {T\opE\limits_i}="l" [r] {TE_1T\opE\limits_i}="m" [r] {T\opE\limits_i}="r" "l":"m"^-{Tu\eta\opE\limits_i}:"r"^-{\sigma'}} \]
is the identity.\label{data:composite-multitensor}
\newcounter{continued-numbering}
\setcounter{continued-numbering}{\value{enumi}}
\end{enumerate}
if in addition $V$ has coproducts and $E$ is distributive, then the data (\ref{data:monoidal-monad-coherence})-(\ref{data:composite-multitensor}) are also in bijection with
\begin{enumerate}
\setcounter{enumi}{\value{continued-numbering}}
\item  Identity on object morphisms $\lambda_X:\Gamma(E)\ca G(T)(X) \to \ca G(T)\Gamma(E)(X)$ of $\ca GV$ providing a monad distributive law of $\ca G(T)$ over $\Gamma(E)$.\label{data:monad-distributive-law}
\item  Liftings of the monad $\ca G(T)$ to a monad $T'$ on $\Enrich{E}$.\label{data:monad-lifting-ECat}
\end{enumerate}
and for any given instance of such data one has an isomorphism
\[ \Enrich{(TE)} \iso \Enrich{E}^{T'} \]
of categories commuting with the forgetful functors into $\ca GV$.
\end{theorem}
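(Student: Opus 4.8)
The plan is to run the chain of bijections $(1)\iff(2)\iff(3)\iff(4)$ and then read off the equivalence of categories, reusing Proposition~\ref{prop:Gamma-E-algebras-as-E-categories}, Lemma~\ref{lem:general-multitensor-facts-for-Tbar} and the formal theory of distributive laws. For $(1)\iff(2)$ one direction is exactly Lemma~\ref{lem:general-multitensor-facts-for-Tbar}(1): a monoidal-monad coherence $\tau$ produces the multitensor $(TE,u',\sigma')$ with $\sigma'=(\mu\sigma)(T\tau E)$, and the three extra conditions ($\eta E$ a multitensor map, $Tu$ a monad map, the displayed composite the identity) drop out of the monoidal-monad axioms. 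Conversely, from such a multitensor one recovers $\tau_{X_i}$ by feeding the outer-$E$-inner-$TE_1$ expression built from $\eta E$ and $\opE\limits_i(Tu_{X_i})$ into $\sigma'$ (taken on the doubly-indexed family with singleton inner sequences); that the two passages are mutually inverse and that the resulting $\tau$ satisfies the monoidal-monad axioms is a formal consequence of the multitensor axioms for $TE$ and the three conditions — at one point the rightmost unit axiom $\sigma\comp(\opE\limits_i u)=\id$ for $E$ is exactly what is needed. No hypotheses on $V$ are used here.

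For $(1)\iff(3)$ I now bring in the assumption that $V$ has coproducts and $E$ is distributive, so that $\Gamma E$ is a genuine monad. Given $\tau$, I would define the identity-on-objects $\lambda_X:\Gamma(E)\ca G(T)(X)\to\ca G(T)\Gamma(E)(X)$ by prescribing its hom map at $(a,b)$: on the summand indexed by a sequence $x_0,\dots,x_n$ it is $\tau:\opE\limits_i TX(x_{i-1},x_i)\to T\opE\limits_i X(x_{i-1},x_i)$ followed by the coprojection into $T\coprod\opE\limits_i X(x_{i-1},x_i)$. The two distributive-law axioms follow from the monoidal-monad axioms together with distributivity of $E$. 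For the converse, evaluating $\lambda$ at the $V$-graph $(Z_1,\dots,Z_n)$ and the hom $(0,n)$ leaves a single non-initial summand on each side (the others have an $\opE$-factor through $\emptyset$ and vanish by distributivity), and one reads off $\tau_{Z_i}$. That $\tau\mapsto\lambda$ and $\lambda\mapsto\tau$ are inverse is the kind of argument already used for Propositions~\ref{prop:G1-locally-ff} and~\ref{prop:Psi-2-ff}: a transformation over $\Set$ between these monads is determined by, and freely prescribable on, the graphs $(Z_1,\dots,Z_n)$ together with the naturality squares for the canonical inclusions $(X(x_0,x_1),\dots,X(x_{n-1},x_n))\hookrightarrow X$. (Conceptually this is just the statement that $\Gamma$, being a $2$-functor, sends a monad on $(V,E)$ in $\DISTMULT$ to a monad on $(\ca GV,\Gamma E)$ in $\MND(\CAT/\Set)$ — a distributive law, by Street~\cite{Str72} — bijectively onto those with underlying functor $\ca G(T)$.)

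Step $(3)\iff(4)$, and the passage to the final isomorphism, is where I would invoke Beck~\cite{BeckDLaws}: distributive laws of $\ca G(T)$ over $\Gamma E$ correspond to liftings of the monad $\ca G(T)$ to $(\ca GV)^{\Gamma E}\iso\Enrich E$, and Beck also supplies the composite monad $S:=\ca G(T)\comp\Gamma E$ together with an isomorphism $(\ca GV)^{S}\iso\Enrich E^{T'}$ over $\ca GV$ (the lift $T'$ being over $\Set$ since $\ca G(T)$ is). Thus the final claim reduces to an isomorphism $\Enrich{(TE)}\iso(\ca GV)^{S}$ over $\ca GV$. Since $U^E$ is faithful and $U^ET'=\ca G(T)U^E$, a $T'$-algebra on an $E$-category $(X,(\bar\kappa_{\vec x}))$ is a $\ca G(T)$-algebra $\theta$ on the homs which is moreover an $E$-functor $T'\ca X\to\ca X$; unwinding the lift, this condition reads $\theta_{x_0,x_n}\comp T(\bar\kappa_{\vec x})\comp\tau=\bar\kappa_{\vec x}\comp\opE\limits_i\theta$. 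I would then exhibit the mutually inverse assignments $(\bar\kappa,\theta)\mapsto\big(\kappa_{\vec x}:=\theta_{x_0,x_n}\comp T(\bar\kappa_{\vec x})\big)$ and $(\kappa_{\vec x})\mapsto\big(\bar\kappa_{\vec x}:=\kappa_{\vec x}\comp\eta,\ \theta_{a,b}:=\kappa_{(a,b)}\comp Tu_{X(a,b)}\big)$, using naturality of $\eta$ and the unit laws (of the $E$-category, of the $T$-algebra, of the $TE$-category) to see the round trips are identities, and matching the $TE$-category associativity axiom — which by the construction of $\sigma'$ bundles $T\sigma$, $\mu$ and $\tau$ together — against the conjunction of $E$-category associativity, the $T$-algebra law and the compatibility square above. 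These assignments are natural in $X$ and respect $(-)_0$, giving the isomorphism over $\ca GV$.

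The hard part is precisely this last identification, and the subtle point worth flagging in advance is that it cannot be obtained by simply applying Proposition~\ref{prop:Gamma-E-algebras-as-E-categories} to $TE$: the multitensor $TE$ is in general \emph{not} distributive, because $T$ need not preserve coproducts, so $\Gamma(TE)$ is not even defined and the honest composite monad $S=\ca G(T)\Gamma(E)$ fails to be path-like — its comparison map $\pi_{X,a,b}$ is exactly the obstruction to $T$ preserving coproducts. Consequently $\Enrich{(TE)}$ has to be matched with $(\ca GV)^{S}$ by the explicit hom-level bookkeeping above, and the genuinely delicate verification is that the associativity data — carried by a single $\sigma'$ in a $TE$-category but by a distributive-law composite in an $S$-algebra — really do correspond under that bookkeeping.
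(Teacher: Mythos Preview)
Your proposal is correct and follows essentially the same route as the paper: the same formulas relate $\tau$ and $\sigma'$ for (1)$\iff$(2); for (1)$\iff$(3) the paper argues purely via the 2-full-faithfulness of $\Psi$ (Proposition~\ref{prop:Psi-2-ff}) and Street's identification of monads in $\MND$ with distributive laws, rather than your explicit summand-wise construction of $\lambda$, but these unpack to the same computation; and (3)$\iff$(4) together with the final isomorphism are obtained by the same combination of Beck's theorem and an explicit unpacking of $T'$. Your closing observation that $TE$ need not be distributive---so one cannot shortcut the last step by applying Proposition~\ref{prop:Gamma-E-algebras-as-E-categories} to $TE$---is a genuine subtlety that the paper leaves implicit.
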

\begin{proof}
(\ref{data:monoidal-monad-coherence}){$\iff$}(\ref{data:composite-multitensor}): The basic idea of this proof is to adapt the discussion of \cite{Beck-DLaws} section 1 replacing one of the monads by a multitensor. Suppose maps $\tau_{X_i}$ are given which make $T$ into a monoidal monad. Define $u_X'=\eta_{E_1X}u_X$ and $\sigma_{X_{ij}}'$ to be given by the composite
\[ \xygraph{!{0;(2.5,0):} {T\opE\limits_iT\opE\limits_j X_{ij}}="l" [r] {T^2\opE\limits_i\opE\limits_j X_{ij}}="m" [r] {T\opE\limits_{ij} X_{ij}.}="r" "l":"m"^-{T \tau E}:"r"^-{\mu \sigma}} \]
The axioms exhibiting $(TE,u',\sigma')$ as a multitensor, $\eta E:E \to TE$ as a multitensor map, $Tu:T \to TE_1$ as a monad map, and $\sigma'(Tu\eta\opE\limits_i) = \id$ follow easily from the multitensor axioms on $E$, the monad axioms on $T$ and the monoidal monad coherence axioms. Conversely given the data $\sigma'$ as in (\ref{data:composite-multitensor}) one defines the monoidal monad coherence $\tau$ as the composite
\[ \xygraph{!{0;(2,0):} {\opE\limits_iT}="l" [r] {T\opE\limits_iTE_1}="m" [r] {T\opE\limits_i.}="r" "l":@<1ex>"m"^-{\eta\opE\limits_iTu}:@<1ex>"r"^-{\sigma'}} \]
The axioms involving $\tau$, $\eta$ and $u$ are verified easily.

In order to verify the other two axioms one must first observe that
\begin{equation}\label{eq:unit-TE-T-E}
\xygraph{{\xybox{\xygraph{!{0;(2,0):(0,.5)::} {T\opE\limits_i\opE\limits_j}="l" [r] {T\opE\limits_iT\opE\limits_j}="r" [d] {T\opE\limits_{ij}}="b" "l":"r"^-{T\opE\limits_i\eta\opE\limits_j}:"b"^-{\sigma'}:@{<-}"l"^-{T\sigma}}}} [r(4)]
{\xybox{\xygraph{!{0;(2,0):(0,.5)::} {T^2\opE\limits_i}="l" [r] {TE_1T\opE\limits_i}="r" [d] {T\opE\limits_{i}}="b" "l":"r"^-{TuT\opE\limits_i}:"b"^-{\sigma'}:@{<-}"l"^-{\mu\opE\limits_i}}}}}
\end{equation}
commutes. One witnesses the commutativity of the triangle on the left from
\[ \xygraph{!{0;(1.5,0):(0,.667)::} {T\opE\limits_i\opE\limits_j}="t1" [r(2)] {TE_1T\opE\limits_i\opE\limits_j}="t2" [r(2)] {T\opE\limits_i\opE\limits_j}="t3" [dr] {T\opE\limits_iT\opE\limits_j}="m4" [l(2)] {TE_1T\opE\limits_iT\opE\limits_j}="m3" [l(2)] {TE_1\opE\limits_i\opE\limits_j}="m2" [l(2)] {T\opE\limits_{ij}}="m1" [dr] {TE_1\opE\limits_{ij}}="b1" [r(2)] {TE_1T\opE\limits_{ij}}="b2" [r(2)] {T\opE\limits_{ij}}="b3"
"t1"(:"t2"^-{Tu\eta\opE\limits_i\opE\limits_j}:"t3"^-{\sigma'\opE\limits_j}:"m4"^-{T\opE\limits_i\eta\opE\limits_j}:"b3"^(.4){\sigma'},:"m1"_-{T\sigma}:"b1"_-{Tu\opE\limits_{ij}}:"b2"_-{TE_1\eta\opE\limits_{ij}}:"b3"_-{\sigma'})
"m2":"m3"^-{TE_1\eta\opE\limits_i\eta\opE\limits_j}:"m4"^-{\sigma'T\opE\limits_j}
"t1":"m2"^(.6){Tu\opE\limits_i\opE\limits_j}:"b1"^(.4){TE_1\sigma}
"t2":"m3"^(.6){TE_1T\opE\limits_i\eta\opE\limits_j}:"b2"^(.4){TE_1\sigma'}} \]
using also $\sigma'(Tu\eta\opE\limits_i) = \id$, and one witnesses the commutativity of the triangle on the right of (\ref{eq:unit-TE-T-E}) from
\[ \xygraph{!{0;(1.5,0):(0,.667)::} {T^2\opE\limits_i}="t1" [r(2)] {T^2E_1T\opE\limits_i}="t2" [r(2)] {T^2\opE\limits_i}="t3" [dr] {TE_1T\opE\limits_i}="m4" [l(2)] {TE_1TE_1T\opE\limits_i}="m3" [l(2)] {T^3\opE\limits_i}="m2" [l(2)] {T\opE\limits_i}="m1" [dr] {T^2\opE\limits_i}="b1" [r(2)] {TE_1T\opE\limits_i}="b2" [r(2)] {T\opE\limits_i}="b3"
"t1"(:"t2"^-{T^2u\eta\opE\limits_i}:"t3"^-{T\sigma'}:"m4"^-{TuT\opE\limits_i}:"b3"^(.4){\sigma'},:"m1"_-{\mu\opE\limits_i}:"b1"_-{T\eta\opE\limits_i}:"b2"_-{TuT\opE\limits_i}:"b3"_-{\sigma'})
"m2":"m3"^-{TuTuT\opE\limits_i}:"m4"^-{TE_1\sigma'}
"t1":"m2"^(.6){T^2\eta\opE\limits_i}:"b1"^(.4){\mu T\opE\limits_iE_1}
"t2":"m3"^(.6){TuTE_1T\opE\limits_i}:"b2"^(.4){\sigma'T\opE\limits_i}} \]
and $\sigma'(Tu\eta\opE\limits_i) = \id$. Second, one observes that 
\begin{equation}\label{eq:assoc-TE-T-E}
\xygraph{{\xybox{\xygraph{!{0;(2,0):(0,.5)::} {T^2\opE\limits_iT\opE\limits_j}="tl" [r] {T^2\opE\limits_{ij}}="tr" [d] {T\opE\limits_{ij}}="br" [l] {T\opE\limits_iT\opE\limits_j}="bl" "tl":"tr"^-{T\sigma'}:"br"^-{\mu\opE\limits_{ij}}:@{<-}"bl"^-{\sigma'}:@{<-}"tl"^-{\mu\opE\limits_iT\opE\limits_j}}}} [r(4.5)u(.1)]
{\xybox{\xygraph{!{0;(2,0):(0,.5)::} {T\opE\limits_iT\opE\limits_j\opE\limits_k}="tl" [r] {T\opE\limits_{ij}\opE\limits_k}="tr" [d] {T\opE\limits_{ijk}}="br" [l] {T\opE\limits_iT\opE\limits_{jk}}="bl" "tl":"tr"^-{\sigma'\opE\limits_k}:"br"^-{T\sigma}:@{<-}"bl"^-{\sigma'}:@{<-}"tl"^-{T\opE\limits_iT\sigma}}}}}
\end{equation}
commute, but these identities are easily witnessed from
\[ \xygraph{{\xybox{\xygraph{!{0;(2,0):(0,.5)::} 
{T^2\opE\limits_iT\opE\limits_j}="tl" [r] {T^2\opE\limits_{ij}}="tr" [d] {TE_1T\opE\limits_{ij}}="mr" [d] {T\opE\limits_{ij}}="br" [l] {T\opE\limits_iT\opE\limits_j}="bl" [u] {TE_1T\opE\limits_iT\opE\limits_j}="ml"
"tl":"tr"^-{T\sigma'}:"mr"^-{TuT\opE\limits_{ij}}:"br"^-{\sigma'}:@{<-}"bl"^-{\sigma'}:@{<-}"ml"^-{\sigma'T\opE\limits_j}:@{<-}"tl"^-{TuT\opE\limits_iT\opE\limits_j} "ml":"mr"^-{TE_1\sigma'}
}}} [r(4.5)u(.1)]
{\xybox{\xygraph{!{0;(2,0):(0,.5)::}
{T\opE\limits_iT\opE\limits_j\opE\limits_k}="tl" [r] {T\opE\limits_{ij}\opE\limits_k}="tr" [d] {T\opE\limits_{ij}T\opE\limits_k}="mr" [d] {T\opE\limits_{ijk}}="br" [l] {T\opE\limits_iT\opE\limits_{jk}}="bl" [u] {T\opE\limits_iT\opE\limits_jT\opE\limits_k}="ml"
"tl":"tr"^-{\sigma'\opE\limits_k}:"mr"^-{T\opE\limits_{ij}u\opE\limits_k}:"br"^-{\sigma'}:@{<-}"bl"^-{\sigma'}:@{<-}"ml"^-{T\opE\limits_i\sigma'}:@{<-}"tl"^-{T\opE\limits_iT\opE\limits_ju\opE\limits_k} "ml":"mr"^-{\sigma'T\opE\limits_k}}}}} \]
and (\ref{eq:unit-TE-T-E}).

With (\ref{eq:assoc-TE-T-E}) now verified we now proceed to the verification of the other axioms for $\tau$. The axiom expressing the compatibility between $\tau$ and $\sigma$ is verified in
\[ \xygraph{!{0;(1,0):(0,.6)::} {\opE\limits_i\opE\limits_jT}="p11" [r(2)] {\opE\limits_iT\opE\limits_jTE_1}="p12" [r(2)] {\opE\limits_iT\opE\limits_j}="p13" [r(2)] {T\opE\limits_iTE_1\opE\limits_j}="p14" [d(3)] {T\opE\limits_i\opE\limits_j}="p33" [d(3)] {T\opE\limits_{ij}}="p53" [l(3)] {T\opE\limits_{ij}TE_1}="p52" [l(3)] {\opE\limits_{ij}T}="p51" [u(3)r(2)] {T\opE\limits_iT\opE\limits_jTE_1}="p31" [r(2)] {T\opE\limits_iT\opE\limits_j}="p32"
"p11"(:"p12"^-{\opE\limits_i\eta\opE\limits_jTu}:"p13"^-{\opE\limits_i\sigma'}:"p14"^-{\eta\opE\limits_iTu\opE\limits_j}:"p33"^-{\sigma'\opE\limits_j}:"p53"^-{T\sigma},:"p51"_-{\sigma T}:"p52"_-{\eta\opE\limits_{ij}Tu}:"p53"_-{\sigma'})
"p11":"p31"|(.4){\eta\opE\limits_i\eta\opE\limits_jTu}:"p32"^-{T\opE\limits_i\sigma'}:@{<-}"p14"|-{T\opE\limits_iT\sigma} "p12":"p31"^(.6){\eta\opE\limits_iT\opE\limits_jTE_1}:"p52"^-{\sigma'TE_1} "p13":"p32"_(.4){\eta\opE\limits_iT\opE\limits_j}:"p53"^-{\sigma'}} \]
and the axiom expressing the compatibility between $\tau$ and $\mu$ is verified in
\[ \xygraph{!{0;(1,0):(0,.6)::} {\opE\limits_iT^2}="p11" [r(2)] {T\opE\limits_iTE_1T}="p12" [r(2)] {T\opE\limits_iT}="p13" [r(2)] {T^2\opE\limits_iTE_1}="p14" [d(3)] {T^2\opE\limits_i}="p33" [d(3)] {T\opE\limits_i}="p53" [l(3)] {T\opE\limits_iTE_1}="p52" [l(3)] {\opE\limits_iT}="p51" [u(3)r(2)] {T\opE\limits_iTE_1TE_1}="p31" [r(2)] {T\opE\limits_iTE_1}="p32"
"p11"(:"p12"^-{\eta\opE\limits_iTuT}:"p13"^-{\sigma'T}:"p14"^-{T\eta\opE\limits_iTu}:"p33"^-{T\sigma'}:"p53"^-{\mu\opE\limits_i},:"p51"_-{\opE\limits_i\mu}:"p52"_-{\eta E_1Tu}:"p53"_-{\sigma'})
"p11":"p31"|(.4){\eta\opE\limits_iTuTu}:"p32"^-{\sigma'TE_1}:@{<-}"p14"|-{\mu\opE\limits_iTE_1} "p12":"p31"^(.6){T\opE\limits_iTE_1Tu}:"p52"^-{T\opE\limits_i\sigma'} "p13":"p32"_(.4){T\opE\limits_iTu}:"p53"^-{\sigma'}} \]
It follows immediately from the unit laws of $T$ and $E$, that the composite function (\ref{data:monoidal-monad-coherence})$\to$(\ref{data:composite-multitensor})$\to$(\ref{data:monoidal-monad-coherence}) via the above constructions is the identity. It is the commutativity of the outside of
\[ \xygraph{!{0;(2.5,0):(0,.5)::} {T\opE\limits_iT\opE\limits_j}="l" [ur] {T^2\opE\limits_iTE_1\opE\limits_j}="tl" [r] {T^2\opE\limits_i\opE\limits_j}="tr" [d] {T\opE\limits_i\opE\limits_j}="mr" [d] {T\opE\limits_{ij}}="br" [l] {T\opE\limits_iT\opE\limits_j}="bl" [u] {T\opE\limits_iTE_1\opE\limits_j}="ml"
"l":"tl"^-{T\eta\opE\limits_iTu\opE\limits_j}:"tr"^-{T\sigma'\opE\limits_j}:"mr"_-{\mu\opE\limits_i\opE\limits_j}:"br"_-{T\sigma}:@{<-}"bl"^-{\sigma'}:@{<-}"l"^-{1} "tl":"ml"^-{\mu\opE\limits_iTE_1\opE\limits_j}:"bl"^-{T\opE\limits_iT\sigma} "ml":"mr"^-{\sigma'\opE\limits_j} "tr":@/^{4pc}/"br"^-{\mu\sigma}} \]
that says that (\ref{data:composite-multitensor})$\to$(\ref{data:monoidal-monad-coherence})$\to$(\ref{data:composite-multitensor}) is the identity.

(\ref{data:monoidal-monad-coherence}){$\iff$}(\ref{data:monad-distributive-law}): A monoidal monad on $(V,E)$ is a monad on $(V,E)$ in the 2-category $\DISTMULT$. By \cite{Street-FTM} to give a monad $S$ on $\ca GV$ over $\Set$ and a distributive law of $S$ over $\Gamma E$, is to give a monad on $(\ca GV,\Gamma E)$ in $\MND(\CAT/\Set)$. Moreover such distributive laws, for the case where $S = \ca GT$ for some monad $T$ on $V$, are exactly monads in $\GMND$. Thus applying $\Psi$ to monads gives the desired bijection.

(\ref{data:monad-distributive-law}){$\iff$}(\ref{data:monad-lifting-ECat}): By the usual theory of monad distributive laws and since $\ca GV^{\Gamma E} \iso \Enrich{E}$ over $\ca GV$ by proposition(\ref{prop:Gamma-E-algebras-as-E-categories}).

One can readily unpack the lifted monad $T'$ in terms of the monoidal coherence data using the details of the proof of proposition(\ref{prop:Gamma-E-algebras-as-E-categories}) which explain how to regard an $E$-category as a $\Gamma E$-algebra. Let $X \in \Enrich{E}$ and as in section(\ref{sec:recalling-multitensors}) write $\kappa_{x_i}:\opE\limits_iX(x_{i-1},x_i) \to X(x_0,x_n)$ for the composition maps. Then since $T'$ is a lifting of $\ca GT$, $T'X$ must have underlying $V$-graph $\ca GT(X)$, which has the same objects as $X$ and homs given by $(T'X)(a,b) = T(X(a,b))$. The composition map $\kappa_{x_i}':\opE\limits_i T'X(x_{i-1},x_i) \to T'X(x_0,x_n)$ is given by the composite
\[ \xygraph{!{0;(3,0):} {\opE\limits_iTX(x_{i-1},x_i)}="l" [r] {T\opE\limits_i X(x_{i-1},x_i)}="m" [r] {TX(x_0,x_n)}="r" "l":"m"^-{\tau}:"r"^-{T \kappa_{x_i}}} \]
Consider $Z \in \ca GV$. To endow $Z$ with the structure of a $TE$-category is to give maps
\[ \kappa_{z_i} : T\opE\limits_iZ(z_{i-1},z_i) \to Z(z_0,z_n) \]
satisfying the usual axioms. But by precomposing these with the unit for $T$ gives the compositions for an $E$-category structure on $Z$, and by the above explicit description of $T'$, one may readily verify that the remaining structure is exactly that of a $T'$-algebra structure. Similarly given $V$-graph map $f:Z \to Z'$ between $TE$-categories, one may readily verify that $f$ is a $TE$-functor iff it is an $E$-functor and a $T'$-algebra map. Thus we have the object and arrow maps of the required isomorphism $\Enrich{(TE)} \iso \Enrich{E}^{T'}$.
\end{proof}
All aspects of the above result apply to the familiar examples of monoidal monads on $\Set$ regarded as monoidal via its cartesian product, since $\times$ preserves all colimits in each variable and so is certainly distributive. These familiar examples include: the pointed set monad, the covariant power set monad, the monad obtained from a commutative ring $R$ by taking $R$-linear combinations. In \cite{Weber-Funny} a tensor product on $\ca GV$ is provided, under very slight conditions on $V$, with respect to which any monad on $\ca GV$ over $\Set$ is (symmetric) monoidal, giving many examples relevant to higher category theory.

\subsection{Distributing a multitensor over a monad}
\label{sec:dist-multitensor-over-monad}
In a completely analogous fashion one may also regard opmonoidal monads as distributive laws. Once again let $(V,E,u,\sigma)$ be a lax monoidal category. An \emph{opmonoidal monad} on $(V,E)$ is a monad on $(V,E)$ in the 2-category of lax monoidal categories, oplax monoidal functors and monoidal natural transformations, which amounts to a monad $(T,\eta,\mu)$ on $V$ together with coherence maps
\[ \tau_{X_i} : T\opE\limits_i X_i \to \opE\limits_i TX_i \]
such that
\[ \xygraph{{\xybox{\xygraph{!{0;(.75,0):(0,1.333)::} {T}="l" [r(2)] {TE_1}="r" [dl] {E_1T}="b" "l"(:"r"^-{Tu }:"b"^{\tau},:"b"_{uT})}}} [r(4.5)]
{\xybox{\xygraph{!{0;(2,0):(0,.5)::} {T\opE\limits_i\opE\limits_j}="tl" [r] {\opE\limits_iT\opE\limits_j}="tm" [r] {\opE\limits_i\opE\limits_jT}="tr" [l(.5)d] {\opE\limits_{ij}T}="br" [l] {T\opE\limits_{ij}}="bl" "tl" (:@<1ex>"tm"^-{\tau\opE\limits_j}:@<1ex>"tr"^-{\opE\limits_i\tau}:"br"^{\sigma T},:"bl"_{T\sigma}:@<1ex>"br"_-{\tau})}}}} \]
and
\[ \xygraph{{\xybox{\xygraph{!{0;(.75,0):(0,1.333)::} {E_1}="l" [r(2)] {TE_1}="r" [dl] {E_1T}="b" "l"(:"r"^-{\eta E_1 }:"b"^{\tau},:"b"_{E_1\eta})}}} [r(4.5)]
{\xybox{\xygraph{!{0;(2,0):(0,.5)::} {T^2\opE\limits_i}="tl" [r] {T\opE\limits_iT}="tm" [r] {\opE\limits_iT^2}="tr" [l(.5)d] {\opE\limits_iT}="br" [l] {T\opE\limits_i}="bl" "tl" (:@<1ex>"tm"^-{T \tau}:@<1ex>"tr"^-{\tau T}:"br"^{\opE\limits_i \mu},:"bl"_{\mu\opE\limits_i}:@<1ex>"br"_-{\tau})}}}} \]
commute. Recalling that $M$ is our notation for the monoid monad on $\CAT$, we shall use the alternative notations
\[ EM(T)(X_1,...,X_n) = \opE\limits_i TX_i \]
interchangeably as convenience dictates.

Given a multitensor $(E,u,\sigma)$ on a category $V$ and a monad $(T,\eta,\mu)$ on $V$, a \emph{lifting of $E$ to $V^T$} is defined to be a multitensor $(E',u',\sigma')$ on $V^T$ such that $U^TE'=EM(U^T)$, $U^Tu'=uU^T$ and $U^T\sigma'=\sigma M^2(U^T)$, where we recall that $U^T:V^T \to V$ is the forgetful functor. In more explicit terms to give such a lifting is to give maps
\[ a_{x_i} : T\opE\limits_i X_i \to \opE\limits_i X_i \]
in $V$ for all sequences $((X_1,x_1),...,(X_n,x_n))$ of $T$-algebras, such that these maps satisfy the axioms making $(\opE\limits_i X_i,a_{x_i})$ $T$-algebras, and with respect to these structures, $\opE\limits_i f_i$ is a morphism of $T$-algebras for any sequence $f_i:(X_i,x_i) \to (Y_i,y_i)$ of $T$-algebra maps, and moreover, $u$ and $\sigma$ are $T$-algebra morphisms.
\begin{theorem}\label{thm:opmonoidal-monad-dist-law}
Let $V$ be a category, $(E,u,\sigma)$ be a multitensor on $V$ and $(T,\eta,\mu)$ be a monad on $V$. Then there is a bijection between the following types of data:
\begin{enumerate}
\item  Morphisms $\tau_{X_i} : T\opE\limits_i X_i \to \opE\limits_i TX_i$
of $V$ providing the coherences making $T$ into a opmonoidal monad.\label{data:opmonoidal-monad-coherence}
\item  Morphisms $\sigma_{X_{ij}}' : \opE\limits_iT\opE\limits_j TX_{ij} \to \opE\limits_{ij} TX_{ij}$ of $V$ providing the substitutions for a multitensor $(EM(T),u',\sigma')$ where $u_X'=u_{TX}\eta_X$, $E \eta:E \to EM(T)$ is a multitensor map, $uT:T \to E_1T$ is a monad map, and the composite
\[ \xygraph{!{0;(2,0):} {\opE\limits_iT}="l" [r] {\opE\limits_iTE_1T}="m" [r] {\opE\limits_iT}="r" "l":"m"^-{\opE\limits_i\eta uT}:"r"^-{\sigma'}} \]
is the identity.\label{data:composite-multitensor-2}
\item  Liftings of the multitensor $E$ to a multitensor $E'$ on $V^T$.\label{data:multitensor-lifting}
\setcounter{continued-numbering}{\value{enumi}}
\end{enumerate}
and for any given instance of such data one has an isomorphism
\[ \Enrich{EM(T)} \iso \Enrich{E'} \]
of categories commuting with the forgetful functors into $\ca GV$.
If in addition $V$ has coproducts, $T$ preserves them and $E$ is distributive, then the data (\ref{data:opmonoidal-monad-coherence})-(\ref{data:composite-multitensor-2}) are also in bijection with
\begin{enumerate}
\setcounter{enumi}{\value{continued-numbering}}
\item  Identity on object morphisms $\lambda_X:\ca G(T)\Gamma(E)(X) \to \Gamma(E)\ca G(T)(X)$ of $\ca GV$ providing a monad distributive law of $\Gamma(E)$ over $\ca G(T)$.\label{data:monad-distributive-law-2}
\end{enumerate}
\end{theorem}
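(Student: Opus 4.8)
The plan is to follow the proof of Theorem~\ref{thm:monoidal-monad-dist-law} step for step, transposing every construction from the lax/monoidal to the oplax/opmonoidal side, and then to obtain the final equivalence with the distributive law datum~(\ref{data:monad-distributive-law-2}) by passing through the $2$-functor $\Gamma'$ and its factorisation $\Psi'$ exactly as the equivalence (\ref{data:monoidal-monad-coherence})$\iff$(\ref{data:monad-distributive-law}) was obtained there through $\Gamma$ and $\Psi$.

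First I would treat (\ref{data:opmonoidal-monad-coherence})$\iff$(\ref{data:composite-multitensor-2})$\iff$(\ref{data:multitensor-lifting}) together with the isomorphism $\Enrich{EM(T)} \iso \Enrich{E'}$, none of which involves coproducts. From opmonoidal coherences $\tau$ one sets $u'_X = u_{TX}\eta_X$ and takes $\sigma'$ to be the composite
\[ \opE\limits_iT\opE\limits_jTX_{ij} \xrightarrow{E\tau T} \opE\limits_i\opE\limits_jT^2X_{ij} \xrightarrow{\sigma\mu} \opE\limits_{ij}TX_{ij}, \]
and one then verifies the multitensor axioms for $(EM(T),u',\sigma')$, that $E\eta$ is a multitensor map, that $uT$ is a monad map, and that $\sigma'(\opE\limits_i\eta uT)=\id$, by pasting diagrams entirely analogous to those in the proof of Theorem~\ref{thm:monoidal-monad-dist-law} with $\tau$ now pointing the other way; conversely $\tau$ is recovered from $\sigma'$ by restricting to singleton inner sequences, and the two round-trips are identities by the duals of the final two displayed diagrams of that proof. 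The equivalence (\ref{data:composite-multitensor-2})$\iff$(\ref{data:multitensor-lifting}) and the displayed isomorphism of categories are then an instance of Beck's theorem~\cite{BeckDLaws}: a lifting $E'$ of $E$ to $V^T$ is precisely a choice of $T$-algebra structures $a_{x_i}:T\opE\limits_iX_i \to \opE\limits_iX_i$ on tensor products of algebras rendering $u$, $\sigma$ and every $\opE\limits_if_i$ a morphism of algebras, and matching this datum against $\sigma'$ simultaneously yields the correspondence and the identification of $EM(T)$-categories with $E'$-categories over $\ca GV$ (this last part exactly as the isomorphism $\Enrich{(TE)}\iso\Enrich{E}^{T'}$ was built at the end of the proof of Theorem~\ref{thm:monoidal-monad-dist-law}).

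For the remaining equivalence, assume in addition that $V$ has coproducts, $T$ preserves them and $E$ is distributive. Then $(V,E)$ is an object of $\OpDISTMULT$ and $(T,\tau):(V,E)\to(V,E)$ is a $1$-cell of $\OpDISTMULT$, so that an opmonoidal monad structure on the given $T$ — by definition a monad on the object $(V,E)$ in $\OpLaxAlg{M}$ with underlying functor $T$ — is, under these hypotheses, the same thing as a monad on $(V,E)$ in $\OpDISTMULT$ with underlying functor $T$. Since $\Psi':\OpDISTMULT\to\GOpMND$ is $2$-fully faithful by Proposition~\ref{prop:Psi-2-ff}, it induces a bijection between such monads and monads on $\Psi'(V,E)=(\ca GV,\Gamma E)$ in $\GOpMND$. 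Finally, by the dual of Street's observation~\cite{Str72} that monads in $\OpMND(\ca K)$ are opmonoidal distributive laws, a monad on $(\ca GV,\Gamma E)$ in $\OpMND(\CAT/\Set)$ amounts to a monad $S$ on $\ca GV$ over $\Set$ together with a distributive law of $\Gamma E$ over $S$; membership in the pullback $\GOpMND$ forces $S=\ca GT$ for a coproduct-preserving monad $T$ on $V$, which is exactly the datum~(\ref{data:monad-distributive-law-2}). Composing the three bijections gives (\ref{data:opmonoidal-monad-coherence})$\iff$(\ref{data:monad-distributive-law-2}), with $\lambda$ obtained from $\tau$ by applying $\Gamma'$.

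I expect the main obstacle to be the diagram-chasing stage rather than the passage through $\Psi'$: although formally analogous to the computations in Theorem~\ref{thm:monoidal-monad-dist-law}, the bookkeeping genuinely changes because $EM(T)$ carries the monad \emph{inside} the many-variable functor instead of outside, so the subdivision-of-sequences combinatorics governing $\sigma'$ and the two round-trip verifications must be redone by hand rather than quoted. The second stage is essentially formal once Propositions~\ref{prop:Gamma-locally-ff} and~\ref{prop:Psi-2-ff} and the cited formal theory of monads are available.
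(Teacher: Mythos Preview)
Your overall strategy matches the paper's: (\ref{data:opmonoidal-monad-coherence})$\iff$(\ref{data:composite-multitensor-2}) by dualising the diagrams of Theorem~\ref{thm:monoidal-monad-dist-law}, and (\ref{data:opmonoidal-monad-coherence})$\iff$(\ref{data:monad-distributive-law-2}) via $\Psi'$ and the formal theory of monads, exactly as you say.

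There is however one point of looseness. You describe (\ref{data:composite-multitensor-2})$\iff$(\ref{data:multitensor-lifting}) and the isomorphism $\Enrich{EM(T)}\iso\Enrich{E'}$ as ``an instance of Beck's theorem'', to be handled ``exactly as'' the corresponding step in Theorem~\ref{thm:monoidal-monad-dist-law}. This is not quite right on two counts. First, Beck's theorem concerns liftings of a \emph{monad} to a category of algebras; here one is lifting a \emph{multitensor} $E$ to $V^T$, and no coproduct hypotheses are in force, so one cannot pass through $\Gamma$ to reduce to the classical statement. The paper therefore proves (\ref{data:opmonoidal-monad-coherence})$\iff$(\ref{data:multitensor-lifting}) directly (not via (\ref{data:composite-multitensor-2})): from $\tau$ one defines the algebra structure on $\opE_i X_i$ as $(\opE_i x_i)\circ\tau$, and from a lifting one recovers $\tau$ as $a_{\mu_{X_i}}\circ T\opE_i\eta$ using free algebras; the non-trivial verification is the compatibility of the reconstructed $\tau$ with $\sigma$, which requires a genuine diagram chase. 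Second, the isomorphism $\Enrich{EM(T)}\iso\Enrich{E'}$ is not formally the same as $\Enrich{(TE)}\iso(\Enrich{E})^{T'}$: in the monoidal case one compared $TE$-categories with $T'$-algebras in $\Enrich{E}$, whereas here one compares $EM(T)$-categories with $E'$-categories, i.e.\ $E$-categories whose homs carry compatible $T$-algebra structures. The paper writes out this bijection explicitly (splitting an $EM(T)$-structure $a_{x_i}$ into $T$-algebra structures $b_{x_0,x_1}$ on homs and an $E$-category structure $c_{x_i}$, and conversely). None of this is hard, but it is new work rather than a citation, and your phrase ``an instance of Beck's theorem'' undersells what must actually be checked.
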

\begin{proof}
(\ref{data:opmonoidal-monad-coherence}){$\iff$}(\ref{data:composite-multitensor-2}): This is completely analogous to the bijection between (\ref{data:monoidal-monad-coherence}) and (\ref{data:composite-multitensor}) in theorem(\ref{thm:monoidal-monad-dist-law}).

(\ref{data:opmonoidal-monad-coherence}){$\iff$}(\ref{data:multitensor-lifting}): Suppose that opmonoidal monad coherence data $\tau_{X_i}$ is given. Then for a given sequence of $T$-algebras $((X_1,x_1),...,(X_n,x_n))$ we define $a_{x_i}$ to be given by the composite
\[ \xygraph{!{0;(2,0):} {T\opE\limits_i X_i}="l" [r] {\opE\limits_iTX_i}="m" [r] {\opE\limits_i X_i.}="r" "l":"m"^-{\tau}:"r"^-{\opE\limits_i X_i}} \]
The verifications that these maps satisfy the $T$-algebra axioms, and that with respect to these structures $u$ and $\sigma$ are $T$-algebra morphisms are straight forward. Conversely given a lifting of $E$ to $V^T$ we construct, for a given sequence $(X_1,...,X_n)$ of objects of $V$, the coherence map $\tau_{X_i}$ as the composite
\[ \xygraph{!{0;(2,0):} {T\opE\limits_i X_i}="l" [r] {T\opE\limits_iTX_i}="m" [r] {\opE\limits_iX_i.}="r" "l":"m"^-{T\opE\limits_i\eta}:"r"^-{a_{\mu_{X_i}}}} \]
The axioms expressing the compatibilities between $\tau$ with $u$, $\eta$ and $\mu$ are all routinely verified. The axiom for the compatibility of $\tau$ with $\sigma$ is witnessed in
\[ \xygraph{!{0;(1.5,0):(0,.6667)::} {T\opE\limits_i\opE\limits_jX_{ij}}="tl" [r(2)] {T\opE\limits_iT\opE\limits_jX_{ij}}="tm" [r(2)] {\opE\limits_iT\opE\limits_jX_{ij}}="tr" [d(2)] {\opE\limits_iT\opE\limits_jTX_{ij}}="mr" [d(2)] {\opE\limits_i\opE\limits_jTX_{ij}}="br" [l(2)d] {\opE\limits_{ij}TX_{ij}}="b" [l(2)u] {T\opE\limits_{ij}TX_{ij}}="bl" [u(2)] {T\opE\limits_{ij}X_{ij}}="ml" [r(1.25)d] {T\opE\limits_i\opE\limits_jTX_{ij}}="il" [u(1.5)r(.75)] {T\opE\limits_iT\opE\limits_jTX_{ij}}="im" [d(1.5)r(.75)] {T\opE\limits_i\opE\limits_jTX_{ij}}="ir"
"tl":"tm"^-{T\opE\limits_i\eta\opE\limits_j}:"tr"^-{a_{\mu_{\opE\limits_jX_{ij}}}}:"mr"^-{\opE\limits_iT\opE\limits_j\eta}:"br"^-{\opE\limits_ia_{\mu_{X_{ij}}}}:"b"^-{\sigma T}:@{<-}"bl"^-{a_{\mu_{X_{ij}}}}:@{<-}"ml"^-{T\opE\limits_{ij}\eta}:@{<-}"tl"^-{T\sigma} "il":"im"^-{T\opE\limits_i\eta\opE\limits_jT}:"ir"^-{T\opE\limits_ia_{\mu_{X_{ij}}}}:@{<-}"il"^-{1} "tl":"il"^-{T\opE\limits_i\opE\limits_j\eta}:"bl"^-{T\sigma T} "tm":"im"^-{T\opE\limits_iT\opE\limits_j\eta}:"mr"^-{a_{\mu_{\opE\limits_jTX_{ij}}}} "ir":"br"^-{a_{a_{\mu_{X_{ij}}}}}
"im":@{}"tr"|-{(I)} "ir":@{}"mr"|-{(II)} "bl":@{}"br"|-{(III)}} \]
in which the unlabelled regions commute for obvious reasons. Region (III) commutes since $\sigma$ is a $T$-algebra map. Since the $a_{\mu_{X_{ij}}}$ are $T$-algebra maps so is $\opE\limits_i a_{\mu_{X_{ij}}}$ and so region (II) commutes. The morphisms $T\opE\limits_j\eta$ are $T$-algebra maps and so $\opE\limits_iT\opE\limits_j\eta$ are $T$-algebra maps whence region (I) commutes also. Thus we have established functions that turn opmonoidal coherence data into liftings and vice versa, and the verification that these are inverse to each other is straight forward.

The isomorphism $\Enrich{E'} \iso \Enrich{(EM(T))}$ over $\ca GV$: To give $X \in \ca GV$ the structure of an $EM(T)$-category is to give morphisms
\[ a_{x_i} : \opE\limits_i T(X(x_{i-1},x_i)) \to X(x_0,x_n) \]
in $V$ for each sequence $(x_0,...,x_n)$ of objects of $X$, such that all diagrams of the form
\[ \xygraph{{\xybox{\xygraph{!{0;(2,0):(0,.5)::} {X(x_0,x_1)}="tl" [r] {TX(x_0,x_1)}="tr" [d] {E_1TX(x_0,x_1)}="br" [l] {X(x_0,x_1)}="bl" "tl":"tr"^-{\eta}:"br"^-{uT}:"bl"^-{a_{x_0,x_1}}:@{<-}"tl"^-{1}}}} [r(5)]
{\xybox{\xygraph{!{0;(3,0):(0,.3333)::} {\opE\limits_iT\opE\limits_jTX(x_{ij-1},x_{ij})}="tl" [r] {\opE\limits_i\opE\limits_jT^2X(x_{ij-1},x_{ij})}="tr" [d] {\opE\limits_{ij}TX(x_{ij-1},x_{ij})}="br" [l] {\opE\limits_iTX(x_{i-1},x_i)}="bl" [r(.5)d] {X(x_0,x_n)}="b" "tl":"tr"^-{\opE\limits_i\tau T}:"br"^-{\sigma \mu}:"b"^-{a_{x_{ij}}}:@{<-}"bl"^-{a_{a_i}}:@{<-}"tl"^-{\opE\limits_iTa_{x_{ij}}}}}}} \]
commute. On the hand to give $X$ the structure of an $E'$-category is to give maps
\[ \begin{array}{lccr} {b_{x_0,x_1}:TX(x_0,x_1) \to X(x_0,x_1)} &&& {c_{x_i}:\opE\limits_iX(x_{i-1},x_i) \to X(x_0,x_n)} \end{array} \]
such that the $b_{x_0,x_1}$ satisfy the $T$-algebra axioms, the $c_{x_i}$ satisfy the $E$-category axioms, and moreover the $c_{x_i}$ are $T$-algebra morphisms with respect to the $T$-algebra structures given by the $b_{x_0,x_1}$. Supposing that maps $a_{x_i}$ are given as above one defines maps $b_{x_0,x_1}$ and $c_{x_i}$ as composites
\[ \xygraph{{\xybox{\xygraph{!{0;(3,0):} {TX(x_0,x_1)}="l" [r] {E_1TX(x_0,x_1)}="m" [r] {X(x_0,x_1)}="r" "l":"m"^-{uT}:"r"^-{a_{x_0,x_1}}}}} [d(.5)]
{\xybox{\xygraph{!{0;(3,0):} {\opE\limits_i X(x_{i-1},x_i)}="l" [r] {\opE\limits_i TX(x_{i-1},x_i)}="m" [r] {X(x_0,x_n).}="r" "l":"m"^-{\opE\limits_i \eta}:"r"^-{a_{x_i}}}}}} \]
The $T$-algebra axioms for $b_{x_0,x_1}$ and the $E$-category axioms for $c_{x_i}$ are easily verified. That $c_{x_i}$ is a $T$-algebra morphism is expressed by the commutativity of the outside of the diagram
\[ \xygraph{!{0;(1.25,0):(0,1.25)::} {T\opE\limits_iX(x_{i-1},x_i)}="tll" [r(1.5)u] {\opE\limits_iTX(x_{i-1},x_i)}="tl" [r(3)] {\opE\limits_iE_1TX(x_{i-1},x_i)}="tr" [r(1.5)d] {\opE\limits_iX(x_{i-1},x_i)}="trr" [d(2)] {\opE\limits_iTX(x_{i-1},x_i)}="mr" [d(2)] {X(x_0,x_n)}="br" [l(3)d] {E_1TX(x_0,x_n)}="bm" [l(3)u] {TX(x_0,x_n)}="bl" [u(2)] {T\opE\limits_iTX(x_{i-1},x_i)}="ml" [u(1.5)r] {\opE\limits_iT^2X(x_{i-1},x_i)}="itl" [r(2)u(.5)] {\opE\limits_iT^2X(x_{i-1},x_i)}="itm" [r(2)d] {\opE\limits_iTE_1TX(x_{i-1},x_i)}="itr" [d(1.25)l(.75)] {\opE\limits_iE_1T^2X(x_{i-1},x_i)}="imr" [d] {\opE\limits_iTX(x_{i-1},x_i)}="ibr" [l(2.25)u(1.5)] {E_1\opE\limits_iT^2X(x_{i-1},x_i)}="ibm" [d(1.75)l(.5)] {E_1T\opE\limits_iTX(x_{i-1},x_i)}="ibl"
"tll":"tl"^-{\tau}:"tr"^-{\opE\limits_iuT}:"trr"^-{\opE\limits_ia_{x_{i-1},x_i}}:"mr"^-{\opE\limits_i\eta}:"br"^-{a_{x_i}}:@{<-}"bm"^-{a_{x_0,x_n}}:@{<-}"bl"^-{uT}:@{<-}"ml"^-{Ta_{x_i}}:@{<-}"tll"^-{T\opE\limits_i\eta}
"ml":"itl"|-{\tau T}:@{<-}"tl"|-{\opE\limits_i T\eta}:"itm"^-{\opE\limits_i \eta T}:"itr"^-{\opE\limits_i TuT}:"imr"|-{\opE\limits_i\tau T}:"ibr"|-{\sigma\mu}:@{<-}"ibm"|-{\sigma\mu}:@{<-}"ibl"|-{E_1\tau T}:@{<-}"ml"|-{uT\opE\limits_iT}
"itl":"ibm"^-{u\opE\limits_iT^2} "tl":@{.>}@/_{1pc}/"ibr"|-{\id} "itm":"imr"_-{\opE\limits_iuT^2} "tr":"itr"_-{\opE\limits_i\eta E_1T}:"mr"|-{\opE\limits_iTa_{x_{i-1},x_i}} "ibr":"br"_-{a_{x_i}} "ibl":"bm"^-{E_1Ta_{x_i}}} \]
whose internal regions all clearly commute. Conversely given structure maps $b_{x_0,x_1}$ and $c_{x_i}$ as above, one constructs the $a_{x_i}$ as the composites
\[ \xygraph{!{0;(3,0):} {\opE\limits_i TX(x_{i-1},x_i)}="l" [r] {\opE\limits_iX(x_{i-1},x_i)}="m" [r] {X(x_0,x_n).}="r" "l":"m"^-{\opE\limits_ib_{x_{i-1},x_i}}:"r"^-{c_{x_i}}} \]
The verification that these maps satisfy the axioms on the $a_{x_i}$ described above is straight forward. It is also straight forward to check that these constructions give a bijection between $EM(T)$-category structures and $E'$-category structures on $X$, and moreover that this can be extended to maps giving the required isomorphism of categories over $\ca GV$.

(\ref{data:opmonoidal-monad-coherence}){$\iff$}(\ref{data:monad-distributive-law-2}): This bijection is given in basically the same way as that for (\ref{data:monoidal-monad-coherence}){$\iff$}(\ref{data:monad-distributive-law}) in theorem(\ref{thm:monoidal-monad-dist-law}), using the 2-functor $\Psi'$ instead of $\Psi$.
\end{proof}
\begin{remark}\label{rem:distributive-opmonoidal-monad-elaboration}
Note in particular that, in the context of theorem(\ref{thm:opmonoidal-monad-dist-law}), when $V$ has coproducts, $E$ is distributive and $T$ preserves coproducts, then the composite multitensor $EM(T)$ is clearly distributive. Moreover since $U^TE'=EM(U^T)$ and $U^T$ creates coproducts, the lifted multitensor $E'$ is also distributive. Thus by theorem(\ref{thm:opmonoidal-monad-dist-law}) and proposition(\ref{prop:Gamma-E-algebras-as-E-categories}) one in fact has isomorphisms
\[ \ca G(V)^{\Gamma(EM(T))} \iso \Enrich{EM(T)} \iso \ca G(V)^{\Gamma(E)\ca G(T)} \iso \Enrich{E'} \iso \ca G(V^T)^{\Gamma E'} \]
over $\ca GV$. Either by a direct verification, or by applying structure-semantics{\footnotemark{\footnotetext{This is the well-known fact due to Lawvere that for any category $\ca E$, the canonical functor
\[ \begin{array}{lccccc} {\tn{Mnd}(\ca E)^{\op} \to \CAT/\ca E} &&& {T} & {\mapsto} & {U:\ca E^T \to \ca E} \end{array} \]
with object map indicated is fully faithful (see \cite{Street-FTM} for a proof). In particular this implies that for monads $S$ and $T$ on $\ca E$, an isomorphism $\ca E^T \iso \ca E^S$ over $\ca E$ is the same thing as a monad isomorphism $S \iso T$.}}} to $\ca G(V)^{\Gamma(EM(T))} \iso \ca G(V)^{\Gamma(E)\ca G(T)}$, one has also an isomorphism
\[ \Gamma(EM(T)) \iso \Gamma(E)\ca G(T) \]
of monads. Moreover the monad $T$ may also be regarded as a multitensor, whose unary part is $T$ and whose $n$-ary parts for $n \neq 1$ are constant at $\emptyset$, and then $\Gamma T \iso \ca GT$ as monads. Thus if $E$ and $T$ are $\lambda$-accessible, then so is $EM(T)$ by theorem(\ref{thm:preservation-by-Gamma-E})(\ref{thmcase:Gamma-E-pres-filtered-colims}).
\end{remark}
\begin{remark}\label{rem:EMT-lra}
If moreover $E$ and $T$ are local right adjoint and the coherences $\tau_{X_i}$ are cartesian natural in the $X_i$, then by the explicit description of the composite multitensor $EM(T)$ and theorem(\ref{thm:preservation-by-Gamma-E})(\ref{thmcase:Gamma-E-lra}), $EM(T)$ is also local right adjoint.
\end{remark}
\begin{example}\label{ex:Tcross}
When $V$ is a category with finite products and $E$ is given by them, any monad $T$ on $V$ is canonically opmonoidal, with the coherences provided by the product preservation obstruction maps. The composite monad, whose tensor product is
\[ EM(T)(X_1,...,X_n) = \prod\limits_{1 \leq i \leq n} TX_i \]
was called $T^{\times}$ in \cite{BataninWeber-EnHop}. Proposition(2.8) of \cite{BataninWeber-EnHop}, which says that
\[ \Enrich{T^{\times}} \iso \Enrich{(V^T,\times)} \]
over $\ca GV$, follows by applying theorem(\ref{thm:opmonoidal-monad-dist-law}) to this example. When $T$ is local right adjoint and $V$ is distributive, the product obstruction maps for $T$ are cartesian natural by lemma(2.15) of \cite{Weber-Fam2fun} and so by remark(\ref{rem:EMT-lra}) $T^{\times}$ is local right adjoint.
\end{example}

\subsection{Wreath products}
\label{ssec:wreath}
Continuing with this last example, one can give an account of the wreath products with $\Delta$ which are central to \cite{Berger-IteratedWreathProduct}, by combining the present discussion with the theory of monads with arities as described in \cite{BergMellWeber-MonadsArities}. Let us first recall and in some ways update some of this theory.

A \emph{monad with arities} is a monad in the 2-category $\CatAr$ of \emph{categories with arities} that we now describe. An object is a fully faithful dense functor $i:\ca A \to \ca E$ such that $\ca A$ is small and $\ca E$ is locally small. A morphism from $i:\ca A \to \ca E$ to $j:\ca B \to \ca F$ is a functor $F: \ca E \to \ca F$ such that the composite functor
\[ \xygraph{{\ca E}="p0" [r] {\ca F}="p1" [r(1.2)] {\PSh {\ca B}}="p2" "p0":"p1"^-{F}:"p2"^-{\ca F(j,1)}} \]
is the left kan extension along $i$ of $\ca F(j,1)Fi$. See lemma(2.2) and section(2.4) of \cite{BergMellWeber-MonadsArities} to see why these compose, and for an elementary characterisation of this last condition. A 2-cell $F \to G$ is just a natural transformation.

In \cite{BergMellWeber-MonadsArities} $i:\ca A \to \ca E$ was referred to as a pair $(\ca A,\ca E)$ and assumed to be the inclusion of a full subcategory. Thus strictly speaking, our $\CatAr$ has more objects than that of \cite{BergMellWeber-MonadsArities}. However any $i:\ca A \to \ca E$ in $\CatAr$ can easily be seen to be isomorphic to one for which the functor is a subcategory inclusion. The objects in the image of $i$ are small projective iff $\ca E(i,1)$ is an equivalence $\ca E \catequiv \PSh {\ca A}$ (see the discussion preceeding lemma(\ref{lem:GV-dense})). Working with such an $i$ is a more flexible alternative than working with categories which are equal or isomorphic to $\PSh {\C}$ for some small category $\C$, for instance, when dealing with $\ca G(\PSh {\C})$.
\begin{remark}\label{rem:arities-enriched-graphs}
By lemma(\ref{lem:GV-dense}), if $V$ has a strict initial object and $i:\ca D \to V$ is a category with arities, then so is $i^+:\ca D_+ \to \ca GV$. Moreover if the objects in the image of $i$ are small projective, then so are those in the image of $i^+$.
\end{remark}
\begin{remark}\label{rem:arities-comma}
If $i:\ca A \to \ca E$ is a category with arities and $X \in \ca E$, then it is straight forward to verify that
\[ \begin{array}{lccr} {i_X:i/X \to \ca E/X} &&& {(A,h:iA \to X) \, \mapsto \, (iA,h)} \end{array} \]
is also a category with arities, and moreover if the objects in the image of $i$ are small projective, then so are those in the image of $i_X$.
\end{remark}
Let us now reformulate Berger's definition of the wreath product with $\Delta$ (\cite{Berger-IteratedWreathProduct} definition(3.1)) in terms of the language of this paper. Given a distributive category $V$, for any sequence of objects $(Z_1,...,Z_n)$ regarded as a $V$-graph, the $V$-graph $\Gamma (\prod)(Z_1,...,Z_n)$, underlying the free $V$-category on $(Z_1,...,Z_n)$, has the following explicit description by the definition of $\Gamma(\prod)$. Its set of objects is $\{0,...,n\}$, and its homs are given by
\[ \Gamma(\textstyle{\prod})(Z_1,...,Z_n)(i,j) =
\left\{\begin{array}{cll} {\prod\limits_{i{<}k{\leq}j} Z_k} && {\tn{if} \,\, i \leq j} \\ {\emptyset} && {\tn{if} \,\, i > j.} \end{array}\right.  \]
\begin{definition}\label{def:wreath}
\cite{Berger-IteratedWreathProduct}
Let $i:\ca A \to \ca E$ be in $\CatAr$ such that $\ca E$ is a distributive category. Then $\Delta \wr \ca A$ is the following category
\begin{itemize}
\item objects are finite sequences of objects of $\ca A$.
\item an arrow $f:(A_1,...,A_m) \to (B_1,...,B_n)$ is an $\ca E$-functor
\[ f:\Gamma(\textstyle{\prod})(iA_1,...,iA_m) \to \Gamma(\textstyle{\prod})(iB_1,...,iB_n).  \]
\end{itemize}
\end{definition}
The enrichment over $\ca E$ implicit in the above is with respect to cartesian product in $\ca E$. Suppose the objects of $\ca A$ are not initial and $\ca E$'s initial object is strict. Then the object map $\phi:\{0,...,m\} \to \{0,...,n\}$ of $f$ above is forced to be an order preserving function. As $f$ amounts to an $\ca E$-graph morphism as on the left in
\[ \begin{array}{lccr} {(iA_1,...,iA_m) \to \Gamma(\textstyle{\prod})(iB_1,...,iB_n)} &&& {f_j:iA_j \longrightarrow \prod\limits_{\phi(j-1)<k{\leq}\phi(j)} iB_k} \end{array} \]
it is completely determined by $\phi$ and the morphisms $f_j$ in $\ca E$ for $1 \leq j \leq m$ as indicated in the previous display. In the case where $\ca E=\PSh {\ca A}$ and $i$ is the yoneda embedding, $\Delta \wr \ca A$ is thus exactly as defined in \cite{Berger-IteratedWreathProduct} definition(3.1). In fact our definition is no more general than that of Berger's.
\begin{remark}\label{rem:reconcile-Berger-wreath}
$\Delta \wr \ca A$ as defined in definition(\ref{def:wreath}) does not depend on $i$ or $\ca E$. For $\ca E(i,1):\ca E \to \PSh {\ca A}$ is fully faithful and product preserving, and so $\ca G(\ca E(i,1))$ is also fully faithful and underlies a monad morphism $(\ca G(\ca E),\Gamma(\prod)) \to (\ca G(\PSh {\ca A}),\Gamma(\prod))$ whose 2-cell datum is invertible. Thus the corresponding commutative square
\[ \xygraph{!{0;(2,0):(0,.5)::} {\ca G(\ca E)^{\Gamma(\prod)}}="p0" [r] {\ca G(\PSh {\ca A})^{\Gamma(\prod)}}="p1" [d] {\ca G(\PSh {\ca A})}="p2" [l] {\ca G(\ca E)}="p3" "p0":"p1"^-{\overline{i}}:"p2"^-{U^{\Gamma(\prod)}}:@{<-}"p3"^-{\ca G(\ca E(i,1))}:@{<-}"p0"^-{U^{\Gamma(\prod)}}} \]
is a pullback by \cite{BergMellWeber-MonadsArities} proposition(1.3)(c), and so $\overline{i}$ is also fully faithful. Moreover for any sequence $(A_1,...,A_n)$ of objects of $\ca A$ one has
\[ \overline{i} \, \Gamma(\textstyle{\prod})(iA_1,...,iA_n) \iso \Gamma(\textstyle{\prod})(yA_1,...,yA_n) \]
where $y$ is the yoneda embedding $\ca A \to \PSh {\ca A}$. Thus the application of $\overline{i}$ gives a bijection between $\ca E$-functors $\Gamma(\prod)(iA_1,...,iA_n) \to \Gamma(\prod)(iB_1,...,iB_n)$ and $\PSh {\ca A}$-functors $\Gamma(\prod)(yA_1,...,yA_n) \to \Gamma(\prod)(yB_1,...,yB_n)$.
\end{remark}
As defined in definition(\ref{def:wreath}), the category $\Delta \wr \ca A$ is the image of the identity on objects-fully faithful factorisation of the composite
\[ \xygraph{!{0;(1.5,0):(0,1)::} {M\ca A}="p0" [r] {M\ca E}="p1" [r] {\ca G\ca E}="p2" [r(1.5)] {\Enrich {\ca E}=\ca G(\ca E)^{\Gamma(\prod)}}="p3" "p0":"p1"^-{Mi}:"p2"^-{\tn{seq}_{\ca E}}:"p3"^-{F^{\Gamma(\prod)}}} \]
where $\tn{seq}_{\ca E}$ is the process of viewing sequences as $\ca E$-graphs.

Let $i:\ca A \to \ca E$ be a category with arities and suppose that $\ca E$ has a terminal object $1$. Given a local right adjoint monad{\footnotemark{\footnotetext{In \cite{BergMellWeber-MonadsArities} local right adjoint monads were called strongly cartesian monads.}}} $T$ on $\ca E$, we shall now explain how one can extend the arities $\ca A$ of $\ca E$ in such a way as to make $T$ a monad with arities. In \cite{BergMellWeber-MonadsArities} this was explained in section(2.6) in the case where $i$ was a yoneda embedding.

Since $T$ is local right adjoint its effect $T_1:\ca E \to \ca E/T1$ on arrows $X \to 1$ has a left adjoint $L_T$. Define the full and faithful functor $i_0:\ca A_T \to \ca E$ as the right part of the identity on objects-fully faithful factorisation of the composite
\[ \xygraph{!{0;(1.5,0):(0,1)::} {i/T1}="p0" [r] {\ca E/T1}="p1" [r] {\ca E.}="p2" "p0":"p1"^-{i_{T1}}:"p2"^-{L_T}} \]
Given $f:iA \to T1$ and writing $g_{(A,f)}$ for the component of the unit of $L_T \ladj T_1$ at $(A,f)$
\[ \xygraph{!{0;(2,0):(0,1)::} {iA}="p0" [r] {TL_Ti_{T1}(A,f)}="p1" [r] {T1}="p2" "p0":"p1"^-{g_{(A,f)}}:"p2"^-{T!}} \]
is a $T$-generic factorisation of $f$ in the sense of \cite{Weber-Fam2fun,Weber-Generic}. Thus the data of $L_Ti_{T1}$ comes down to a choice of such generic factorisation for each $(A,f)$. Since generics for the identity are exactly isomorphisms, the cartesianness of the unit $\eta:1 \to T$ ensures that its components are $T$-generic by \cite{Weber-Generic} proposition(5.10)(2). As in the discussion of section(6.1) of \cite{BataninWeber-EnHop}, one can thus take $g_{(A,f)}$ for $f$ of the form
\[ \xygraph{{iA}="p0" [r] {1}="p1" [r] {T1}="p2" "p0":"p1"^-{!}:"p2"^-{\eta_1}} \]
to be $\eta_{iA}$, whence $L_Ti_{TA}(A,f)=iA$ in this case. This ensures that $i$ factors through $i_0$, and so by theorem(5.13) of \cite{Kelly-EnrichedCatsBook}, $i_0$ is dense. By the same argument as theorem(2.9) of \cite{BergMellWeber-MonadsArities}, $\ca A_T$ are arities for $T$, that is to say $T$ is a monad on $i_0:A_T \to \ca E$ in $\CatAr$.

Define $i_T:\Theta_T \to \ca E^T$ to be the right part of the identity on objects-fully faithful factorisation of the composite
\[ \xygraph{{\ca A_T}="p0" [r] {\ca E}="p1" [r] {\ca E^T}="p2" "p0":"p1"^-{i_0}:"p2"^-{F^T}} \]
By the nerve theorem (\cite{BergMellWeber-MonadsArities} theorem(1.10)), $i_T$ is also dense and one has a characterisation of the image of the nerve functor $\ca E^T(i_T,1):\ca E^T \to \PSh {\Theta_T}$.

The basic example worth recalling is where $\ca E = \Graph$, $i$ is the yoneda embedding and $T$ is the monad for categories. Then $\Theta_T$ is equivalent to $\Delta$, though not isomorphic -- unwinding the definitions in this case reveals that $\Theta_T$ differs from $\Delta$ in that there are two copies of the object $[0]$. Thus to recover $\Delta$ up to isomorphism from the above considerations, one must take a skeleton of $\Theta_T$. Similar remarks apply to the other examples considered in section(4) of \cite{Weber-Fam2fun}.

Let $i:\ca A \to \ca E$ be a category with arities, $\ca E$ be locally c-presentable, and $T$ be a coproduct preserving local right adjoint monad on $\ca E$. Then by remark(\ref{rem:arities-enriched-graphs}) and theorem(\ref{thm:preservation-by-Gamma-E})(\ref{thmcase:Gamma-E-lra}), $i^+:\ca A_+ \to \ca G\ca E$ is a category with arities, $\ca G\ca E$ is locally c-presentable, and $\Gamma(T^{\times})$ is a coproduct preserving local right adjoint monad on $\ca G\ca E$. Thus as above, one can define $\Theta_{\Gamma(T^{\times})}$. Recall from section(\ref{sec:dist-multitensor-over-monad}) that $\Gamma(T^{\times})$ is the composite monad $\Gamma(\prod)\ca G(T)$ defined via a distributive law.
\begin{proposition}\label{prop:gen-wreath-formula}
For $i$, $\ca E$ and $T$ as above there exists a fully faithful functor
\[ w_T : \Theta_{\Gamma(T^{\times})} \longrightarrow \Delta \wr \Theta_T.  \]
\end{proposition}
\begin{proof}
The category $\Delta \wr \Theta_T$ is the image of the identity on objects-fully faithful factorisation of the composite
\begin{equation}\label{eq:comp-for-Delta-wr-iT}
\xygraph{!{0;(1.5,0):(0,1)::} {M\Theta_{i,T}}="p0" [r] {M\ca E^T}="p1" [r] {\ca G(\ca E^T)}="p2" [r(2)] {\ca G(\ca E^T)^{\Gamma(\prod)}=\ca G(E)^{\Gamma(T^{\times})}}="p3" "p0":"p1"^-{Mi_T}:"p2"^-{\tn{seq}_{\ca E^T}}:"p3"^-{F^{\Gamma(\prod)}}} \end{equation}
and the category $\Theta_{\Gamma(T^{\times})}$ is the image of the identity on objects-fully faithful factorisation of the composite
\begin{equation}
\label{eq:comp-for-Theta-Gamma-Tcross}
\xygraph{!{0;(2,0):(0,1)::} {i^+/\Gamma(T^{\times})(1)}="p0" [r(1.5)] {\ca G(\ca E)/\Gamma(T^{\times})(1)}="p1" [r] {\ca G(\ca E)}="p2" [r] {\ca G(\ca E)^{\Gamma(T^{\times})}.}="p3" "p0":"p1"^-{i^+_{\Gamma(T^{\times})(1)}}:"p2"^-{L_{\Gamma(T^{\times})}}:"p3"^-{F^{\Gamma(T^{\times})}}}
\end{equation}
Thus one has fully faithful functors
\[ \begin{array}{lccr} {i_1:\Delta \wr \Theta_T \longrightarrow \ca G(\ca E)^{\Gamma(T^{\times})}} &&& {i_2:\Theta_{\Gamma(T^{\times})} \longrightarrow \ca G(\ca E)^{\Gamma(T^{\times})}.} \end{array} \]
To demonstrate the existence of $w_T$ such that $i_2 = i_1w_T$, it suffices to show that if $X \in \ca G(\ca E)^{\Gamma(T^{\times})}$ is isomorphic to an object in the image of $i_2$, then $X$ is isomorphic to an object in the image of $i_1$.

To say that $X \iso X' \in \tn{im}(i_2)$ is to say that $X \iso \Gamma(T^{\times})(Y)$ and there exists $B \in \ca A_+$ together with a generic morphism $g:B \to \Gamma(T^{\times})(Y)$, because then $Y$ will have been obtained by applying the first two functors of (\ref{eq:comp-for-Theta-Gamma-Tcross}) to $f=\Gamma(T^{\times})(!)g$. In other words, $X \iso \Gamma(T^{\times})(Y)$ where $Y$ was obtained by generically factoring some morphism $f:B \to \Gamma(T^{\times})(1)$, where $B \in \ca A_+$. Such generic factorisations were understood in the proof of theorem(\ref{thm:preservation-by-Gamma-E})(\ref{thmcase:Gamma-E-lra}). When $B=0$ one may take $g$ to be $\eta_0$. On the other hand when $B=(A)$, one may take $g:(A) \to \Gamma(T^{\times})(Z_1,...,Z_n)$ such that $g0=0$, $g1=n$ and $g_{0,1}:A \to \prod\limits_{1 \leq i \leq n} TZ_i$ such that $\tn{pr}_ig_{0,1}$ is $T$-generic for all $i$. Thus $X \iso \Gamma(T^{\times})(Y)$ where $Y=(p_1,...,p_n)$, and there exists $A \in \ca A$ and $T$-generic maps $g_i:A \to Tp_i$ for $1 \leq i \leq n$ (the case $B=0$ captured by the case $n=0$).

To say that $X \iso X' \in \tn{im}(i_1)$ is to say that $X \iso \Gamma(\prod)(q_1,...,q_n)$ where the $q_i$ are isomorphic to objects in the image of $i_T$. But this says that for all $i$, $q_i \iso Tp_i$ and there exists $A_i \in \ca A$ and a $T$-generic morphism $g_i:A_i \to Tp_i$. Thus $X \iso \Gamma(Tp_1,...,Tp_n)$ and for all $i$, there exists $A_i \in \ca A$ and a $T$-generic morphism $g_i:A_i \to Tp_i$. Now $(Tp_1,...,Tp_n)$ is a sequence of free $T$-algebras viewed as an $\ca E^T$-graph, and since $T$ preserves coproducts this can be rewritten as $\ca G(T)(p_1,...,p_n)$. Thus $X \iso \Gamma(T^{\times})(Y)$ where $Y=(p_1,...,p_n)$, and for $1 \leq i \leq n$ there exists $A_i \in \ca A$ and $T$-generic maps $g_i:A_i \to Tp_i$ for $1 \leq i \leq n$. 
\end{proof}
\begin{remark}\label{rem:condition-for-wreath-equivalence}
From the previous proof, it is clear that $w_T$ is essentially surjective on objects when $T$ and $i$ satisfy the following condition -- given $A_1, A_2 \in \ca A$, and generics $g_1:A_1 \to TX$ and $g_2:A_2 \to TY$, there exists $A \in \ca A$ and generics $g_1':A \to TX$ and $g_2':A \to TY$.
\end{remark}
\begin{example}\label{ex:wreath-semicat}
Consider the case when $\ca E=\Graph$, $i$ is the yoneda embedding and $T$ is the monad for semicategories. Recall that a semicategory is a graph with an associative binary composition, but not necessarily identities for this composition. Thus $TX$ is the graph whose vertices are those of $X$, and whose edges are \emph{non-empty} paths. In this case $\Theta_T$ is the full subcategory of $\Delta$ consisting of the strictly monotone functions (ie the injections). Thus an object of $\Delta \wr \Theta_T$ is a sequence of finite non-empty ordinals $([n_1],...,[n_k])$. As a subcategory of $\ca G^2(\Set)^{\Gamma(T^{\times})}$ the objects of $\Delta \wr \Theta_T$ are free $\Gamma(T^{\times})$ algebras on 2-dimensional globular pasting diagrams. For instance  $([3],[0],[2])$ and $([3],[1],[2])$ are identified with the globular pasting diagrams
\[ \xygraph{{\xybox{\xygraph{{\bullet}="p0" [r] {\bullet}="p1" [r] {\bullet}="p2" [r] {\bullet}="p3"
"p0":@/^{3pc}/"p1"|-{}="q10" "p0":@/^{1pc}/"p1"|-{}="q11" "p0":@/_{1pc}/"p1"|-{}="q12" "p0":@/_{3pc}/"p1"|-{}="q13" "p1":"p2" "p2":@/^{2pc}/"p3"|-{}="q30" "p2":"p3"|-{}="q31" "p2":@/_{2pc}/"p3"|-{}="q32"
"q10":@{}"q11"|(.5){}="m11" "m11" [u(.15)] :@{=>}[d(.3)]
"q11":@{}"q12"|(.5){}="m12" "m12" [u(.15)] :@{=>}[d(.3)]
"q12":@{}"q13"|(.5){}="m13" "m13" [u(.15)] :@{=>}[d(.3)]
"q30":@{}"q31"|(.5){}="m31" "m31" [u(.15)] :@{=>}[d(.3)]
"q31":@{}"q32"|(.5){}="m32" "m32" [u(.15)] :@{=>}[d(.3)]}}}
[r(2.5)] {\tn{and}} [r(2.5)]
{\xybox{\xygraph{{\bullet}="p0" [r] {\bullet}="p1" [r] {\bullet}="p2" [r] {\bullet}="p3"
"p0":@/^{3pc}/"p1"|-{}="q10" "p0":@/^{1pc}/"p1"|-{}="q11" "p0":@/_{1pc}/"p1"|-{}="q12" "p0":@/_{3pc}/"p1"|-{}="q13" "p1":@/^{1pc}/"p2"|-{}="q20" "p1":@/_{1pc}/"p2"|-{}="q21"
"p2":@/^{2pc}/"p3"|-{}="q30" "p2":"p3"|-{}="q31" "p2":@/_{2pc}/"p3"|-{}="q32"
"q10":@{}"q11"|(.5){}="m11" "m11" [u(.15)] :@{=>}[d(.3)]
"q11":@{}"q12"|(.5){}="m12" "m12" [u(.15)] :@{=>}[d(.3)]
"q12":@{}"q13"|(.5){}="m13" "m13" [u(.15)] :@{=>}[d(.3)]
"q20":@{}"q21"|(.5){}="m21" "m21" [u(.15)] :@{=>}[d(.3)]
"q30":@{}"q31"|(.5){}="m31" "m31" [u(.15)] :@{=>}[d(.3)]
"q31":@{}"q32"|(.5){}="m32" "m32" [u(.15)] :@{=>}[d(.3)]}}}} \]
respectively. The algebras of $\Gamma(T^{\times})$ in this case are categories enriched in semicategories using the cartesian product. In other words $\Gamma(T^{\times})$ algebras are just like strict $2$-categories except that they needn't have identity 2-cells. In particular the lack of identity 2-cells means that there is no meaningful operation of whiskering in a $\Gamma(T^{\times})$ algebra as there is in a 2-category. As a subcategory of $\ca G^2(\Set)^{\Gamma(T^{\times})}$ the objects of $\Theta_{\Gamma(T^{\times})}$ are free on those globular pasting diagrams which one doesn't require whiskering to build. For instance the left pasting diagram above does not live in $\Theta_{\Gamma(T^{\times})}$ whereas
the right one does. Thus this is an example where $w_T$ is \emph{not} an equivalence.
\end{example}

\subsection{Strict $n$-category monads}
\label{sec:strict-n-cat-monad}
One can consider the following inductively-defined sequence of monads
\begin{itemize}
\item  Put $\ca T_{{\leq}0}$ equal to the identity monad on $\Set$.
\item  Given a monad $\ca T_{{\leq}n}$ on $\ca G^n\Set$, define the monad
$\ca T_{{\leq}n+1} = \Gamma \ca T^{\times}_{{\leq}n}$
on $\ca G^{n+1}\Set$.
\end{itemize}
recalling that $\ca G^n\Set$ is the category of $n$-globular sets.
By example(\ref{ex:Tcross}) and proposition(\ref{prop:Gamma-E-algebras-as-E-categories}) it follows that $\ca G^n(\Set)^{\ca T_{\leq n}}$ is the category of strict $n$-categories and strict $n$-functors between them. By remarks(\ref{rem:distributive-opmonoidal-monad-elaboration}) and (\ref{rem:EMT-lra}), and example(\ref{ex:Tcross}), we recover the fundamental properties of these monads, that is that they are coproduct preserving, finitary and local right adjoint. Moreover from this inductive description of $\ca T_{\leq{n}}$ and theorem(\ref{thm:opmonoidal-monad-dist-law}) one recovers the distibutive law
\[ \begin{array}{c} {\ca G(\ca T_{\leq{n}})\Gamma(\prod) \rightarrow \Gamma(\prod)\ca G(\ca T_{\leq{n}})} \end{array} \]
for all $n$, between monads on $\ca G^n\Set$, with composite monad $\Gamma(\prod)\ca G(\ca T_{\leq{n}}) = \ca T_{\leq{(n{+}1)}}$, as witnessed in \cite{Cheng-IteratedDist}.

We denote by $k$ the free living $k$-cell viewed as a representable $n$-globular set (for $0 \leq k \leq n$). Recall \cite{Weber-Generic} that a globular set $A$ is a globular pasting diagram of dimension $\leq k$ iff there exists a generic morphism $k \to \ca T_{\leq n}A$. A globular pasting diagram of dimension $\leq k$ is obviously a globular pasting diagram of dimension $\leq n$, and so for such an $A$ there exists a generic morphism $n \to \ca T_{\leq n}A$. Thus $\ca T_{\leq n}$ satisfies the condition of remark(\ref{rem:condition-for-wreath-equivalence}), and so
\[ \Theta_{\ca T_{\leq n{+}1}} \catequiv \Delta \wr \Theta_{\ca T_{\leq n}} \]
by proposition(\ref{prop:gen-wreath-formula}) and remark(\ref{rem:condition-for-wreath-equivalence}). Up to isomorphism the category $\Theta_n$ of \cite{Berger-IteratedWreathProduct} may be defined as a skeleton of $\Theta_{\ca T_{\leq n}}$, and so one has $\Theta_{n{+}1} \iso \Delta \wr \Theta_n$.

As far as defining the monads $\ca T_{\leq n}$ is concerned, one could just as well start with any locally finitely c-presentable $V$ in place of $\Set$, giving monads whose algebras are $V$-enriched strict $n$-categories. This enrichment gives objects in $V$ of $n$-cells between parallel pairs of $(n-1)$-cells. By the same arguments these monads are also coproduct preserving, finitary and local right adjoint, and moreover give rise to distributive laws in the same way.

\section{Higher operads and contractibility}
\label{sec:contractible-operads-and-multitensors}

Since higher operads are monad morphisms of a certain kind, the functorial correspondence between monads and multitensors gives a multitensor viewpoint on higher operads. This is theorem(\ref{thm:equivalence-operads<->multitensors}) which generalises the main results of \cite{BataninWeber-EnHop}. Sections(\ref{ssec:trivial-fibrations}) and (\ref{ssec:contractible-multitensors}) are then concerned with extending this to give a sensible notion of ``contractible multitensor'' and its relation to contractible operads. Finally in section(\ref{sec:Trimble}) we use our theory to recover Trimble's definition, as described in \cite{Cheng-ComparingOperadic}.

\subsection{The basic correspondence}
\label{sec:operads-multitensors-basic}
Recall \cite{BataninWeber-EnHop} that just as one can define $T$-operads for a cartesian monad $T$, one also has a notion of $E$-multitensor for any cartesian multitensor $E$. For $(V,E)$ a cartesian multitensor, one defines an \emph{$E$-multitensor} to consist of another multitensor $A$ on $V$ together with a natural transformation $\alpha:A \to E$ which is cartesian natural and compatible with the multitensor structures.

For $V$ a category with pullbacks and $T$ a cartesian monad on $\ca GV$ over $\Set$, a $T$-operad $\alpha:A \to T$ over $\Set$ may be regarded as either a monad functor $(1_{\ca GV},\alpha) : (\ca GV,T) \to (\ca GV,A)$ over $\Set$ whose 1-cell datum is an identity and 2-cell datum is a cartesian transformation, or equally well as a monad opfunctor $(1_{\ca GV},\alpha) : (\ca GV,A) \to (\ca GV,T)$ whose 1-cell datum is an identity and 2-cell datum is cartesian. Similarly, an $E$-multitensor may be regarded as either a lax monoidal functor $(1_V,\alpha) : (V,E) \to (V,A)$ over $\Set$ whose 1-cell datum is an identity and 2-cell datum is a cartesian transformation, or equally well as an oplax monoidal functor $(1_V,\alpha) : (V,A) \to (V,E)$ whose 1-cell datum is an identity and 2-cell datum is cartesian.

We denote by $\NOp{T}$ the category of $T$-operads over $\Set$ and their morphisms. A morphism from $\alpha:A \to T$ to $\beta:B \to T$ is just a monad morphism $\gamma:A \to B$ such that $\alpha=\beta\gamma$. It follows that $\gamma$ is itself cartesian and over $\Set$. Thus a $T$-operad morphism may be regarded either as a monad functor $(1_V,\gamma) : (V,T) \to (V,A)$ or a monad opfunctor $(1_V,\gamma) : (V,A) \to (V,T)$. Similarly one has the category $\Mult{E}$ of $E$-multitensors and their morphisms, with a morphism from $\alpha:A \to E$ to $\beta:B \to E$ being multitensor map over $E$. As with operad morphisms, morphisms of $E$-multitensors are reexpressable either as lax monoidal functors under $(V,E)$ or as oplax monoidal functors over $(V,E)$. Thus by applying either $\Gamma$ or $\Gamma'$ to $E$-multitensors and their morphisms, one obtains a functor
\[ \Gamma_E : \Mult{E} \to \NOp{\Gamma E}. \]
By lemma(\ref{lem:transfer-dpl}), theorem(\ref{thm:characterisation-of-image-of-Gamma}) and proposition(\ref{prop:Gamma-respects-cartesian-transformations}), $\Gamma_E$ is essentially surjective on objects. By proposition(\ref{prop:Psi-2-ff}) it is fully faithful, and so we have obtained
\begin{theorem}\label{thm:equivalence-operads<->multitensors}
Let $V$ be lextensive. Then $\Gamma_E$ gives an equivalence of categories
\[ \Mult{E} \catequiv \NOp{\Gamma E}.\]
\end{theorem}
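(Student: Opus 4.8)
The plan is to split the claimed equivalence into essential surjectivity on objects and full faithfulness, and to obtain each from results already established. At the outset I would record that, $V$ being lextensive, it is in particular extensive with finite limits, and that $\Gamma E$ is then a cartesian monad on $\ca GV$ over $\Set$ (so that the category $\NOp{\Gamma E}$ is defined); these facts are used throughout.

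For essential surjectivity I would start from an arbitrary object of $\NOp{\Gamma E}$, that is, a cartesian monad morphism $\alpha : A \to \Gamma E$ over $\Set$. By Theorem \ref{thm:characterisation-of-image-of-Gamma} the monad $\Gamma E$, being of the form $\Gamma(-)$, is distributive and path-like; Lemma \ref{lem:transfer-dpl} then transfers both properties along the cartesian operad $\alpha$ to $A$, so that $A$ too is distributive and path-like. A second application of Theorem \ref{thm:characterisation-of-image-of-Gamma}, this time in the recognition direction, produces a canonical monad isomorphism $A \iso \Gamma(\overline{A})$ over $\ca GV$, with $\overline{A}$ the multitensor associated to $A$ as in section \ref{ssec:monads-to-multitensors}. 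Using the identification $\overline{\Gamma E} \iso E$ of Remark \ref{rem:implicit-Tbar}, the morphism $\alpha$ transports to a multitensor map $\overline{\alpha} : \overline{A} \to E$, and since $\alpha$ is cartesian, Proposition \ref{prop:Gamma-respects-cartesian-transformations} --- applied to the (op)lax monoidal functor whose underlying functor is $1_V$, using extensivity of $V$ --- forces $\overline{\alpha}$ to be a cartesian transformation. Hence $(\overline{A},\overline{\alpha})$ is an object of $\Mult{E}$, and tracking the isomorphisms through the naturality of the comparison $\Gamma\overline{T} \iso T$ of Theorem \ref{thm:characterisation-of-image-of-Gamma} shows $\Gamma_E(\overline{A},\overline{\alpha}) \iso (A,\alpha)$ as $\Gamma E$-operads.

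For full faithfulness I would use that, by the discussion preceding the statement, a morphism of $\Mult{E}$ is precisely a (lax, equivalently oplax) monoidal functor with underlying functor $1_V$ sitting in a commuting triangle under, equivalently over, $(V,E)$, and that $\Gamma_E$ acts on this datum by applying $\Gamma$, equivalently $\Gamma'$. These $2$-functors factor as $\DISTMULT \xrightarrow{\Psi} \GMND \to \MND(\CAT/\Set)$ and $\OpDISTMULT \xrightarrow{\Psi'} \GOpMND \to \OpMND(\CAT/\Set)$, in which the second legs are faithful (an $\ca G$-image of a functor determines the functor, cf. Proposition \ref{prop:G1-locally-ff}) and restrict to bijections on the one-cells whose underlying functor is an identity, while $\Psi$ and $\Psi'$ are fully faithful by Proposition \ref{prop:Psi-2-ff}. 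Consequently $\Gamma$ and $\Gamma'$ restrict to bijections between one-cells with underlying functor $1_V$ and monad (op)functors with underlying functor $1_{\ca GV}$, compatibly with postcomposition by the structure maps down to $(V,E)$ and $(\ca GV,\Gamma E)$; this is exactly the assertion that $\Gamma_E$ is fully faithful.

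The step I expect to be the main obstacle is the essential-surjectivity argument, and within it the two verifications it conceals: that the multitensor $\overline{A}$ one extracts is genuinely \emph{cartesian} (so that $(\overline{A},\overline{\alpha})$ is a bona fide $E$-multitensor, not merely a multitensor over $E$), and that the isomorphism $\Gamma_E(\overline{A},\overline{\alpha}) \iso (A,\alpha)$ respects the structure maps into $\Gamma E$. Both reduce to the naturality of the comparison isomorphism of Theorem \ref{thm:characterisation-of-image-of-Gamma} together with Proposition \ref{prop:Gamma-respects-cartesian-transformations}; granting those, the remainder is a fiddly but routine unwinding of the definitions of $\Mult{E}$, $\NOp{\Gamma E}$ and $\Gamma_E$.
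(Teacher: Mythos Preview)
Your proposal is correct and follows precisely the paper's approach: essential surjectivity via Lemma~\ref{lem:transfer-dpl}, Theorem~\ref{thm:characterisation-of-image-of-Gamma} and Proposition~\ref{prop:Gamma-respects-cartesian-transformations}, and full faithfulness via Proposition~\ref{prop:Psi-2-ff}; the paper's proof is simply the one-line citation of these results, while you have spelled out how they fit together. One small remark: the definition of $E$-multitensor in section~\ref{sec:operads-multitensors-basic} requires only that the transformation $\alpha$ be cartesian, not that the multitensor $A$ itself be cartesian, so your stated ``main obstacle'' about $\overline{A}$ being a cartesian multitensor is not actually something you need to verify (though it does follow automatically from cartesianness of $\overline{\alpha}$ and of $E$ by the pasting law for pullbacks).
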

In the case where $E=\ca T_{\leq n}^{\times}$ we recover the first equivalence of corollary(7.10) and of corollary(8.3) of \cite{BataninWeber-EnHop}.

\subsection{Trivial Fibrations}
\label{ssec:trivial-fibrations}
Let $V$ be a category and $\ca I$ a class of maps in $V$. Denote by $\ca I^{\uparrow}$ the class of maps in $V$ that have the right lifting property with respect to all the maps in $\ca I$. That is to say, $f:X{\rightarrow}Y$ is in $\ca I^{\uparrow}$ iff for every $i:S{\rightarrow}B$ in $\ca I$, $\alpha$ and $\beta$ such that the outside of
\[ \xygraph{{S}="tl" [r] {X}="tr" [d] {Y}="br" [l] {B}="bl" "tl" (:"tr"^-{\alpha}:"br"^{f},:"bl"_{i}(:"br"_-{\beta},:@{.>}"tr"|{\gamma}))} \]
commutes, then there is a $\gamma$ as indicated such that $f\gamma{=}\beta$ and $\gamma{i}=\alpha$. An $f \in \ca I^{\uparrow}$ is called a \emph{trivial $\ca I$-fibration}. The basic facts about $\ca I^{\uparrow}$ that we shall use are summarised in
\begin{lemma}\label{lem:basic-tf}
Let $V$ be a category, $\ca I$ a class of maps in $V$, $J$ a set and \[ (f_j:X_j{\rightarrow}Y_j \,\,\, | \,\,\, j \in J) \]
a family of maps in $V$.
\begin{enumerate}
\item  $\ca I^{\uparrow}$ is closed under composition and retracts.\label{tfib1}
\item  If $V$ has products and each of the $f_j$ is a trivial $\ca I$-fibration, then
\[ \begin{array}{c} {\prod\limits_{j} f_j : \prod\limits_j X_j \rightarrow \prod\limits_j Y_j} \end{array} \]
is also a trivial $\ca I$-fibration.\label{tfib2}
\item  The pullback of a trivial $\ca I$-fibration along any map is a trivial $\ca I$-fibration.\label{tfib3}
\item  If $V$ is extensive and $\coprod_jf_j$ is a trivial $\ca I$-fibration, then each of the $f_j$ is a trivial fibration.\label{tfib4}
\item  If $V$ is extensive, the codomains of maps in $\ca I$ are connected and each of the $f_j$ is a trivial $\ca I$-fibration, then $\coprod_jf_j$ is a trivial $\ca I$-fibration.\label{tfib5}
\end{enumerate}
\end{lemma}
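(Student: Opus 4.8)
The plan is to prove the five parts of Lemma~\ref{lem:basic-tf} in order, since the later parts build on the earlier ones and on the theory of extensive categories developed in section~\ref{ssec:connected-objects}. Throughout, the key mechanism is that a lifting problem against a fixed $i:S\to B$ is governed by the commutative square obtained by applying $V(B,-)$ and $V(S,-)$ to $f$, and that all the relevant (co)limit constructions on the family $(f_j)$ behave well with respect to such squares precisely because of extensivity or the presence of products.

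First, for part~(\ref{tfib1}): closure under composition is the standard diagram chase (given a lifting problem against $gf$, lift first against $g$, then against $f$), and closure under retracts is the standard retract argument (paste the retract diagram onto a lifting problem and use the lift for the retracted map). For part~(\ref{tfib2}): given $i:S\to B$ in $\ca I$ and a square from $i$ into $\prod_j f_j$, postcompose with each product projection to get a lifting problem against $f_j$, solve each one, and assemble the solutions into a map $B\to\prod_j X_j$ using the universal property of the product; one checks it solves the original problem componentwise. For part~(\ref{tfib3}): this is the classical fact that right lifting properties are stable under pullback — given a square from $i$ into the pullback projection, compose with the pullback square to get a lifting problem against $f$, solve it, and then induce the desired lift into the pullback by its universal property, using that the two legs agree by construction.

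For part~(\ref{tfib4}): by extensivity, the coproduct decomposition $\coprod_j X_j$ (resp.\ $\coprod_j Y_j$) has the property that pulling $\coprod_j f_j$ back along the coprojection $c_j:Y_j\to\coprod_j Y_j$ recovers $f_j$ (this is exactly the pullback characterisation in theorem~\ref{thm:extensivity-characterisation}: the square with $f_j$, $\coprod f_j$, $c_j$ on top and $c_j$ on bottom is a pullback). So each $f_j$ is a pullback of the trivial $\ca I$-fibration $\coprod_j f_j$, and part~(\ref{tfib3}) finishes it. For part~(\ref{tfib5}), which I expect to be the main obstacle: given $i:S\to B$ in $\ca I$ with $B$ connected, and a square
\[ \xymatrix{{S} \ar[r]^-{\alpha} \ar[d]_{i} & {\coprod_j X_j} \ar[d]^{\coprod_j f_j} \\ {B} \ar[r]_-{\beta} & {\coprod_j Y_j}} \]
the connectedness of $B$ means that $\beta$ factors through a unique coprojection $c_{j_0}:Y_{j_0}\to\coprod_j Y_j$, say $\beta = c_{j_0}\beta'$. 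Because the square relating $f_{j_0}$ to $\coprod_j f_j$ along $c_{j_0}$ is a pullback (extensivity again), the pair $(\alpha,\beta')$ — after checking $\alpha$ lands in $X_{j_0}$, which follows since $\coprod_j f_j \circ \alpha = \beta\circ i = c_{j_0}\beta' i$ factors through $c_{j_0}$ and coprojections are mono, so $\alpha$ factors through the pullback $X_{j_0}$ — induces a lifting problem $\alpha':S\to X_{j_0}$, $\beta':B\to Y_{j_0}$ against $f_{j_0}$. Solve it to get $\gamma':B\to X_{j_0}$, and take $\gamma = c_{j_0}\gamma':B\to\coprod_j X_j$; the identities $\gamma i=\alpha$ and $(\coprod_j f_j)\gamma=\beta$ follow by postcomposing the $j_0$-level identities with $c_{j_0}$ and using the factorisations. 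The only subtlety is the claim that $\alpha$ factors through $X_{j_0}$, and that uses both that coprojections into a coproduct in an extensive category are mono and disjoint (theorem~\ref{thm:extensivity-characterisation}) and that $S$ need not be connected — but we never needed $S$ connected, only $B$, which is exactly the hypothesis placed on codomains of maps in $\ca I$.
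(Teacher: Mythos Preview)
Your proposal is correct and follows essentially the same approach as the paper. The paper dismisses (\ref{tfib1})--(\ref{tfib3}) as standard, proves (\ref{tfib4}) exactly as you do by observing that each $f_j$ is a pullback of $\coprod_j f_j$ along a coprojection, and proves (\ref{tfib5}) by the same factoring-through-a-unique-component argument you give; your version simply spells out the details (in particular the use of the pullback property to factor $\alpha$ through $X_{j_0}$) more explicitly. One minor remark: in (\ref{tfib5}) the factoring of $\alpha$ through $X_{j_0}$ is a direct consequence of the universal property of that pullback square applied to the pair $(\alpha,\beta' i)$, so you do not actually need to invoke monicity or disjointness of coprojections separately.
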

\begin{proof}
(\ref{tfib1})-(\ref{tfib3}) is standard. If $V$ is extensive then the squares
\[ \xygraph{!{0;(1.5,0):(0,.666)::}
{X_j}="tl" [r] {\coprod_jX_j}="tr" [d] {\coprod_jY_j}="br" [l] {Y_j}="bl" "tl" (:"tr":"br"^{\coprod_jf_j},:"bl"_{f_j}:"br")} \]
whose horizontal arrows are the coproduct injections are pullbacks, and so (\ref{tfib4}) follows by the pullback stability of trivial $\ca I$-fibrations. As for (\ref{tfib5}) note that for $i:S{\rightarrow}B$ in $\ca I$, the connectedness of $B$ ensures that any square as indicated on the left
\[ \xygraph{{\xybox{\xygraph{!{0;(1.5,0):(0,.666)::} {S}="tl" [r] {\coprod_jX_j}="tr" [d] {\coprod_jY_j}="br" [l] {B}="bl" "tl" (:"tr":"br"^{\coprod_jf_j},:"bl"_{i}:"br")}}}
[r(4)]
{\xybox{\xygraph{!{0;(1.5,0):(0,.666)::} {S}="tl" [r] {X_j}="tr" [d] {Y_j}="br" [l] {B}="bl" "tl" (:"tr":"br"^{f_j},:"bl"_{i}:"br")}}}} \]
factors through a unique component as indicated on the right, enabling one to induce the desired filler.
\end{proof}
\begin{definition}
Let $F,G:W{\rightarrow}V$ be functors and $\ca I$ be a class of maps in $V$. A natural transformation $\phi:F{\implies}G$ is a \emph{trivial $\ca I$-fibration} when its components are trivial $\ca I$-fibrations.
\end{definition}
\noindent Note that since trivial $\ca I$-fibrations in $V$ are pullback stable, this reduces, in the case where $W$ has a terminal object $1$ and $\phi$ is cartesian, to the map $\phi_1:F1{\rightarrow}G1$ being a trivial $\ca I$-fibration.

Given a category $V$ with an initial object, and a class of maps $\ca I$ in $V$, we denote by $\ca I^+$ the class of maps in $\ca GV$ containing the maps{\footnotemark{\footnotetext{Recall that $0$ is the $V$-graph with one object whose only hom is initial, or in other words the representing object of the functor $\ca GV{\rightarrow}\Set$ which sends a $V$-graph to its set of objects.}}}
\[ \begin{array}{lccr} {\emptyset \rightarrow 0} &&& {(i) : (S) \rightarrow (B)} \end{array} \]
where $i \in \ca I$. The proof of the following lemma is trivial.
\begin{lemma}\label{lem:ind-tf}
Let $V$ be a category with an initial object and $\ca I$ a class of maps in $V$. Then $f:X{\rightarrow}Y$ is a trivial $\ca I^+$-fibration iff it is surjective on objects and all its hom maps are trivial $\ca I$-fibrations.
\end{lemma}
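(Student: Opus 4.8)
The final statement is Lemma~\ref{lem:ind-tf}, characterising trivial $\ca I^+$-fibrations in $\ca GV$ in terms of the surjectivity-on-objects and hom-maps conditions. The author says the proof is trivial, and indeed the plan is just to unwind the lifting conditions against the two families of generators $\emptyset \to 0$ and $(i):(S)\to(B)$ for $i\in\ca I$.

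\medskip

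First I would handle the generator $\emptyset \to 0$. A lifting problem of $f:X\to Y$ against $\emptyset\to 0$ is precisely a map $0\to Y$ (i.e.\ a choice of object of $Y$) together with the trivial map $\emptyset\to X$, and a solution is a map $0\to X$ lying over it; since $0$ represents the underlying-object-set functor $(-)_0:\ca GV\to\Set$, having all such lifts is exactly the surjectivity of $f_0:X_0\to Y_0$. Next I would handle the generators $(i):(S)\to(B)$. Using again that $\ca GV((D),-)$ picks out (via the adjunction $\varepsilon_V \ladj L_V$, equivalently $L_V : Z\mapsto ((Z),0,1)$ described in Section~\ref{ssec:enriched-graphs}) a pair of objects and a hom element, a commutative square with $(S)\to(B)$ on the left and $f$ on the right amounts to: a pair $a,b\in X_0$, a map $S\to X(a,b)$, and a map $B\to Y(f_0a,f_0b)$ making the evident square with $f_{a,b}$ commute; a diagonal filler is exactly a lift $B\to X(a,b)$ in that square. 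Hence, once $f_0$ is known to be (surjective, and we are testing) well-defined on the relevant objects, having all these lifts for all $i\in\ca I$ and all $a,b$ is precisely the statement that every hom map $f_{a,b}:X(a,b)\to Y(a',b')$ (with $a'=f_0a$, $b'=f_0b$) is a trivial $\ca I$-fibration.

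\medskip

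To assemble these: if $f$ is a trivial $\ca I^+$-fibration then it lifts against both families, so by the two paragraphs above $f_0$ is surjective and each hom map lies in $\ca I^{\uparrow}$. Conversely, given $f_0$ surjective and all hom maps trivial $\ca I$-fibrations, a lifting problem against $\emptyset\to 0$ is solved by picking any preimage under $f_0$ of the chosen object, and a lifting problem against $(i):(S)\to(B)$ is solved, after reading off the pair $(a,b)$ and the data as above, by the lift guaranteed since $f_{a,b}\in\ca I^{\uparrow}$. I do not anticipate any genuine obstacle here: the only thing to be careful about is the bookkeeping of the adjunction identifications $\ca GV((D),X)\iso\coprod_{a,b\in X_0}V(D,X(a,b))$ and $\ca GV(0,X)\iso X_0$, which are already recorded in Section~\ref{ssec:enriched-graphs} and used in the proof of Lemma~\ref{lem:GV-dense}. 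So in the write-up I would state the two identifications, note that a lifting problem (resp.\ its solution) against each generator transports along them to the claimed object-level (resp.\ hom-level) data, and conclude.
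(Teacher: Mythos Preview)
Your proposal is correct and is exactly the routine unpacking the paper has in mind: the paper declares the proof ``trivial'' and omits it, and your argument---reducing the lifting condition against $\emptyset\to 0$ to surjectivity on objects via $\ca GV(0,-)\iso(-)_0$, and the lifting condition against each $(i):(S)\to(B)$ to the hom-level lifting problem via $\ca GV((D),X)\iso\coprod_{a,b}V(D,X(a,b))$---is precisely that unpacking. One tiny slip: the adjunction is $L_V\ladj\varepsilon_V$, not $\varepsilon_V\ladj L_V$, but this does not affect the argument.
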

\noindent In particular starting with $V=\PSh {\G}$ the category of globular sets and $\ca I_{-1}$ the empty class of maps, one generates a sequence of classes of maps $\ca I_n$ of globular sets by induction on $n$ by the formula $\ca I_{n+1}=(\ca I_{n})^+$ since $\ca G(\PSh {\G})$ may be identified with $\PSh {\G}$, and moreover one has inclusions $\ca I_n \subset \ca I_{n+1}$. More explicitly, the set $\ca I_n$ consists of $(n+1)$ maps: for $0{\leq}k{\leq}n$ one has the inclusion $\partial{k} \hookrightarrow k$, where $k$ here denotes the representable globular set, that is the ``$k$-globe'',
and $\partial{k}$ is the $k$-globe with its unique $k$-cell removed. One defines $\ca I_{{\leq}\infty}$ to be the union of the $\ca I_n$'s.
Note that by definition $\ca I_{{\leq}\infty}=\ca I_{{\leq}\infty}^{+}$.

There is another version of the induction just described to produce, for each $n \in \N$, a class $\ca I_{{\leq}n}$ of maps of $\ca G^n(\Set)$. The set $\ca I_{{\leq}0}$ consists of the functions
\[ \begin{array}{lccr} {\emptyset \rightarrow 0} &&& {0+0 \rightarrow 0,} \end{array} \]
so $\ca I_{\leq 0}^{\uparrow}$ is the class of bijective functions. For $n \in \N$, $\ca I_{{\leq}n{+}1}=\ca I_{{\leq}n}^{+}$. As maps of globular sets, the class $\ca I_{\leq n}$ consists of all the maps of $\ca I_{n}$ together with the unique map $\partial{(n{+}1)}{\rightarrow}n$. A map of $n$-globular sets is a trivial $\ca I_{\leq n}$ fibration iff it has the right lifting property with respect to all the morphisms of $\ca I_n$ and moreover the unique right lifting property in the top dimension (ie with respect to $\partial n \hookrightarrow n$).
\begin{definition}
Let $0{\leq}n{\leq}\infty$. An $n$-operad{\footnotemark{\footnotetext{The monad $\ca T_{{\leq}\infty}$ on globular sets is usually just denoted as $\ca T$: it is the monad whose algebras are strict $\omega$-categories.}}} $\alpha:A{\rightarrow}\ca T_{{\leq}n}$ is \emph{contractible} when it is a trivial $\ca I_{{\leq}n}$-fibration. An $n$-multitensor $\varepsilon:E{\rightarrow}\ca T_{{\leq}n}^{\times}$ is \emph{contractible} when it is a trivial $\ca I_{{\leq}n}$-fibration.
\end{definition}
\noindent By the preceeding two lemmas, an $(n+1)$-operad $\alpha:A{\rightarrow}\ca T_{{\leq}n{+}1}$ over $\Set$ is contractible iff the hom maps of $\alpha_1$ are trivial $\ca I_{\leq n}$-fibrations.

\subsection{Contractible operads versus contractible multitensors}
\label{ssec:contractible-multitensors}
As one would expect a $\ca T_{\leq n{+}1}$-operad over $\Set$ is contractible iff its associated $\ca T^{\times}_{{\leq}n}$-multitensor is contractible. This fact has quite a general explanation.
\begin{proposition}\label{prop:contractible}
Let $(H,\psi):(V,E){\rightarrow}(W,F)$ be a lax monoidal functor between distributive lax monoidal categories, and $\ca I$ a class of maps in $W$. Suppose that $W$ is extensive, $H$ preserves coproducts and the codomains of maps in $\ca I$ are connected. Then the following statements are equivalent
\begin{enumerate}
\item $\psi$ is a trivial $\ca I$-fibration.\label{propcase:contractible-monoidal-coherence}
\item $\Gamma\psi$ is a trivial $\ca I^+$-fibration.\label{propcase:contractible-monad-map}
\end{enumerate}
\end{proposition}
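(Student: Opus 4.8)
The plan is to reduce the statement, via Lemma~\ref{lem:ind-tf}, to a comparison of the hom maps of $\Gamma\psi$ with the components of $\psi$. First I would record the structural fact used already inside the proof of Proposition~\ref{prop:Psi-2-ff}: since $H$ preserves coproducts, the coproduct-preservation obstruction map occurring in the definition of $\Gamma(\psi)_{X,a,b}$ in section~\ref{sec:2-functoriality-Gamma} is an isomorphism, so for each $X \in \ca GV$ and $a,b \in X_0$ the arrow $\Gamma(\psi)_{X,a,b}$ is isomorphic, in the arrow category of $W$, to
\[ \coprod_{a=x_0,\dots,x_n=b} \psi_{X(x_0,x_1),\dots,X(x_{n-1},x_n)}. \]
Since $\Gamma\psi$ lives over $\Set$, every component $\Gamma(\psi)_X$ is the identity on objects, in particular surjective on objects; so by Lemma~\ref{lem:ind-tf}, condition~(\ref{propcase:contractible-monad-map}) is equivalent to the assertion that each of the displayed coproducts is a trivial $\ca I$-fibration.

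For (\ref{propcase:contractible-monoidal-coherence})$\implies$(\ref{propcase:contractible-monad-map}) I would argue: if $\psi$ is a trivial $\ca I$-fibration, then each summand above, being a component of $\psi$, is a trivial $\ca I$-fibration; since $W$ is extensive and the codomains of the maps in $\ca I$ are connected, Lemma~\ref{lem:basic-tf}(\ref{tfib5}) shows the coproduct itself is one. An arrow isomorphic to a trivial $\ca I$-fibration is again one (a retract of it, Lemma~\ref{lem:basic-tf}(\ref{tfib1})), so $\Gamma(\psi)_{X,a,b}$ is a trivial $\ca I$-fibration for all $X,a,b$, whence $\Gamma\psi$ is a trivial $\ca I^+$-fibration by Lemma~\ref{lem:ind-tf}.

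For the converse I would specialise exactly as in the proof of Proposition~\ref{prop:Psi-2-ff}: given a sequence $(Z_1,\dots,Z_n)$ of objects of $V$, apply the hypothesis to $X=(Z_1,\dots,Z_n)$ with $a=0$, $b=n$. Because the $V$-graph $(Z_1,\dots,Z_n)$ has all homs initial except the consecutive ones, and because $E$ and $F$ are distributive while $H$ preserves the initial object (being coproduct preserving), every summand of the coproduct indexed by a sequence other than $(0,1,\dots,n)$ has both source and target initial, hence is the identity of $\emptyset$; thus $\Gamma(\psi)_{(Z_1,\dots,Z_n),0,n}$ is isomorphic to $\psi_{Z_1,\dots,Z_n}$ alone. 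By Lemma~\ref{lem:ind-tf} the former is a trivial $\ca I$-fibration, hence so is $\psi_{Z_1,\dots,Z_n}$; since the sequence was arbitrary this exhibits $\psi$ as a trivial $\ca I$-fibration.

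I do not expect a serious obstacle: the argument is a combination of bookkeeping already carried out for Proposition~\ref{prop:Psi-2-ff} with the closure properties of Lemma~\ref{lem:basic-tf}. The one point that wants a little care is justifying, in the converse direction, that the degenerate summands may be discarded — i.e.\ that coproduct preservation forces $H\emptyset \iso \emptyset$ and that distributivity of $E$ and $F$ then collapses those summands to identities of the initial object — and, in the forward direction, that one must invoke Lemma~\ref{lem:basic-tf}(\ref{tfib5}) rather than mere closure under coproducts, which is precisely where the connectedness hypothesis on the codomains of $\ca I$ is genuinely used.
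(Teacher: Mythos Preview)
Your proposal is correct and follows essentially the same route as the paper's proof: identify each hom map $\Gamma(\psi)_{X,a,b}$ (up to the coproduct-preservation isomorphism) with a coproduct of components of $\psi$, then invoke Lemmas~\ref{lem:basic-tf} and~\ref{lem:ind-tf}. The paper is more compressed---it simply asserts the identification and the special case $\Gamma(\psi)_{(Z_1,\dots,Z_n),0,n}=\psi_{Z_1,\dots,Z_n}$---while you spell out the role of Lemma~\ref{lem:basic-tf}(\ref{tfib5}) and the collapsing of degenerate summands; but the argument is the same.
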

\begin{proof}
For each $X \in \ca GV$ the component $\{\Gamma\psi\}_X$ is the identity on objects and for $a,b \in X_0$, the corresponding hom map is obtained as the composite of
\[ \begin{array}{c} {\coprod\limits_{a{=}x_0,...,x_n{=}b} \psi : \coprod\limits_{x_0,...,x_n} \opF\limits_iHX(x_{i-1}x_i) \rightarrow \coprod\limits_{x_0,...,x_n} H\opE\limits_iX(x_{i-1}x_i)} \end{array} \]
and the canonical isomorphism that witnesses the fact that $H$ preserves coproducts. In particular note that for any sequence $(Z_1,...,Z_n)$ of objects of $V$, regarded as $V$-graph in the usual way, one has \[ \{\Gamma\psi\}_{(Z_1,...,Z_n)} = \psi_{Z_1,...,Z_n}.\]
Thus $(\ref{propcase:contractible-monoidal-coherence}){\iff}(\ref{propcase:contractible-monad-map})$ follows from lemmas(\ref{lem:basic-tf}) and (\ref{lem:ind-tf}).
\end{proof}
\begin{corollary}\label{cor:contractible}
Let $0{\leq}n{\leq}\infty$, $\alpha:A{\rightarrow}\ca T_{{\leq}n{+}1}$ be a $\ca T_{\leq n{+}1}$-operad over $\Set$ and $\varepsilon:E{\rightarrow}\ca T^{\times}_{{\leq}n}$ be the corresponding $\ca T^{\times}_{{\leq}n}$-multitensor. TFSAE:
\begin{enumerate}
\item  $\alpha:A{\rightarrow}\ca T_{{\leq}n{+}1}$ is contractible.
\item  $\varepsilon:E{\rightarrow}\ca T^{\times}_{{\leq}n}$ is contractible.
\end{enumerate}
\end{corollary}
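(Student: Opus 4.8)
The plan is to obtain this corollary as a direct instance of proposition(\ref{prop:contractible}), together with the bookkeeping that identifies the relevant classes of maps and the operad--multitensor correspondence. Throughout write $V$ for $\ca G^n\Set$ (and for $\PSh{\G}$ when $n=\infty$, using $\ca G\PSh{\G}\iso\PSh{\G}$). By theorem(\ref{thm:equivalence-operads<->multitensors}) and the construction of the equivalence $\Gamma_{\ca T^{\times}_{{\leq}n}}$ in section(\ref{sec:operads-multitensors-basic}), the operad $\alpha:A\to\ca T_{{\leq}n{+}1}$ corresponds to $\varepsilon:E\to\ca T^{\times}_{{\leq}n}$ by applying $\Gamma$: viewing $\varepsilon$ as the lax monoidal functor $(1_V,\varepsilon):(V,\ca T^{\times}_{{\leq}n})\to(V,E)$, its image under $\Gamma$ is the monad functor underlying $\alpha$, so that $A\iso\Gamma E$ and $\alpha=\Gamma\varepsilon$ modulo this isomorphism. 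Note that $E$ is distributive, by lemma(\ref{lem:transfer-dpl})(\ref{tdpl1}) applied to $\alpha$ together with the distributivity of $\ca T_{{\leq}n{+}1}$, or equivalently because $E\iso\overline{\Gamma E}=\overline{A}$ and $A$ is distributive.

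Next I would verify the hypotheses of proposition(\ref{prop:contractible}) for the lax monoidal functor $(1_V,\varepsilon)$ and the class $\ca I=\ca I_{{\leq}n}$ of maps in $V$. Both $(V,\ca T^{\times}_{{\leq}n})$ and $(V,E)$ are distributive lax monoidal categories: $\ca T^{\times}_{{\leq}n}$ is distributive since $\ca T_{{\leq}n}$ preserves coproducts and, $V$ being lextensive, finite products distribute over coproducts (cf. example(\ref{ex:Tcross})), and $E$ is distributive as just noted. The category $V$ is extensive, being a presheaf topos (theorem(\ref{thm:GV-locally-c-presentable})(\ref{thmcase:presheaf-topos})). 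The functor $1_V$ preserves coproducts. Finally, the codomains of the maps in $\ca I_{{\leq}n}$ are representable globular sets --- the globes $k$ for $0{\leq}k{\leq}n$ --- and representables in a presheaf topos are connected, since colimits there are computed pointwise.

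With these hypotheses in place, proposition(\ref{prop:contractible}) gives that $\varepsilon$ is a trivial $\ca I_{{\leq}n}$-fibration if and only if $\Gamma\varepsilon$ is a trivial $\ca I_{{\leq}n}^{+}$-fibration. Since $\ca I_{{\leq}n{+}1}=\ca I_{{\leq}n}^{+}$ by definition, and since contractibility of $\varepsilon$ means exactly that it is a trivial $\ca I_{{\leq}n}$-fibration while contractibility of $\alpha=\Gamma\varepsilon$ means exactly that it is a trivial $\ca I_{{\leq}n{+}1}$-fibration, this is the asserted equivalence. The case $n=\infty$ is handled identically, using $\ca T_{{\leq}\infty}\iso\Gamma\ca T^{\times}_{{\leq}\infty}$ on $\PSh{\G}$ and the fact that $\ca I_{{\leq}\infty}^{+}=\ca I_{{\leq}\infty}$. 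I do not expect a genuine obstacle beyond proposition(\ref{prop:contractible}) itself; the only points needing care are the identification of $\alpha$ with $\Gamma\varepsilon$ through $\Gamma_{\ca T^{\times}_{{\leq}n}}$, the observation that the generating maps in $\ca I_{{\leq}n}$ have connected codomains, and the matching of $\ca I_{{\leq}n{+}1}$ with $\ca I_{{\leq}n}^{+}$.
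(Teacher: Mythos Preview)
Your proposal is correct and follows essentially the same approach as the paper: verify that the codomains of the maps in $\ca I_{\leq n}$ are connected and then apply proposition(\ref{prop:contractible}). The paper's proof is just the one-line observation that connectedness holds (``by induction''), whereas you spell out the identification $\alpha\iso\Gamma\varepsilon$, the remaining hypotheses, and the $n=\infty$ case; your remark that the codomains are representables in a presheaf topos is a cleaner justification of connectedness than the paper's inductive sketch.
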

\begin{proof}
By induction one may easily establish that the codomains of the maps in any of the classes: $\ca I_n$, $\ca I_{{\leq}n}$, $\ca I_{{\leq}\infty}$ are connected so that proposition(\ref{prop:contractible}) may be applied.
\end{proof}

\subsection{Trimble's construction}
\label{sec:Trimble}
In this section we exhibit Cheng's analysis of Trimble's definition \cite{Cheng-ComparingOperadic} as fitting within our framework.

\emph{Topological preliminaries}. Given a topological space $X$ and points $a$ and $b$ therein, one may define the topological space $X(a,b)$ of paths in $X$ from $a$ to $b$ at a high degree of generality. In recalling the details let us denote by $\Top$ a category of ``spaces'' which is complete, cocomplete and cartesian closed. We shall write $1$ for the terminal object. We shall furthermore assume that $\Top$ comes equipped with a bipointed object $I$ playing the role of the interval. A conventional choice for $\Top$ is the category of compactly generated Hausdorff spaces with its usual interval, although there are many other alternatives which would do just as well from the point of view of homotopy theory.

Let us denote by $\sigma{X}$ the suspension of $X$, which can be defined as the pushout
\[ \xygraph{!{0;(1.5,0):(0,.667)::} {X{+}X}="tl" [r] {I{\times}X}="tr" [d] {\sigma{X}.}="br" [l] {1{+}1}="bl" "tl"(:"tr":"br",:"bl":"br") "br" [u(.3)l(.3)] (:@{-}[r(.15)],:@{-}[d(.15)])} \]
Writing $\Top_{\bullet}$ for the category of bipointed spaces, that is to say the coslice $1{+}1/\Top$, the above definition exhibits the suspension construction as a functor
\[ \sigma : \Top \rightarrow \Top_{\bullet}. \]
Applying $\sigma$ successively to the inclusion of the empty space into the point, one obtains the inclusions of the $(n{-}1)$-sphere into the $n$-disk for all $n \in \N$, and its right adjoint
\[ h : \Top_{\bullet} \rightarrow \Top \]
is the functor which sends the bipointed space $(a,X,b)$, to the space $X(a,b)$ of paths in $X$ from $a$ to $b$. This adjunction $\sigma \ladj h$ is easy to verify directly using the above elementary definition of $\sigma(X)$ as a pushout, and the pullback square
\[ \xygraph{!{0;(1.5,0):(0,.667)::} {X(a,b)}="tl" [r] {X^I}="tr" [d] {X^{1{+}1}}="br" [l] {1}="bl" "tl"(:"tr":"br"^{X^i},:"bl":"br"_-{(a,b)}) "tl" [d(.3)r(.3)] (:@{-}[l(.15)],:@{-}[u(.15)])} \]
where $i$ is the inclusion of the boundary of $I$.

Thus to each space $X$ one can associate a canonical topologically enriched graph whose homs are the path spaces of $X$. In section(\ref{sec:categories-of-enriched-graphs}) we described explicitly the adjunction $(-)^{\times 2}_{\bullet} \ladj \ca G_1$ and in particular its unit $\eta$, from which one may readily verify that the assignment $X \mapsto PX$ is the object map of the composite
\[ \xygraph{!{0;(2,0):} {\Top}="l" [r] {\ca G(\Top_{\bullet})}="m" [r] {\ca G\Top}="r" "l":"m"^-{\eta}:"r"^-{\ca Gh}} \]
and by proposition(\ref{prop:unit-G1}), the component $\eta_f$ of the unit of this adjunction at $f:A \to \Set$ has a left adjoint when $A$ is cocomplete. Since $\Top$ is cocomplete, $h$ has left adjoint $\sigma$, and $\ca G$ is a 2-functor, whence $P$ is a right adjoint.

Recall from section(\ref{sec:recalling-multitensors}) that non-symmetric operads within braided monoidal categories may be regarded as multitensors, and that these are distributive when the tensor product is distributive. To say that a non-symmetric topological operad $A$ acts on $P$ is to say that $P$ factors as
\[ \xygraph{!{0;(2,0):} {\Top}="l" [r] {\Enrich A}="m" [r] {\ca G(\Top)}="r" "l":"m"^-{P_A}:"r"^-{U^A}} \]
The main example to keep in mind is the version of the little intervals operad recalled in \cite{Cheng-ComparingOperadic} definition(1.1). As this $A$ is a contractible non-symmetric operad, $A$-categories may be regarded as a model of $A$-infinity spaces. Since $P$ is a right adjoint, $P_A$ is also a right adjoint by the Dubuc adjoint triangle theorem.

\emph{Inductive construction}. Let $A$ be a non-symmetric topological operad which acts on $P$. Applying a product preserving functor
\[ Q : \Top \rightarrow V \]
into a distributive category to the operad $A$ in $\Top$, produces an operad $QA$ in $V$. Moreover $Q$ may be regarded as the underlying functor of a strong monoidal functor $(\Top,A) \rightarrow (V,QA)$ between lax monoidal categories. Applying $\Gamma$ to this gives us a monad functor
\[ (\ca G(\Top),\Gamma(A)) \rightarrow (\ca GV,\Gamma(QA)) \]
with underlying functor $\ca GQ$, which amounts to giving a lifting $\overline{Q}$ as indicated in the commutative diagram
\[ \xygraph{!{0;(2,0):(0,.5)::} {\Top}="tl" [r] {\Enrich A}="tm" [r] {\Enrich {QA}}="tr" [d] {\ca GV}="br" [l] {\ca G(\Top)}="bl" "tl"(:"tm"^-{P_A}(:"tr"^-{\overline{Q}}:"br"^{U^{QA}},:"bl":"br"_-{\ca G(Q)}),:"bl"_{P})} \]
and so we have produced another product preserving functor
\[ Q^{(+)} : \Top \rightarrow V^{(+)} \]
where $Q^{(+)}=\overline{Q}P_A$ and $V^{(+)}=\Enrich {QA}$. The functor $\overline{Q}$ is product preserving since $\ca G(Q)$ is and $U^{QA}$ creates products. The assignment
\[ (Q,V) \mapsto (Q^{(+)},V^{(+)}) \]
in the case where $A$ is as described in \cite{Cheng-ComparingOperadic} definition(1.1), is the inductive process lying at the heart of the Trimble definition. In this definition one begins with the path components functor $\pi_0 : \Top \rightarrow \Set$ and defines the category $\Trimble 0$ of ``Trimble 0-categories'' to be $\Set$. The induction is given by
\[ (\Trimble {n{+}1},\pi_{n{+}1}) := (\textnormal{Trm}_{n}^{(+)},\pi_{n}^{(+)}) \]
and so this definition constructs not only a notion of weak $n$-category but the product preserving $\pi_n$'s to be regarded as assigning the fundamental $n$-groupoid to a space.

\emph{Operads for Trimble $n$-categories}. In the context of a product preserving functor $Q:\Top \to V$ as above, suppose that $W$ is a lextensive category, $T$ is a coproduct preserving cartesian monad on $W$, and $\phi:S \to T$ is a $T$-operad. Suppose moreover that $V = W^S$. Then the operad/multitensor $QA$ is a lifting of the operad/multitensor $U^SQA$, and so $\Enrich{QA}$ may be identified with categories enriched in the multitensor on $W$ whose tensor product is given by $(U^SQA)_n \times SX_1 \times ... \times SX_n$ by theorem(\ref{thm:opmonoidal-monad-dist-law}). But the composites
\begin{equation}\label{eq:Trimble-operad}
\xygraph{!{0;(2.5,0):} {U^SQ(A)_n \times \prod\limits_i SX_i}="l" [r] {\prod\limits_i SX_i}="m" [r] {\prod\limits_i TX_i}="r" "l":@<1ex>"m"^-{\tn{proj}}:@<1ex>"r"^-{\prod\limits_i \phi_{X_i}}}
\end{equation}
are the components of a cartesian multitensor map into $T^{\times}$. Thus by theorem(\ref{thm:equivalence-operads<->multitensors}) $\Enrich{(QA)}$ is the category of algebras of a $\Gamma(T^{\times})$-operad over $\Set$. Thus by the inductive definition of $\Trimble n$ and of the monads $\ca T_{\leq n}$, $\Trimble n$ is the category of algebras of a $\ca T_{\leq n}$-operad.

\emph{Contractibility of the Trimble operads}. Let us denote by $\ca J$ the set of inclusions $S^{n{-}1}{\rightarrow}D^n$ of the $n$-sphere into the $n$-disk for $n \in \N$. Recall that these may all be obtained by successively applying the suspension functor $\sigma$ to the inclusion of the empty space into the point. By definition the given topological operad $A$ is contractible when for each $n$ the unique map $A_n \rightarrow 1$ is in $\ca J^{\uparrow}$, and this is equivalent to saying that the cartesian multitensor map $A \rightarrow \prod$ is a trivial $\ca J$-fibration. We shall write $U_n:\Trimble n \rightarrow \ca G^n\Set$ for the forgetful functor for each $n$.
\begin{lemma}\label{lem:triv-fibrations-for-Eugenia}
If $f:X{\rightarrow}Y$ is a trivial $\ca J$-fibration then
\begin{enumerate}
\item $f_{a,b}:X(a,b){\rightarrow}Y(fa,fb)$ is a trivial $\ca J$-fibration for all $a,b \in X$.\label{lemcase:pathspaces}
\item $U_n\pi_n{f}$ is a trivial $\ca I_{\leq{n}}$-fibration.\label{lemcase:U-npi-nf}
\end{enumerate}
\end{lemma}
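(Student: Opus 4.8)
\textbf{Proof plan for Lemma~\ref{lem:triv-fibrations-for-Eugenia}.}

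\emph{Part (\ref{lemcase:pathspaces}).} The plan is to exploit the adjunction $\sigma \ladj h$ together with the pullback square defining $X(a,b)$ that appears in section(\ref{sec:Trimble-top-prelimin}). Given $f:X \to Y$ a trivial $\ca J$-fibration, I want to show $f_{a,b}:X(a,b) \to Y(fa,fb)$ is a trivial $\ca J$-fibration. A lifting problem against $S^{n-1} \to D^n$ into the square
\[ \xygraph{{X(a,b)}="tl" [r] {X^I}="tr" [d] {X^{1+1}}="br" [l] {1}="bl" "tl":"tr"^-{}:"br"^-{X^i}:@{<-}"bl"^-{(a,b)}:@{<-}"tl"^-{}} \]
transposes, via exponential adjointness in the cartesian closed category $\Top$ and the fact that $S^{n-1} \to D^n$ is $\sigma$ applied to $S^{n-2} \to D^{n-1}$ (with the base case being $\emptyset \to 1$), into a lifting problem for $f$ itself against the map $S^n \to D^{n+1}$ (one suspension higher), relative to the fixed behaviour on the two endpoints supplied by $(a,b)$. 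Since $f \in \ca J^{\uparrow}$ and $S^n \to D^{n+1} \in \ca J$, such a lift exists; transposing back gives the desired lift into $X(a,b)$. Concretely, the key identity is that $\sigma$-suspension on the source side corresponds under $\sigma \ladj h$ to forming path spaces on the target side, so ``right lifting against $\ca J$'' is stable under the $h$-construction. I would phrase this as: $h:\Top_{\bullet} \to \Top$ sends (the evident bipointed refinement of) $\ca J$-trivial fibrations to $\ca J$-trivial fibrations, because $\sigma$ carries $\ca J$ into $\ca J$.

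\emph{Part (\ref{lemcase:U-npi-nf}).} I would argue by induction on $n$, following the inductive construction of $\Trimble n$ and of $\ca I_{\leq n}$. The base case $n=0$: $\pi_0:\Top \to \Set$ is the path-components functor, and if $f$ is a $\ca J$-trivial fibration then in particular it has the right lifting property against $\emptyset \to 1$ (surjective on points up to path-connectedness) and against $S^0 \to D^1$ (injective on $\pi_0$), so $\pi_0 f$ is a bijection, i.e.\ a trivial $\ca I_{\leq 0}$-fibration since $\ca I_{\leq 0}^{\uparrow}$ is exactly the bijections. For the inductive step, recall $\Trimble{n+1} = \Trimble n^{(+)} = \Enrich{QA}$ where $Q = \pi_n$ (here I use the notation of section(\ref{sec:Trimble-construction})), and that $\pi_{n+1} = \overline{\pi_n}\,P_A$. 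The forgetful functor $U_{n+1}$ factors through $U^{QA}:\Enrich{QA} \to \ca G^{n+1}\Set$ followed by... more precisely $U_{n+1} = \ca G(U_n) \circ U^{QA}$ where one must be careful, but the upshot is that $U_{n+1}\pi_{n+1}f$ is, by lemma(\ref{lem:ind-tf}), a trivial $\ca I_{\leq n+1} = (\ca I_{\leq n})^+$-fibration precisely when it is surjective on objects and each of its hom maps is a trivial $\ca I_{\leq n}$-fibration. Surjectivity on objects holds because $f \in \ca J^{\uparrow}$ implies $\pi_0 f$ surjective (as in the base case). For the hom maps: the hom of $\pi_{n+1}f$ between $a$ and $b$ is (using that $P_A$ acts fibrewise on path spaces and $\overline{\pi_n}$ lifts $\ca G(\pi_n)$) obtained by applying $U_n\pi_n$ to the map $f_{a,b}:X(a,b) \to Y(fa,fb)$ of path spaces; by part (\ref{lemcase:pathspaces}) this $f_{a,b}$ is again a $\ca J$-trivial fibration, and so by the induction hypothesis $U_n\pi_n f_{a,b}$ is a trivial $\ca I_{\leq n}$-fibration. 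Hence $U_{n+1}\pi_{n+1}f$ is a trivial $\ca I_{\leq n+1}$-fibration.

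\textbf{Main obstacle.} The delicate point is bookkeeping the identification in the inductive step: one must check that the hom map of $\pi_{n+1}(f) = \overline{\pi_n}(P_A(f))$ between $a$ and $b$ really is $U_n\pi_n$ applied to $f_{a,b}$, i.e.\ that passing to the $(+)$-construction commutes with taking homs in the expected way, and that $U_{n+1}$ decomposes compatibly with $U^{QA}$ and $\ca G(U_n)$. This is where the explicit description of $P_A$ as acting fibrewise via path spaces (section(\ref{sec:Trimble-top-prelimin})), the definition of $\overline{Q}$ as a lift of $\ca G(Q)$ (section(\ref{sec:Trimble-construction})), and lemma(\ref{lem:ind-tf}) all have to be assembled carefully; the homotopy-theoretic content, by contrast, is entirely contained in part (\ref{lemcase:pathspaces}) and is comparatively routine given $\sigma \ladj h$.
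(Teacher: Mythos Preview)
Your proposal is correct and follows essentially the same approach as the paper. For part~(\ref{lemcase:pathspaces}) the paper invokes the adjunction $\sigma \ladj h$ directly to transpose a lifting square against $S^{n-1}\to D^n$ into one in $\Top_\bullet$ against $S^n\to D^{n+1}$, just as you do; for part~(\ref{lemcase:U-npi-nf}) the paper proceeds by the same induction, using that $U_{n+1}\pi_{n+1}=\ca G(U_n\pi_n)P$ and then appealing to part~(\ref{lemcase:pathspaces}) together with lemma(\ref{lem:ind-tf}). The factorisation you flag as the ``main obstacle'' is stated by the paper in one line without further justification, so you may be over-worrying it: once one unwinds $\pi_{n+1}=\overline{\pi_n}P_A$ and $U_{n+1}=\ca G(U_n)U^{QA}$ the identity $U_{n+1}\pi_{n+1}=\ca G(U_n\pi_n)P$ drops out immediately from the defining commutative square for $\overline{Q}$ in section(\ref{sec:Trimble-construction}).
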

\begin{proof}
(\ref{lemcase:pathspaces}): To give a commutative square as on the left in
\[ \begin{array}{lccr} {\xybox{\xygraph{!{0;(2,0):(0,.5)::} {S^{n{-}1}}="tl" [r] {X(a,b)}="tr" [d] {Y(fa,fb)}="br" [l] {D^n}="bl" "tl"(:"tr":"br",:"bl":"br")}}} &&&
{\xybox{\xygraph{!{0;(2,0):(0,.5)::} {S^n}="tl" [r] {(a,X,b)}="tr" [d] {(fa,Y,fb)}="br" [l] {D^{n{+}1}}="bl" "tl"(:"tr":"br",:"bl":"br")}}} \end{array} \]
is the same as giving a commutative square in $\Top_{\bullet}$ as on the right in the previous display, by $\sigma \ladj h$. The square on the right admits a diagonal filler $D^{n{+}1} \rightarrow X$ since $f$ is a trivial $\ca J$-fibration, and thus so does the square on the left.

(\ref{lemcase:U-npi-nf}): We proceed by induction on $n$. Having the right lifting property with respect to the inclusions
\[ \begin{array}{lccr} {\emptyset \hookrightarrow 1} &&& {1{+}1=\partial{I} \hookrightarrow I} \end{array} \]
ensures that $f$ surjective and injective on path components, and thus is inverted by $\pi_0$. For the inductive step we assume that $U_n\pi_n$ sends trivial $\ca J$-fibrations to trivial $\ca I_{{\leq}n}$-fibrations and suppose that $f$ is a trivial $\ca J$-fibration. Then so are all the maps it induces between path spaces by (\ref{lemcase:pathspaces}). But from the inductive definition of $\Trimble {n{+}1}$ we have $U_{n{+}1}\pi_{n{+}1}=\ca G(u_n\pi_n)P$ and so $U_{n{+}1}\pi_{n{+}1}(f)$ is a morphism of $(n{+}1)$-globular sets which is surjective on objects (as argued already in the $n=0$ case) and whose hom maps are trivial $\ca I_{{\leq}n}$-fibrations by induction. Thus the result follows by lemma(\ref{lem:ind-tf}).
\end{proof}
\begin{corollary}\label{cor:Eugenia}
\emph{(\cite{Cheng-ComparingOperadic} Theorem(4.8))} $\Trimble n$ is the category of algebras for a contractible $\ca T_{\leq n}$-operad.
\end{corollary}
\begin{proof}
We proceed by induction on $n$, and the case $n=0$ holds trivially. For the inductive step we must show by corollary(\ref{cor:contractible}) that the components (\ref{eq:Trimble-operad}) are trivial $\ca I_{\leq n}$-fibrations, where $Q=\pi_n$, $T=\ca T_{\leq n}$ and $\phi:S \to T$ the contractible operad for Trimble $n$-categories. But by lemma(\ref{lem:triv-fibrations-for-Eugenia}) the unique map $U^SQ(A) \to 1$ is a trivial $\ca I_{\leq n}$-fibration since $A$ is contractible, and so the result follows from lemma(\ref{lem:basic-tf}) since trivial fibrations are closed under products and composition.
\end{proof}

\appendix

\section{Locally connected and locally presentable categories}
\label{sec:lcp}
Higher categorical structures are supposed to model the homotopy-theoretic aspects of spaces. Thus the categories that arise in this work behave in some respects as categories of space-like objects, even before one considers any Quillen model category structures. The formal expression of this is that all the categories at arise in this work are are locally c-presentable in the sense to be discussed in this section. This includes a well-behaved notion of connected component of an object, and that the ability to decompose objects into connected components works as one would want. From a technical standpoint, local c-presentability also plays an important role in the dictionary between monads and multitensors. In particular, the correspondence between local right adjoint multitensors and local right adjoint monads given in theorem(\ref{thm:preservation-by-Gamma-E}) requires that the underlying categories are locally c-presentable.

The material of this section is somewhat of a review, being essentially an instance of the theory given in \cite{ABLR-ClassificationAccessible}. However we do cover the particular case of the theory of locally c-presentable categories in considerably more detail than in \cite{ABLR-ClassificationAccessible}. There are two principal results in this section. The first of these, theorem(\ref{thm:conn-GabUlm}), characterises locally c-presentable categories in various ways. From this result it is clear that locally connected Grothendieck toposes are examples. The second result, theorem(\ref{thm:acc-monad}), exhibits algebras of coproduct preserving accessible monads on locally c-presentable categories as locally c-presentable. By this result the categories of algebras of higher operads are exhibited as locally c-presentable.

\subsection{Connected objects and locally connected categories}
\label{ssec:connected-objects}
We now collect together the basic, mostly well-known, abstract categorical theory of connected objects and coproduct decompositions. Recall that an object $C$ in a category $V$ with coproducts is \emph{connected} when the representable $V(C,-)$ preserves coproducts.

The natural environment within which to study coproduct decompositions is a lextensive category. Recall that a category $V$ is \emph{extensive} when it has coproducts and for all families $(X_i:i \in I)$ of objects of $V$, the functor
\[ \begin{array}{lccr} {\coprod : \prod\limits_i (V/X_i) \rightarrow V/(\coprod\limits_i X_i)}
&&& {(f_i:Y_i{\rightarrow}X_i) \mapsto \coprod\limits_if_i:\coprod\limits_iY_i{\rightarrow}\coprod\limits_iX_i} \end{array} \]
is an equivalence of categories. Note that this terminology is not quite standard: extensivity is usually defined using only finite coproducts.

Recall that coproducts in a category are said to be \emph{disjoint} when coproduct coprojections are mono and the pullback of different coprojections is initial. Recall also that an initial object is said to be \emph{strict} when any map into it is an isomorphism. The fundamental result on extensive categories is
\begin{theorem}\label{thm:extensivity-characterisation}
(\cite{CarboniLackWalters-Extensivity},\cite{Cockett-DistributiveCategories}) A category $V$ is extensive iff it has coproducts, pullbacks along coproduct coprojections and given a family of commutative squares
\[ \xymatrix{{X_i} \ar[r]^-{c_i} \ar[d]_{f_i} & X \ar[d]^{f} \\ {Y_i} \ar[r]_-{d_i} & Y} \]
where $i \in I$ such that the $d_i$ form a coproduct cocone, the $c_i$ form a coproduct cocone iff these squares are all pullbacks. In an extensive category coproducts are disjoint and the initial object of $V$ is strict.
\end{theorem}
We consider now conditions on a category which turn out to be sufficient to ensure extensivity.
\begin{definition}\label{def:locally-connected-category}
A category $V$ is \emph{locally connected} when
\begin{enumerate}
\item $V$ has coproducts.
\item $V$ has pullbacks along coproduct inclusions.
\item every $X \in V$ is a coproduct of connected objects.
\end{enumerate}
\end{definition}
\begin{lemma}\label{lem:easy-ext}
If a category $V$ is locally connected then it is extensive.
\end{lemma}
\begin{proof}
Suppose $f:X \to \emptyset$ is a morphism into the initial object, and $X = \coprod_{i{\in}I} X_i$ is a decomposition of $X$ as a coproduct of connected objects. Then for any $i \in I$, one has by composing with $c_i$ the i-th coproduct coprojection, a map $X_i \to \emptyset$. But since $X_i$ is connected there can be no such map since the hom $V(X_i,\emptyset)$ is empty, and so $I$ must be empty, and so $X$ is initial, and so $f$ is invertible. Thus $V$ has a strict initial object.

Given $A$ and $B$ in $V$, denote by $c_A:A \to A + B$ the coprojection. Given a pair of maps $f,g:X \to A$ such that $c_Af=c_Ag$, using $X$'s coproduct decomposition again one has $c_Afc_i=c_Agc_i$, but since $X_i$ is connected $fc_i=gc_i$, and since this is true for all $i$, $f=g$, and so $c_A$ is mono. On the other hand suppose that a commutative square
\[ \xygraph{ !{0;(1.5,0):(0,.5)::} {X}="tl" [r] {B}="tr" [d] {A + B}="br" [l] {A}="bl" "tl":"tr"^-{}:"br"^-{c_B}:@{<-}"bl"^-{c_A}:@{<-}"tl"^-{}} \]
is given. Then by composing with $c_i$, one obtains another with $X_i$ in place of $X$, but this cannot be since $X_i$ is connected. Thus $I$ is empty, and so $X$ is initial. Thus the coproducts in $V$ are disjoint.

By theorem(\ref{thm:extensivity-characterisation}) it suffices to show that given a family of commutative squares as on the left in
\[ \xygraph{{\xybox{\xygraph{{X_i}="tl" [r] {X}="tr" [d] {Y}="br" [l] {Y_i}="bl" "tl":"tr"^-{c_i}:"br"^-{f}:@{<-}"bl"^-{d_i}:@{<-}"tl"^-{f_i}}}} [r(4)]
{\xybox{\xygraph{!{0;(2,0):(0,.5)::} {V(Z,X_i)}="tl" [r] {V(Z,X)}="tr" [d] {V(Z,Y)}="br" [l] {V(Z,Y_i)}="bl" "tl":"tr"^-{V(Z,c_i)}:"br"^-{V(Z,f)}:@{<-}"bl"^-{V(Z,d_i)}:@{<-}"tl"^-{V(Z,f_i)}}}}} \]
where $i \in I$ such that the $d_i$ form a coproduct cocone, the $c_i$ form a coproduct cocone iff these squares are all pullbacks. By the yoneda lemma, this is equivalent to the same statement for the family of squares on the right (in $\Set$) for all $Z \in V$. Since every object of $V$ is a coproduct of connected ones, it suffices to consider just those $Z \in V$ that are connected. By the definition of connectedness, the functions $V(Z,c_i)$ (resp. $V(Z,d_i)$) form a coproduct cocone for all connected $Z$ iff the maps $c_i$ (resp $d_i$) do so in $V$, and so the result follows by the extensivity of $\Set$.
\end{proof}
\begin{remark}\label{rem:Sh(X)-not-locally-connected}
Let $X$ be a topological space which is not locally connected. Then the topos $\tn{Sh}(X)$ of sheaves on $X$ is an example of a category which is extensive but not locally connected.
\end{remark}
Recall that a category is \emph{lextensive} when it is extensive and has finite limits. We shall now study the decomposability of objects in such categories. As we shall see, the categorical datum which tells us whether all objects in a lextensive category $V$ admit a coproduct decomposition, is the left adjoint $(-) \cdot 1$ to the representable $V(1,-)$, where $1$ as usual denotes the terminal object.
\begin{lemma}\label{lem:lex-discrete}
If $V$ is lextensive, then the representable $V(1,-):V \to \Set$ has a left exact left adjoint $(-) \cdot 1$ given by taking copowers with $1$.
\end{lemma}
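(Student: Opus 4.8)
The plan is to define $(-) \cdot 1$ on a set $S$ by the copower $S \cdot 1 := \coprod_{s \in S} 1$ --- which exists since $V$, being extensive, has all coproducts --- and then to treat the adjunction and the left exactness as two separate matters. The adjunction is essentially free: the universal property of the coproduct gives
\[ V(S \cdot 1, X) \;=\; V\Big(\coprod_{s \in S} 1,\, X\Big) \;\iso\; \prod_{s \in S} V(1, X) \;=\; \Set\big(S, V(1,X)\big), \]
naturally in $S$ and $X$, so that $(-) \cdot 1 \ladj V(1,-)$.

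For left exactness it suffices to check that $(-) \cdot 1$ preserves the terminal object and pullbacks. The terminal object is immediate, $1 \cdot 1 = \coprod_{s \in 1} 1 \iso 1$, so the content is in the pullbacks, and I would prepare for that with two consequences of extensivity. \emph{First}, in a lextensive category finite products distribute over coproducts: pulling the projection $A \times \coprod_i X_i \to \coprod_i X_i$ back along the coproduct cocone $(c_i : X_i \to \coprod_i X_i)$ produces pullback squares whose top edges are the evident maps $A \times X_i \to A \times \coprod_i X_i$, so by theorem(\ref{thm:extensivity-characterisation}) those top edges form a coproduct cocone, i.e. $A \times \coprod_i X_i \iso \coprod_i (A \times X_i)$; in particular $(S \cdot 1) \times (T \cdot 1) \iso (S \times T) \cdot 1$. \emph{Second}, by extensivity $\coprod : \prod_{u \in U}(V/1) \to V/(U \cdot 1)$ is an equivalence, so an object of $V/(U \cdot 1)$ is determined by its fibres over the coprojections $c_u : 1 \to U \cdot 1$, and pullbacks over $U \cdot 1$ are formed fibrewise; moreover, since coproducts are stable under pullback and disjoint (theorem(\ref{thm:extensivity-characterisation})), for a function $\phi : S \to U$ the fibre of $\phi \cdot 1 : S \cdot 1 \to U \cdot 1$ over $c_u$ computes summandwise to $\coprod_{s \in \phi^{-1}(u)} 1 = \phi^{-1}(u) \cdot 1$.

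With these in hand, given a pullback $S \times_U T$ in $\Set$ with legs $\phi : S \to U$ and $\psi : T \to U$, I would use the decomposition $S \times_U T = \coprod_{u \in U}(\phi^{-1}(u) \times \psi^{-1}(u))$ to compute, via the second fact, that the $(-) \cdot 1$-image square --- which lives over $U \cdot 1$ --- has fibre $(\phi^{-1}(u) \times \psi^{-1}(u)) \cdot 1$ over $c_u$, whereas the honest pullback $(S \cdot 1) \times_{U \cdot 1} (T \cdot 1)$, formed fibrewise, has fibre $\phi^{-1}(u) \cdot 1 \times \psi^{-1}(u) \cdot 1$ there; the comparison map between them restricts on each fibre to the canonical map, which is invertible by the first fact, and since the slice equivalence reflects isomorphisms the comparison map is itself invertible, so the image square is a pullback. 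The one genuinely fiddly point --- and the only step I expect to need real care --- is identifying that fibrewise comparison with the canonical distributivity isomorphism; conceptually nothing is at stake once distributivity and the slice equivalence afforded by theorem(\ref{thm:extensivity-characterisation}) are available.
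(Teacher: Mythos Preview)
Your proof is correct and, like the paper's, ultimately rests on theorem(\ref{thm:extensivity-characterisation}); the difference is organizational. You work fibrewise over the apex $U \cdot 1$ of the cospan, first isolating distributivity of finite products over coproducts as a lemma and then using the slice equivalence to reduce the pullback comparison to the canonical map $(\phi^{-1}(u) \times \psi^{-1}(u)) \cdot 1 \to (\phi^{-1}(u) \cdot 1) \times (\psi^{-1}(u) \cdot 1)$ on each fibre. The paper instead pulls back along the coprojections $a : 1 \to A \cdot 1$ of one of the legs, identifies the fibre of $P \cdot 1$ over $a$ with $h^{-1}(a) \cdot 1 \iso k^{-1}(ga) \cdot 1$, and applies theorem(\ref{thm:extensivity-characterisation}) directly --- once to see each composite rectangle is a pullback, and once more to conclude the right-hand square is. Your approach buys a cleaner modular structure with distributivity isolated as a reusable fact; the paper's is a touch more direct and avoids that detour. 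Either is fine.
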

\begin{proof}
It is a standard fact, coming from nothing more than the universal property of coproducts, that the left adjoint take this form, and clearly $(-) \cdot 1$ preserves the terminal object. Given a pullback in $\Set$ as on the left in
\[ \xygraph{
{\xybox{\xygraph{{P}="tl" [r] {B}="tr" [d] {C}="br" [l] {A}="bl" "tl":"tr"^-{f}:"br"^-{k}:@{<-}"bl"^-{g}:@{<-}"tl"^-{h} "tl":@{}"br"|*{\scriptstyle{pb}}}}} [r(4)]
{\xybox{\xygraph{!{0;(1.5,0):(0,.667)::} {h^{-1}(a) \cdot 1}="tll" [r] {P \cdot 1}="tl" [r] {B \cdot 1}="tr" [d] {C \cdot 1}="br" [l] {A \cdot 1}="bl" [l] {1}="bll" "tl":"tr"^-{f \cdot 1}:"br"^-{k \cdot 1}:@{<-}"bl"^-{g \cdot 1}:@{<-}"tl"_-{h \cdot 1}
"tll":"tl" "bl":@{<-}"bll"^-{a}:@{<-}"tll" "tll":@{}"bl"|*{\scriptstyle{pb}}}}}} \]
one has for each $a \in A$ a diagram as on the right. Since the original square is a pullback one has canonical bijections $h^{-1}(a) \iso k^{-1}(ga)$ enabling one to identify the top horizontal composite as $(-) \cdot 1$ applied to the inclusion of the fibre $k^{-1}(ga)$, and so for all $a$ these maps exhibit $B \cdot 1$ as a coproduct. By theorem(\ref{thm:extensivity-characterisation}) it follows that the composite square on the right is a pullback. Since this is true for all $a \in A$ the right-most square is a pullback, again by theorem(\ref{thm:extensivity-characterisation}), as required.
\end{proof}
This is very familiar in the case where $V$ is a Grothendieck topos. Then the adjoint pair $(-){\cdot}1 \ladj V(1,-)$ is the global sections geometric morphism. Recall also that in this case the existence of a further left adjoint to $(-){\cdot}1$ is a fundamental property, which in the case of $\tn{Sh}(X)$ for $X$ a topological space, is equivalent to the local connectedness of $X$ (see remark(\ref{rem:Sh(X)-not-locally-connected}) above). Inspired by this case, we make
\begin{definition}\label{def:admits-pi0}
Let $V$ be a category with coproducts and a terminal object $1$. A left adjoint to $(-){\cdot}1$ is denoted as
\[ \pi_0 : V \longrightarrow \Set \]
and when it exists, we say that $V$ \emph{admits a $\pi_0$ functor}.
\end{definition}
We now note that the connectedness of an object in a lextensive category can be characterised in various ways.
\begin{lemma}\label{lem:char-connectedness-inLextCat}
Let $V$ be a lextensive category and $C$ be an object therein. Then the following statements are equivalent:
\begin{enumerate}
\item $C$ is connected.
\item $V(C,-)$ preserves copowers with $1$.
\end{enumerate}
and if in addition $V$ admits a $\pi_0$ functor, then these are moreover equivalent to
\begin{enumerate}[resume]
\item $\pi_0(C) \iso 1$.
\end{enumerate}
\end{lemma}
\begin{proof}
Suppose that $V(C,-)$ preserves copowers with $1$. Coproduct coprojections defining $X=\coprod_{i{\in}I} X_i$ assemble, by theorem(\ref{thm:extensivity-characterisation}), into pullback squares
\[ \xygraph{{X_i}="tl" [r] {X}="tr" [d] {I \cdot 1}="br" [l] {1}="bl" "tl":"tr"^-{c_i}:"br"^-{}:@{<-}"bl"^-{i \cdot 1}:@{<-}"tl"^-{} "tl":@{}"br"|*{\scriptstyle{pb}}} \]
to which we apply $V(C,-)$. By theorem(\ref{thm:extensivity-characterisation}) in the case $V=\Set$, the functions $V(C,c_i)$ form a coproduct cocone since $V(C,i)$ do by hypothesis. Thus $V(C,-)$ does indeed preserve all coproducts. In the case where one has $\pi_0$, by the canonical isomorphisms $V(C,I \cdot 1) \iso \Set(\pi_0C,I)$, the connectedness of $C$ is equivalent to $\Set(\pi_0C,-)$ being isomorphic to the identity, which by the yoneda lemma is equivalent to $\pi_0C \iso 1$.
\end{proof}
We now characterise those lextensive categories in which every object admits a decomposition as a coproduct of connected objects.
\begin{proposition}\label{prop:lext-decompose-charn}
Let $V$ be a lextensive category. Then $V$ is locally connected iff $V$ admits a $\pi_0$ functor.
\end{proposition}
\begin{proof}
Suppose that every object of $V$ can be expressed as a coproduct of connected objects. For each $X \in V$ choose such a decomposition, write $\pi_0(X)$ for the indexing set, and for $i \in \pi_0(X)$ denote by $c_i:X_i \to X$ the corresponding coprojection. One induces the map $\eta_X$ as in
\begin{equation}\label{diag:lext-decompose-charn}
\xygraph{!{0;(2,0):(0,.5)::} {X_i}="tl" [r] {X}="tr" [d] {\pi_0(X) \cdot 1}="br" [l] {1}="bl" "tl":"tr"^-{c_i}:"br"^-{\eta_X}:@{<-}"bl"^-{i \cdot 1}:@{<-}"tl"^-{} "tl":@{}"br"|*{\scriptstyle{pb}} "br" [r] {I \cdot 1}="brr" "tr":"brr"^-{f} "bl":@/_{2pc}/"brr"_-{g(i) \cdot 1} "br":@{.>}"brr"_-{g \cdot 1}} \end{equation}
so that the square commutes, and as indicated this square is a pullback by theorem(\ref{thm:extensivity-characterisation}). Given a set $I$ and a morphism $f$ as in (\ref{diag:lext-decompose-charn}), the connectedness of $X_i$ ensures that there is a unique $g(i) \in I$ making the outside of (\ref{diag:lext-decompose-charn}) commute. In this way we have exhibited a unique $g:\pi_0X \to I$ making the triangle in (\ref{diag:lext-decompose-charn}) commutative, and this exhibits $\eta_X$ as the component at $X$ of a unit of $\pi_0 \ladj (-) \cdot 1$. Moreover the uniqueness of coproduct decompositions is now evident, since any choice of all them gives rise in this way to an explicit left adjoint $\pi_0$ of the same functor $(-) \cdot 1$, and so different choices give rise to canonical isomorphisms between the corresponding $\pi_0$'s, which are compatible with the corresponding units.

For the converse let us suppose that we have $\pi_0 \ladj (-) \cdot 1$. Then one has $\eta_X:X \to \pi_0(X) \cdot 1$, and one then takes pullbacks as in (\ref{diag:lext-decompose-charn}) to obtain the $c_i:X_i \to X$ which form a coproduct cocone by theorem(\ref{thm:extensivity-characterisation}). To finish the proof we must show that all these $X_i$'s are connected, and by lemma(\ref{lem:char-connectedness-inLextCat}) it suffices to show that the cardinality $|\pi_0(X_i)|$ is $1$. If it was $0$ then one would have $\eta_{X_i}:X_i \to \emptyset$ making $X_i$ initial too, since initial objects are strict. But then by defining $I = \pi_0(X) \setminus \{i\}$, writing $j:I \to \pi_0(X)$ for the proper inclusion, one has also $\eta':X \to I \cdot 1$ such that $(j \cdot 1)\eta'=\eta_X$. But by the universal property of $\eta_X$ one also has a section $s:\pi_0(X) \to I$ of $j$, contradicting the properness of $j$. Thus $|\pi_0(X_i)| > 0$. Note that we have a diagram
\[ \xygraph{!{0;(2,0):(0,.5)::} {X_i}="tl" [r] {X}="tr" [d] {\pi_0(X) \cdot 1}="br" [l] {1}="bl" [d(.5)l] {\pi_0(X_i) \cdot 1}="bbll" "tl":"tr"^-{c_i}:"br"^-{\eta_X}:@{<-}"bl"_-{i}:@{<-}"tl"^-{}:"bbll"_-{\eta_{X_i}}(:"bl",:@/_{1.5pc}/"br"_-{\pi_0(c_i) \cdot 1}|{}="cd") "bl":@{}"cd"|-{(I)}} \]
in which the outside and all regions except region (I) are clearly commutative. By the universal property of $\eta_{X_i}$ the function $\pi_0(c_i)$ factors as
\[ \xygraph{{\pi_0(X_i)}="l" [r] {1}="m" [r] {\pi_0(X)}="r" "l":"m"^-{}:"r"^-{i}} \]
so that $|\tn{im}(\pi_0(c_i))| \leq 1$, and since $\pi_0(c_i)$ as a coprojection in $\Set$ is injective, we have $|\pi_0(X_i)| \leq 1$.
\end{proof}

\subsection{Locally c-presentable categories}
\label{ssec:lcc}
We recall first some of the basic notions from the theory of locally presentable categories \cite{AdamekRosicky-LFP,GabrielUlmer-LFP,MakkaiPare-AccessibleCategories}. Let $\lambda$ be a regular cardinal. A \emph{$\lambda$-small category} is one whose class of arrows forms a set of cardinality $< \lambda$, and a category $\ca A$ is \emph{$\lambda$-filtered} when every functor $J \to \ca A$, where $J$ is $\lambda$-small, admits a cocone. Colimits of functors out of $\lambda$-filtered categories are called \emph{$\lambda$-filtered colimits}. An object $X$ of a category $V$ is \emph{$\lambda$-presentable} when the representable $V(X,-)$ preserves all $\lambda$-filtered colimits that exist in $V$. A locally small category $V$ is \emph{locally $\lambda$-presentable} when it is cocomplete and there is a set $\ca S$ of $\lambda$-presentable objects such that every object of $V$ is a $\lambda$-filtered colimit of objects from $\ca S$.

There are many alternative characterisations of locally $\lambda$-presentable categories, the most minimalistic being the following. Recall that a set $\ca D$ of objects of $V$ is a \emph{strong generator} when for all maps $f:X{\rightarrow}Y$, if
\[ V(D,f) : V(D,X) \rightarrow V(D,Y) \]
is bijective for all $D \in \ca D$, then $f$ is an isomorphism. Then a locally small category $V$ is locally $\lambda$-presentable iff it is cocomplete and has a strong generator consisting of $\lambda$-presentable objects. Other characterisations include: as categories of $\Set$-valued models of limit sketches whose distinguished cones are $\lambda$-small; as full reflective subcategories of presheaf categories for which the inclusion is $\lambda$-accessible; to name just two. See for instance \cite{AdamekRosicky-LFP,GabrielUlmer-LFP,MakkaiPare-AccessibleCategories} for a complete discussion of this fundamental notion.

The appropriate functors between such categories are the $\lambda$-accessible ones -- a functor being \emph{$\lambda$-accessible} when it preserves $\lambda$-filtered colimits. Here we describe a mild variant of these notions in which the role of $\lambda$-presentable objects is played by objects which are both $\lambda$-presentable and connected, and accessible functors are replaced by functors which preserve both $\lambda$-filtered colimits and coproducts. A category $\ca A$ is \emph{$\lambda$-c-filtered} when every functor $J \to \ca A$, where $J$ is $\lambda$-small and connected, admits a cocone. Clearly a category is $\lambda$-c-filtered iff its connected components are $\lambda$-filtered, and thus a $\lambda$-c-filtered colimit is the same thing as a coproduct of $\lambda$-filtered colimits{\footnotemark{\footnotetext{When $\lambda$ is the first infinite cardinal, such categories are often said to be ``pseudo filtered''.}}}.

The categories which are $\lambda$-small and connected form a \emph{doctrine} $\D$ in the sense of \cite{ABLR-ClassificationAccessible}, a $\lambda$-c-filtered category is one which is $\D$-filtered in the sense of \cite{ABLR-ClassificationAccessible} definition(1.
2), and this doctrine is easily exhibited as \emph{sound} in the sense of \cite{ABLR-ClassificationAccessible} definition(2.2).
\begin{definition}\label{def:lcc}
\cite{ABLR-ClassificationAccessible} A locally small category $V$ is \emph{locally $\lambda$-c-presentable} when it is cocomplete and has a set $\ca S$ of objects which are connected and $\lambda$-presentable, such that every object of $V$ is a $\lambda$-c-filtered colimit of objects of $\ca S$. A \emph{locally c-presentable} category is one which is locally $\lambda$-c-presentable for some regular cardinal $\lambda$. When $\lambda$ is the first infinite ordinal, we also use the terminology locally \emph{finitely} c-presentable.
\end{definition}
We have attributed definition(\ref{def:lcc}) to \cite{ABLR-ClassificationAccessible} since a locally $\lambda$-c-presentable category is exactly a locally-$\D$-presentable category in the sense of \cite{ABLR-ClassificationAccessible} definition(5.1), for the doctrine $\D$ of $\lambda$-small connected categories. The soundness of this doctrine ensures, by theorem(5.5) of \cite{ABLR-ClassificationAccessible}, that one has various reformulations of the notion of $\lambda$-c-presentable category, analogous to those in the usual theory of locally presentable categories. We record these reformulations in
\begin{theorem}\label{thm:ABLR}
\cite{ABLR-ClassificationAccessible}
For a locally small category $V$ and regular cardinal $\lambda$, the following statements are equivalent.
\begin{enumerate}
\item  $V$ is locally $\lambda$-c-presentable.
\item  $V$ is equivalent to the category of models for a limit sketch whose distingished cones are $\lambda$-small and connected.\label{lc5}
\item  $V$ is equivalent to the full subcategory of $[\ca A,\Set]$ consisting of $\lambda$-small connected limit preserving functors, for some small category $\ca A$ with $\lambda$-small connected limits.\label{lc5.5}
\item  $V$ is a free completion of a small category with $\lambda$-small connected limits under $\lambda$-c-filtered colimits.\label{lc5.75}
\end{enumerate}
\end{theorem}
In theorem(\ref{thm:conn-GabUlm}) below we give further reformulations of this notion.
\begin{lemma}\label{lem:summands-of-lambda-presentables}
Let $\lambda$ be a regular cardinal and $V$ be locally $\lambda$-presentable and extensive. Then a summand of a $\lambda$-presentable object in $V$ is $\lambda$-presentable.
\end{lemma}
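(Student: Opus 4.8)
The plan is to prove directly that $V(X,-)$ preserves $\lambda$-filtered colimits. So fix a functor $D:J\to V$ with $J$ $\lambda$-filtered, write $L=\colim_j D_j$ with coprojections $\kappa_j:D_j\to L$, and let $X$ be a summand, say $Y\iso X+Z$ with $Y$ $\lambda$-presentable. The naive approach — noting that $V(X,-)\times V(Z,-)\iso V(X+Z,-)=V(Y,-)$ preserves $\lambda$-filtered colimits and trying to ``cancel'' the factor $V(Z,-)$ — breaks down exactly when the relevant hom-sets are empty; equivalently, the usual retract argument fails because a summand $X$ of $Y$ need not be a retract of $Y$ (there may be no morphism $Z\to X$ at all, as happens already in $\Set\times\Set$). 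The idea is to circumvent this by replacing $D$ with a diagram ``fattened'' by $Z$.

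First I would form the diagram $D(-)+Z:J\to V$, $j\mapsto D_j+Z$. Since $J$ is $\lambda$-filtered it is connected, and a short computation with the universal property of coproducts — using that, for a connected diagram, the components of a cocone valued on the constant summand $Z$ are forced to agree — shows that in an extensive category $\colim_j(D_j+Z)\iso L+Z$; this is just the statement that $(-)+Z$ preserves connected colimits, and it is here that extensivity and the connectedness of $J$ are used. Because $Y$ is $\lambda$-presentable, applying $V(Y,-)$ to this $\lambda$-filtered diagram gives a canonical bijection $\colim_j V(Y,D_j+Z)\iso V(Y,L+Z)$.

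Next I would decompose both sides. By extensivity the canonical functor $V/W\times V/Z\to V/(W+Z)$ is an equivalence, which yields a natural isomorphism $V(Y,W+Z)\iso\coprod_d V(Y^d_1,W)\times V(Y^d_2,Z)$, where $d=(Y^d_1,Y^d_2)$ ranges over the set (it is a set, $V$ being well-powered) of decompositions of $Y$ as an ordered coproduct of two summands, the $Y^d_i\hookrightarrow Y$ being the coprojections, which are monic since coproducts in an extensive category are disjoint. Substituting $W=D_j$ and $W=L$, and using that coproducts and the functors $(-)\times K$ commute with colimits, the bijection of the previous paragraph is carried to a coproduct over $d$ of the maps $c_{Y^d_1}\times\id_{V(Y^d_2,Z)}$, where $c_{Y^d_1}:\colim_j V(Y^d_1,D_j)\to V(Y^d_1,L)$ is the colimit-comparison map. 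A coproduct of functions is bijective iff each summand is, so every $c_{Y^d_1}\times\id$ is a bijection; taking $d$ to be the given decomposition $Y=X+Z$ and observing that $V(Z,Z)$ is nonempty (it contains $\id_Z$), we conclude that $c_X:\colim_j V(X,D_j)\to V(X,L)$ is a bijection, which is what we want.

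The two points I expect to need real care are: (i) the claim that $(-)+Z$ preserves the connected colimit $\colim_j D_j$, which is the crucial input and rests on extensivity together with $J$ being connected; and (ii) verifying that the decomposition isomorphism $V(Y,W+Z)\iso\coprod_d V(Y^d_1,W)\times V(Y^d_2,Z)$ is natural in $W$ and identifies the colimit-comparison map with the asserted coproduct of maps. Both are routine but should be spelled out; everything else is formal manipulation with colimits in $\Set$.
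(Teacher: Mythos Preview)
Your proof is correct and takes a genuinely different route from the paper's. The paper argues indirectly: writing $A$ (your $X$) as a $\lambda$-filtered colimit of $\lambda$-presentables $A_i$ (using that $V$ is locally $\lambda$-presentable), it observes that $A+B$ is then the $\lambda$-filtered colimit of the $A_i+B$; since $A+B$ is $\lambda$-presentable the identity on $A+B$ factors through some $A_j+B$, and a single use of extensivity (pulling the coprojection $c_A:A\to A+B$ back along $k_j+1_B$) yields a retraction $A_j\to A$, so $A$ is a retract of a $\lambda$-presentable object and hence $\lambda$-presentable. Your approach instead verifies directly that $V(X,-)$ preserves $\lambda$-filtered colimits, fattening the \emph{test} diagram by $Z$ rather than a presentation of $X$, and then invoking the full extensivity equivalence $V/(W+Z)\simeq V/W\times V/Z$ to decompose hom-sets and isolate the comparison map $c_X$. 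The paper's argument is shorter and leans on the standard ``retract of a $\lambda$-presentable is $\lambda$-presentable'' principle; yours avoids the structural fact that every object is a $\lambda$-filtered colimit of $\lambda$-presentables, trading it for well-poweredness and more explicit hom-set bookkeeping. One small remark: your step (i), that $(-)+Z$ preserves connected colimits, does not actually need extensivity --- the cocone argument you sketch works in any category with coproducts --- so extensivity enters only at step (ii).
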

\begin{proof}
Suppose that $A$, $B \in V$ and that their coproduct $A + B$ is a $\lambda$-presentable object. Since $V$ is locally $\lambda$-presentable one has a $\lambda$-filtered category $I$, and a colimit cocone $k_i:A_i \to A$ for $i \in I$, where the $A_i$ are $\lambda$-presentable objects. Thus the maps $k_i + 1_B:A_i+B \to A+B$ exhibit $A+B$ as a $\lambda$-filtered colimit. Since $A + B$ is $\lambda$-presentable, there is $j \in I$ and $s:A + B \to A_j + B$ such that $(k_j + 1_B)s = 1_{A+B}$. Extensivity ensures that the right-most square in
\[ \xygraph{!{0;(2,0):(0,.5)::} {A}="tl" [r] {A_j}="tm" [r] {A}="tr" [d] {A+B}="br" [l] {A_j+B}="bm" [l] {A+B}="bl" "tl":@{.>}"tm"_-{t}:"tr"_-{k_j}:"br"^-{c_A}:@{<-}"bm"_-{k_j+1_B}:@{<-}"bl"_-{s}:@{<-}"tl"^-{c_A} "tm":"bm"_-{c_{A_j}} "tl":@/^{1pc}/"tr"^-{1} "bl":@/_{1pc}/"br"_-{1}} \]
is a pullback, enabling us to induce $t$ as shown which exhibits $k_j$ as a retraction, and thus $A$ as $\lambda$-presentable.
\end{proof}
\begin{theorem}\label{thm:conn-GabUlm}
For a locally small category $V$ and regular cardinal $\lambda$, the following statements are equivalent.
\begin{enumerate}
\item  $V$ is cocomplete and has a strong generator consisting of objects which are connected and $\lambda$-presentable.\label{lc0}
\item  $V$ is locally $\lambda$-c-presentable.\label{lc1}
\item  $V$ is cocomplete and has a small dense subcategory consisting of objects which are connected and $\lambda$-presentable.\label{lc2}
\item  $V$ is a full subcategory of a presheaf category for which the inclusion is $\lambda$-accessible, coproduct preserving and has a left adjoint.\label{lc4}
\item  $V$ is locally $\lambda$-presentable and every object of $V$ is a coproduct of connected objects.\label{lc6}
\item  $V$ is locally $\lambda$-presentable, extensive and the functor $(-){\cdot}1:\Set{\rightarrow}V$ has a left adjoint.\label{lc7}
\end{enumerate}
\end{theorem}
\begin{proof}
The implication (\ref{lc2})$\implies$(\ref{lc0}) is trivial, and the equivalence of (\ref{lc6}) and (\ref{lc7}) is an immediate consequence of lemma(\ref{lem:easy-ext}) and proposition(\ref{prop:lext-decompose-charn}). Given (\ref{lc1}) $V$ is clearly locally $\lambda$-presentable and any $X \in V$ is a coproduct of $\lambda$-filtered colimits of $\lambda$-presentable connected objects. But a $\lambda$-filtered colimit of connected objects is connected, and so (\ref{lc1})$\implies$(\ref{lc6}).

(\ref{lc0})$\implies$(\ref{lc1}): Let $\ca D$ be a strong generator of $\lambda$-small connected objects, and denote also by $\ca D$ the full subcategory of $V$ it determines. Take the closure $\ca S$ of $\ca D$ in $V$ under $\lambda$-small connected colimits, and note that $\ca S$ is also essentially small (see \cite{Kelly-EnrichedCatsBook} section(3.5)). Thus $\ca S$ is also a strong generator of $V$ consisting of $\lambda$-small connected objects and moreover, the full subcategory it determines has $\lambda$-small connected colimits. Thus for $X \in V$, the comma category $\ca S/X$ is $\lambda$-c-filtered, and so it suffices to show that the comma object defining $\ca S/X$ exhibits $X$ as a colimit. Denote by $K$ the actual colimit, for $f:A \to X$ in $\ca S/X$ by $\kappa_f:A \to K$ the component of the colimit cocone, and by $k:K \to X$ the induced map. Since $\ca S$ is a strong generator it suffices to show that for all $A \in \ca S$ the function $V(A,k):V(A,C) \to V(A,X)$ is bijective. It is surjective by the 
definition of $k$, which is defined as the unique map such that $k \kappa_f = f$ for all $f \in \ca S/X$. To see that $V(A,k)$ is injective, suppose that one has $b$ and $c:A \to K$ such that $kb=kc$. Then since the colimit defining $K$ is $\lambda$-c-filtered and $A$ is $\lambda$-presentable and connected, one has $b_2:B \to K$ and $b_3:B \to X$ such that $\kappa_{b_3}b_2=b$, and similarly $c_2:C \to K$ and $c_3:C \to X$ such that $\kappa_{c_3}c_2=c$. Take the pushout
\[ \xygraph{{A}="tl" [r] {B}="tr" [d] {D}="br" [l] {C}="bl" "tl":"tr"^-{b_2}:"br"^-{p}:@{<-}"bl"^-{q}:@{<-}"tl"^-{c_2} "tl":@{}"br"|-*{\scriptstyle{po}}} \]
in $\ca S$, and induce $d:D \to X$ as the unique map such that $dp=b_3$ and $dq=c_3$. The result follows by
\[ b = \kappa_{b_3}b_2 = \kappa_d pb_2 = \kappa_d qc_2 = \kappa_{c_3}c_2 = c. \]

(\ref{lc1})$\implies$(\ref{lc2}): Let $\ca S$ be the set of $\lambda$-presentable connected objects required by definition(\ref{def:lcc}), and denote by $i:\ca S \to V$ the inclusion of the corresponding full subcategory of $V$. Let $X,Y \in V$ and suppose that $\phi:V(i,X) \to V(i,Y)$ in $\PSh{\ca S}$ is given. One has $k:J \rightarrow \ca S$ with $J$ small and $\lambda$-c-filtered, such that $\col(ik)=X$, and we denote by $\kappa_j:kj \rightarrow X$ a typical component of the colimiting cocone. Induce $\phi':X \rightarrow Y$ as the unique map such that $\phi'\kappa_j=\phi(\kappa_j)$ for all $j \in J$. But then for all $f:S \rightarrow X$ with $S \in \ca S$, one has $\phi'f=\phi(f)$: since $S$ is $\lambda$-presentable and connected one can find $j \in J$ and $g:S \rightarrow kj$ such that $f=\kappa_jg$ and so
\[ \phi(f) = \phi(\kappa_j)g = \phi'\kappa_jg=\phi'f. \]
Thus $V(i,1):V \rightarrow \PSh S$ is fully-faithful, in other words, $i$ is dense as claimed.

(\ref{lc2})$\implies$(\ref{lc4}): Let $i:\ca S \to V$ be the inclusion of the given dense subcategory. Then $V(i,1):V \to \PSh{\ca S}$ preserves coproducts and is $\lambda$-accessible since the objects of $\ca S$ are connected and $\lambda$-presentable, is fully faithful since $i$ is dense, and has a left adjoint given by left kan extension along $i$ since $V$ is cocomplete and $\ca S$ is small.

(\ref{lc4})$\implies$(\ref{lc2}): Let $I:V \to \PSh{\C}$ be the given inclusion and $L$ be its left adjoint. Let $T$ be the monad induced by $L \ladj I$, and note that since $I$ is fully-faithful, it is monadic. Denote by $i:\ca S \to \PSh{\C}$ the inclusion of the closure of the representables in $\PSh{\C}$ under $\lambda$-small connected colimits. Since the objects of $\ca S$ are $\lambda$-presentable and connected, and $T$ is $\lambda$-accessible and coproduct preserving, it follows that $(T,\ca S)$ is a monad with arities in the sense of \cite{Weber-Fam2fun,Mellies-Segal,BergMellWeber-MonadsArities}. Taking
\[ \xygraph{{\ca S}="l" [r] {\Theta_T}="m" [r] {V}="r" "l":"m"^-{k}:"r"^-{j}} \]
to be the identity on objects fully faithful factorisation of $Li$, it follows from the Nerve theorem \cite{BergMellWeber-MonadsArities} that $j:\Theta_T \to V$ is dense. Since for all $A \in \Theta_T$, $V(jA,-) \iso V(LiA,-) \iso \PSh{\C}(iA,IA)$ and $I$ preserves coproducts and $\lambda$-filtered colimits, it follows that the image of $j$ consists of connected $\lambda$-presentable objects.

(\ref{lc7})$\implies$(\ref{lc0}): $V$ is cocomplete by definition. Let $\ca D$ be a strong generator of $\lambda$-presentable objects of $V$. Decompose each object of $\ca D$ into connected components using proposition(\ref{prop:lext-decompose-charn}), and write $\ca D'$ for the set of summands of objects of $\ca D$ that so arise. Clearly $\ca D'$ is also a strong generator of $V$, its objects are connected by definition and $\lambda$-presentable by lemma(\ref{lem:summands-of-lambda-presentables}).
\end{proof}
\begin{examples}\label{ex:lc-groth-toposes}
By theorem(\ref{thm:conn-GabUlm})(\ref{lc4}) any presheaf topos is locally finitely c-presentable. By theorem(\ref{thm:conn-GabUlm})(\ref{lc7}) a Grothendieck topos is locally connected iff it is locally c-presentable.
\end{examples}
Just as with locally presentable categories, locally c-presentable categories are closed under many basic categorical constructions. For instance from theorem(\ref{thm:conn-GabUlm})(\ref{lc6}), one sees immediately that the slices of a locally $\lambda$-c-presentable category are locally $\lambda$-c-presentable from the corresponding result for locally presentable categories. Another instance of this principle is the following result.
\begin{theorem}\label{thm:acc-monad}
If $V$ is locally $\lambda$-c-presentable and $T$ is a $\lambda$-accessible coproduct preserving monad on $V$, then $V^T$ is locally $\lambda$-c-presentable.
\end{theorem}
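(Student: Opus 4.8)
The plan is to verify condition (\ref{lc0}) of theorem(\ref{thm:conn-GabUlm}) for $V^T$: namely, to exhibit a strong generator of $V^T$ whose objects are connected and $\lambda$-presentable. Cocompleteness of $V^T$ (indeed, local $\lambda$-presentability) and the fact that $U^T:V^T\to V$ creates $\lambda$-filtered colimits, hence is $\lambda$-accessible, are standard consequences of $V$ being cocomplete and $T$ being $\lambda$-accessible \cite{AR94}; the remaining work lies in handling coproducts.

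The key step, and the one where coproduct-preservation of $T$ is used, is that $U^T$ creates coproducts. Given a family $((A_j,a_j):j\in J)$ of $T$-algebras, the coproduct $\coprod_jA_j$ formed in $V$ carries the algebra structure $T(\coprod_jA_j)\iso\coprod_jTA_j\xrightarrow{\coprod_ja_j}\coprod_jA_j$, and one checks that this is the coproduct in $V^T$; the unit and associativity axioms, and the universal property, are all verified componentwise, which is legitimate precisely because both $T$ and $T\comp T$ preserve coproducts. I expect this to be the main (though routine and well-known) technical point of the proof. In particular $U^T$ preserves, and reflects, coproduct cocones.

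Finally, I would start from a strong generator $\ca D$ of $V$ consisting of connected $\lambda$-presentable objects --- available since $V$ is locally $\lambda$-c-presentable, by theorem(\ref{thm:conn-GabUlm}) --- and show that the set $\{TD:D\in\ca D\}$ of free $T$-algebras is a strong generator of $V^T$ with the same properties. The natural isomorphism $V^T(TD,-)\iso V(D,U^T(-))$ does all the work: it is a composite of $\lambda$-accessible functors, so each $TD$ is $\lambda$-presentable in $V^T$; combined with the conservativity of $U^T$ it shows that $\{TD:D\in\ca D\}$ is a strong generator; and combined with the preservation of coproducts by $U^T$ established above, together with connectedness of $D$ in $V$, it shows that each $TD$ is connected in $V^T$. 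The implication (\ref{lc0})$\implies$(\ref{lc1}) of theorem(\ref{thm:conn-GabUlm}) then yields that $V^T$ is locally $\lambda$-c-presentable.
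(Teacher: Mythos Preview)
Your proof is correct and takes a genuinely different route from the paper's. You verify condition~(\ref{lc0}) of theorem~\ref{thm:conn-GabUlm} by an elementary argument: the free-algebra adjunction isomorphism $V^T(TD,-)\iso V(D,U^T(-))$ transfers connectedness and $\lambda$-presentability from a strong generator $\ca D$ of $V$ to the family $\{TD:D\in\ca D\}$ in $V^T$, using only that $U^T$ creates $\lambda$-filtered colimits and coproducts. The paper instead verifies condition~(\ref{lc4}): it takes $\Theta_0$ to be the full subcategory of connected $\lambda$-presentable objects of $V$, observes that $(T,\Theta_0)$ is a monad with arities in the sense of \cite{Fam2fun}, and invokes the nerve theorem of \cite{Fam2fun,BMW} to conclude that the induced nerve functor $V^T(i,1):V^T\to\PSh{\Theta_T}$ is fully faithful, $\lambda$-accessible, coproduct-preserving, and has a left adjoint. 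Your approach is self-contained and avoids the monads-with-arities machinery entirely; the paper's approach, while heavier, situates the result within the nerve-theorem framework that the author uses elsewhere and yields the explicit dense subcategory $\Theta_T$ of $V^T$ as a byproduct.
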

\begin{proof}
By the analogous result for locally presentable categories $V^T$ is locally $\lambda$-presentable and thus cocomplete. Defining $\Theta_0$ to be the full subcategory of $V$ consisting of the $\lambda$-presentable and connected objects, $(T,\Theta_0)$ is a monad with arities in the sense of \cite{Weber-Fam2fun}. One has a canonical isomorphism
\[ \xymatrix @C=3.5em {{V^T} \ar[r]^-{V^T(i,1)} \ar[d]_{U^T} \save \POS?="d" \restore & {\PSh {\Theta}_T} \ar[d]^{\res_j} \save \POS?="c" \restore \\ V \ar[r]_-{V(i_0,1)} & {\PSh {\Theta}_0} \POS "d";"c" **@{}; ?*{\iso}} \]
in the notation of \cite{Weber-Fam2fun}. Since $T$ preserves $\lambda$-filtered colimits and coproducts, $U^T$ creates them. Since $j$ is bijective on objects $\res_j$ creates all colimits. Thus by the above isomorphism $V^T(i,1)$ preserves $\lambda$-filtered colimits and coproducts. By the nerve theorem of \cite{Weber-Fam2fun} $V^T(i,1)$ is also fully faithful, it has a left adjoint since $V^T$ is cocomplete given by left extending $i$ along the yoneda embedding, and so we have exhibited $V^T$ as conforming to theorem(\ref{thm:conn-GabUlm})(\ref{lc4}).
\end{proof}
\begin{examples}\label{ex:lc-noperad-algebras}
An $n$-operad for $0{\leq}n{\leq}\omega$ in the sense of \cite{Batanin-MonGlobCats}, gives a finitary coproduct preserving monad on the category $\PSh {\G}_{{\leq}n}$ of $n$-globular sets, and its algebras are just the algebras of the monad. Since $\PSh {\G}_{{\leq}n}$ as a presheaf topos is locally finitely c-presentable by example(\ref{ex:lc-groth-toposes}), the category of algebras of any $n$-operad is locally finitely c-presentable by theorem(\ref{thm:acc-monad}).
\end{examples}

\section*{Acknowledgements}
\label{sec:Acknowledgements}
Many thanks to Michael Batanin, Clemens Berger, Denis-Charles Cisinski and Paul-Andr\'{e} Melli\`{e}s for interesting discussions on the substance of this paper. Thanks are also due to the referee for various insightful remarks which helped a lot to improve the exposition. Finally, I would like to express my gratitude to the laboratory PPS (Preuves Programmes Syst\`{e}mes) in Paris, the Max Planck Institute in Bonn, the IHES and the Macquarie University Mathematics Department for the excellent working conditions I enjoyed during this project.

\bibliographystyle{plain}

\end{document}